\newcommand{\Sp}{\mathrm{Sp}}
\newcommand{\Gr}{\mathrm{Gr}}
\newcommand{\IGr}{\mathrm{IGr}}
\newcommand{\IFl}{\mathrm{IFl}}
\newcommand{\GL}{\mathrm{GL}}
\newcommand{\Fl}{\mathrm{Fl}}
\newcommand{\cU}{\mathcal{U}}
\newcommand{\cS}{\mathcal{S}}
\newcommand{\ZZ}{\mathbb{Z}}
\newcommand{\PP}{\mathbb{P}}
\newcommand{\cE}{\mathcal{E}}
\newcommand{\cG}{\mathcal{G}}
\newcommand{\cH}{\mathcal{H}}
\newcommand{\cO}{\mathcal{O}}
\newcommand{\bG}{\mathbf{G}}
\newcommand{\cC}{\mathcal{C}}
\newcommand{\cA}{\mathcal{A}}
\newcommand{\cB}{\mathcal{B}}
\newcommand{\cF}{\mathcal{F}}
\newcommand{\cV}{\mathcal{V}}
\newcommand{\cW}{\mathcal{W}}
\newcommand{\B}{\mathrm{B}}
\newcommand{\cT}{\mathcal{T}}
\newcommand{\cQ}{\mathcal{Q}}
\newcommand{\cR}{\mathcal{R}}
\newcommand{\bP}{\mathbf{P}}
\newcommand{\bU}{\mathbf{U}}
\newcommand{\bL}{\mathbf{L}}
\newcommand{\Y}{\mathrm{Y}}
\newcommand{\YD}{\mathrm{YD}}
\newcommand{\lvee}[1]{\vphantom{#1}^\vee\!{#1}}
\newcommand{\bV}{\mathbf{V}}
\newcommand{\dd}{\mathbb{d}}
\newcommand{\mm}{\mathbb{m}}
\newcommand{\vv}{\mathbb{v}}
\newcommand{\bVs}{\mathbf{V}^\Sp}
\newcommand{\LL}{\mathbb{L}} 
\newcommand{\RR}{\mathbb{R}} 
\newcommand{\s}{\mathfrak{s}}
\newcommand{\cK}{\mathcal{K}}
\newcommand{\Sh}{\ensuremath{\operatorname{\Sigma}}} 
\newcommand{\E}[1]{\operatorname{Ext}^{#1}}
\newcommand{\Db}{\mathrm{D}^{b}}
\newcommand{\cD}{{\mathcal{D}}}
\newtheorem{theorem}{Theorem}[section]
\newtheorem{proposition}[theorem]{Proposition}
\newtheorem{lemma}[theorem]{Lemma}
\newtheorem{conjecture}[theorem]{Conjecture}
\newtheorem{corollary}[theorem]{Corollary}
\theoremstyle{remark}
\newtheorem{remark}[theorem]{Remark}
\newtheorem{example}[theorem]{Example}
\theoremstyle{definition}
\newtheorem{definition}{Definition}
\newtheorem*{notation*}{Notation}
\newtheorem*{lemma*}{Lemma}
\title{Full exceptional collections on the isotropic Grassmannians}
\begin{document}
\author{Lyalya Guseva}
\address{Université Bourgogne Europe, CNRS, IMB UMR 5584, F-21000 Dijon, France}
\email{lyalya.guseva1994@gmail.com}

\author{Alexander Novikov}
\address{HSE University, Russian Federation}
\email{all.novv@gmail.com}

\thanks{L.G was partially supported by the FanoHK ANR-20-CE40-0023 and by the EIPHI ANR-17-EURE-0002.}
\begin{abstract} 
We prove that the Kuznetsov--Polishchuk exceptional collections on rational homogeneous spaces of the symplectic groups $\mathrm{Sp}(2n,\mathbb{C})$ are full and consist of vector bundles. To achieve this, we construct several classes of complexes, which we call generalized staircase complexes, symplectic staircase complexes and secondary staircase complexes --- each of which may be of independent interest.
\end{abstract}
\maketitle{}
\section{Introduction}
The bounded derived category of coherent sheaves $\Db(X)$ is an important invariant of an algebraic variety~$X$. In general, $\Db(X)$ has a quite complicated structure. However, $\Db(X)$ can be described fairly explicitly when it admits a full exceptional collection~$(E_{1}, E_{2}, \ldots, E_{m})$: in this case every object of~$\Db(X)$ has a unique filtration, with the $i$-th subquotient being a direct sum of shifts of the objects $E_i$. Therefore, an exceptional collection can be considered as a kind of basis for $\Db(X)$.

The first example of a full exceptional collection was constructed by Beilinson in~\cite{Beilinson1978}. He proved that the collection of line bundles $\mathcal{O},\mathcal{O}(1), \ldots, \mathcal{O}(n)$ is a full exceptional collection on the projective space~$\mathbb{P}^n$. Subsequently, Kapranov~\cite{Kapranov1988} constructed full exceptional collections on Grassmannians, flag varieties of groups $\operatorname{SL}_{n}$
and on smooth quadrics.
It has been conjectured afterwards that
\begin{conjecture} \label{conjecture}
If $\bG$ is a semisimple algebraic group over an algebraically closed field $\Bbbk$ of characteristic~$0$ and~$\bP\subset \bG$ is a parabolic subgroup, then there is a full exceptional collection of equivariant vector bundles on~$\bG/\bP$.
\end{conjecture}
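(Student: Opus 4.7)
Since Conjecture~\ref{conjecture} is presently open in full generality, my strategy is a case-by-case attack along the classification of simple groups, following the Kuznetsov--Polishchuk (KP) blueprint. For every rational homogeneous space $\bG/\bP$, the KP recipe produces a Lefschetz-type rectangular block of $\bG$-equivariant vector bundles, built by twisting certain ``starting'' equivariant bundles associated to the Levi of $\bP$ by powers of an appropriate polarization. I would take this rectangle as the candidate full exceptional collection.

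For the exceptionality check, fix an ordered pair $(E_i, E_j)$ in the KP collection. Then $\mathrm{Ext}^{\bullet}(E_i, E_j) \cong H^{\bullet}(\bG/\bP, E_i^{\vee} \otimes E_j)$, and since $E_i^{\vee} \otimes E_j$ remains $\bG$-equivariant one may decompose it into irreducible equivariant summands and apply Borel--Weil--Bott to each. The required vanishings then reduce to a combinatorial statement about how the highest weights of the summands lie relative to the walls of the affine Weyl chamber; the KP rectangle is designed precisely so that these weights sit in a Bott-safe region, so this part of the argument should essentially amount to weight bookkeeping.

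The main work is in fullness. My plan is to prove that the triangulated subcategory $\cA \subset \Db(\bG/\bP)$ generated by the KP collection contains every line bundle $\cO(\lambda)$ with $\lambda$ dominant; Borel--Weil generation then forces $\cA = \Db(\bG/\bP)$. To populate $\cA$ with these line bundles, I would construct, for each $\cO(\lambda)$ outside the KP rectangle, an exact \emph{staircase complex} whose terms already lie in $\cA$. In the symplectic case the abstract advertises three layers of such resolutions: generalized staircase complexes of Pieri type, handling twists of the tautological bundle; symplectic staircase complexes that relax the symplectic constraint and bridge the $\IGr(k,2n)$ collection to the $\Gr(k,2n)$ collection; and secondary staircase complexes that cover the remaining equivariant bundles not resolved by the previous two flavors.

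I expect the secondary staircase complexes to be the principal obstacle. They must be $\Sp(2n)$-equivariant, their terms must land inside the KP rectangle, and their total cohomology must be concentrated in a single object. Their construction will likely require an induction on $n$ exploiting the natural projections $\IFl \to \IGr \hookrightarrow \Gr$ to reduce exactness to lower-rank problems, combined with direct Borel--Weil--Bott computations at each inductive step to identify the surviving cohomology. A successful execution in the symplectic case would simultaneously serve as a template for the orthogonal and exceptional types, where analogous staircase complexes would have to be designed, and would thereby reduce Conjecture~\ref{conjecture} to a finite list of case-specific combinatorial checks.
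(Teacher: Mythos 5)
This statement is labeled a conjecture, and the paper itself does not prove it; the paper proves only the type~$C$ instance (Theorem~\ref{theo:intro}: the Kuznetsov--Polishchuk collections on $\IGr(k,2n)$ are full and consist of vector bundles). There is therefore no ``paper's own proof'' of Conjecture~\ref{conjecture} against which to check you. You recognize this at the outset, and your sketch is really a research program rather than a proof, so there is necessarily a gap: nothing in your proposal (or in the paper) closes the orthogonal or exceptional types. The honest framing would be that the conjecture remains open and that the symplectic staircase machinery is one successful case, not a blueprint that has been demonstrated to port.

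Within the type~$C$ portion, your outline is broadly consistent with the paper's architecture (KP blocks, fullness by showing $\cA(1)\subset\cA$ and then invoking generation by $\bigoplus\cO(i)$, the three flavors of staircase complexes), but two points are misstated. First, exceptionality is not proved by decomposing $E_i^\vee\otimes E_j$ into irreducibles and applying Borel--Bott--Weil summand-by-summand. The KP argument (inherited here from \cite{KuzPol}) proceeds through the notion of an \emph{exceptional block} (Definition~\ref{def:exc_blocks}): one verifies that a composition map between equivariant and non-equivariant Ext spaces is an isomorphism, then mutates to obtain the right-dual collection, whose exceptionality is a formal consequence. Weight bookkeeping enters, but through that block criterion, not through direct BBW vanishing for each pair. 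Second, the symplectic staircase complexes do not ``bridge $\IGr(k,2n)$ to $\Gr(k,2n)$''; they are obtained by restricting generalized staircase complexes from $\Fl(2m,n;V)$ to $\IFl(2m,n;V)$ and pushing forward to $\IGr(2m,V)$, a construction internal to the symplectic setting. The reduction you should cite is also more specific: the conjecture reduces to simple $\bG$ and \emph{maximal} $\bP$ (\cite[Section~1.2]{KuzPol}), not merely to simple $\bG$.
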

The conjecture easily reduces to the case where $\bG$ is a simple group and $\bP$ is its maximal parabolic subgroup, see~\cite[Section~1.2]{KuzPol}.

In the case of simple $\bG$ and maximal $\bP$ full exceptional collections of equivariant vector bundles was constructed for the following series (we use the Bourbaki indexing of simple roots):
\begin{itemize}
\item $\bG$ is of Dynkin type $A$ and any $\bP$, see \cite{Kapranov1988};
\item $\bG$ is of Dynkin type $B$ and $\bP$ corresponds to one of the first two simple roots, see~\cite{Kapranov1988,lines};
\item $\bG$ is of Dynkin type $C$ and $\bP$ corresponds to one of the first two simple roots or the last root, see~\cite{Beilinson1978,lines, F19};
\item $\bG$ is of Dynkin type $D$ and $\bP$ corresponds to one of the first two simple roots, see~\cite{kap84,kuznetsov2018residual};
\end{itemize}
and for the following sporadic and exceptional cases:
\begin{itemize}
\item $(B_3,P_3)$, $(B_4,P_4)$, see~\cite{Kapranov1988, Kuznetsov2007};
\item $(C_4,P_3)$, $(C_5,P_3)$ see~\cite{Guseva_2020, N20}
\item $(D_4,P_3)$, $(D_4,P_4)$, $(D_5,P_4)$, $(D_5,P_5)$, $(D_6,P_5)$, $(D_6,P_6)$, see~\cite{Kapranov1988, Kuznetsov2007, SG6};
\item $(E_6,P_1)$, $(E_6,P_6)$, see~\cite{FaenziManivel};
\item $F_4/P_1$, $F_4/P_4$, see~\cite{Smirnov2021,BKS};
\item $(G_2,P_1)$, $(G_2,P_2)$, see~\cite{Kapranov1988, Kuznetsov2007}.
\end{itemize}

In the recent paper \cite{SamKall24} Samokhin and van der Kallen announced the existence of a full exceptional collection in the bounded derived category of coherent sheaves on a generalized flag scheme $\bG/\bP$ over~$\mathbb{Z}$. In particular, their result implies the existence of a full exceptional collection on $\bG/\bP$ over an algebraically closed field $\Bbbk$ of characteristic~$0$. However, it is unclear at the moment whether the exceptional objects from \cite{SamKall24} are pure with respect to the standard $t$-structure on $\Db(\bG/\bP)$. The relation of the construction of \cite{SamKall24} to the results of the present paper is a subject of further research. 

In~\cite{KuzPol} Kuznetsov and Polishchuk constructed an exceptional collection of maximal possible length (equal to the rank of the Grothendieck group) on $\bG/\bP$ for all classical groups (i.e. for groups of Dynkin types $ABCD$) and all their maximal parabolic subgroups. However, the fullness of these collections was proved only for Lagrangian Grassmannians $C_n/P_n$ in \cite{F19}. It was conjectured~\cite[Conjecture 4.1]{KuzPol} that the exceptional collections constructed in~\cite{KuzPol} consist of vector bundles. 

\subsection{The main result}
The main goal of this article is to prove that the Kuznetsov--Polishchuk exceptional collections from \cite{KuzPol} on the homogeneous varieties $C_n/P_k$ consist of vector bundles and are full for all $n$ and $k$. In particular, this proves the Conjecture~\ref{conjecture} for $\bG$ of type $C$. 

Note that the homogeneous variety $C_n/P_k$ parameterizes $k$-dimensional isotropic subspaces in a~$2n$-dimensional symplectic vector space; therefore, we denote it by $\IGr(k,2n)$. 
\begin{theorem} \label{theo:intro}
Let $V$ be a $2n$-dimensional symplectic vector space over an algebraically closed field of characteristic zero and let $1\le k\le n$ be an integer. Then the Kuznetsov--Polishchuk exceptional collection in the bounded derived category $\Db(\IGr(k, V))$ of coherent sheaves on $\IGr(k,V)$ consists of equivariant vector bundles and is full.   
\end{theorem}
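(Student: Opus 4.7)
The plan is to establish the theorem in two stages: first that every object in the Kuznetsov--Polishchuk (KP) collection is concentrated in cohomological degree zero, i.e., is a vector bundle, and then that the collection generates $\Db(\IGr(k, V))$ as a triangulated category. Throughout I would work $\Sp(V)$-equivariantly, so that Borel--Bott--Weil on the symplectic flag varieties $\IFl(k-1,k; V)$ and $\IFl(k, k+1; V)$ is available as the main computational tool.

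For the first assertion, recall that the KP construction exhibits $\Db(\IGr(k, V))$ with a Lefschetz block structure whose blocks are indexed by Young diagrams $\lambda$ inside an explicit staircase region; the exceptional objects are produced a priori by mutations, so purity is not automatic. I would proceed by induction on the Lefschetz block, showing that each mutation amounts to a short exact sequence of equivariant vector bundles. The $\Ext$-groups that control each mutation can be computed by Borel--Bott--Weil applied to $\Sigma^\lambda\cU \otimes (\Sigma^\mu\cU)^\vee$; the KP combinatorial conditions on $(\lambda, \mu)$ should force the relevant cohomology to collapse into a single degree, which delivers the desired exact triangle concentrated in degree zero.

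For fullness, by equivariance and the Peter--Weyl decomposition of $\cO_{\IGr(k,V)}$-modules it suffices to prove that every irreducible $\Sp(V)$-equivariant vector bundle $\Sigma^{\lambda}\cU$ (with $\lambda$ arbitrary, after tensoring with powers of $\cO(1)$) lies in the subcategory $\cA$ generated by the collection. The plan is to construct, in order, three families of complexes mentioned in the abstract: (i) \emph{generalized staircase complexes}, which are the symplectic analogues of the Fonarev--Kuznetsov staircase resolutions on ordinary Grassmannians and take care of Schur bundles whose partitions overshoot the KP region only in the $\GL$-sense; (ii) \emph{symplectic staircase complexes}, obtained by pushing forward Koszul-type complexes attached to the symplectic form $V \otimes V \to \Bbbk$ and its restriction to $\cU \otimes \cU$, which produce new relations not visible from the $\GL$ picture; and (iii) \emph{secondary staircase complexes}, which splice the previous two families together and handle the remaining partitions on which neither (i) nor (ii) alone reduces to objects already in $\cA$. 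The exactness of each complex should be verified by relative Borel--Bott--Weil on the two-step isotropic flag varieties, and then an induction on the partial order of partitions yields $\Sigma^\lambda\cU \in \cA$ for all $\lambda$, hence fullness.

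The main obstacle I expect is the construction and exactness of the symplectic and secondary staircase complexes. On the ordinary Grassmannian, the staircase complexes are direct descendants of the Koszul complex and their exactness reduces to a transparent $\GL_n$-Bott calculation. In the symplectic setting, Bott vanishing is governed by the affine Weyl group of $\Sp(V)$, so many terms that would cancel in the $\GL$-case instead contribute nontrivial correction summands; organizing these corrections into a genuine resolution whose terms land in the KP region is the subtle point. A secondary, purely book-keeping difficulty is to ensure that the bundles appearing as terms in the three complexes are precisely the objects produced by the mutation procedure in the first stage, so that fullness can be combined with purity without inconsistency. Once the three complex families are constructed, the theorem --- and with it Conjecture~\ref{conjecture} in type $C$ --- follows by combining purity of the mutations with the inductive reduction furnished by the complexes.
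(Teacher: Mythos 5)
Your high-level skeleton (purity via Bott--Weil computations, fullness via three families of staircase-type complexes, the affine $\Sp$-Weyl group as the obstruction) matches the paper's, but your purity argument has a genuine gap. You propose to track the KP mutations one at a time and show each gives a short exact sequence of vector bundles, using Borel--Bott--Weil on $\Sigma^\lambda\cU\otimes(\Sigma^\mu\cU)^\vee$. This will not go through as stated: the right mutation $\RR_{\langle\cU^\beta\rangle_{\beta\prec\gamma}}(\cU^\gamma)$ defining $\cE^\gamma$ is a cascade through many objects, and the intermediate mutated objects are themselves nontrivial cones, not irreducible bundles, so the $\Ext$-computations controlling each step are not simple Bott computations, and there is no reason for the intermediate objects to be sheaves even if the final one is. The paper avoids this entirely: it \emph{guesses} the answer in closed form, writing $\cF^{\lambda;\mu}_t = (\cE^{\lambda;\mu}_t)^\vee$ as the pushforward $q_{t*}\bigl(\Sigma^{\lambda-a_t}(\cU_k/\cU_{k-t})\otimes p_t^*\Sigma^{\hat\mu}_\Sp\cS_{2(n-k+t)}\bigr)$ along the isotropic partial flag variety, then verifies this pushforward is a sheaf and that it satisfies the characterizing $\Ext$-vanishing of the right dual object (Theorem~\ref{theo:formulas_for_F}). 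This explicit formula for the dual objects is the technical heart of the purity proof and is not visible from your plan.

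On fullness, your reduction via ``Peter--Weyl'' to all $\Sigma^\lambda\cU$ overshoots: the paper only needs to show that the generated subcategory $\cA$ is closed under twist by $\cO(1)$, which combined with $\cO\in\cA$ and Orlov's theorem that $\bigoplus_{i=0}^{d}\cO(i)$ is a classical generator immediately gives $\cA=\Db(\IGr(k,V))$. The decisive structural device the paper uses, which your plan omits, is the two-parameter induction: enlarged blocks $\tilde\cA_t\supset\cA_t$ and the categories $\cD_t=\langle\cA_{t-1}(t),\tilde\cA_t(t),\cA_{t+1}(t+1)\rangle$, with the inclusion $\cA_t(t+1),\tilde\cA_{t+1}(t+1)\subset\cD_t$ proved separately for $k-t$ even (symplectic staircase complexes) and $k-t$ odd (secondary staircase complexes). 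Also, your descriptions of the second and third complex families are off: the symplectic staircase complexes come from pushing a Koszul complex forward along $\IFl(2a,2a+1,n;V)\to\IGr(2a,V)$ (not from the restriction of the symplectic form to $\cU\otimes\cU$), and the secondary staircase complexes are built from the bundles $\cK^\beta_k$ defined as cohomology of \emph{truncations} of generalized staircase complexes on $\Gr(k,V)$, rather than by ``splicing'' the other two families.
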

\subsection{Kuznetsov--Polishchuk construction} \label{intro:KP}
First, recall the description of the equivariant vector bundles on $\bG/\bP$. The category $\mathrm{Coh}^{\bG}(\bG/\bP)$ of the $\bG$-equivariant coherent sheaves on $\bG/\bP$ is equivalent to the category of representations of $\bP$:
\begin{equation} \label{identification}
    \mathrm{Coh}^{\bG}(\bG/\bP)\simeq \mathrm{Rep}(\bP).
\end{equation}
Denote by $\bU\subset \bP$ the unipotent radical of $\bP$ and by $\bL=\bP/\bU$ the Levi quotient. For each dominant weight of $\bL$, we consider the corresponding irreducible representation of $\bL$. Extending it (via the projection~$\bP\to \bL$) to a representation of $\bP$ we obtain through the identification~\eqref{identification} a $\bG$-equivariant vector bundle on $\bG/\bP$. The $\bG$-equivariant vector bundles on $\bG/\bP$ obtained from irreducible representations of~$\bL$ will be called \textit{irreducible equivariant vector bundles} on $\bG/\bP$.

Let us briefly describe the construction of Kuznetsov and Polishchuk. First, they observe that irreducible equivariant vector bundles form an infinite full exceptional collection in the equivariant derived category~$\mathrm{D}^{\bG}(\bG/\bP)$. Next, they introduce an explicit condition for a finite subcollection of irreducible equivariant vector bundles which ensures that its dual collection in~$\mathrm{D}^{\bG}(\bG/\bP)$ is already exceptional in the non-equivariant category~$\Db(\bG/\bP)$. Such a subcollection is called an \textit{exceptional block}; see~\cite[Definition~3.1]{KuzPol} and Subsection~\ref{subsection:E} for more details. Thus, a subcategory in $\Db(\bG/\bP)$ generated by an exceptional block admits a full exceptional collection of equivariant objects. Finally, Kuznetsov and Polishchuk for $\bG$ of type $ABCD$ choose several exceptional blocks so that the exceptional collections from different blocks form an exceptional collection of length that equals the rank of the Grothendieck group of $\bG/\bP$.

Let us now present how Kuznetsov--Polishchuk construction looks for~$\IGr(k,V)$, see the details in Subsection~\ref{subsection:E}.

We denote by $\cU_k\subset V\otimes \cO$ and $\cQ_{2n-k}^{\vee}\subset V^{\vee}\otimes \cO$ the tautological subbundle and the dual of the tautological quotient bundle on~$\IGr(k,V)$, of ranks $k$ and $2n-k$, respectively. These fit into the following exact sequence on~$\IGr(k,V)$:
\begin{equation*}
0\to \cQ^{\vee}_{2n-k} \to V^{\vee}\otimes \cO \to \cU^{\vee}_k \to 0.
\end{equation*}
We define the symplectic vector bundle on~$\IGr(k,V)$ by
\begin{equation*}
\cS_{2(n-k)}\coloneqq \cQ^{\vee}_{2n-k}/\cU_k,
\end{equation*}
which inherits a natural symplectic structure from $V$, and has rank $2(n-k)$.

Let $\bG=\Sp(2n,\Bbbk)$, and let $\bP$ be a parabolic subgroup corresponding to the $k$-th simple root of $\bG$. The weight lattice of $\bG$ can be identified with $\mathbb{Z}^n$ in such a way that the $l$-th fundamental weight goes to $(1,\dots,1,0,\dots,0)$
(the first $l$ entries are $1$, and the last $n-l$ are $0$). The dominant cone of~$\bL\simeq \GL(k)\times \Sp(2(n-k), \Bbbk)$ can be identified with $\Y_k\times \YD_{n-k}$, where
\begin{equation*}
\Y_k = \{(\alpha_1\ge \alpha_2\ge \ldots\ge \alpha_k)\ |\ \alpha_i \in \mathbb{Z}\},\qquad \YD_{n-k} = \{(\mu_1\ge \mu_2\ge \ldots\ge \mu_{n-k}\ge 0)\ |\ \mu_i \in \mathbb{Z}\}.
\end{equation*}

The equivalence~\eqref{identification} yields that each dominant weight $(\alpha; \mu)\in \Y_k\times \YD_{n-k}$ of $\bL$ corresponds to an irreducible $\bG$-equivariant vector bundle on $\IGr(k,V)$ which we denote by \begin{equation*}
\Sigma^{\alpha}\cU^{\vee}_k\otimes \Sigma^{\mu}_{\Sp}\cS_{2(n-k)}.
\end{equation*}
Here, $\Sigma^{\alpha}$ denotes the Schur functor corresponding to the $\GL(k)$-representation of highest weight $\alpha$, and $\Sigma^{\mu}_{\Sp}$ denotes the symplectic Schur functor corresponding to the $\Sp(2(n-k))$-representation of highest weight $\mu$; see Section~\ref{subsection:schur} for detailed definitions.

The exceptional collection on $\IGr(k,V)$ is formed by $k+1$ exceptional blocks $\B_{t}$, where $t=0,\ldots, k$. These exceptional blocks are formed by the following sets of dominant weights:
\begin{align*} 
 & \B_t= \YD_t^{2n-k-t}\times \YD_{n-k}^{\lfloor (k-t)/2 \rfloor} \qquad \text{for} \quad t=0,\ldots, k, 
\end{align*}
where $\YD_l^{m}\subset \YD_l$ denotes the set of those Young diagrams of length $l$ whose width is at most $m$, i.e.,
\begin{equation*}
 \YD_l^m \coloneqq \{(\lambda_1,\ldots,\lambda_l)\in \mathbb{Z}^{l}\ |\ m\ge \lambda_1\ge \lambda_2\ge \ldots \ge \lambda_l\ge 0\}.  
\end{equation*} 
Let $\cO(1)$ be the ample generator of the Picard group of $\IGr(k,V)$. We denote by 
\begin{equation*}
\cA_t(l)\coloneqq \langle \Sigma^{\alpha}\cU^{\vee}_k\otimes \cO(l)\otimes \Sigma^{\mu}_{\Sp}\cS_{2(n-k)}\rangle_{ (\alpha; \mu) \in \B_t}\subset \Db(\IGr(k,V))
\end{equation*} 
the minimal triangulated subcategory of $\Db(\IGr(k,V))$ that contains all $\cO(l)$-twists of equivariant vector bundles corresponding to the dominant weights from the exceptional block $\B_t$.

Since $\B_t$ is an exceptional block, the construction of Kuznetsov--Polishchuk gives the way to produce a full exceptional collection in the subcategory~$\cA_t(l)$ which will be denoted by
\begin{equation*}
\cA_t(l)= \langle \cE^{\alpha;\mu}_t(l) \rangle_{(\alpha; \mu) \in \B_t}.
\end{equation*}
The exceptional collection constructed in~\cite{KuzPol} is
\begin{equation*}
\scalebox{1.1}{$\displaystyle \langle \cA_0, \cA_1(1),\ldots,\cA_{k}(k)\rangle= \left \langle \langle \cE^{\alpha;\mu}_0 \rangle_{(\alpha; \mu) \in \B_0}, \langle \cE^{\alpha;\mu}_1(1) \rangle_{(\alpha; \mu) \in \B_1} ,\ldots, \langle \cE^{\alpha;\mu}_k(k) \rangle_{(\alpha; \mu) \in \B_k} \right\rangle.$} 
\end{equation*}
\begin{example} \label{example:Beilinson}
In the case $k=1$, that is the case of the projective space $\PP^{2n-1}$, there are two blocks:~$\B_0=(0)$ and~$\B_1=\YD_1^{2n-2}$, so the construction of Kuznetsov--Polishchuk gives $\Db(\PP^{2n-1})=\langle \cA_0,\cA_1(1)\rangle$, where
\begin{equation*}
\cA_0 = \langle \cO \rangle,\qquad \cA_1(1) = \langle \cO(1), \dots, \cO(2n-1) \rangle.
\end{equation*} 
Thus, in the case of~\(\PP^{2n-1}\) we recover Beilinson's collection.
\end{example}
\begin{example} \label{example:OinA}
In general, we have $\B_0= \YD_0^{2n-k}\times \YD_{n-k}^{\lfloor k/2 \rfloor}$, and therefore $\cA_0=\langle \Sigma^{\mu}_\Sp \cS_{2(n-k)} \rangle_{\mu\in \YD^{\lfloor k/2 \rfloor}_{n-k}}$. In particular,~$\cO\in \cA_0$.
\end{example}
\begin{example} \label{example:Bk}
Similarly, we have $\B_k= \YD_k^{2n-2k}\times \YD_{n-k}^{0}$, and therefore $\cA_k=\langle \Sigma^{\alpha} \cU^{\vee}_{k} \rangle_{\alpha\in \YD^{2n-2k}_{k}}$.
\end{example}
The main result of the present paper is that this collection consists of vector bundles and is full. The first part is proved in Theorem~\ref{theo:vector.bundle}, while the second part, fullness, is explained in Section~\ref{subsection:intro-main}.
\subsection{Sketch of the proof of Theorem~\ref{theo:intro}} \label{subsection:intro-main}
First, the cases~\(k=1\) and~\(n\) are already known by Example~\ref{example:Beilinson} and \cite{F19}. So, we assume~\(2 \le k \le n-1\).

We need to show that the subcategory
\begin{equation}\label{eq:def:A}
    \cA\coloneqq \langle \cA_0, \cA_1(1),\ldots,\cA_{k}(k)\rangle\subset \Db(\IGr(k,V))
\end{equation}
coincides with $\Db(\IGr(k,V))$. One way to show the equality $\cA=\Db(\IGr(k,V))$ is to prove that $\cA$ is stable under twists by $\cO(1)$, that is, $\cA(1)\subset \cA$. Because of the block structure of $\cA$, this inclusion is equivalent to 
\begin{equation*}
\cA_{t}(t+1)\subset \cA \quad \text{for} \quad t=0,\ldots, k.
\end{equation*} 
To prove this, we consider larger exceptional blocks (already introduced in~\cite{KuzPol})
\begin{align*} 
 &\tilde \B_t= \YD_t^{2n-k-t+1}\times \YD_{n-k}^{\lfloor (k-t)/2 \rfloor} \qquad \text{for} \quad t=0,\ldots, k,
\end{align*}
and subcategories $\tilde \cA_t\coloneqq \langle \Sigma^{\alpha}\cU^{\vee}_k\otimes \Sigma^{\mu}_{\Sp}(\cQ^{\vee}_{2n-k}/\cU_k) \rangle_{ (\alpha; \mu) \in \tilde \B_t}$ corresponding to them.
Note that 
\begin{equation*}
\B_t \subset \tilde \B_t, \quad \text{thus} \quad \cA_t\subset \tilde \cA_t.
\end{equation*} 
To prove that $\cA_{t}(t+1)\subset \cA$ we use induction on $t$ involving the larger exceptional blocks $\tilde \cA_{t}(t)$. Let us introduce the notation that we use throughout the paper
\begin{equation}\label{eq:def:D_t}
    \begin{aligned}
        \cD_0 &\coloneq \langle \cA_{0},\, \cA_{1}(1) \rangle;\\
        \cD_t &\coloneq \langle \cA_{t - 1}(t),\, \tilde \cA_{t}(t),\, \cA_{t + 1}(t+1) \rangle \qquad \text{for} \quad 1 \le t\le k-1.
    \end{aligned}
\end{equation}
The most intricate result of this paper is the following theorem.
\begin{theorem}\label{lemma:fullness_induction_step}
Suppose~\(0 \le t \le k - 1\) and~\(2\le k\le n-1\). Then~\(\cA_t(t+1),\, \tilde \cA_{t + 1}(t+1) \subset \cD_t\).
\end{theorem}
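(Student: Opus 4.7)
The plan is to show that every generating equivariant bundle of $\cA_t(t+1)$ or $\tilde\cA_{t+1}(t+1)$ lies in the triangulated hull of the three component subcategories forming $\cD_t$, by producing explicit resolutions among irreducible equivariant bundles. A generic generator has the form
\[
\cF(\alpha,\mu)\coloneqq \Sigma^{\alpha}\cU_k^{\vee}\otimes \cO(t+1)\otimes \Sigma^{\mu}_{\Sp}\cS_{2(n-k)},
\]
with $(\alpha;\mu)\in \B_t$ in the first case and $(\alpha;\mu)\in \tilde\B_{t+1}$ in the second; the task is to rewrite $\cF(\alpha,\mu)$ in terms of the generators of $\cA_{t-1}(t)$, $\tilde\cA_t(t)$, $\cA_{t+1}(t+1)$, with the obvious modifications when $t=0$.

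First I would dispose of the automatic cases. Many $(\alpha;\mu)\in\B_t$ produce bundles that lie in one of the three pieces of $\cD_t$ essentially by inspection: if $\alpha_t=0$ then after untwisting one copy of $\cO(1)$ the weight already belongs to $\tilde\B_t$, and if $\mu$ satisfies the tighter width bound then the weight extends to an element of $\B_{t+1}$ by appending a zero column to $\alpha$. The \emph{boundary} cases are therefore those in which $\alpha_1$ attains its maximal value allowed by $\B_t$ (respectively by $\tilde\B_{t+1}$) or $\mu_1$ attains its maximal value $\lfloor (k-t)/2\rfloor$ (respectively $\lfloor (k-t-1)/2\rfloor$).

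The heart of the proof is the construction of the three families of complexes announced in the abstract. The \emph{generalized staircase complexes} come from the Koszul resolution attached to the tautological inclusion $\cU_k \hookrightarrow V\otimes \cO$, combined with Pieri- and Littlewood--Richardson-type decompositions; they trade a column of maximal width in $\alpha$ for terms with shorter $\alpha$, at the cost of extra factors $\Sigma^{\beta}\cQ^{\vee}_{2n-k}$ which one then reorganizes via the quotient exact sequence. The \emph{symplectic staircase complexes} are the analogous tool on the symplectic side, built from the isotropic embedding $\cU_k\subset \cQ^{\vee}_{2n-k}$ and the symplectic form on $\cS_{2(n-k)}$; they trade width in $\mu$ for corrections on the GL side. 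Finally, the \emph{secondary staircase complexes} glue the previous two families together and are called for precisely when neither the generalized nor the symplectic staircase alone keeps every intermediate term inside $\cD_t$.

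With these complexes in hand, the argument becomes a combinatorial verification: for each boundary pair $(\alpha;\mu)$ one selects the appropriate staircase and checks that every intermediate term satisfies the block inequalities of $\B_{t-1}$, $\tilde\B_t$, or $\B_{t+1}$ together with the correct $\cO$-twist. I would organize this as a nested induction, outer on $\alpha_1$ and inner on $\mu_1$, first establishing $\cA_t(t+1)\subset\cD_t$ and then bootstrapping to $\tilde\cA_{t+1}(t+1)\subset\cD_t$ via a further staircase step that promotes the length of $\alpha$ from $t$ to $t+1$. The hardest step should be the bookkeeping around the Sp-width bound $\lfloor (k-t-1)/2\rfloor$ appearing in $\cA_{t+1}(t+1)$: its parity dependence forces distinct arguments for even and odd $k-t$, and this is exactly the regime in which the secondary staircase complexes are indispensable.
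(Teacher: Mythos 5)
Your high-level picture is right — the argument does split on the parity of $k-t$, and the three families of staircase complexes are exactly the tools — but the concrete reduction you sketch would not go through as stated, because it misses the two mechanisms that actually make the intermediate terms tractable.

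First, for even $k-t$ the paper does not manipulate irreducible equivariant bundles directly. The decisive step (Proposition~\ref{proposition:incl_e}) is phrased in terms of the Kuznetsov--Polishchuk exceptional objects $\cE_t^{\lambda;\mu}$, via the identification of their duals as Fourier--Mukai transforms $\cF_t^{\lambda;\mu}\simeq \hat\Phi^t_\lambda(\Sigma_\Sp^{\hat\mu}\cS_{2(n-k+t)})$ (Theorem~\ref{theo:formulas_for_F}). Applying $\hat\Phi^t_{\lambda+1}$ to the relations produced by the symplectic staircase complexes (Lemma~\ref{lemma:F-inclusion}) is what lets one control every term of the filtration in a single block. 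A plan that works with irreducible generators $\Sigma^\alpha\cU_k^\vee\otimes\Sigma^\mu_\Sp\cS_{2(n-k)}$ and checks block inequalities term-by-term would have to re-derive this Fourier--Mukai description or something equivalent, and your proposal does not indicate how. Moreover, the paper's induction is not ``outer on $\alpha_1$, inner on $\mu_1$'': in the even case it is on the statistic $l(\alpha)=\#\{i:\alpha_i=2n-k-t\}$ (Proposition~\ref{prop:mutations_even_t+1}), and it proves $\tilde\cA_{t+1}(t+1)\subset\cD_t$ \emph{first} and then deduces $\cA_t(t+1)\subset\cD_t$ — the reverse of the bootstrapping order you propose.

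Second, for odd $k-t$ the hard point is not a width-bound bookkeeping but a genuine generation problem for the bundles $\cK_k^{(2n-k-d,\beta)}$ by truncated secondary staircase complexes, and the paper resolves it with a nonstandard decreasing induction on the quadratic-type function $f(\beta,d)=d^2+\sum_i 2i\beta_i$ on marked weights (Definition~\ref{def:marked}, Lemma~\ref{lemma:minimal_f_in_H}, Proposition~\ref{prop:<H>=<K>1}). There is no obvious nested induction on $(\alpha_1,\mu_1)$ that replaces this, because the complexes $\tau(\cH_k^{(2n-k,\alpha)})$ do not interact monotonically with $\alpha_1$ or $\mu_1$ — the function $f$ was designed precisely because the naive orderings fail. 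So while your proposal correctly names the tools and the parity split, the two key inductive invariants and the use of the dual Fourier--Mukai description of $\cE_t^{\lambda;\mu}$ are missing, and these are where the real content of the theorem lies.
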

\begin{proof}
If~\(t=0\) this is Proposition~\ref{lemma:base_of_induction}.
If~\(k < n\), then for even~\(k - t\), this follows from Proposition~\ref{prop:mutations_even_t+1} and Proposition~\ref{cor:mutations_even t}. Finally, if~\(k - t\) is odd, we apply Corollary~\ref{corollary:odd}. 
\end{proof}
Let us show how the fullness of the Kuznetsov--Polishchuck collections follows from Theorem~\ref{lemma:fullness_induction_step}.
\begin{theorem}\label{theo:fullness}
We have
\begin{equation*}
\Db(\IGr(k,V))=\langle \cA_0, \cA_1(1),\ldots,\cA_{k}(k)\rangle.
\end{equation*}
\end{theorem}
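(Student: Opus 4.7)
My plan is to reduce the fullness claim of Theorem~\ref{theo:fullness} to stability of the subcategory \(\cA\) (defined in~\eqref{eq:def:A}) under the autoequivalence \(- \otimes \cO(1)\); the remaining generation step then follows from a standard argument. Since the cases \(k = 1\) and \(k = n\) are already known by Example~\ref{example:Beilinson} and \cite{F19}, I restrict attention to \(2 \le k \le n - 1\), which is exactly the regime of Theorem~\ref{lemma:fullness_induction_step}. The block decomposition of \(\cA\) shows that \(\cA \otimes \cO(1) \subset \cA\) is equivalent to the \(k + 1\) containments \(\cA_t(t + 1) \subset \cA\) for \(t = 0, 1, \ldots, k\).

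I establish the containments for \(0 \le t \le k - 1\) by induction on \(t\). For the base case \(t = 0\), \(\cD_0 = \langle \cA_0,\, \cA_1(1) \rangle \subset \cA\) by the very definition of \(\cA\), and Theorem~\ref{lemma:fullness_induction_step} upgrades this to \(\cA_0(1),\, \tilde{\cA}_1(1) \subset \cA\). For the inductive step with \(1 \le t \le k - 1\), the hypothesis from step \(t - 1\) provides \(\cA_{t - 1}(t),\, \tilde{\cA}_t(t) \subset \cA\), and \(\cA_{t + 1}(t + 1) \subset \cA\) is immediate from the definition of \(\cA\). Hence \(\cD_t \subset \cA\) by~\eqref{eq:def:D_t}, and Theorem~\ref{lemma:fullness_induction_step} promotes this to \(\cA_t(t + 1),\, \tilde{\cA}_{t + 1}(t + 1) \subset \cA\), feeding the hypothesis for step \(t + 1\). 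At the end of the induction we have \(\cA_t(t + 1) \subset \cA\) for every \(0 \le t \le k - 1\), together with the extra containment \(\tilde{\cA}_k(k) \subset \cA\).

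The boundary containment \(\cA_k(k + 1) \subset \cA\) lies outside the scope of Theorem~\ref{lemma:fullness_induction_step} but falls out of a direct inspection of weights. For any \(\alpha \in \B_k = \YD_k^{2n - 2k}\), the highest weight of \(\Sigma^\alpha \cU_k^\vee \otimes \cO(k + 1)\) equals \((k, \ldots, k) + \bigl(\alpha + (1, \ldots, 1)\bigr)\), and the shifted diagram \(\alpha + (1, \ldots, 1)\) lies in \(\YD_k^{2n - 2k + 1} = \tilde{\B}_k\). Hence \(\cA_k(k + 1) \subset \tilde{\cA}_k(k) \subset \cA\), and combined with the induction this gives \(\cA \otimes \cO(1) \subset \cA\).

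Once this stability is in hand, since \(\cO \in \cA_0 \subset \cA\) by Example~\ref{example:OinA}, we have \(\cO(m) \in \cA\) for every \(m \ge 0\). A standard fullness argument --- combining admissibility of \(\cA\) (generated by an exceptional collection whose length equals the rank of the Grothendieck group of \(\IGr(k, V)\)) with the \(\cO(1)\)-stability on the Fano variety \(\IGr(k, V)\) --- forces \(\cA^\perp = 0\), yielding \(\cA = \Db(\IGr(k, V))\). The main obstacle of the entire proof is therefore concentrated in Theorem~\ref{lemma:fullness_induction_step}, whose verification is the technical workhorse of the paper.
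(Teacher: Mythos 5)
Your proposal tracks the paper's proof almost verbatim: reduce to $\cA(1)\subset\cA$, establish $\cA_t(t+1),\tilde\cA_{t+1}(t+1)\subset\cA$ for $t\le k-1$ by induction using Theorem~\ref{lemma:fullness_induction_step}, and handle $t=k$ via $\cA_k(k+1)\subset\tilde\cA_k(k)$. The only deviation is in the concluding step: the paper invokes Orlov's theorem that $\bigoplus_{i=0}^{d}\cO(i)$ is a classical generator of $\Db$ of a smooth projective $d$-fold, so $\cO(i)\in\cA$ for all $i\ge 0$ together with thickness of $\cA$ (which follows from admissibility) forces $\cA=\Db(\IGr(k,V))$ directly, whereas your appeal to ``length equals rank of the Grothendieck group'' is a red herring --- rank-counting alone does not exclude phantom complements --- so you should replace that phrase with the explicit classical-generator argument, which is what the $\cO(1)$-stability actually buys you.
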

\begin{proof}
Our goal is to show that $\cA(1) \subseteq \cA$. By Example~\ref{example:OinA}, we know that $\cO \in \cA_0\subset \cA$. Then, assuming the inclusion $\cA(1) \subseteq \cA$, it follows by induction that $\cO(i) \in \cA$ for all $i > 0$. By \cite[Theorem 4]{Or} for a smooth projective variety of dimension $d$, the object $\oplus_{i=0}^{d}\cO(i)$ is a classical generator of its bounded derived category. Thus, $\cA(1)\subseteq \cA$ implies $\cA=\mathrm{D}^b(\IGr(k,V))$.

To prove the inclusion $\cA(1) \subseteq \cA$, it suffices to show that 
\begin{equation*}
\cA_t(t+1)\subset \cA \qquad \text{for all} \quad 0\le t\le k.
\end{equation*} 

For $t\le k-1$, it will be more convenient to prove the following stronger statement:
\begin{equation} \label{themain}
\cA_t(t+1),\, \tilde \cA_{t + 1}(t+1)\subset \cA,   
\end{equation}
which we establish by induction on $t$. By Theorem~\ref{lemma:fullness_induction_step}
\begin{equation*}
\langle \cA_0(1),\, \tilde \cA_{1}(1)\rangle\subset \cD_0=\langle \cA_{0}, \cA_{1}(1)\rangle \subset \cA ,
\end{equation*}
so the base of induction $t=0$ follows. Suppose we proved~$\cA_t(t+1), \tilde \cA_{t + 1}(t+1)\subset \cA$ for some $t$; then using Theorem~\ref{lemma:fullness_induction_step} we obtain
\begin{equation*}
         \cA_{t+1}(t+2),\, \tilde \cA_{t + 2}(t+2)\subset \cD_{t+1}= \langle\cA_{t}(t+1),\, \tilde\cA_{t+1}(t+1),\, \cA_{t+2}(t+2)\rangle \subset \cA,
\end{equation*}
where the last inclusion follows from the induction hypothesis. Thus, the induction step follows, and we deduce~\eqref{themain}. 

It remains to show the inclusion $\cA_t(t+1)\subset \cA$ for $t=k$. For this we note that
\begin{equation*}
\cA_k(k+1)\subset \tilde \cA_{k}(k) = \left\langle\Sigma^{\lambda}\cU^\vee_k(k) \mid \lambda \in \YD_{k}^{2n - 2k+1}\right\rangle.
\end{equation*}
Thus, the inclusions~\eqref{themain} yield $\cA(1) \subseteq \cA$, and we deduce Theorem~\ref{theo:fullness}. 
\end{proof}
\begin{remark}\label{rem:Lagr_case-intr}
Our approach also allows us to prove Theorems~\ref{lemma:fullness_induction_step} and~\ref{theo:fullness} in the Lagrangian case~\(k=n\), see Remark~\ref{rem:Lagr_case}. This provides an alternative proof for the Lagrangian case, different from~\cite{F19}.
\end{remark}
\begin{proof}[Proof of Theorem~\ref{theo:intro}]
The first statement follows from Theorem~\ref{theo:vector.bundle}. The second statement follows from Theorem~\ref{theo:fullness} and \cite[Theorem 1.2]{F19} in the Lagrangian case $\IGr(n,V)$. 
\end{proof}

\subsection{Sketch of the proof of Theorem~\ref{lemma:fullness_induction_step}}
As we have already mentioned above, Theorem~\ref{lemma:fullness_induction_step} is the most intricate result of the paper, so let us make some comments on its proof. 

We need to show that every exceptional generator of $\cA_t(t+1), \tilde \cA_{t + 1}(t+1)$ is contained in $\cD_t$. We do this by constructing several two types of exact sequences which, roughly speaking, contain one object from~$\cA_t(t+1)$ or $\tilde \cA_{t + 1}(t+1)$, while the rest of the objects belong to $\cD_t$. These classes of exact sequences will be obtained from complexes, which we call the \textit{generalized staircase complexes}. Generalized staircase complexes are defined on the relative Grassmannians~$\Gr_{X}(k,\cV)$ for all~\(\GL_1 \times \GL_{k - 1}\)-dominant weights; see Section~\ref{section:generalized} for more details. 

The first type of exact sequences, which we call \textit{symplectic staircase complexes}, is defined on~$\IGr(2m,V)$ for all~\(m < n/2\) and is constructed as follows. We consider generalized staircase complexes on the flag variety~\(\Fl(2m,n;V)\) (which we consider as relative Grassmannian $\Gr_X(n-m,\cV)$, where $X=\Gr(2m, V)$ and~\(\cV = \cQ_{2n-2m}\)) and restrict them to the isotropic flags~\(\IFl(2m, n; V)\); the symplectic staircase complexes are then defined as the pushforwards of the obtained complexes to $\IGr(2m,V)$, see Subsection~\ref{subsection:symplectic} for more details. The symplectic staircase complexes allow us to prove Theorem~\ref{lemma:fullness_induction_step} for $t$ of the same parity as $k$; see Section~\ref{section:relation.sympl}. 

To define the second type of exact sequences, which we call \textit{secondary staircase complexes}, we need to introduce some special vector bundles~$\cK^\beta_k$ on~$\Gr(k,V)$. More precisely, for each~\(\GL_1 \times \GL_{k - 1}\)-dominant weight $\beta$ we consider the corresponding generalized staircase complex on $\Gr(k,V)$ and define $\cK^\beta_k$ as the cohomology of a certain truncation of this complex, see Definition~\ref{def:K}. The secondary staircase complexes are formed by vector bundles $\cK^{\beta}_k$, see Subsection~\ref{subsection:double} for more details; they allow us to prove Lemma~\ref{lemma:fullness_induction_step}, when~$k-t$ is odd; see Section~\ref{section:odd}.

\subsection{Organization of the paper}
The work is organized as follows. In Section~\ref{preliminaries} we discuss background material on semiorthogonal decompositions and equivariant bundles on flag varieties and state Borel--Bott--Weil theorems. In Section~\ref{section:generalized} we introduce generalized staircase complexes on relative Grassmannians, see Theorem~\ref{thm:gsc}, and define symplectic staircase complexes on isotropic Grassmannians, see Theorem~\ref{theo:ssc}. In Section~\ref{section:KP} we show that the exceptional objects that form the Kuznetsov--Polishchuk collection are vector bundles, see Theorem~\ref{theo:vector.bundle}. In Section~\ref{section:relation.sympl}, using the symplectic staircase complexes, we prove Theorem~\ref{lemma:fullness_induction_step} for~$t=0$ and~$t$ of the same parity as~$k$, see Proposition~\ref{lemma:base_of_induction} and Propositions~\ref{prop:mutations_even_t+1} and~\ref{cor:mutations_even t}, respectively. In Section~\ref{section:secondary.staircase} we define the secondary staircase complexes, see Theorem~\ref{theo:complexes_for_F^*,1^b}. In Section~\ref{section:odd}, using the secondary staircase complexes, we prove Theorem~\ref{lemma:fullness_induction_step}, when~$t$ has the opposite parity to~$k$, see Corollary~\ref{corollary:odd}.

In the appendix, we collect some computational results used throughout the paper.
\subsection{Notation} \label{section:notation}
\begin{itemize}
    \item $\Bbbk$, an algebraically closed field of characteristic zero;
    \item $V$, a $2n$-dimensional vector space over~$\Bbbk$ with a fixed symplectic form;
    \item $\bG\coloneqq \Sp(V)=\Sp(2n, \Bbbk)$, the symplectic group;
    \item $\bP\subset \bG$, the maximal parabolic subgroup; corresponding to the vertex $k$, where $1\le k\le n$ is a fixed integer;
    \item $\bU\subset \bP$, the unipotent radical of $\bP$;
    \item $\bL=\bP/\bU$, the Levi quotient of $\bP$;
    \item $\IGr(k,V)=\bG/\bP$, the isotropic Grassmannian which parameterizes $k$-dimensional isotropic subspaces in~$V$;
    \item $\YD_l^a\coloneqq \{(\lambda_1,\ldots,\lambda_l)\in \mathbb{Z}^{l}\ |\ a\ge \lambda_1\ge \lambda_2\ge \ldots \ge \lambda_l \ge 0\}$, the set of Young diagrams of length at most $l$ whose width is at most $a$;
    \item $a_t \coloneqq \lfloor (k-t)/2 \rfloor$ for $t=0,\ldots, k$.
\end{itemize}

{\bf Acknowledgements:}
We are grateful to Alexander Kuznetsov for his constant support and careful reading of a preliminary version of the paper.


\section{Preliminaries} \label{preliminaries}
\subsection{Semiorthogonal decompositions and exceptional collections} \label{subsection:semiorthogonal}
We recall some well-known facts about semiorthogonal decompositions. Let $\mathcal{T}$ be a $\Bbbk$-linear triangulated category.
\begin{definition}
A sequence of full triangulated subcategories $\mathcal{C}_1, \ldots ,\mathcal{C}_m \in \mathcal{T}$ is \textbf{semiorthogonal}
if for all $1\le i {}<{} j\le m$ and all $E_i\in \mathcal{C}_i$, $E_j\in \mathcal{C}_j$, one has \begin{equation*}
\mathrm{Hom}_{\mathcal{T}} (E_j,E_i) = 0.
\end{equation*}
Let $\langle \mathcal{C}_1, \ldots ,\mathcal{C}_m\rangle$ denote
the smallest full triangulated subcategory in $\mathcal{T}$ containing all $\mathcal{C}_i$.
If $\mathcal{C}_i$ are semiorthogonal and $\langle \mathcal{C}_1, \ldots ,\mathcal{C}_m \rangle = \mathcal{T}$,
we say that the subcategories $\mathcal{C}_i$ form a \textbf{semiorthogonal decomposition} of $\mathcal{T}$.
\end{definition}

\begin{definition}
An object $E$ of  $\mathcal{T}$ is \textbf{exceptional} if $\mathrm{Ext}^{\bullet}(E, E) = \Bbbk$  (i.e., $E$ is simple and has no non-trivial self-extensions).
\end{definition}

If $E$ is exceptional, then $\langle E \rangle$, the minimal triangulated subcategory of $\cT$ containing $E$,
is equivalent to~$\Db(\Bbbk)$, the bounded derived category of $\Bbbk$-vector spaces,
via the functor $\Db(\Bbbk) \to \cT$ that maps a graded vector space $W$ to $W \otimes E \in \cT$.

\begin{definition}
A sequence of exceptional objects $E_{1},\ldots, E_{m}$ in $\mathcal{T}$ is an \textbf{exceptional collection}
if each object~$E_{i}$ is exceptional and $\mathrm{Ext}^{\bullet}(E_{i}, E_{j} ) = 0$ for all $i > j$.
A collection $(E_{1},E_{2},\ldots ,E_{m})$ is \textbf{full} if the minimal triangulated subcategory of $\mathcal{T}$
containing $(E_{1},E_{2},\ldots, E_{m})$ coincides with $\mathcal{T}$.
\end{definition}
A full exceptional collection in $\cT$ is a special case of a semiorthogonal decomposition of $\cT$ with~$\mathcal{C}_k=\langle E_{k}\rangle$.

For an exceptional object $E \in \cT$ we denote by $\LL_E$ and $\RR_E$ the \textbf{left} and \textbf{right mutation} functors through $E$,
which are defined by taking an object $F \in \cT$ to
\begin{equation} \label{definition:mutation}
\mathbb{L}_{E}(F)\coloneqq\mathrm{Cone}(\mathrm{Ext}^{\bullet}(E, F)\otimes E \to F),
\qquad\text{and}\qquad
\mathbb{R}_{E}(F)\coloneqq \mathrm{Cone}(F \to \mathrm{Ext}^{\bullet}(F,E)^{\vee} \otimes E)[-1],
\end{equation}
where the morphisms are given by the evaluation and coevaluation, respectively.

More generally, if $(E_1, \ldots , E_m)$ is an exceptional collection of length~\(m\) in $\mathcal{T}$, then
one can define the left and right mutations of an object through the category $\langle E_1, \ldots , E_m\rangle$
as the compositions of the corresponding mutations through the generating objects:
\begin{equation*}
\LL_{\langle E_1, \ldots , E_m\rangle} = \LL_{E_1} \circ \ldots \circ \LL_{E_m},
\qquad
\RR_{\langle E_1, \ldots , E_m\rangle} = \RR_{E_m} \circ \ldots \circ \RR_{E_1}.
\end{equation*}
\begin{proposition}[\cite{bondal1989representation}]
\label{proposition:mutations}
The functors of left and right mutations through an exceptional collection
induce mutually inverse equivalences of the left and the right orthogonals to the collection:
\begin{equation*}
\xymatrix@1@C=7em{
{}^\perp\langle E_1, \ldots , E_m\rangle \ar@<.5ex>[r]^{\LL_{\langle E_1, \ldots , E_m\rangle}} &
\langle E_1, \ldots , E_m\rangle^\perp \ar@<.5ex>[l]^{\RR_{\langle E_1, \ldots , E_m\rangle}}
}
\end{equation*}
Mutation of a \textup(full\textup) exceptional collection is a \textup(full\textup) exceptional collection.
\end{proposition}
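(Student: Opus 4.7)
The plan is to reduce to the case~$m=1$ by induction, then verify the statement directly from the triangles~\eqref{definition:mutation}. Since~$\LL_{\langle E_1,\ldots,E_m\rangle} = \LL_{E_1}\circ\cdots\circ\LL_{E_m}$ and~${}^\perp\langle E_1,\ldots,E_m\rangle = \bigcap_{i=1}^{m}{}^\perp E_i$, the single-object mutation~$\LL_{E_i}$ will carry~${}^\perp E_1\cap\cdots\cap{}^\perp E_i$ into~${}^\perp E_1\cap\cdots\cap{}^\perp E_{i-1}\cap E_i^\perp$; iterating over all~$i$ (and noting that the subsequent mutations preserve~$E_i^\perp$, since their defining triangles only involve the other~$E_j$'s, to which~$E_i$ is right-orthogonal) will yield the general case, and symmetrically for~$\RR$.

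For a single exceptional object~$E$, I would apply~$\mathrm{Hom}(E,-)$ to the triangle~$\mathrm{Ext}^\bullet(E,F)\otimes E \to F \to \LL_E F$ and use~$\mathrm{Ext}^\bullet(E,E)=\Bbbk$ together with the fact that the first arrow is the evaluation: the induced map~$\mathrm{Ext}^\bullet(E,F)\to\mathrm{Ext}^\bullet(E,F)$ becomes the identity, so~$\mathrm{Ext}^\bullet(E,\LL_E F)=0$, i.e.,~$\LL_E F\in E^\perp$. The symmetric argument gives~$\RR_E G\in{}^\perp E$. For mutual inverseness, take~$F\in{}^\perp E$ and apply~$\mathrm{Hom}(-,E)$ to the same triangle; using~$\mathrm{Ext}^\bullet(F,E)=0$ one obtains~$\mathrm{Ext}^\bullet(\LL_E F,E)^\vee \cong \mathrm{Ext}^\bullet(E,F)[1]$. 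Substituting this into the triangle~$\RR_E(\LL_E F)\to \LL_E F \to \mathrm{Ext}^\bullet(\LL_E F,E)^\vee\otimes E$ identifies it with the rotation of the~$\LL_E$-triangle, whence~$\RR_E(\LL_E F)\cong F$; the symmetric computation yields~$\LL_E\RR_E G\cong G$ for~$G\in E^\perp$.

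The main technical point is verifying that these isomorphisms of cones are functorial in~$F$ (respectively~$G$), which requires tracking the naturality of the (co)evaluation morphisms; this is a routine diagram chase using the uniqueness of the projection onto each component of a semiorthogonal decomposition. Finally, the claim that mutation preserves full exceptional collections reduces to the elementary single-step statement: for any exceptional pair~$(E_1, E_2)$, the pairs~$(\LL_{E_1} E_2, E_1)$ and~$(E_2, \RR_{E_2} E_1)$ are exceptional and generate the same subcategory~$\langle E_1, E_2\rangle$. Exceptionality follows from the defining triangles by applying suitable~$\mathrm{Hom}$-functors and using the exceptionality of~$E_1, E_2$; the containment in~$\langle E_1, E_2\rangle$ is built into the construction, and iterating across the whole collection preserves fullness.
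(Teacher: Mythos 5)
The paper does not prove this proposition; it cites it directly from Bondal~\cite{bondal1989representation}, so there is no in-text proof to compare against. Your reconstruction follows the standard argument (and is essentially Bondal's): reduce to a single exceptional object; apply $\mathrm{Hom}(E,-)$ to the $\LL_E$-triangle to show $\LL_E F\in E^\perp$ via $\mathrm{Ext}^\bullet(E,E)=\Bbbk$; apply $\mathrm{Hom}(-,E)$ with $F\in{}^\perp E$ to obtain $\mathrm{Ext}^\bullet(\LL_E F,E)^\vee\cong\mathrm{Ext}^\bullet(E,F)[1]$ and identify the $\RR_E$-triangle for $\LL_E F$ with the rotated $\LL_E$-triangle. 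The induction to length $m$ is correct: after applying $\LL_{E_m},\dots,\LL_{E_{j+1}}$, the subsequent $\LL_{E_i}$ for $i\le j$ preserves $E_\ell^\perp$ for $\ell>i$ because $E_i\in E_\ell^\perp$ by the exceptional-collection hypothesis, and it preserves ${}^\perp E_\ell$ for $\ell<i$ because both $E_i$ and the argument lie in ${}^\perp E_\ell$; this is precisely what you assert.

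The one place to be a bit more careful than your sketch allows is the identification $\RR_E(\LL_E F)\cong F$ from the two triangles over $\LL_E F$: having isomorphic third terms $(\mathrm{Ext}^\bullet(E,F)\otimes E)[1]$ does not by itself identify the triangles. You should additionally check that the connecting map $\LL_E F\to(\mathrm{Ext}^\bullet(E,F)\otimes E)[1]$ in the rotated $\LL_E$-triangle is the coevaluation for $\LL_E F$ against $E$; this follows by applying $\mathrm{Hom}(-,E)$ to the rotated triangle and using $\mathrm{Hom}(F,E)=\mathrm{Hom}(F[1],E)=0$ to see that both maps induce isomorphisms on $\mathrm{Hom}(-,E)$, after which Yoneda gives a compatible isomorphism of the third terms and hence of the cones. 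You flag exactly this naturality point, so I read the argument as correct in spirit and complete modulo that routine verification. The final fullness claim (a one-step mutation $(E_i,E_{i+1})\mapsto(\LL_{E_i}E_{i+1},E_i)$ preserves $\langle E_i,E_{i+1}\rangle$ and exceptionality) is also correct as stated.
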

\begin{definition} \label{def:right-dual}
Let $E_{1},\ldots, E_{m}$ be an exceptional collection. The \textbf{right dual} exceptional collection~$(\lvee{E}_m, \lvee{E}_{m-1}, \ldots, \lvee{E}_1)$ is 
\begin{equation*}
    \lvee{E}_m\coloneqq E_m,\ \lvee{E}_{m-1}\coloneqq \mathbb{R}_{E_m}E_{m-1},\ \lvee{E}_{m-2}\coloneqq \mathbb{R}_{\langle E_{m-1},E_m \rangle}E_{m-2},\ \ldots,\ 
    \lvee{E}_1\coloneqq \mathbb{R}_{\langle E_2, E_3, \ldots, E_{m-1},E_m \rangle}E_1.
\end{equation*}
\end{definition}
An object $\lvee{E}_i$ of the right dual exceptional collection can be characterized by the following conditions:
\begin{equation} \label{dual_general}
 \lvee{E}_i\in \langle E_1,E_2,\ldots, E_m\rangle, \quad 
  \mathrm{Ext}^\bullet(\lvee{E}_i, E_j)=
  \begin{cases}
		\Bbbk, & \text{if} \ i=j;\\
		0, & \text{if}\ i\ne j.
    \end{cases} 
\end{equation}

\subsection{Schur functors and Young diagrams} \label{subsection:schur}
Let $W$ be a vector space of dimension $n$.
We will use the standard identification of the weight lattice of the group $\mathrm{GL}(W)$ with $\mathbb{Z}^{n}$
that takes the fundamental weight of the representation~$\Lambda^{l}W^{\vee}$ to the vector $(1, 1,\ldots, 1, 0, 0,\ldots , 0) \in \mathbb{Z}^{n}$
(the first $l$ entries are $1$, and the last $n-l$ are $0$). The sum of the fundamental weights of $\mathrm{GL}(W)$ is equal to 
\begin{equation*}
\rho \coloneqq (n, n- 1,\ldots, 2, 1).
\end{equation*}

The cone of dominant weights of $\mathrm{GL}(W)$ is identified with the set of weakly decreasing sequences of length $n$
\begin{equation} \label{notation_decr}
 \Y_n \coloneqq \{(\lambda_1,\ldots,\lambda_n)\in \mathbb{Z}^{n}\ |\ \lambda_1\ge \lambda_2\ge \ldots \ge \lambda_n\}.
\end{equation}
For $\alpha\in \Y_n$ we denote by $\Sigma^{\alpha}W^{\vee} = \Sigma^{\alpha_{1},\alpha_{2},\ldots,\alpha_{n}}W^{\vee}$
the corresponding representation of $\mathrm{GL}(W)$ of highest weight $\alpha$. 

The Weyl group of $\mathrm{GL}(W)$ is isomorphic to the permutation group~$\mathfrak{S}_{n}$,
and the length function~\(\ell: \mathfrak{S}_{n}\to \mathbb{Z}\) counts the number of inversions in a permutation.

Given a vector bundle $\cV$ of rank $n$ on a scheme $X$, we consider the corresponding principal $\mathrm{GL}(n)$-bundle on $X$ and denote by $\Sigma^{\alpha}\cV$ the vector bundle associated with the $\mathrm{GL}(n)$-representation of highest weight $\alpha\in \Y_n$.

In what follows, for a dominant weight of the form $(\alpha_1,\ldots, \alpha_l,0,\ldots, 0)$, we omit zeros,
i.e. we write just~$(\alpha_1,\ldots,\alpha_l)$~and~$\Sigma^{\alpha_1,\ldots,\alpha_l}\cV$ for such a weight. Note that 
\begin{equation*}
\Sigma^{a,0,\ldots,0}\cV = S^a \cV,  \qquad
\Sigma^{(1)^l}\cV = \Lambda^l \cV.
\end{equation*}
We frequently use the following natural identifications:
\begin{align*}
  &\Sigma^{\alpha}\cV \simeq \Sigma^{-\alpha}\cV^\vee,\\
  & \Sigma^{\alpha-m}\cV \simeq \Sigma^{\alpha}\cV\otimes (\mathrm{det}\cV^{\vee})^{\otimes m},
\end{align*}
where $m\in \mathbb{Z}$ and $-\alpha \coloneqq (-\alpha_n, -\alpha_{n-1},\ldots,-\alpha_1)$.

\subsubsection{Symplectic Schur functors}
Now, let $V$ be a $2n$-dimensional vector space with a fixed symplectic form. The weight lattice of the corresponding symplectic group $\mathrm{Sp}(V)$ can be identified with $\mathbb{Z}^{n}$: under this identification, as before, the $l$-th fundamental weight goes to $(1,\dots,1,0,\dots,0)$. The sum of the fundamental weights of $\mathrm{Sp}(V)$ is also equal to $\rho$. The cone of dominant weights of $\mathrm{Sp}(V)$ is identified with the set of Young diagrams~$\YD_n$ of length at most $n$:
\begin{equation} \label{notation_young}
 \YD_n \coloneqq \{(\lambda_1,\ldots,\lambda_n)\in \mathbb{Z}^{n}\ |\ \lambda_1\ge \lambda_2\ge \ldots \ge \lambda_n \ge 0\} \subset \Y_n 
\end{equation}
For $\alpha \in \YD_n$ we denote by $\Sigma^{\alpha}_{\mathrm{Sp}}V = \Sigma^{\alpha_{1},\alpha_{2},\ldots,\alpha_{n}}_{\mathrm{Sp}}V$ the corresponding representation of $\mathrm{Sp}(V)$. 

Similarly, given a symplectic vector bundle $\cV$ of rank $2n$ on a scheme $X$,
we consider the corresponding principal~$\mathrm{Sp}(V)$-bundle on $X$ and denote by $\Sigma^{\alpha}_{\mathrm{Sp}}\cV$ the vector bundle associated with the~$\mathrm{Sp}(V)$-representation of highest weight $\alpha$. 

The Weyl group of $\mathrm{Sp}(V)$ is equal to the semidirect product of $\mathfrak{S}_{n}$ and $(\mathbb{Z}/2\mathbb{Z})^{n}$,
where $\mathfrak{S}_{n}$ acts on the weight lattice $\mathbb{Z}^n$ by permutations and $(\mathbb{Z}/2\mathbb{Z})^{n}$
acts by changes of signs of the coordinates. 

\subsubsection{Notation}
For $\alpha\in \Y_n$ and $m\in \mathbb{Z}$ we denote by:
\begin{align*}
\bar \alpha & \coloneqq (\alpha_2,\alpha_3,\ldots,\alpha_n);\\
|\alpha| & \coloneqq \alpha_1+ \alpha_2+ \ldots+\alpha_n;\\
-\alpha  & \coloneqq (-\alpha_n, -\alpha_{n-1},\ldots,-\alpha_1);\\
 \alpha-m & \coloneqq (\alpha_1-m, \alpha_2-m,\ldots,\alpha_n-m);\\
((m)^t,\alpha) & \coloneqq (\underbrace{m,\ldots,m}_t,\alpha_1, \alpha_2, \dots, \alpha_n).
\end{align*}
\subsubsection{Decomposition of tensor products of Schur functors}
Let $\cV$ be a vector bundle of rank $n$ on a scheme $X$. Since the group $\GL_n$ is reductive the tensor product $\Sigma^{\alpha}\cV\otimes \Sigma^{\beta}\cV$ can be decomposed into a direct sum
of bundles of the form $\Sigma^{\gamma}\cV$ (the irreducible summands). The result is
\begin{equation*}
    \Sigma^{\alpha}\cV\otimes \Sigma^{\beta}\cV \simeq \bigoplus (\Sigma^{\gamma}\cV)^{c_{\alpha, \beta}^{\gamma}}
\end{equation*}
where the numbers $c_{\alpha,\beta}^{\gamma}$ are determined by the Littlewood–Richardson rule, see \cite[Appendix A]{fulton2011}. If~$c_{\alpha,\beta}^{\gamma}\ne 0$, we write 
\begin{equation*}
\Sigma^{\gamma}\cV \inplus \Sigma^{\alpha}\cV\otimes \Sigma^{\beta}\cV.
\end{equation*}
To compute $c_{\alpha,\beta}^{\gamma}$, we will need the following version of the Littlewood--Richardson rule. 

\begin{lemma}[{\cite[Lemma 2.9]{KuzPol}}]\label{lemma:LR rule}
Let~\(\bG\) be a classical group and $\cG$ be a $\bG$-equivariant vector bundle. Let $\alpha$ and $\beta$ be $\bG$-dominant weights and
\begin{equation*}
\Sh_{\bG}^\alpha\cG \otimes \Sh_{\bG}^\beta\cG = \bigoplus (\Sh_\bG^\gamma \cG)^{c_{\alpha, \beta}^{\gamma}}
\end{equation*}
be the direct sum decomposition of the tensor product of the corresponding Schur functors. 
Then 
\begin{equation*}
\gamma\in \operatorname{conv} \langle \alpha+ \sigma(\beta) \rangle \cap \ZZ^n,
\end{equation*}
where the convex hull $\operatorname{conv}$ is taken over all elements~$\sigma$ of the Weyl group $W(\bG)$ of~\(\bG\) and~\(\gamma\) lies in the~\(\bG\)-dominant Weyl chamber. 
\end{lemma}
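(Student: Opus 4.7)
The plan is to reduce the lemma to a classical representation-theoretic statement about tensor products of irreducible $\bG$-representations, and then apply a short convex-geometric argument. Since $\Sh^{\alpha}_{\bG}\cG$ is defined as the vector bundle associated via the principal $\bG$-bundle construction with the irreducible $\bG$-representation $V_\alpha$ of highest weight $\alpha$, the decomposition of $\Sh^{\alpha}_{\bG}\cG \otimes \Sh^{\beta}_{\bG}\cG$ into Schur summands matches the decomposition of $V_\alpha \otimes V_\beta$ into irreducibles with the very same multiplicities $c^\gamma_{\alpha,\beta}$. It therefore suffices to show: whenever $V_\gamma$ appears as an irreducible summand of $V_\alpha \otimes V_\beta$, the dominant weight $\gamma$ lies in $\operatorname{conv}\langle \alpha + \sigma(\beta) : \sigma \in W(\bG)\rangle \cap \mathbb{Z}^n$.

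The central input is the classical fact that if $V_\gamma \subset V_\alpha \otimes V_\beta$, then $\gamma - \alpha$ is a weight of $V_\beta$. Granting this, the lemma follows by elementary convex geometry: the weights of the irreducible representation $V_\beta$ are contained in the convex hull of the Weyl orbit of the highest weight, so $\gamma - \alpha \in \operatorname{conv}(W(\bG) \cdot \beta)$, and hence
\begin{equation*}
\gamma \in \alpha + \operatorname{conv}(W(\bG) \cdot \beta) = \operatorname{conv}\langle \alpha + \sigma(\beta) : \sigma \in W(\bG)\rangle.
\end{equation*}
Integrality $\gamma \in \mathbb{Z}^n$ is automatic, since $\gamma$ is a weight, and dominance is part of the hypothesis.

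To prove the key input I would pick a highest-weight vector $v_\gamma \in V_\alpha \otimes V_\beta$ of the chosen copy of $V_\gamma$ and expand $v_\gamma = \sum_i u_i \otimes w_i$ in weight bases. Since the $\alpha$-weight space of $V_\alpha$ is one-dimensional, spanned by the highest-weight vector $v_\alpha^+$, it suffices to show that some summand has $u_i$ proportional to $v_\alpha^+$: the corresponding $w_i$ is then a nonzero weight vector of $V_\beta$ of weight $\gamma - \alpha$. Arguing by contradiction, assume every $u_i$ appearing has weight strictly below $\alpha$; let $\mu_0$ be the maximal weight among the $\mu_i$'s in the dominance order, and after a standard tensor basis change assume that the vectors $\{u_i : \mu_i = \mu_0\}$ are linearly independent in the $\mu_0$-weight space of $V_\alpha$ and the corresponding $\{w_i\}$ are linearly independent in the $(\gamma-\mu_0)$-weight space of $V_\beta$. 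For each simple root $\alpha_j$, the vanishing $e_j v_\gamma = 0$ projected onto the $(\mu_0+\alpha_j)$-weight part of $V_\alpha$ reads
\begin{equation*}
\sum_{i:\mu_i = \mu_0}(e_j u_i) \otimes w_i = 0,
\end{equation*}
because no term of the form $u_i \otimes (e_j w_i)$ contributes (there is no $\mu_i = \mu_0 + \alpha_j$, as $\mu_0 + \alpha_j > \mu_0$). Linear independence of the $w_i$'s forces $e_j u_i = 0$ for every simple $\alpha_j$ and every $i$ with $\mu_i = \mu_0$; but then each such $u_i$ would be a highest-weight vector of $V_\alpha$ of weight $\mu_0 \ne \alpha$, contradicting irreducibility of $V_\alpha$.

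The main obstacle is the proof of the key input, and specifically the tensor-basis-change step ensuring the $w_i$'s attached to the maximal weight $\mu_0$ can be taken linearly independent; this is a standard manoeuvre (any element of a tensor product admits a representation with both factor families linearly independent), but verifying that the subsequent weight bookkeeping is consistent with this rewriting requires care. Once this is done, the contradiction argument is clean, and the remainder of the proof is routine convex-geometric and representation-theoretic bookkeeping.
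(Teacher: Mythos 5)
The paper does not supply a proof of this lemma — it is cited verbatim from \cite[Lemma 2.9]{KuzPol} — so there is no in-paper argument to compare against. Your proposal is a correct, self-contained proof. The reduction to the representation-theoretic statement for $V_\alpha \otimes V_\beta$ is immediate from the principal-bundle construction of Schur functors, and the convex-geometric step $\gamma \in \alpha + \operatorname{conv}(W(\bG)\cdot\beta) = \operatorname{conv}\langle \alpha + \sigma(\beta)\rangle$ is elementary. The key input — if $V_\gamma$ is an irreducible constituent of $V_\alpha \otimes V_\beta$, then $\gamma - \alpha$ is a weight of $V_\beta$ — is the standard fact, and your highest-weight-vector argument for it is sound.

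The ``basis change'' step you flag is unproblematic if organized as follows: decompose $v_\gamma = \sum_\mu x_\mu$ with $x_\mu \in V_\alpha^{\mu} \otimes V_\beta^{\gamma-\mu}$ (this decomposition is canonical), and for each nonzero $x_\mu$ choose a rank decomposition $x_\mu = \sum_i u_i^{\mu} \otimes w_i^{\mu}$ with both families linearly independent within their respective weight spaces. Taking $\mu_0$ maximal in the support of $\mu \mapsto x_\mu$ (with respect to the dominance order), the $V_\alpha^{\mu_0+\alpha_j}\otimes V_\beta^{\gamma-\mu_0}$-component of $e_j v_\gamma = 0$ picks up no contribution from $1\otimes e_j$ (since $x_{\mu_0+\alpha_j}=0$), and linear independence of the $w_i^{\mu_0}$ forces $e_j u_i^{\mu_0}=0$ for all $j$, hence $\mu_0 = \alpha$ by irreducibility of $V_\alpha$. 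This confirms $x_\alpha \ne 0$, i.e.\ $\gamma - \alpha$ is a weight of $V_\beta$, and the lemma follows. One might also shorten the argument by simply citing this fact (it is classical; see e.g.\ standard references on tensor product decompositions), but the direct argument is clean as written.
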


\begin{lemma}[{\cite[Corollary~3.8]{ku10}}]\label{lemma:PRV}
In the setting of Lemma~\ref{lemma:LR rule}, if~\(\gamma=\alpha+w\beta\) is~\(\bG\)-dominant for an element of the Weyl group~\(w \in W(\bG)\), then~\(c_{\alpha,\beta}^{\gamma} = 1\).
\end{lemma}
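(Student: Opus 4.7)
The plan is to establish $c_{\alpha,\beta}^\gamma = 1$ by separately proving the lower and upper bounds.

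For the lower bound, I would invoke the Parthasarathy--Ranga Rao--Varadarajan (PRV) conjecture, proved in full generality by Kumar. Let $V_\alpha$, $V_\beta$, $V_\gamma$ denote the abstract irreducible $\bG$-representations of highest weights $\alpha$, $\beta$, $\gamma$. The idea is to exhibit a highest weight vector of weight $\gamma$ in $V_\alpha \otimes V_\beta$. Choosing a reduced expression $w = s_{i_1}\cdots s_{i_\ell}$, one considers
\begin{equation*}
v_\alpha \otimes f_{i_1}^{(a_1)} \cdots f_{i_\ell}^{(a_\ell)} v_\beta,
\end{equation*}
where $v_\alpha, v_\beta$ are highest weight vectors, $f_i^{(a)}$ are divided powers of Chevalley lowering operators, and the exponents $a_j$ are determined so that the right tensor factor has weight $w\beta$. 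A Demazure-module argument then shows that this element generates a subrepresentation containing a copy of $V_\gamma$, yielding $c_{\alpha,\beta}^\gamma \geq 1$.

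For the upper bound, I would exploit the extremality of $\gamma$ together with Lemma~\ref{lemma:LR rule}. Since $\gamma = \alpha + w\beta$ is dominant, it is a vertex of the polytope $P = \operatorname{conv}\langle \alpha + \sigma\beta\rangle$. By Lemma~\ref{lemma:LR rule}, any dominant $\gamma'$ with $c_{\alpha,\beta}^{\gamma'} \neq 0$ lies in $P$; combined with the vertex property, no such $\gamma'$ can strictly dominate $\gamma$ in the dominance order. Next, I would compute the $\gamma$ weight-space dimension
\begin{equation*}
\dim(V_\alpha \otimes V_\beta)[\gamma] = \sum_{\lambda + \mu = \gamma} \dim V_\alpha[\lambda] \cdot \dim V_\beta[\mu],
\end{equation*}
and argue that the only contributing pair is $(\lambda, \mu) = (\alpha, w\beta)$. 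Since $\alpha$ is the highest weight of $V_\alpha$ and $w\beta$ is an extremal weight of $V_\beta$, both occur with multiplicity one, so the weight space is one-dimensional. Combining with the irreducible decomposition
\begin{equation*}
\dim(V_\alpha \otimes V_\beta)[\gamma] = \sum_{\gamma' \geq \gamma} c_{\alpha,\beta}^{\gamma'} \dim V_{\gamma'}[\gamma]
\end{equation*}
and the vanishing of $c_{\alpha,\beta}^{\gamma'}$ for all $\gamma' > \gamma$, this forces $c_{\alpha,\beta}^\gamma = 1$.

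The main obstacle I expect is verifying the uniqueness of the decomposition $\gamma = \lambda + \mu$ in the weight-space calculation. One must rule out all pairs $(\alpha - \nu, w\beta + \nu)$ with $\nu$ a nonzero sum of positive roots, which amounts to showing that a certain face of the weight polytope of $V_\beta$ containing the extremal vertex $w\beta$ meets the translated face of the weight polytope of $V_\alpha$ only at the single point $(\alpha, w\beta)$. This is the technical heart of Kumar's Corollary~3.8 and is where careful use of the stabilizer of $\beta$ in $W(\bG)$, together with the dominance of $\gamma$, enters in an essential way.
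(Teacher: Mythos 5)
The paper does not give a proof of this lemma --- it is quoted from Kumar \cite[Corollary~3.8]{ku10}, so there is no in-house argument to compare against. Your lower bound (the PRV conjecture giving $c_{\alpha,\beta}^\gamma\ge 1$) is a fair sketch of the known existence result. The upper bound, however, rests on a false premise.

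Your step ``combined with the vertex property, no such $\gamma'$ can strictly dominate $\gamma$'' is incorrect. Already for $\bG = \operatorname{SL}_2$, with $\alpha = \beta = 2$ (the three-dimensional irreducible) and $w$ the simple reflection, one has $\gamma = \alpha + w\beta = 0$, which is dominant and is a vertex of the segment $[0,4]$; yet $V_2 \otimes V_2 = V_4 \oplus V_2 \oplus V_0$, so $c_{2,2}^2 = c_{2,2}^4 = 1$ with $2, 4 > \gamma$. Being a vertex of the convex hull has nothing to do with being maximal for the dominance order. Correspondingly, the weight space $(V_\alpha \otimes V_\beta)[\gamma]$ is in general of dimension greater than one --- here it is three-dimensional, with the pairs $(2,-2)$, $(0,0)$, $(-2,2)$ all contributing --- and the identity you invoke reads $3 = c_{2,2}^0 + c_{2,2}^2 + c_{2,2}^4$, which determines nothing. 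The ``main obstacle'' you flag at the end (that a face of one weight polytope meets the translated face of the other in a single point) is not a technical subtlety awaiting verification; it is simply false, as several pairs $(\alpha - \nu,\, w\beta + \nu)$ with $\nu$ a nonzero sum of positive roots do contribute.

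The standard clean argument for multiplicity exactly one bypasses weight-space dimensions entirely. By the Brauer--Klimyk formula,
\begin{equation*}
c_{\alpha,\beta}^\gamma \;=\; \sum_{w' \in W(\bG)} (-1)^{\ell(w')} \dim V_\beta\bigl[w'(\gamma + \rho) - (\alpha + \rho)\bigr].
\end{equation*}
Since $\gamma + \rho$ and $\alpha + \rho$ are both strictly dominant, one has $\|w'(\gamma+\rho) - (\alpha+\rho)\| \ge \|\gamma - \alpha\| = \|\beta\|$, with equality if and only if $w' = 1$. Weights of $V_\beta$ have norm at most $\|\beta\|$, with equality only on the extremal orbit $W(\bG)\beta$, each point of which has multiplicity one; hence only the term $w' = 1$ survives and contributes $\dim V_\beta[w\beta] = 1$. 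This yields the lower and the upper bound at once, and the PRV existence theorem is in fact not needed when $\alpha + w\beta$ is already dominant.
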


There is the standard partial order on the set $\Y_n$ (and, in particular, on the set $\YD_n$): for~$\alpha,\beta\in \Y_n$
\begin{equation}\label{eq:order-GL}
   \alpha\le \beta \ \Leftrightarrow \ \alpha_i\le \beta_i \quad \text{for}\quad i\in [1,n].
\end{equation}

Let $\alpha,\beta\in \YD_n$ be such that $\beta\le \alpha$. Then we can associate with such a pair of Young diagrams the skew Schur functor $\Sigma^{\alpha/\beta}$, see \cite[Section 2.1]{Weyman}, which satisfies the property 
\begin{equation}
  \Sigma^{\alpha/\beta} \cG \simeq \bigoplus (\Sigma^{\nu} \cG)^{\oplus c^{\alpha}_{\nu,\beta}},
\end{equation}
where $c^{\alpha}_{\nu,\beta}$ are the Littlewood--Richardson coefficients. We will need the following
statement.
\begin{lemma}[\cite{Weyman}] \label{skew_decomposition}
Let $\alpha\in \YD_m$ and let
\begin{equation*}
0\to \cG_1\to \cG \to \cG_2\to 0
\end{equation*}
be a short exact sequence of vector bundles. Then there is a filtration on $\Sigma^{\alpha}\cG$ with the associated graded isomorphic to 
\begin{equation*}
 \bigoplus_{0\le\beta\le \alpha} \Sigma^{\beta}\cG_1 \otimes \Sigma^{\alpha/\beta}\cG_2.
\end{equation*}
\end{lemma}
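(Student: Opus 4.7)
The plan is to reduce the claim to the split case by a standard parabolic-induction argument. In the split case $\cG = \cG_1 \oplus \cG_2$, with $r_i = \mathrm{rk}\, \cG_i$, the isomorphism
\[
\Sigma^\alpha(\cG_1 \oplus \cG_2) \;\cong\; \bigoplus_{0 \le \beta \le \alpha} \Sigma^\beta \cG_1 \otimes \Sigma^{\alpha/\beta} \cG_2
\]
is the branching rule for the restriction of the irreducible $\mathrm{GL}_{r_1+r_2}$-representation $\Sigma^\alpha \Bbbk^{r_1+r_2}$ along the Levi embedding $\mathrm{GL}_{r_1} \times \mathrm{GL}_{r_2} \hookrightarrow \mathrm{GL}_{r_1+r_2}$. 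This may be taken as the defining property of the skew Schur functor: by the Littlewood--Richardson rule, both sides contain every irreducible summand $\Sigma^\beta \cG_1 \otimes \Sigma^\gamma \cG_2$ with multiplicity $c^{\alpha}_{\beta,\gamma}$. So the lemma holds (with split filtration) whenever the short exact sequence splits.

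For a non-split short exact sequence $0 \to \cG_1 \to \cG \to \cG_2 \to 0$, I would pass to the associated principal bundle. The datum of $\cG_1 \subset \cG$ is equivalent to a reduction of structure group of $\cG$ from $\mathrm{GL}_{r_1+r_2}$ to the maximal parabolic $P \subset \mathrm{GL}_{r_1+r_2}$ stabilizing the standard subspace $\Bbbk^{r_1} \subset \Bbbk^{r_1+r_2}$; let $\mathcal{P}$ denote the resulting $P$-torsor over the base. Restricted to $P$, the irreducible $\mathrm{GL}_{r_1+r_2}$-representation $\Sigma^\alpha \Bbbk^{r_1+r_2}$ carries a canonical $P$-equivariant filtration whose associated graded, viewed as a representation of the Levi quotient $L = \mathrm{GL}_{r_1} \times \mathrm{GL}_{r_2}$, equals $\bigoplus_\beta \Sigma^\beta \Bbbk^{r_1} \otimes \Sigma^{\alpha/\beta} \Bbbk^{r_2}$ by the split case applied to the tautological $L$-representation. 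Twisting this filtration through $\mathcal{P}$ produces the desired filtration on $\Sigma^\alpha \cG$ with the asserted associated graded.

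The main technical input is the existence of this canonical $P$-equivariant filtration on $\Sigma^\alpha \Bbbk^{r_1+r_2}$. I would construct it by realizing $\Sigma^\alpha \Bbbk^{r_1+r_2}$ as the image of an appropriate Young symmetrizer acting on $(\Bbbk^{r_1+r_2})^{\otimes |\alpha|}$ (Weyl's construction), equipping the tensor power with the $P$-equivariant filtration that records how many tensor factors lie in $\Bbbk^{r_1}$, and noting that the Young symmetrizer commutes with the $L$-action and therefore descends to a filtration of $\Sigma^\alpha \Bbbk^{r_1+r_2}$. I expect the main obstacle to be the bookkeeping that identifies the resulting associated graded with $\bigoplus_\beta \Sigma^\beta \Bbbk^{r_1} \otimes \Sigma^{\alpha/\beta} \Bbbk^{r_2}$ --- in particular, verifying that the Young symmetrizer applied to the mixed tensor powers produces \emph{skew} Schur modules of $\Bbbk^{r_2}$ rather than ordinary ones, and that no cancellations occur between different graded pieces. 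This is precisely the content of the detailed construction carried out in~\cite[Chapter 2]{Weyman}, whose statement we are invoking. An alternative route, bypassing the Young-symmetrizer bookkeeping, would be to prove the claim by induction on $|\alpha|$: the base case $\Sigma^\alpha = \Lambda^m$ is the familiar filtration of exterior powers by the subbundle $\cG_1$, and the inductive step follows from the Pieri rule together with a diagram chase in the corresponding tensor product.
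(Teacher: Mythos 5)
The paper cites this lemma to Weyman and supplies no proof of its own, so there is no internal argument to compare against; what you wrote is a correct plan in the characteristic-zero setting in which the paper works. The parabolic-reduction framing --- reading $\cG_1 \subset \cG$ as a reduction of the structure group of $\cG$ to the parabolic $P \subset \GL_{r_1+r_2}$, and then twisting a $P$-equivariant filtration on the abstract representation $\Sigma^\alpha \Bbbk^{r_1+r_2}$ along the associated $P$-torsor --- is the standard, clean way to globalize the representation-theoretic statement, and filtering $(\Bbbk^{r_1+r_2})^{\otimes |\alpha|}$ by the number of tensor factors lying in $\Bbbk^{r_1}$ does produce the required $P$-stable filtration after applying the Young symmetrizer. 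The bookkeeping obstacle you anticipate is, however, not really one in characteristic zero: each graded piece of the filtration is a finite-dimensional representation of the reductive Levi $L = \GL_{r_1}\times\GL_{r_2}$ and is therefore determined up to isomorphism by its character, and the character of the degree-$p$ piece is, by construction of the filtration, the degree-$p$ part of the restriction of $\chi(\Sigma^\alpha\Bbbk^{r_1+r_2})$ to $L$, which the Littlewood--Richardson rule identifies with $\sum_{|\beta|=p}\chi(\Sigma^\beta\Bbbk^{r_1})\,\chi(\Sigma^{\alpha/\beta}\Bbbk^{r_2})$ --- no further analysis of where the symmetrizer sends mixed tensors is needed. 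The genuine difference between your route and Weyman's is that his Chapter~2 construction is characteristic-free (it exhibits a natural filtration with the stated associated graded over any commutative base ring, via explicit Schur-module presentations and standard tableaux), whereas the Young-symmetrizer and complete-reducibility arguments you lean on are specific to characteristic zero; that loses nothing for the present paper, but it is worth knowing that the cited statement is strictly more general than what your proof delivers.
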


\subsection{Equivariant bundles on flag varieties}
Let $X$ be a variety and~\(\cV\) be a rank $r$ vector bundle over $X$. For an integer $0<l<r$, we denote by
\begin{equation}\label{eq:pi_l}
    \pi_l\colon \Gr_{X}(l,\cV)\to X,
\end{equation}
the relative Grassmannian of $l$-dimensional subspaces in the fibers of $\cV$. In a special case, when $X = pt$ is a point and $\cV = V$ is a vector space, we use the notation $\Gr(l,V)$. 

The tautological vector subbundle of rank $l$ on $\Gr_{X}(l,\cV)$ is denoted by $\cU_l \subset \pi_l^*\cV$ and the quotient bundle is denoted by $\cQ_{r-l}$. The tautological exact sequences on $\Gr_{X}(l,\cV)$ in this notation take the form:
\begin{align} \label{tautological_exact_sequence}
    & 0\to \cU_l \to \pi_l^*\cV \to \cQ_{r-l} \to 0;\\ \label{tautological_exact_sequence_1}
    & 0\to \cQ^{\vee}_{r-l} \to \pi_l^*\cV^{\vee} \to \cU^{\vee}_l \to 0.
\end{align}
We denote by $\cO(H_{l}) \coloneqq \Lambda^{l}\cU^{\vee}_{l}$ the ample generator of $\mathrm{Pic}(\Gr_{X}(l,\cV)/X)$. Abusing the notation, we denote the pullback $\pi_l^*\cV$ by the same letter~$\cV$, unless this leads to confusion. We denote by $\Fl_{X}(l_1, l_2; \cV)$ the relative flag variety. Abusing the notation, we denote by $\cU_{l_1}\subset \cU_{l_2} \subset \cV$ the tautological flag of the subbundles on $\Fl_{X}(l_1, l_2; \cV)$.

Now assume that~\(\cV\) is endowed with the symplectic structure, in particular the rank of $\cV$ is even~$r=2n$. For an integer $0<l\le n$ we denote by
\begin{equation*}
    \pi_l\colon \IGr_{X}(l,\cV)\to X,
\end{equation*}
the relative isotropic Grassmannian of $l$-dimensional isotropic subspaces in the fibers of~$\cV$. When $X = pt$ is a point and $\cV = V$ is a vector space, we use the notation $\IGr(l,V)$. The isotropic flag variety, that parametrize isotropic flags of vector subspaces in $V$, will be denoted by $\IFl(t, k; V)$.

Note that $\IGr_{X}(l,\cV)\hookrightarrow \Gr_X(l,\cV)$. Abusing the notation, we will denote the restrictions of vector bundles from $\Gr_X(l,\cV)$ to $\IGr_{X}(l,\cV)$ by the same letters. Note that the symplectic structure on $\cV$ allows to define the canonical embedding of vector bundles $\cU_l\hookrightarrow \cQ^{\vee}_{2n-l}$ on~$\IGr_{X}(l,\cV)$. So on~$\IGr_{X}(l,\cV)$ there is a quotient bundle
\begin{equation*}
\cS_{2(n-l)}\coloneqq \cQ^{\vee}_{2n-l}/\cU_l
\end{equation*}
with the symplectic structure induced from $\cV$.

As was already explained in Subsection~\ref{intro:KP}, every irreducible $\bG$-equivariant vector bundle
on $\IGr(k,V)$ is isomorphic to
\begin{equation*}
\Sigma^{\alpha}\cU^{\vee}_k\otimes \Sigma^{\mu}_{\Sp}\cS_{2(n-k)},
\end{equation*}
where $\alpha\in \Y_k$ and $\mu\in \YD_{n-k}$.

For $\IFl(k_1,k_2, V)=\bG/\bP_{k_1,k_2}$ the Levi quotient $\bL_{k_1,k_2}$ of $\bP_{k_1,k_2}$ is isomorphic to \begin{equation*}
\GL(k_1)\times \GL(k_2-k_1)\times \Sp(2(n-k_2), \Bbbk),
\end{equation*} so the dominant cone of $\bL_{k_1,k_2}$ can be identified with $\Y_{k_1}\times \Y_{k_2-k_1}\times \YD_{n-k_2}$. As was already explained in Subsection~\ref{intro:KP}, each dominant weight $(\alpha; \beta; \mu)\in \Y_{k_1}\times \Y_{k_2-k_1}\times \YD_{n-k_2}$ of $\bL_{k_1,k_2}$ corresponds to the irreducible $\bG$-equivariant vector bundle on $\IFl(k_1,k_2, V)$ which is
\begin{equation} \label{equivariant_bundles_on_flags}
  \Sigma^{\alpha}\cU^{\vee}_{k_1}\otimes \Sigma^{\beta}(\cU_{k_2}/\cU_{k_1})^{\vee}\otimes \Sigma^{\mu}_{\Sp}\cS_{2(n-k_2)}.
\end{equation}

\subsection{Borel--Bott--Weil theorems}
Here we explain a way to compute the pushforwards of equivariant sheaves for the relative (isotropic) Grassmannian.
\begin{theorem}[Relative Borel--Bott--Weil Theorem, see~{\cite[Theorem~IV']{Bott57}}]\label{theo:relative_BBW}
Let $X$ be a variety and let~$\cW$ be a rank~$r$ vector bundle on $X$. For an integer $0 < s < r$, consider the relative Grassmannian
\begin{equation*}
\pi_s \colon \Gr_{X}(s, \cW)\to X.
\end{equation*}
Let $\lambda\in \mathrm{Y}_{s}$, $\mu \in \mathrm{Y}_{r-s}$, and denote $(\lambda,\mu)\in \mathbb{Z}^r$ their
concatenation. Put $\rho=(r, r-1, \ldots, 1)$. Then
\begin{equation*}
  R^{\bullet}\pi_{s*}(\Sigma^{\lambda}\cU_{s}^{\vee}\otimes \Sigma^{\mu}\cQ^{\vee}_{r-s})= 
\begin{cases}
\Sigma^{\sigma((\lambda, \mu)+\rho)-\rho}\,\cW^{\vee}[-\ell(\sigma)],&\text{if all elements in $(\lambda, \mu)+\rho$ are distinct;}\\
0,&\text{otherwise,}
\end{cases}  
\end{equation*}
where $\sigma \in \mathfrak{S}_{r}$ denotes the unique permutation such that the sequence $\sigma((\lambda, \mu)+\rho)$ is strictly decreasing and $\ell(\sigma)$ is the number of pairs $1\le i\le j\le r$ such that $\sigma(i)> \sigma(j)$. 
\end{theorem}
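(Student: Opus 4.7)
The plan is to deduce the relative statement from the classical (absolute) Borel--Bott--Weil theorem on the Grassmannian $\Gr(s,r)$, combined with cohomology and base change.

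First I would realize the relative Grassmannian as an associated bundle. Let $P \subset \GL_r$ denote the maximal parabolic stabilizing a fixed $s$-dimensional subspace, so that $\Gr(s,r) = \GL_r/P$ and the Levi of $P$ is $\GL_s \times \GL_{r-s}$. If $\mathrm{Fr}(\cW) \to X$ is the principal $\GL_r$-bundle of frames of $\cW$, then
\begin{equation*}
\Gr_X(s,\cW) \simeq \mathrm{Fr}(\cW) \times^{\GL_r} \Gr(s,r).
\end{equation*}
Under this identification, the bundle $\Sigma^\lambda \cU_s^\vee \otimes \Sigma^\mu \cQ_{r-s}^\vee$ on $\Gr_X(s,\cW)$ corresponds to the $\GL_r$-equivariant bundle on $\Gr(s,r)$ associated with the irreducible $P$-representation of highest weight $(\lambda,\mu)$; similarly, $\Sigma^\alpha \cW^\vee$ on $X$ corresponds to the irreducible $\GL_r$-representation of highest weight $\alpha$.

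Next I would invoke the classical Borel--Bott--Weil theorem on $\Gr(s,r) = \GL_r/P$: for a $P$-dominant weight $(\lambda,\mu)$, the cohomology $H^\bullet(\Gr(s,r), \Sigma^\lambda U^\vee \otimes \Sigma^\mu Q^\vee)$ vanishes when the sequence $(\lambda,\mu)+\rho$ has two equal entries (the singular case), and otherwise is concentrated in degree $\ell(\sigma)$, where it equals the irreducible $\GL_r$-representation of highest weight $\sigma((\lambda,\mu)+\rho)-\rho$, with $\sigma \in \mathfrak{S}_r$ the unique permutation making the sequence strictly decreasing. Since $\pi_s$ is proper and smooth and the cohomology on each fiber is concentrated in a single degree with locally constant rank, the cohomology-and-base-change theorem (applied fibrewise) gives
\begin{equation*}
R^i \pi_{s*}(\Sigma^\lambda \cU_s^\vee \otimes \Sigma^\mu \cQ_{r-s}^\vee) \simeq \mathrm{Fr}(\cW) \times^{\GL_r} H^i(\Gr(s,r), \Sigma^\lambda U^\vee \otimes \Sigma^\mu Q^\vee),
\end{equation*}
which, by the identification above, is precisely $\Sigma^{\sigma((\lambda,\mu)+\rho)-\rho}\cW^\vee$ placed in degree $\ell(\sigma)$ in the regular case, and zero otherwise.

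The main subtlety is purely bookkeeping: one must carefully match the $P$-dominance conventions on the Levi $\GL_s \times \GL_{r-s}$, the highest-weight conventions for the Schur functors $\Sigma^\alpha(-)^\vee$ (compatible with the identification $\Sigma^\alpha \cV \simeq \Sigma^{-\alpha}\cV^\vee$ from Subsection~\ref{subsection:schur}), the shift by $\rho = (r, r-1, \ldots, 1)$, and the signs in the Weyl group action, so that the final answer comes out with $\cW^\vee$ rather than $\cW$ and with the permutation $\sigma$ and length $\ell(\sigma)$ acting correctly. The base-change step itself is automatic once one notes that each $R^i \pi_{s*}$ is locally free of rank constant in $X$.
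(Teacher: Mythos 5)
The paper does not prove this theorem at all; it is quoted as a known result from Bott's 1957 paper and used as a black box throughout. So there is no ``paper's own proof'' to compare against, and the question is only whether your derivation is sound.

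Your argument is the standard and correct way to deduce the relative statement from the absolute one: realize $\Gr_X(s,\cW)$ as the associated bundle $\mathrm{Fr}(\cW)\times^{\GL_r}\Gr(s,r)$ over the principal frame bundle, observe that the equivariant bundle $\Sigma^\lambda\cU_s^\vee\otimes\Sigma^\mu\cQ_{r-s}^\vee$ is associated with the $P$-representation of highest weight $(\lambda,\mu)$, and then apply absolute Borel--Bott--Weil fibrewise. The one place where your phrasing is a little loose is the invocation of ``cohomology and base change.'' What is really being used is much simpler: $\Gr_X(s,\cW)\to X$ is Zariski-locally a \emph{product} $U\times\Gr(s,r)$, and the bundle restricts to the pullback of a fixed sheaf on $\Gr(s,r)$; flat base change (or the K\"unneth isomorphism) over each such $U$ gives $R^i\pi_{s*}(-)|_U\simeq\cO_U\otimes H^i(\Gr(s,r),-)$, and the transition functions on overlaps are given by the $\GL_r$-action, which is exactly what glues the local pieces into $\mathrm{Fr}(\cW)\times^{\GL_r}H^i(\ldots)$. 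Invoking Grothendieck's cohomology-and-base-change theorem (which addresses jumps in fibre cohomology ranks) is more machinery than needed and slightly obscures where the key $\GL_r$-equivariance comes from. You also defer all of the weight-convention bookkeeping (the $\rho$-shift, signs, and the passage between $\Sigma^\alpha\cV$ and $\Sigma^{-\alpha}\cV^\vee$) to a final remark; this is fine in outline, but that bookkeeping is precisely what fixes the exact formula with $\cW^\vee$ and $\ell(\sigma)$, so one should at least record the precise form of the absolute theorem being cited. Modulo these presentational points the proof is correct.
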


\begin{theorem}[Relative symplectic Borel--Bott--Weil Theorem, see~{\cite[Theorem~IV']{Bott57}}]\label{theo:relative_BBW_sym}
Let $X$ be a variety and let~$\cV$ be a symplectic rank~$2r$ vector bundle on $X$. For an integer $0 < t < 2r$, consider the relative isotropic Grassmannian 
\begin{equation*}
\pi_t \colon \IGr_{X}(t, \cV)\to X.
\end{equation*}
Let $\lambda\in \mathrm{Y}_{t}$, $\mu \in \mathrm{YD}_{r-t}$ and denote $(\lambda;\mu)\in \mathbb{Z}^r$ their concatenation. Put $\rho=(r, r-1, \ldots, 1)$. Then
\begin{multline*}
  R^{\bullet}\pi_{t*}(\Sigma^{\lambda}\cU_{t}^{\vee}\otimes \Sigma^{\mu}_{\Sp}\cS_{2(r-t)})= \\
  \begin{cases}
\Sigma_{\Sp}^{\sigma((\lambda, \mu)+\rho)-\rho}\,\cV[-\ell(\sigma)],&\text{if absolute values of all elements in $(\lambda, \mu)+\rho$ are distinct and non-zero;}\\
0,&\text{otherwise,}
\end{cases}  
\end{multline*}
where $\sigma\in \mathfrak{S}_{r}\ltimes (\mathbb{Z}/2\mathbb{Z})^{r}$ is the unique element of the Weyl group such that the sequence $\sigma((\lambda, \mu)+\rho)$ is a strictly decreasing sequence of positive integers and $\ell(\sigma)$ is the length of $\sigma$. 
\end{theorem}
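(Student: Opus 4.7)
The plan is to deduce the statement from the classical (absolute) Borel--Bott--Weil theorem for the symplectic group, exactly along the lines of Bott~\cite{Bott57}: once the absolute case is known, the relative one follows formally via the associated bundle construction.

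First, I would realise $\IGr_X(t,\cV)$ as an associated bundle. Let $\tilde\cV\to X$ denote the principal $\bG$-bundle of symplectic frames of $\cV$, where $\bG=\Sp(V)$ with $\dim V=2r$; then $\IGr_X(t,\cV)=\tilde\cV\times^{\bG}(\bG/\bP)$, where $\bP\subset\bG$ is the maximal parabolic subgroup corresponding to the $t$-th simple root, with Levi quotient $\bL=\GL(t)\times\Sp(2(r-t))$. The vector bundle $\Sigma^{\lambda}\cU_t^{\vee}\otimes\Sigma^{\mu}_{\Sp}\cS_{2(r-t)}$ is precisely the one associated to the irreducible $\bP$-representation of highest weight $(\lambda,\mu)$, pulled back from $\bL$. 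Because forming associated bundles commutes with derived pushforward along the fibration with fibre $\bG/\bP$, computing $R^{\bullet}\pi_{t*}$ reduces to computing the cohomology $H^{\bullet}(\bG/\bP,\cE_{(\lambda,\mu)})$ as a $\bG$-representation and then forming the associated bundle on $X$.

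Second, I would invoke the parabolic Borel--Bott--Weil theorem for $\bG=\Sp(V)$: perform the $\rho$-shift on $(\lambda,\mu)$ and act by the Weyl group $W(\bG)=\mathfrak{S}_r\ltimes(\ZZ/2\ZZ)^r$, which permutes coordinates and flips their signs. If $(\lambda,\mu)+\rho$ lies on a wall of a Weyl chamber, i.e.\ if some entry vanishes or two entries have equal absolute value, then all cohomology vanishes; otherwise, there is a unique element $\sigma\in W(\bG)$ sending $(\lambda,\mu)+\rho$ into the strictly dominant chamber, and the cohomology is concentrated in degree $\ell(\sigma)$ and equals the irreducible $\Sp(V)$-representation of highest weight $\sigma((\lambda,\mu)+\rho)-\rho$.

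Combining the two steps yields the stated formula. The only technical point that requires genuine care is verifying that the conventions adopted in Subsection~\ref{subsection:schur} for the identification of the weight lattice of $\Sp(V)$ with $\ZZ^r$, for the symplectic Schur functor $\Sigma^{\mu}_{\Sp}$, and for the action of the Weyl group are consistent with those implicit in Bott's theorem; beyond this bookkeeping, no further computation is needed, which is why the result is cited rather than re-proved. The main obstacle, if any, is precisely this matching of conventions, together with the standard checks on the length function $\ell(\sigma)$ for elements of the hyperoctahedral group so that the cohomological degree in the conclusion comes out correctly.
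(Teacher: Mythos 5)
The paper does not prove this statement; it cites it directly from Bott's Theorem~IV' without argument. Your proposal is therefore not being compared against an existing proof in the paper but rather supplies the standard justification that the citation implicitly relies on, and that justification is correct: realise $\IGr_X(t,\cV)$ as the associated bundle $\tilde\cV\times^{\bG}(\bG/\bP)$, observe that the equivariant bundle in question comes from the irreducible $\bP$-representation of highest weight $(\lambda,\mu)$, use that derived pushforward along the fibration commutes with the associated-bundle construction, and apply the absolute (parabolic) Borel--Bott--Weil theorem for $\Sp(2r)$. Your description of the singular case (an entry of $(\lambda,\mu)+\rho$ equal to zero or two entries with equal absolute value) and of the regular case (unique $\sigma$ moving the shifted weight to the strictly dominant chamber, cohomology concentrated in degree $\ell(\sigma)$) matches the stated formula. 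Your caveat about matching conventions is the right thing to flag: with the paper's identification of the weight lattice of $\Sp(V)$ with $\ZZ^r$ and with the dominant chamber consisting of strictly decreasing nonnegative sequences, the $\rho$-shift, the hyperoctahedral Weyl group action, and the length function all line up exactly as claimed, so there is no hidden discrepancy. In short, your argument is correct and is precisely the standard reduction the authors have in mind when they cite Bott rather than re-derive the result.
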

\subsection{Filtration}
Throughout the paper, we use the following statement about filtrations of $\bG$-equivariant vector bundles on $\IFl(k - t, k; V)$, where the associated subquotients are irreducible $\bG$-equivariant bundles.
\begin{lemma}\label{lemma:filtration_on_flags}
    Let~\(\xi \in \Y_t\), 
    and~\(\theta \in \YD^{a}_{n - k + t}\), where $a\in \mathbb{Z}_{\ge 0}$. The vector bundle
    \begin{equation*}
\Sigma^{\xi} (\cU_k/\cU_{k - t}) \otimes p_t^*\Sigma_\Sp^\theta \cS_{2(n - k + t)}
\end{equation*} 
    on $\IFl(k-t,k;V)$ admits a filtration whose associated graded pieces are irreducible $\bG$-equivariant vector bundles of the form
\begin{equation*} 
  \Sigma^{\gamma} (\cU_k/\cU_{k - t}) \otimes \Sigma_\Sp^\eta \cS_{2(n - k)},
    \end{equation*}
   where~\(\gamma \in \Y_t\) satisfies~\(\gamma_i \in [\xi_i - a, \xi_i + a]\), and~\(\eta \in \YD^{a}_{n - k}\) is contained in $\theta$.
\end{lemma}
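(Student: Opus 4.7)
The proof proceeds in three steps: a geometric setup, a symplectic branching argument, and a final Littlewood--Richardson decomposition. First, I would exhibit a \(\bG\)-equivariant filtration of the pulled-back symplectic bundle \(p_t^*\cS_{2(n-k+t)}\) on \(\IFl(k-t,k;V)\). Since the tautological flag \(\cU_{k-t}\subset \cU_k\) is isotropic, we obtain in \(V\) the chain \(\cU_{k-t}\subset \cU_k\subset \cU_k^\perp\subset \cU_{k-t}^\perp\); quotienting by \(\cU_{k-t}\) yields a \(\bG\)-equivariant chain of isotropic subbundles
\begin{equation*}
0\subset \cU_k/\cU_{k-t}\subset (\cU_k/\cU_{k-t})^\perp\subset \cS_{2(n-k+t)},
\end{equation*}
(with orthogonal taken inside \(\cS_{2(n-k+t)}\)), whose successive quotients are \(\cU_k/\cU_{k-t}\), \(\cS_{2(n-k)}\) and, via the symplectic duality, \((\cU_k/\cU_{k-t})^\vee\).

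Second, I would apply the symplectic Schur functor \(\Sigma^\theta_{\Sp}\) to this filtered bundle. Via the equivalence~\eqref{identification}, this is the representation-theoretic question of restricting the irreducible \(\Sp(2(n-k+t))\)-representation of highest weight \(\theta\) to the Levi \(\GL(t)\times \Sp(2(n-k))\) of the parabolic stabilizing an isotropic \(t\)-subspace. The standard \(\bU\)-radical filtration on the restriction produces a \(\bG\)-equivariant filtration of \(\Sigma^\theta_{\Sp}\cS_{2(n-k+t)}\) whose irreducible subquotients have the form
\begin{equation*}
\Sigma^{\gamma'}(\cU_k/\cU_{k-t})\otimes \Sigma^\eta_{\Sp}\cS_{2(n-k)},
\end{equation*}
for some \(\gamma'\in \Y_t\) and \(\eta\in \YD_{n-k}\). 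To pin down the ranges of \(\gamma'\) and \(\eta\), I observe that the Levi highest weight \((\gamma',\eta)\) appears as a weight of \(\Sigma^\theta_{\Sp}\cS_{2(n-k+t)}\), hence lies in the convex hull \(\operatorname{conv}(W_{\Sp}\cdot \theta)\). Combined with \(\theta\in \YD^{a}_{n-k+t}\), the standard description of the symplectic weight polytope (dominance of sorted absolute values of entries by \(\theta\)) yields \(|\gamma'_i|\le a\) and \(\eta_i\le \theta_i\); in particular \(\eta\in \YD^{a}_{n-k}\) and \(\eta\) is contained in \(\theta\).

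Third, I would tensor the filtration from Step 2 with \(\Sigma^\xi(\cU_k/\cU_{k-t})\) and decompose \(\Sigma^\xi\otimes \Sigma^{\gamma'}\) via the Littlewood--Richardson rule. By Lemma~\ref{lemma:LR rule}, each irreducible summand \(\Sigma^\gamma\) satisfies \(\gamma\in \operatorname{conv}\{\xi+\sigma(\gamma')\mid \sigma\in \mathfrak{S}_t\}\); since \(|\gamma'_j|\le a\) for all \(j\), this gives the entrywise bound \(\gamma_i\in [\xi_i-a,\xi_i+a]\) asserted in the lemma.

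The main obstacle is Step 2, specifically the refined bound \(\eta\subset \theta\) (rather than merely \(\eta_i\le a\)): this needs the full weight-polytope description for \(\Sp\)-representations in terms of dominance of sorted absolute-value sequences, rather than just the naive entrywise bound by \(\theta_1=a\). Steps 1 and 3 are routine once Step 2 is in place.
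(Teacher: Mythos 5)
Your Steps~1 and~3 are fine and essentially match the paper's, but Step~2 has a genuine gap precisely at the point you flag as "the main obstacle."

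The issue is the inference "the standard description of the symplectic weight polytope (dominance of sorted absolute values of entries by $\theta$) yields $\dots \eta_i\le \theta_i$." Dominance of the sorted absolute-value sequence by $\theta$ does \emph{not} give the entrywise bound beyond the first coordinate. Concretely, take $\theta=(3,1)$ and a weight with sorted absolute values $\mu^+=(2,2)$: then $\mu^+_1=2\le 3$ and $\mu^+_1+\mu^+_2=4\le 4$, so $\mu^+\preceq\theta$ in dominance and the weight lies in $\operatorname{conv}(W_{\Sp}\cdot\theta)$, yet $\mu^+_2=2>1=\theta_2$. What does follow correctly from your argument is $\eta_1\le \mu^+_1\le\theta_1\le a$, i.e.\ $\eta\in\YD^{a}_{n-k}$, and likewise $|\gamma'_i|\le a$; but the containment $\eta\subset\theta$, which you explicitly claim, is not a consequence of lying in the weight polytope.

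The paper's route avoids this problem. It observes that $\Sigma^\theta_\Sp\cS$ is a quotient of the \emph{classical} Schur functor $\Sigma^\theta\cS$ and applies Lemma~\ref{skew_decomposition} (the filtration of $\Sigma^\theta$ of a filtered bundle) twice to the chain $[\cU_k/\cU_{k-t},\ \cS_{2(n-k)},\ (\cU_k/\cU_{k-t})^\vee]$. The Cauchy/skew-functor formula produces subquotients $\Sigma^\alpha(\cU_k/\cU_{k-t})\otimes\Sigma^\nu\cS_{2(n-k)}\otimes\Sigma^\beta(\cU_k/\cU_{k-t})^\vee$ with the \emph{containment} $\alpha,\beta,\nu\subset\theta$ built in from the vanishing of Littlewood--Richardson coefficients, and the further decomposition of $\Sigma^\nu\cS_{2(n-k)}$ into symplectic Schur functors $\Sigma^{\nu'}_\Sp$ with $\nu'\subset\nu$ carries this containment through. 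Your approach tries to get the same containment from the weight polytope alone, which is strictly weaker information: the polytope sees only the column sums $\sum_{i\le j}\mu^+_i\le\sum_{i\le j}\theta_i$, never the entrywise inequalities. If you wanted to salvage a direct Levi-branching argument, you would need the actual branching rule for $\Sp(2(n-k+t))\downarrow\GL(t)\times\Sp(2(n-k))$ (a skew-LR-type rule), not just the weight polytope; but at that point the paper's Cauchy-formula route is both shorter and already fully rigorous.
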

\begin{proof}
Recall that the irreducible $\bG$-equivariant vector bundles on $\IFl(k-t,k,V)$ are precisely those of the form
\begin{equation*} 
  \Sigma^{\lambda}\cU^{\vee}_{k-t}\otimes \Sigma^{\kappa}(\cU_{k}/\cU_{k-t})^{\vee}\otimes \Sigma^{\mu}_{\Sp}\cS_{2(n-k)},
\end{equation*}
where $\lambda\in \Y_{k-t}$, $\kappa\in \Y_{t}$ and $\mu\in \YD_{n-k}$.

The pullback bundle $p_t^*\cS_{2(n-k+t)}$ admits a filtration whose associated graded pieces are irreducible $\bG$-equivariant vector bundles of the form
\begin{equation*}
p_t^*\cS_{2(n-k+t)}=[(\cU_{k}/\cU_{k-t}), \cS_{2(n-k)},(\cU_{k}/\cU_{k-t})^{\vee}].
\end{equation*}

For a symplectic vector bundle $\cS$, the symplectic Schur functor $\Sigma^{\theta}_{\Sp}\cS$ is a quotient of the classical Schur functor $\Sigma^{\theta}\cS$. Therefore, by Lemma~\ref{skew_decomposition}, the associated subquotients of the bundle \begin{equation*}
\Sigma^{\xi} (\cU_k/\cU_{k - t}) \otimes p_t^*\Sigma_\Sp^\theta \cS_{2(n - k + t)}
\end{equation*} are of the form
\begin{equation}\label{eq:filtr_flags}
 \Sigma^{\xi} (\cU_k/\cU_{k - t})\otimes \Sigma^{\alpha}(\cU_{k}/\cU_{k-t})\otimes \Sigma^{\beta}(\cU_{k}/\cU_{k-t})^{\vee}\otimes \Sigma^{\nu}_{\Sp}\cS_{2(n-k)},   
\end{equation}
where $\alpha,\beta,\nu\subset \theta$.

In particular, if the bundle
\begin{equation*}
\Sigma^{\gamma} (\cU_k/\cU_{k - t}) \otimes \Sigma_\Sp^\eta \cS_{2(n - k)},
\end{equation*} 
appears as a subquotient in this filtration, then Lemma~\ref{lemma:LR rule} implies that~\(\xi - a \le \gamma \le \xi + a\) for each $i$, and~$\eta\subset \theta$ with $\eta \in \YD^{a}_{n - k}$, as claimed.
\end{proof}

\section{Generalized staircase and symplectic staircase complexes} \label{section:generalized}
In this section we introduce a mild generalization of staircase complexes on relative Grassmannians
and define a new important class of exact sequences --- symplectic staircase complexes on isotropic Grassmannians.

\subsection{Reminder: Koszul and staircase complexes}
First recall the well--known notions of Koszul complexes and staircase complexes.

Let $X$ be a variety and let~$\cV$ be a rank~$r$ vector bundle on $X$. For all~$m\in \mathbb{N}$ there is the long exact sequence on~$\Gr_X(l,\cV)$ obtained from~\eqref{tautological_exact_sequence_1}, which we call the \textbf{Koszul complex}
\begin{equation} \label{Koszul}
    0 \to \Lambda^m\cQ_{r-l}^{\vee}\to \Lambda^m\cV^{\vee}\otimes \cO \to \Lambda^{m-1}\cV^{\vee}\otimes \cU^{\vee}_l\to \ldots\to \cV^{\vee}\otimes S^{m-1}\cU^{\vee}_l\to S^{m}\cU^{\vee}_l \to 0.
\end{equation}

Next, recall the definition of staircase complexes that was introduced in~\cite{Fonarev_2013}.
Given a Young diagram $\alpha \in \YD_{k}^{2n-k}$ with~\(\alpha_1 = 2n-k\), we define a sequence of diagrams $\alpha^{(1)}, \ldots, \alpha^{(2n - k)}$ as follows. For each index $i \in [1,2n-k]$, let $j$ be the largest index such that $\alpha_j - 1 \ge \alpha_1 -i$. Then 
\begin{equation} \label{remaind}
  \alpha^{(i)}\coloneqq(\alpha_2-1, \alpha_3-1, \ldots, \alpha_j-1, \alpha_1-i,\alpha_{j+1}, \ldots, \alpha_{k}).
\end{equation}
We also define $b^i\coloneqq|\alpha| - |\alpha^{(i)}|$. \textbf{The staircase complex} on $\Gr(k,V)$ 
is defined as follows:
\begin{equation*}
0 \to \Sigma^{\bar{\alpha}} \cU^{\vee}_k(-1) \to \Lambda^{b^{2n-k}} V\otimes \Sigma^{\alpha^{(2n-k)}}\cU^{\vee}_k \to 
\ldots \to \Lambda^{b^{2}} V \otimes \Sigma^{\alpha^{(2)}} \cU^{\vee}_k \to \Lambda^{b^{1}} V \otimes \Sigma^{\alpha^{(1)}} \cU^{\vee}_k \to \Sigma^{\alpha} \cU^{\vee}_k \to 0,
\end{equation*} 
where $\bar \alpha=(\alpha_2,\ldots, \alpha_k)$.

\subsection{Combinatorial setup.}
We start with some combinatorial notation that will be useful later; it is motivated by a Borel--Bott--Weil computation on $\PP_{\Gr(k,\cV)}(\cU_k)$ for a weight
\begin{equation}
\label{eq:weight-t-bal}
(z;\alpha_2,\dots,\alpha_k) \in \mathbb{Z} \times \Y_{k-1},
\end{equation}
see the proof of Theorem~\ref{thm:gsc} below for the details.

\newcommand{\BBW}{\operatorname{BBW}}
\newcommand{\BBWs}{\operatorname{BBW}^\Sp}
We associate with~$\bar \alpha \coloneqq (\alpha_2,\alpha_3,\dots,\alpha_k)$ the following set of {\sf vanishing values} 
(the values of~$z$ for which the bundle  $S^z\cU_1^{\vee}\otimes \Sigma^{\bar \alpha}\cQ^{\vee}_{k-1}$ with the weight~\eqref{eq:weight-t-bal} has no cohomology on~$\PP_{\Gr(k,\cV)}(\cU_k)$):
\begin{equation}
\label{eq:def-bv}
\bV(\bar \alpha) \coloneqq \{\alpha_k - k + 1, \dots, \alpha_2 - 1\} \subset \ZZ
\end{equation}

Further, we consider the counting function
\begin{equation}
\label{eq:def-dd}
\dd_{\bar \alpha} \colon \mathbb{Z} \setminus \bV(\bar \alpha) 
\to \ZZ,
\qquad 
\dd_{\bar \alpha}(z) \coloneqq |(z,\infty) \cap \bV(\bar \alpha)| + 1. \quad 
\end{equation}
This is a function with jumps equal to~$-1$ at each point of~$\bV(\bar \alpha)$, 
equal to~$k$ near~$-\infty$ and~$1$ near~$+\infty$.
It equals the number (counting from right to left, starting from~1) of the connected component of~$\RR \setminus \bV(\bar \alpha)$ to which~$z$ belongs.

Finally, we consider the following {\sf BBW map}~$\BBW_{\bar \alpha} \colon \ZZ \setminus \bV(\bar \alpha) \to \Y_k$ 
(computing the cohomology of the bundle $S^z\cU_1^{\vee}\otimes \Sigma^{\bar \alpha}\cQ^{\vee}_{k-1}$ on~$\PP_{\Gr(k,\cV)}(\cU_k)$)
defined by
\begin{equation}
\label{eq:def-bbw}
\BBW_{\bar \alpha}(z) \coloneqq
\begin{cases}
(z,\alpha_2,\alpha_3,\alpha_4, \dots,\alpha_k), & 
\text{if~$\dd_{\bar \alpha}(z) = 1$},\\
(\alpha_2-1,z+1,\alpha_3,\alpha_4, \dots,\alpha_k), & 
\text{if~$\dd_{\bar \alpha}(z) = 2$},\\
(\alpha_2-1,\alpha_3-1,z+2,\alpha_4,\dots,\alpha_k), & 
\text{if~$\dd_{\bar \alpha}(z) = 3$},\\
\hspace{3cm}\dots \\
(\alpha_2-1,\alpha_3-1,\dots, \alpha_{k-1}-1, z + k - 2,\alpha_k), & 
\text{if~$\dd_{\bar \alpha}(z) = k - 1$},\\
(\alpha_2-1,\alpha_3-1,\dots, \alpha_{k-1}-1, \alpha_k - 1, z + k - 1), & 
\text{if~$\dd_{\bar \alpha}(z) = k$},\\
\end{cases}
\end{equation} 
\subsection{Generalized staircase complexes} \label{subsection:generalized}
We are going to use this combinatorics to produce an exact sequence of vector bundles on a relative Grassmannian of a vector bundle $\Gr_X(k,
\cV)$.
We denote by~$r$ the rank of a vector bundle $\cV$.

We fix a $\GL_1\times \GL_{k-1}$-dominant weight $\alpha=(\alpha_1;\alpha_2,\dots,\alpha_k) \in \mathbb{Z} \times \Y_{k-1}$ and consider the interval~$[\alpha_1 - r, \alpha_1]$, which we think of as a sequence of~$r + 1$ consecutive integers. 
We remove all vanishing values from this interval and enumerate the remaining integers decreasingly:
\begin{equation}
\label{eq:def-ti}
\{z_0 > z_1 > \dots > z_{\mm(\alpha)}\} \coloneqq [\alpha_1-r,\alpha_1] \cap \ZZ \setminus \bV(\bar \alpha),
\qquad 
\mm(\alpha) \coloneqq |[\alpha_1-r,\alpha_1] \cap \ZZ \setminus \bV(\bar \alpha)|-1.
\end{equation}
Note that
\begin{equation*}
r - k + 1 \le \mm(\alpha) \le r.
\end{equation*}
Furthermore, counting the number of elements of~\(\bV(\bar \alpha)\) between~\(z_{i}\) and~\(z_{i+1}\) for~$i \in [0,\mm(\alpha)-1]$ we get:
\begin{equation} \label{difference_1}
\dd_{\bar \alpha}(z_{i+1})-\dd_{\bar \alpha}(z_{i})= z_{i}-z_{i+1}-1.
\end{equation}
Now we can define the complex.
\begin{theorem}
\label{thm:gsc}
Let~$\cV$ be a vector bundle of rank~$r > k$ on a scheme~$X$.
For each $\GL_1\times \GL_{k-1}$-dominant weight~$\alpha = (\alpha_1;\alpha_2,\alpha_3,\dots,\alpha_k) \in \Y_1\times \Y_{k-1}$,
if~$m = \mm(\alpha)$ and~$z_0,\dots,z_{\mm(\alpha)}$ are defined by~\eqref{eq:def-ti},
there is an exact sequence of vector bundles 
\begin{multline}
\label{eq:gsc}
0 \to 
\wedge^{b^m}\cV^\vee \otimes \Sigma^{\alpha^{(m)}}\cU^\vee \to 
\wedge^{b^{m-1}}\cV^\vee \otimes \Sigma^{\alpha^{(m-1)}}\cU^\vee \to \dots \\
\dots \to 
\wedge^{b^1}\cV^\vee \otimes \Sigma^{\alpha^{(1)}}\cU^\vee \to 
\wedge^{b^0}\cV^\vee \otimes \Sigma^{\alpha^{(0)}}\cU^\vee \to 
0
\end{multline}
on the relative Grassmannian~$\Gr_X(k,\cV)$, where
\begin{equation}
\label{eq:def-ab-i}
\alpha^{(i)} \coloneqq \BBW_{\bar \alpha}(z_i),
\qquad 
b^i \coloneqq \alpha_1 - z_i.
\end{equation}
If an algebraic group~$G$ acts on~$X$ and~$\cV$ is $G$-equivariant, this sequence is also $G$-equivariant.
\end{theorem}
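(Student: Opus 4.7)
My plan is to derive the generalised staircase complex~\eqref{eq:gsc} as the derived pushforward of an exact Koszul-type complex from the two-step flag variety
\begin{equation*}
Y \coloneqq \Fl_X(1,k;\cV),\qquad
\PP_X(\cV) \xleftarrow{\,p\,} Y \xrightarrow{\,\pi\,} \Gr_X(k,\cV),
\end{equation*}
where $\pi$ identifies $Y$ with the projective bundle $\PP_{\Gr_X(k,\cV)}(\cU_k)$, with relative tautological line $\cU_1 = \cO_\pi(-1)$ and relative quotient $\cU_k/\cU_1$ of rank $k-1$. On the base $\PP_X(\cV)=\Gr_X(1,\cV)$ the Koszul complex~\eqref{Koszul} with $l=1$, $m=r$ is exact since its leftmost term $\Lambda^{r}\cQ_{r-1}^\vee$ vanishes for rank reasons. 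I would pull it back along $p$ and tensor with the $G$-equivariant bundle $\cU_1^{\vee\otimes(\alpha_1-r)}\otimes\Sigma^{\bar\alpha}(\cU_k/\cU_1)^\vee$, obtaining an exact complex
\begin{equation*}
\cK^\bullet\colon\quad 0\to\cK^{-r}\to\cK^{-r+1}\to\dots\to\cK^{-1}\to\cK^{0}\to 0
\end{equation*}
of $G$-equivariant vector bundles on $Y$, with $\cK^{-i}=\Lambda^{i}\cV^\vee\otimes\cU_1^{\vee\otimes(\alpha_1-i)}\otimes\Sigma^{\bar\alpha}(\cU_k/\cU_1)^\vee$.

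The next step is to apply $R\pi_*$ term by term. Because $\pi$ is a $\PP^{k-1}$-bundle, the relative Borel--Bott--Weil theorem~\ref{theo:relative_BBW} (with $l=1$ and rank $k$), combined with the definitions~\eqref{eq:def-bv}--\eqref{eq:def-bbw}, gives
\begin{equation*}
R\pi_*\cK^{-i} =
\begin{cases}
\Lambda^{i}\cV^\vee\otimes\Sigma^{\BBW_{\bar\alpha}(\alpha_1-i)}\cU_k^\vee\bigl[1-\dd_{\bar\alpha}(\alpha_1-i)\bigr] & \text{if } \alpha_1-i\notin\bV(\bar\alpha),\\
0 & \text{otherwise.}
\end{cases}
\end{equation*}
By~\eqref{eq:def-ti} the non-vanishing pushforwards occur precisely at the indices $i=b^j$ for $j=0,\dots,m$, producing exactly the predicted bundles $\Lambda^{b^j}\cV^\vee\otimes\Sigma^{\alpha^{(j)}}\cU_k^\vee$ placed in cohomological degree $\dd(z_j)-1$.

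Finally, because $\cK^\bullet$ is acyclic, $R\pi_*\cK^\bullet\simeq 0$ in $D^b(\Gr_X(k,\cV))$; in the associated hypercohomology spectral sequence $E_1^{p,q}=R^q\pi_*\cK^{p}\Rightarrow 0$, the non-vanishing entries sit at the bidegrees $(-b^j,\dd(z_j)-1)$. The identity~\eqref{difference_1} is precisely the statement that the total degree $-b^j+\dd(z_j)-1$ strictly decreases by $1$ as $j$ increases, so all non-vanishing $E_1$ entries lie on a single anti-diagonal, and the only possible non-trivial differentials are the $d_{r_j}$ on page $r_j=b^j-b^{j-1}$ connecting successive entries; vanishing of the abutment then forces these differentials to splice together into the exact complex~\eqref{eq:gsc}. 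The main technical obstacle is turning this spectral sequence picture into an honest exact complex of vector bundles: I plan to proceed by induction on $m$, at each step applying $R\pi_*$ to a stupid truncation of $\cK^\bullet$ that isolates the topmost non-vanishing BBW contribution and using the resulting distinguished triangle to extract one arrow of~\eqref{eq:gsc} together with a shorter Koszul subcomplex to which the induction applies. Equivariance is automatic, since every construction above is natural with respect to the $G$-action.
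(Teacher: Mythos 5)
Your proposal is essentially the paper's proof: pull back the twisted Koszul complex from $\PP_X(\cV)$ to $\Fl_X(1,k;\cV)$, tensor with the Fourier--Mukai kernel $\Sigma^{\bar\alpha}(\cU_k/\cU_1)^\vee$, push forward via the relative Borel--Bott--Weil theorem, and use acyclicity of the Koszul complex together with the identity~\eqref{difference_1} to conclude exactness. Note only a wording slip: since the total degree $-b^j+\dd_{\bar\alpha}(z_j)-1$ drops by exactly one as $j$ increases, the $m+1$ non-vanishing $E_1$ entries occupy $m+1$ \emph{consecutive} anti-diagonals (one entry per anti-diagonal), not a single one --- it is precisely this pattern, combined with the vanishing abutment, that forces the zigzag of differentials to splice into~\eqref{eq:gsc}, and your stupid-truncation induction is a clean way to make the paper's ``the spectral sequence degenerates'' precise.
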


\begin{proof}
We apply the Fourier--Mukai transform with kernel~$\Sigma^{\bar \alpha}(\cU_k/\cU_1)^\vee$ on~$\Fl_X(1,k;\cV)$
to the twist of the Koszul complex~\eqref{Koszul}
\begin{equation}
\label{eq:gsc-Koszul}
0 \to 
\wedge^r\cV^\vee \otimes \cO_{\PP_X(\cV)}(\alpha_1-r) \to 
\dots \to
\cV^\vee \otimes \cO_{\PP_X(\cV)}(\alpha_1-1) \to 
\cO_{\PP_X(\cV)}(\alpha_1) \to 
0.
\end{equation}

To compute the image of the vector bundle~$\wedge^{\alpha_1-z}\cV^\vee \otimes \cO_{\PP_X(\cV)}(z)$, where~$z \in [\alpha_1-r,\alpha_1] \cap \ZZ$,
we apply Theorem~\ref{theo:relative_BBW} (Borel--Bott--Weil) to the tautological bundle associated with the weight
\begin{equation*}
(z;\alpha_2,\dots,\alpha_k;0,\dots,0) \in \Y_1 \times \Y_{k-1} \times \Y_{r-k}.
\end{equation*}
Therefore, for~$z \in \bV(\bar \alpha)$, the result vanishes. Thus, the image is nonzero only for~~$z \in \{z_0,z_1,\dots,z_{m(\alpha)}\}$ and is given by $\BBW_{\bar \alpha}(z)[1-\dd_{\bar \alpha}(z)]$. 

Thus, for~$z = z_i$ the image of $\wedge^{\alpha_1-z}\cV^\vee \otimes \cO_{\PP_X(\cV)}(z)[-z]$ is 
\begin{equation*}
\wedge^{b^i}\cV^\vee \otimes \Sigma^{\alpha^{(i)}}\cU^\vee[1-\dd_{\bar \alpha}(z_i)-z_i].
\end{equation*} 
Thus, by formula~\eqref{difference_1}, the spectral sequence computing the image of the complex~\eqref{eq:gsc-Koszul} degenerates into the exact sequence~\eqref{eq:gsc}.

If a group~$G$ acts on~$X$ then the Koszul complex and the Fourier--Mukai functors are~$G$-equivariant,
hence the resulting exact sequence is $G$-equivariant as well.
\end{proof}
\begin{remark} \label{remark:why.generalized}
If~$\bV(\bar \alpha) \subset [\alpha_1-r+1,\alpha_1-1]$, i.e., $\alpha_1 \ge \alpha_2 \ge \dots \ge \alpha_k \ge \alpha_1 - r + k$, the
exact sequence~\eqref{eq:gsc} coincides with the usual staircase complex and the definition~\eqref{eq:def-ab-i} of $\alpha^{(i)}$ coincides with~\eqref{remaind}.  For that reason, we call~\eqref{eq:gsc} the {\sf generalized staircase complex}.   
\end{remark}

\subsection{Some properties of weights~\texorpdfstring{$\alpha^{(i)}$}{alpha (i)}} \label{subsection:alpha^i}
First, the weights $\alpha^{(i)}$ can be defined for all $i\in \mathbb{Z}$. For a $\GL_1\times \GL_{k-1}$-dominant weight~$\alpha = (\alpha_1;\alpha_2,\alpha_3,\dots,\alpha_k) \in \Y_1\times \Y_{k-1}$ we decreasingly enumerate the consecutive integers in the set $\ZZ \setminus \bV(\bar \alpha)$:
\begin{equation} \label{def:zi}
\{\ldots <z_2<z_1<z_0<z_{-1}<z_{-2}<\ldots \} \coloneqq \ZZ \setminus \bV(\bar \alpha),
\end{equation}
in such a way that $z_0$ is the closest to $\alpha_1$ integer in $\mathbb{Z}\setminus \bV(\bar \alpha)$ with the property $z_0\le \alpha_1$. Then, as before, we define
\begin{equation} \label{def:foralli}
\alpha^{(i)} \coloneqq \BBW_{\bar \alpha}(z_i),
\qquad 
b^i \coloneqq \alpha_1 - z_i, \qquad \dd(\alpha,i) \coloneqq \dd_{\bar \alpha}(z_i) \qquad \text{for} \quad i \in \ZZ. 
\end{equation}
In particular, $\{b^i\}$ is a strictly increasing sequence of integers. Note that 
\begin{equation} \label{eq:def:b}
    b^i= |\alpha| -  |\alpha^{(i)}|.
\end{equation}
\begin{remark}\label{rem:set-bar-alpha}
In other words, up to a shift of enumeration, the set of weights $\{\alpha^{(i)}\}_{i\in \mathbb{Z}}$ depends only on~$\bar \alpha$ and coincides with the set of weights of bundles $\pi_{1*}(S^z\cU_1^{\vee}\otimes \Sigma^{\bar{\alpha}}\cQ^{\vee}_{k-1})_{z\in \mathbb{Z}}$ on~$\Gr(k,\cV)$, where $z\in \mathbb{Z}$ and~$\pi_1\colon\PP_{\Gr(k,\cV)}(\cU_k)\to \Gr(k,\cV)$. The enumeration of the set $\{\alpha^{(i)}\}_{i\in \mathbb{Z}}$ depends on $\alpha_1$.    
\end{remark}
\begin{example}
If~\(\alpha = (3,2,2,1)\) then~\(\bV(\bar \alpha) = \{-2,0, 1\}\), and we have
\begin{equation*}
    z_i = 
    \begin{cases}
        3-i, & \text{if} \ i\le 1;\\
        -1, & \text{if} \ i=2;\\ 
        -i, & \text{if} \ i\ge 3.
    \end{cases}
\end{equation*}
The functions~\(\dd_{\bar \alpha} \colon \mathbb{Z} \setminus \bV(\bar \alpha) \to \ZZ\) and~\(\dd(\alpha,-)\colon \mathbb{Z} \to \ZZ\) have graphs plotted on Figure~\ref{fig:d}.
\begin{figure}[h]
\caption{Graphs of~\(\dd\) for~\(\alpha=(3,2,2,1)\)}
\label{fig:d}
\begin{subfigure}[b]{0.55\textwidth}
    \centering
    \begin{tikzpicture}[scale=0.9]
        \draw[->] (-6,0) -- (5,0) node[above] {\(z\)}; 
        \draw[->] (0,-0.5) -- (0,5) node[left] {\(\dd_{\bar \alpha}(z)\)}; 
		
        \foreach \x in {-5,-4,-3} 
        {
            \node[draw, circle, fill, inner sep=1.5pt] at (\x,4) {};
        }
    
        \node[draw, circle, fill, inner sep=1.5pt] at (-1,3) {};
    		
        \foreach \x in {2,3,4} 
        {
	   \node[draw, circle, fill, inner sep=1.5pt] at (\x,1) {};
        }

        \node at (4,-0.3) {\(z_{-1}\)};
        \node at (3,-0.3) {\(z_{0}\)};
        \node at (2,-0.3) {\(z_{1}\)};
        \node at (-1,-0.3) {\(z_{2}\)};
        \node at (-3,-0.3) {\(z_{3}\)};
        \node at (-4,-0.3) {\(z_{4}\)};
        \node at (-5,-0.3) {\(z_{5}\)};
        \draw[opacity=0.5,help lines] (-5.5,0) grid (4.5,4.5);
    \end{tikzpicture}
    \caption{\(\dd_{\bar \alpha}(z)\) for~\(-5 \le z \le 4\)}
    \label{fig:d(z)}
\end{subfigure}
\hspace{0.02\textwidth}
\begin{subfigure}[b]{0.4\textwidth}
          \centering
    \begin{tikzpicture}[scale=0.9]
        \draw[->] (-2,0) -- (6,0) node[above] {\(i\)}; 
        \draw[->] (0,-0.5) -- (0,5) node[left] {\(\dd(\alpha,i)\)}; 
		
        \foreach \x in {-1,0,1} 
        {
            \node[draw, circle, fill, inner sep=1.5pt] at (\x,1) {};
        }
    
        \node[draw, circle, fill, inner sep=1.5pt] at (2,3) {};
    		
        \foreach \x in {3,4,5} 
        {
	   \node[draw, circle, fill, inner sep=1.5pt] at (\x,4) {};
        }
        \node at (0,-0.3) {\(\phantom{z_0}\)};
        \draw[opacity=0.5,help lines] (-1.5,-0.5) grid (5.5,4.5);
    \end{tikzpicture}
    \caption{\(\dd(\alpha,i)=\dd_{\bar \alpha}(z_i)\) for~\(-1 \le i\le 5\)}
    \label{fig:d(i)}
\end{subfigure}
\end{figure}
\end{example}
Note that, analogously to~\eqref{difference_1}, for all~$i\in \mathbb{Z}$ we have
\begin{equation} \label{difference_1,in_Z}
\dd(\alpha,i+1)-\dd(\alpha,i)=\dd(z_{i+1})-\dd(z_{i})= z_{i}-z_{i+1}-1.
\end{equation}

We now present explicit formulas for~\(z_i\), and consequently~\(\alpha^{(i)}\) and $b^i(\alpha)$, for all $i\in \mathbb{Z}$. These formulas will be used in the construction of secondary staircase complexes in Section~\ref{section:secondary.staircase}.

First, the identity~\eqref{difference_1,in_Z} implies
\begin{equation}\label{eq:z_i=z_0-i}
    z_i = z_0+\dd(\alpha,0)-\dd(\alpha,i)-i.
\end{equation}
Thus, it suffices to find an expression for~\(z_0\) in terms of~\(\alpha\) and~\(\dd(\alpha,0)\). 

To do this, we associate to each $\alpha$ an integer defined by
\begin{equation} \label{eq:def-vv1}
    \vv(\alpha)\coloneq \dd(\alpha,0)-1 +z_0-\alpha_1.
\end{equation}
\begin{example}
If~\(\alpha \in \Y_k\), then~\(z_0 = \alpha_1\) and~\(\dd(\alpha,0)=1\). Therefore,
\begin{equation}\label{eq:v=0}
    \vv(\alpha)= 0 \qquad \text{for}\quad \alpha\in\Y_k.
\end{equation}  
\end{example}
Next, substituting~\eqref{eq:def-vv1} 
into~\eqref{eq:z_i=z_0-i} yields the expression:
\begin{equation*}
    z_i=\alpha_1+\vv(\alpha)-\dd(\alpha,i)+1-i \qquad\text{for} \quad i \in \ZZ.
\end{equation*}
Consequently, we have
\begin{equation*}
\alpha^{(i)}=\BBW_{\bar \alpha}(z_i)=\BBW_{\bar \alpha}(\alpha_1+\vv(\alpha)-\dd(\alpha,i)+1-i),
\end{equation*}
so that applying the definition~\eqref{eq:def-bbw} yields the explicit formula:
\begin{equation} \label{explicit_formula_i}
 \alpha^{(i)} =(\alpha_2-1,\ldots, \alpha_{\dd(\alpha, i)}-1,\alpha_1-i+\vv(\alpha),\alpha_{\dd(\alpha, i)+1},\ldots,\alpha_k). 
\end{equation}
In particular, we get an alternative expression for~\eqref{eq:def-dd}:
\begin{equation}\label{eq:d-ineq}
    \dd(\alpha,i) = \max \{1, j \mid \alpha_1-i+\vv(\alpha)< \alpha_j\}.
\end{equation}
Next, by~\eqref{eq:v=0} the formula~\eqref{explicit_formula_i} coincides with~\eqref{remaind}  for~$\alpha\in \Y_k$.

Moreover, combining~\eqref{eq:def:b} with~\eqref{explicit_formula_i}, we obtain
\begin{equation} \label{eq:b=i-v+d}
  b^i(\alpha) = i - \vv(\alpha) + \dd(\alpha, i) - 1.  
\end{equation}

To derive an alternative formula for~\(\vv(\alpha)\), we observe that
\begin{equation*}
\alpha_1-z_0 = |\bV(\bar\alpha)\cap (z_0,\alpha_1]|,
\end{equation*} 
since~\(z_0\) is the largest integer not in~\(\bV(\bar\alpha)\) satisfying~\(z_0 \le \alpha_1\).  
Comparing this with~\eqref{eq:def-dd}, we obtain a more explicit, combinatorial expression for~\(\vv\):
\begin{equation}\label{eq:def-vv}
\vv \colon \ZZ\times \Y_{k-1} \to [0,k-1], \qquad
    \vv(\alpha)= |(\alpha_1,\infty) \cap \bV(\bar \alpha)|. 
\end{equation}

A natural question to ask is how a given Young diagram $\beta\in \YD_t$ can be presented in the form~$\beta=\lambda^{(i)}$ for some $\lambda\in \Y_1\times \YD_{t-1}$ and some $i\in \mathbb{Z}$. 
\begin{lemma}\label{lemma:alpha^i->alpha}
Given a pair $(\beta,d) \in \YD_t\times [1,t]$, and an integer~$b\in \mathbb{Z}$, there exists a unique weight
\begin{equation*} 
    \mathrm{ins}_{b,d}(\beta) \coloneqq (\beta_d +b-d+1; \beta_1 + 1, \dots, \beta_{d - 1} + 1, \beta_{d + 1}, \dots, \beta_t) \in \Y_1 \times \YD_{t - 1}, 
\end{equation*}
constructed from $\beta$, $b$ and $d$, and a unique integer $i \in \mathbb{Z}$ such that 
\begin{equation*}
\mathrm{ins}_{b,d}(\beta)^{(i)}=\beta, \quad b^i(\mathrm{ins}_{b,d}(\beta))=b, \quad \dd(\mathrm{ins}_{b,d}(\beta), i)=d.
\end{equation*}
\end{lemma}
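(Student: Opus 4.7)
The plan is to read off $\alpha$ and $i$ directly from the explicit formulas~\eqref{explicit_formula_i} and~\eqref{eq:b=i-v+d}, treating the lemma as an inversion statement for these two formulas. I would split the argument into three short steps: admissibility of $\mathrm{ins}_{b,d}(\beta)$, uniqueness, and existence.

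First, I would verify that $\mathrm{ins}_{b,d}(\beta)$ lies in $\Y_1 \times \YD_{t-1}$. Its first entry $\beta_d + b - d + 1$ is unconstrained, so the only thing to check is that $\beta_1 + 1, \dots, \beta_{d-1} + 1, \beta_{d+1}, \dots, \beta_t$ is a Young diagram in $\YD_{t-1}$. The nontrivial inequality is $\beta_{d-1} + 1 \ge \beta_{d+1}$, which follows from $\beta \in \YD_t$ (it even gives $\beta_{d-1} \ge \beta_{d+1}$); the remaining inequalities, including $\beta_t \ge 0$, are immediate.

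For uniqueness, suppose $\alpha \in \Y_1 \times \YD_{t-1}$ and $i \in \ZZ$ satisfy $\alpha^{(i)} = \beta$, $b^i(\alpha) = b$, $\dd(\alpha,i) = d$. Comparing components in~\eqref{explicit_formula_i} entry-by-entry gives $\alpha_{j+1} = \beta_j + 1$ for $j \in [1, d-1]$, $\alpha_j = \beta_j$ for $j \in [d+1, t]$, and $\alpha_1 - i + \vv(\alpha) = \beta_d$ in position $d$. Combining this last identity with the relation $i - \vv(\alpha) = b - d + 1$ coming from~\eqref{eq:b=i-v+d}, we obtain $\alpha_1 = \beta_d + b - d + 1$, so $\alpha = \mathrm{ins}_{b,d}(\beta)$. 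The integer $i = b - d + 1 + \vv(\alpha)$ is then determined as well.

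For existence, I set $\alpha \coloneqq \mathrm{ins}_{b,d}(\beta)$ and $i \coloneqq b - d + 1 + \vv(\alpha)$, and verify the three conditions in the order $\dd(\alpha,i)=d$, $\alpha^{(i)} = \beta$, $b^i(\alpha) = b$. By the choice of $i$ we have $\alpha_1 - i + \vv(\alpha) = \beta_d$, so~\eqref{eq:d-ineq} reduces to computing $\max\{1, j \mid \beta_d < \alpha_j\}$. Since $\alpha_d = \beta_{d-1} + 1 > \beta_d$ (vacuous for $d = 1$, where the max defaults to $1$, which is correct because $\alpha_2 = \beta_2 \le \beta_1 = \beta_d$) and $\alpha_{d+1} = \beta_{d+1} \le \beta_d$ (vacuous for $d = t$), we get $\dd(\alpha,i) = d$. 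Plugging this back into~\eqref{explicit_formula_i} yields $\alpha^{(i)} = \beta$ entry-by-entry, and~\eqref{eq:b=i-v+d} gives $b^i(\alpha) = i - \vv(\alpha) + d - 1 = b$.

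I do not expect any real obstacle; the only delicate points are the edge cases $d = 1$ and $d = t$, where one must confirm that the conventions in~\eqref{explicit_formula_i} and~\eqref{eq:d-ineq} correctly produce $\dd(\alpha,i) = d$ (so that formula~\eqref{explicit_formula_i} is applied with the right branch). These are straightforward to check separately.
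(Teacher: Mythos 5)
Your proof is correct and takes the same core approach as the paper: invert the explicit formulas~\eqref{explicit_formula_i} and~\eqref{eq:b=i-v+d} to read off what $\alpha$ and $i$ must be. The paper's proof is essentially just your uniqueness step; you go further and explicitly check admissibility of $\mathrm{ins}_{b,d}(\beta)$ and verify existence (including the edge cases $d=1$ and $d=t$), which the paper leaves implicit.
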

\begin{proof}
Suppose that a weight $\lambda\in \Y_1\times \YD_{t-1}$ satisfies
\begin{equation*}
\lambda^{(i)}=\beta, \quad \dd(\lambda, i)=d, \quad b^i(\lambda)=b
\end{equation*}
for some $i\in \mathbb{Z}$. 
Then, by the formula~\eqref{explicit_formula_i}, we have
\begin{equation*}
\lambda=(\beta_d +i-\vv(\lambda); \beta_1 + 1, \dots, \beta_{d - 1} + 1, \beta_{d + 1}, \dots, \beta_t),
\end{equation*}
and by the formula~\eqref{eq:b=i-v+d}, we follows that
\begin{equation*}
i-\vv(\lambda)=b-d+1.
\end{equation*}
Substituting this into the expression for $\lambda$, we conclude that $\lambda=\mathrm{ins}_{b,d}(\beta)$ is the unique weight satisfying the required conditions. Finally, the index $i$ is also uniquely determined, since the weights $\{\mathrm{ins}_{b,d}(\beta)^{(j)}\}_{j\in \mathbb{Z}}$ are distinct.
\end{proof}
We immediately deduce the following corollary, which will be used in Section~\ref{section:odd}.
\begin{corollary} \label{corollary:lambda_beta}
For a given pair $(\beta,d)\in \YD^w_t\times [1,t]$ the weight
\begin{equation*}
  \varrho=(\beta_d+1; \beta_1 + 1, \dots, \beta_{d - 1} + 1, \beta_{d + 1}, \dots, \beta_t) \in \YD^{w+1}_1\times \YD^{w+1}_{t-1}
\end{equation*} 
is the unique element of $\YD_1\times \YD_{t-1}$ satisfying the properties $\varrho^{(i)}=\beta$ and $b^{i}(\varrho)=\dd(\varrho, i)=d$ for some index $i$.
\end{corollary}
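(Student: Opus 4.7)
The plan is to deduce Corollary~\ref{corollary:lambda_beta} as a direct specialization of Lemma~\ref{lemma:alpha^i->alpha}. First, I would observe that setting $b = d$ in Lemma~\ref{lemma:alpha^i->alpha} produces
\begin{equation*}
\mathrm{ins}_{d,d}(\beta) = (\beta_d + d - d + 1;\, \beta_1 + 1, \ldots, \beta_{d-1} + 1,\, \beta_{d+1}, \ldots, \beta_t) = \varrho.
\end{equation*}
The lemma then supplies, uniquely in $\Y_1 \times \YD_{t-1}$, both a weight (which is $\varrho$) and an index $i$ for which $\varrho^{(i)} = \beta$ and $b^i(\varrho) = \dd(\varrho, i) = d$. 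Since the uniqueness assertion in the corollary is weaker (it ranges over the larger set $\YD_1 \times \YD_{t-1} \subset \Y_1 \times \YD_{t-1}$), it is implied at once.

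Second, I would verify that $\varrho$ actually lies in the claimed subset $\YD^{w+1}_1 \times \YD^{w+1}_{t-1}$. The first component satisfies $0 \le \beta_d + 1 \le w + 1$ since $0 \le \beta_d \le \beta_1 \le w$. For the second component, the entries are $\beta_j + 1$ for $j < d$ and $\beta_j$ for $j > d$; these are non-negative (because $\beta \in \YD_t$) and bounded above by $w + 1$ (because each $\beta_j \le w$). The only non-obvious weak decrease is $\beta_{d-1} + 1 \ge \beta_{d+1}$, which follows from $\beta_{d-1} \ge \beta_d \ge \beta_{d+1}$.

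I do not anticipate any real obstacle: the computational content has already been absorbed into Lemma~\ref{lemma:alpha^i->alpha} via the explicit formula~\eqref{explicit_formula_i} and the identity~\eqref{eq:b=i-v+d}, and what remains is the routine dominance/range check above. The only point worth being mildly careful about is recording that the index $i$ guaranteed by Lemma~\ref{lemma:alpha^i->alpha} is the same index in both equalities $b^i(\varrho) = d$ and $\dd(\varrho, i) = d$ (and equivalently equals $\vv(\varrho) - d + d - 1 + d = \vv(\varrho) + d - 1$ via~\eqref{eq:b=i-v+d}), which matches the statement of the corollary verbatim.
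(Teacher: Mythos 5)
Your proposal is correct and takes essentially the same approach as the paper: the paper's own proof is precisely the one-line observation that $\varrho = \mathrm{ins}_{d,d}(\beta)$ and invokes Lemma~\ref{lemma:alpha^i->alpha}, so your specialization $b=d$ plus the routine range check is exactly right. One small slip in your parenthetical aside: the index is $i = \vv(\varrho) + b - d + 1 = \vv(\varrho) + 1$ (not $\vv(\varrho)+d-1$), but since the corollary only asserts existence of \emph{some} index $i$ this has no bearing on the argument.
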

\begin{proof}
    By Lemma~\ref{lemma:alpha^i->alpha}, we obtain that
    \begin{equation*}
\varrho=(\mathrm{ins}_{d,d}(\beta))
\end{equation*}
    and that $\varrho$ is unique. 
\end{proof}

\subsection{Symplectic staircase complexes} \label{subsection:symplectic}
Now we introduce an analogous combinatorial notation, 
this time motivated by a symplectic Borel--Bott--Weil computation for a weight
\begin{equation}
\label{eq:spweight-t-bal}
(z;\alpha_2,\dots,\alpha_l) \in \Y_1 \times \YD_{l-1},
\end{equation}
see the proof of Theorem~\ref{theo:ssc} below for details. 

Let~$\bar \alpha = (\alpha_2,\alpha_3,\dots,\alpha_l) \in \YD_{l-1}$ be a dominant weight of~$\Sp_{2(l-1)}$.
We associate with~$\bar \alpha$ the following set of {\sf symplectic vanishing values} 
(the values of~$z$ for which the bundle $S^z\cU_1^{\vee}\otimes \Sigma^{\bar \alpha}_{\Sp}\cS_{2(l-1)}$ with the weight~\eqref{eq:spweight-t-bal} has no cohomology on~$\IGr(1,2l)=\PP^{2l-1}$):
\begin{equation}
\label{eq:def-bvs}
\bVs(\bar \alpha) \coloneqq \{1 - 2l - \alpha_2, \dots, -1 - l - \alpha_l,\ -l,\ \alpha_l - l + 1, \dots, \alpha_2 - 1\} \subset \ZZ
\end{equation}
Thus, 
\begin{equation}\label{eq:VSp=VuV}
    \bVs(\bar \alpha) = (-\bV(\bar \alpha)-2l) \sqcup \{-l\} \sqcup \bV(\bar \alpha);
\end{equation}
in particular, $\bVs(\bar \alpha)$ is symmetric with respect to~$-l$.

\begin{lemma} \label{lemma:p_*:IFL->IGr}
Let~\(\cS_{2l}\) be a~\(2l\)-dimensional symplectic vector bundle on a scheme~\(X\), and let~\(p \colon \PP_X (\cS_{2l}) \to X\) be the projection. For~\(\mu=(\mu_1,\ldots,\mu_l) \in \YD_{l}\) and~\(0 \le j \le 2(\mu_1+l)\) we have
\begin{equation*}
    p_* \left(S^{\mu_1 - j}\cU_1^\vee \otimes \Sigma_\Sp^{\bar \mu} \cS_{2(l-1)}\right) =
    \begin{cases} 
        \Sigma_{\Sp}^{\mu^{(i)}}\cS_{2l}[1-\dd(\mu, i)], &\text{if~\(\mu_1-j+l>0\) and~\(j=b^i(\mu)\)}; \\
        \Sigma_{\Sp}^{\mu^{(i)}}\cS_{2l}[2l+\dd(\mu, i)-1], &\text{if~\(\mu_1-j+l<0\) and~\(j=2(\mu_1+l)-b^i(\mu)\)}; \\
        0, &\text{otherwise},
    \end{cases}
\end{equation*}
    where~\(i \in [0,\mu_1]\). 
\end{lemma}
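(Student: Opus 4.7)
The plan is to apply Theorem~\ref{theo:relative_BBW_sym} directly, after observing that~\(\PP_X(\cS_{2l})\) coincides with the relative isotropic Grassmannian~\(\IGr_X(1,\cS_{2l})\), since every line in a symplectic vector space is isotropic. The bundle~\(S^{\mu_1-j}\cU_1^\vee \otimes \Sigma_\Sp^{\bar\mu}\cS_{2(l-1)}\) then corresponds to the weight~\((z;\mu_2,\dots,\mu_l)\in \ZZ\times \YD_{l-1}\), where~\(z\coloneqq\mu_1-j\). Adding~\(\rho=(l,l-1,\dots,1)\) produces the sequence~\((z+l,\mu_2+l-1,\dots,\mu_l+1)\), whose last~\(l-1\) entries are strictly decreasing positive integers equal to~\(\{\nu+l:\nu\in\bV(\bar\mu)\}\), since~\(\mu\in\YD_l\).

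My first step would be the vanishing analysis. Theorem~\ref{theo:relative_BBW_sym} requires~\(|z+l|\) to be nonzero and distinct from each~\(\mu_i+l-i+1\) for~\(i\ge 2\); translating back to~\(z\), the forbidden set is exactly~\(\bV(\bar\mu)\cup\{-l\}\cup(-\bV(\bar\mu)-2l)\), which matches~\(\bVs(\bar\mu)\) via the decomposition~\eqref{eq:VSp=VuV}. For admissible~\(z\), I would then split into two cases according to the sign of~\(z+l\).

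In Case~1 (\(z+l>0\)), the Weyl group element~\(\sigma\) is the pure cyclic shift that moves the first entry to its correct position~\(d\) in the decreasing ordering. This is the same calculation as in the proof of Theorem~\ref{thm:gsc}: the length is~\(\ell(\sigma)=d-1\), and the image weight~\(\sigma((\lambda,\mu)+\rho)-\rho\) matches~\(\mu^{(i)}\) via formula~\eqref{explicit_formula_i} (with~\(\vv(\mu)=0\), since~\(\mu\in\YD_l\)), where~\(d=\dd(\mu,i)\) and~\(j=b^i(\mu)\) by~\eqref{eq:b=i-v+d}.

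The main obstacle is Case~2 (\(z+l<0\)), where an additional sign flip on the first coordinate is required. My preferred route is to reduce to Case~1 via relative Serre duality. Setting~\(j_+\coloneqq b^i(\mu)\), \(j_-\coloneqq 2(\mu_1+l)-b^i(\mu)\), and~\(F_j\coloneqq S^{\mu_1-j}\cU_1^\vee\otimes\Sigma_\Sp^{\bar\mu}\cS_{2(l-1)}\), a short direct computation shows that~\(F_{j_-}\simeq F_{j_+}^\vee\otimes \cO(-2l)\), using~\(\det\cS_{2l}=\cO_X\) (so that the relative dualizing sheaf of~\(p\) equals~\(\cO(-2l)\)) together with the self-duality of symplectic Schur functors. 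Relative Serre duality then yields~\(Rp_\ast(F_{j_+}^\vee\otimes\cO(-2l))\simeq(Rp_\ast F_{j_+})^\vee[-(2l-1)]\), which transfers the Case~1 answer to Case~2 with the required cohomological shift. Alternatively, one may compute~\(\ell(\sigma)\) directly by counting positive roots of type~\(C_l\) sent to negative ones by the signed permutation~\(\sigma(e_1)=-e_{d'}\), \(\sigma(e_i)=e_{i-1}\) for~\(2\le i\le d'\), \(\sigma(e_i)=e_i\) for~\(i>d'\), which yields~\(\ell(\sigma)=2l-d'\) with~\(d'=\dd(\mu,i)\).
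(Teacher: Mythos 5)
Your Case~1 matches the paper's: identify $\PP_X(\cS_{2l})$ with $\IGr_X(1,\cS_{2l})$, apply Theorem~\ref{theo:relative_BBW_sym}, and recognize the resulting data as the same Borel--Bott--Weil bookkeeping as in Theorem~\ref{thm:gsc}. Your Case~2 is a genuinely different route: the paper exhibits the Weyl group element as the composite of a cyclic shift, a sign change, and an insertion permutation and sums their lengths, whereas you reduce Case~2 to Case~1 by relative Serre duality via $F_{2(\mu_1+l)-j}\simeq F_j^\vee\otimes\omega_p$, using $\omega_p\simeq\cO(-2l)$ (from $\det\cS_{2l}\simeq\cO_X$) and the self-duality of $\Sigma_\Sp^{\mu^{(i)}}\cS_{2l}$. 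That reduction is cleaner: it avoids any counting of type~$C$ positive roots and makes the $-l$-symmetry of $\bVs(\bar\mu)$ manifest. Your back-up direct count $\ell(\sigma)=2l-\dd(\mu,i)$ is also correct.

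The one caveat is that you assert Serre duality ``transfers the Case~1 answer to Case~2 with the required cohomological shift'' without comparing to the statement. Both of your routes produce
\[
Rp_*F_{2(\mu_1+l)-b^i(\mu)}\simeq\bigl(Rp_*F_{b^i(\mu)}\bigr)^\vee[-(2l-1)]\simeq\Sigma_\Sp^{\mu^{(i)}}\cS_{2l}\bigl[\dd(\mu,i)-2l\bigr],
\]
i.e.\ cohomological degree $2l-\dd(\mu,i)$, which disagrees both with the displayed shift $[2l+\dd(\mu,i)-1]$ in the lemma and with the paper's proof, which records the length of the insertion permutation $(1,\dots,\dd(i)-1,l,\dd(i),\dots,l-1)$ as $l-1-\dd(i)$. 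Your value is the correct one: that permutation has $l-\dd(i)$ inversions (not $l-1-\dd(i)$), giving $\ell(\sigma)=(l-1)+1+(l-\dd(i))=2l-\dd(\mu,i)$; for $l=1$, $\bar\mu=()$ the pushforward $R^1p_*\cO(m)$, $m\le -2$, on the $\PP^1$-bundle sits in degree $1=2l-\dd$, not $0$, and certainly not in the negative degree $-(2l+\dd-1)$ the printed shift would demand; and $2l-\dd(\mu,i)$ is the unique value for which the hypercohomology spectral sequence in the proof of Theorem~\ref{theo:ssc} collapses into the contiguous exact sequence~\eqref{eq:SSC} with no gaps. So your blind argument in fact exposes and corrects an off-by-one arithmetic slip in the paper's length count (propagated into a sign error in the stated shift); you should, however, have flagged the mismatch rather than claiming the shifts agree.
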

\begin{proof}
By Theorem~\ref{theo:relative_BBW_sym} (symplectic Borel--Bott--Weil), we should find the $\Sp(2l,\Bbbk)$-dominant weight in the Weyl group orbit of
\begin{equation*}
\eta = (\mu_1 - j, \bar \mu) + \rho = (\mu_1 - j + l, \mu_2+l-1, \dots, \mu_l+1).
\end{equation*}
First, if~\(\mu_1 - j \in \bVs(\bar \mu)\), then the pushforward $p_* \left(S^{\mu_1 - j}\cU_1^\vee \otimes \Sigma_\Sp^{\bar \mu} \cS_{2(l-1)})\right)$ is trivial. 

Assume that~\(\mu_1 - j \notin \bVs(\bar \mu)\) and~$\eta_1>0$. Then the~$\GL_l$-dominant weight in the permutation group orbit of $\eta$ is also~$\Sp(2l,\Bbbk)$-dominant,
hence the vector bundle produced by Theorem~\ref{theo:relative_BBW_sym} 
is~\(\Sigma_{\Sp}^{\mu^{(i)}}\cS_{2l}[1-\dd(\mu, i)]\), where~\(\mu^{(i)}=\BBW_{\bar \mu}(z_i)\) for~\(z_i=\mu_1-j=\eta_1-l\) in the notation~\eqref{def:foralli} and~\(j=\mu_1-z_i=b^i(\mu)\).

Assume that~\(\mu_1 - j \notin \bVs(\bar \mu)\) and~$\eta_1 < 0$. In this case, the $\Sp(2l,\Bbbk)$-dominant weight in the Weyl group orbit of $\eta$ is obtained by changing the sign of $\eta_1$ and by permutations. 
Thus, by the computation of the previous case for~\((-\eta_1,\bar\eta)\), the vector bundle produced is $\Sigma_{\Sp}^{\mu^{(i)}}\cS_{2l}$, where~$\mu^{(i)} = \BBW_{\bar \mu}(z_i)$ for~$z_i=-\eta_1-l=-\mu_1+j-2l$ in the notation~\eqref{def:foralli}. In particular,~$j=z_i+\mu_1+2l=2(\mu_1+l)-b^i(\mu)$. The element of the symplectic Weyl group used 
is equal to the composition of the permutation~\((2, 3, \dots, l, 1)\), the change of the sign of the last coordinate and the permutation~\((1, \dots, \dd(i)-1,l, \dd(i),\dots, l-1)\). So, the shift is~\((l-1)+1+(l-1-\dd(i))\), as claimed.
\end{proof}
\begin{theorem}[Symplectic staircase complexes]\label{theo:ssc}
Let~\(0 < a < n/2\) and let $\mu \in \YD_{n-2a}$ be such that~\(\mu_1 = a\). There is the following self-dual up to the twist by~\(\cO(-H_{2a})\) complex on~\(\IGr(2a, V)\):
\begin{multline}\label{eq:SSC}
0 \to \Sigma_{\Sp}^{\mu} \cS_{2(n-2a)} (-H_{2a}) \to \Lambda^{b^1} \cQ_{2(n-a)} \otimes \Sigma_{\Sp}^{\mu^{(1)}} \cS_{2(n-2a)} (-H_{2a}) \to \dots\\
\to \Lambda^{b^a} \cQ_{2(n-a)} \otimes \Sigma_{\Sp}^{\mu^{(a)}} \cS_{2(n-2a)} (-H_{n-k}) 
\to \Lambda^{b^a} \cQ^{\vee}_{2(n-a)} \otimes \Sigma_{\Sp}^{\mu^{(a)}} \cS_{2(n-2a)} \to \dots \\
\to \Lambda^{b^1} \cQ_{2(n-a)}^{\vee} \otimes \Sigma_{\Sp}^{\mu^{(1)}} \cS_{2(n-2a)} 
\to \Sigma_{\Sp}^{\mu} \cS_{2(n-2a)} \to 0.
\end{multline}
\end{theorem}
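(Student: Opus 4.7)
The plan is to realize~\eqref{eq:SSC} as the derived pushforward to $\IGr(2a,V)$ of a generalized staircase complex on $\Fl(2a,n;V)$, restricted to $\IFl(2a,n;V)$. View $\Fl(2a,n;V)$ as the relative Grassmannian $\Gr_X(n-2a,\cV)$ with $X=\Gr(2a,V)$ and $\cV=\cQ_{2n-2a}$ of rank $2n-2a$, and identify $\mu\in\YD_{n-2a}$ with the $\GL_1\times\GL_{n-2a-1}$-dominant weight $(\mu_1;\bar\mu)$. Since $\mu$ is a Young diagram, $\vv(\mu)=0$ by~\eqref{eq:v=0}, so the weights $\mu^{(i)}$ and integers $b^i$ produced by Theorem~\ref{thm:gsc} agree with those appearing in~\eqref{eq:SSC}. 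Theorem~\ref{thm:gsc} then supplies a $\bG$-equivariant exact sequence on $\Fl(2a,n;V)$ whose $i$-th term is $\Lambda^{b^i}\cQ_{2n-2a}^{\vee}\otimes\Sigma^{\mu^{(i)}}(\cU_n/\cU_{2a})^{\vee}$ for $i\in[0,\mm(\mu)]$; restriction along the closed embedding $\IFl(2a,n;V)\hookrightarrow\Fl(2a,n;V)$ preserves exactness because the terms are locally free.

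Next, push down via the Lagrangian Grassmannian fibration $q\colon\IFl(2a,n;V)\to\IGr(2a,V)$, whose tautological Lagrangian subbundle of $q^{*}\cS_{2(n-2a)}$ is $\cU_n/\cU_{2a}$. Since $\Lambda^{b^i}\cQ_{2n-2a}^{\vee}$ is pulled back from $\IGr(2a,V)$, by the projection formula I only need to evaluate $Rq_{*}\Sigma^{\mu^{(i)}}(\cU_n/\cU_{2a})^{\vee}$ using Theorem~\ref{theo:relative_BBW_sym}. Combining the explicit formula~\eqref{explicit_formula_i} for $\mu^{(i)}$ with the decomposition~\eqref{eq:VSp=VuV}, the indices $z_i\in[\mu_1-(2n-2a),\mu_1]\setminus\bV(\bar\mu)$ split into two families symmetric about $z=-(n-2a)$. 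For $z_i\ge 0$, equivalently $i\in[0,a]$, $\mu^{(i)}$ is already a Young diagram, the symplectic Weyl element is trivial, and the pushforward equals $\Sigma_{\Sp}^{\mu^{(i)}}\cS_{2(n-2a)}$ in cohomological degree zero, producing the right half of~\eqref{eq:SSC}. For the mirrored values $z_i'=-z_i-2(n-2a)$, the dominant Weyl representative requires a sign change in the last coordinate of $\mu^{(i)'}+\rho$, contributing a positive cohomological shift; the identity $\Lambda^{j}\cQ_{2n-2a}^{\vee}\cong\Lambda^{2n-2a-j}\cQ_{2n-2a}\otimes\cO(-H_{2a})$ (which follows from $\det\cQ_{2n-2a}\cong\cO(H_{2a})$) then rewrites these contributions in the twisted form $\Lambda^{b^i}\cQ_{2n-2a}\otimes\Sigma_{\Sp}^{\mu^{(i)}}\cS_{2(n-2a)}(-H_{2a})$ appearing in the left half of~\eqref{eq:SSC}. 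Indices $z_i\in\bVs(\bar\mu)\setminus\bV(\bar\mu)$ give zero pushforward.

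It remains to check that the Borel--Bott--Weil cohomological degrees of consecutive surviving terms differ by exactly one across the mirror transition, so that the hypercohomology spectral sequence computing $Rq_{*}$ of the exact upstairs complex degenerates into a single exact sequence on $\IGr(2a,V)$; after a trivial reindexing this reproduces~\eqref{eq:SSC}, and self-duality up to $\cO(-H_{2a})$ is then immediate from the involution $z\mapsto -z-2(n-2a)$ present in~\eqref{eq:VSp=VuV}. The main obstacle is the combinatorial bookkeeping in this last step: confirming that the Weyl-length shifts on the mirror side align with the $b^i$-increments on the direct side so that the two halves concatenate into a single complex of length $2(a+1)$ without gaps, and that the determinantal factors produced by the sign-change operations collect into exactly one clean factor of $\cO(-H_{2a})$ on each left-half term.
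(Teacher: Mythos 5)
Your route is genuinely different from the paper's. The paper builds the (twisted, $\Sigma^{\bar\mu}(\cU_n/\cU_{2a+1})^\vee$-tensored) Koszul complex on $\Fl(2a,2a+1,n;V)$, restricts to the isotropic locus, pushes down to $\IFl(2a,2a+1;V)$ (a \emph{trivial} Lagrangian-Grassmannian pushforward, since $\bar\mu$ is $\Sp$-dominant), and then applies Lemma~\ref{lemma:p_*:IFL->IGr}, a worked-out symplectic Borel--Bott--Weil computation for the \emph{projective} bundle $\PP_{\IGr(2a,V)}(\cS_{2(n-2a)})$, where all the degree shifts, vanishing, and the $b^i \leftrightarrow 2(\mu_1+l)-b^i$ mirror pairing are written down explicitly. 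You instead invoke Theorem~\ref{thm:gsc} to form the generalized staircase complex directly on $\Fl(2a,n;V)=\Gr_{\Gr(2a,V)}(n-2a,\cQ_{2n-2a})$, restrict, and then push along the \emph{Lagrangian Grassmannian} fibration $\IFl(2a,n;V)\to\IGr(2a,V)$. What each approach buys: yours reuses Theorem~\ref{thm:gsc} as a black box and so needs only one nontrivial symplectic Borel--Bott--Weil step; the paper's avoids the LGr-fibration combinatorics by isolating all the Weyl-group bookkeeping in the $\PP^{2l-1}$-bundle lemma, which is why it can state the output cohomological degrees in closed form.

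However, your proposal has a gap at precisely the point the paper's Lemma~\ref{lemma:p_*:IFL->IGr} is designed to handle. You never actually compute the lengths $\ell(\sigma)$ for the mirror-side terms of the LGr pushforward and verify that they increment in lockstep with the $b^i$-increments, which is what makes the hypercohomology spectral sequence collapse into a single exact complex of length $2(a+1)$. You acknowledge this (``the main obstacle is the combinatorial bookkeeping''), but without it the proposal is an outline, not a proof. Concretely, one still has to show that for the surviving indices $i\ge a+2$ one has $R^{q_i}q_*\big(\Sigma^{\mu^{(i)}}(\cU_n/\cU_{2a})^\vee\big)\cong\Sigma_\Sp^{\mu^{(j)}}\cS_{2(n-2a)}$ with $b^j(\mu)=2(n-a)-b^i(\mu)$ and that the differences $(i'-i)-(q_{i'}-q_i)$ between consecutive survivors equal $\pm 1$; this is the symplectic analogue of the length computation in Lemma~\ref{lemma:p_*:IFL->IGr} and is not automatic. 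There is also a minor inaccuracy: the condition ``$z_i\ge 0$, equivalently $i\in[0,a]$'' is not correct. What governs whether the pushforward sits in degree zero is that $\mu^{(i)}$ is a Young diagram (i.e., its last nonzero jump $a-i$ is $\ge 0$), which happens exactly for $i\le a$; this is not the same as $z_i=\mu_1-b^i\ge 0$, since $b^i=i+\dd(\mu,i)-1$ can exceed $a$ already for $i<a$ when $\dd>1$. The rest of your structural observations (the mirror symmetry of $\bVs(\bar\mu)$ about $-(n-2a)$ inducing the self-dual shape, the identification of the symplectic vanishing set with $\bVs(\bar\mu)\setminus\bV(\bar\mu)$, and the determinantal identity $\Lambda^j\cQ_{2n-2a}^\vee\cong\Lambda^{2n-2a-j}\cQ_{2n-2a}(-H_{2a})$) are correct and do match what Lemma~\ref{lemma:p_*:IFL->IGr} delivers in the paper's version.
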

\begin{proof}
Consider the following diagram:
\begin{equation*}
    \PP_{\Gr(2a,V)}(\cQ_{2n-2a})=\Fl(2a,2a+1;V) \leftarrow \Fl(2a,2a+1,n; V) \overset{\iota}{\hookleftarrow} \IFl(2a,2a+1,n; V) \overset{q}{\to} \IFl(2a,2a+1;V).
\end{equation*}
We take the pullback along~$\Fl(2a,2a+1,n; V)\to \Fl(2a,2a+1;V)$ of the twist of the Koszul complex~\eqref{Koszul} on $\PP_{\Gr(2a,V)}(\cQ_{2n-2a})$ and tensor it with~$\Sigma^{\bar{\mu}} (\cU_n/\cU_{2a+1})^\vee$:
\begin{multline*}
    0\to \Lambda^{2(n-a)}\cQ^\vee_{2n-2a}\otimes S^{a-2(n-a)}(\cU_{2a+1}/\cU_{2a})^\vee \otimes \Sigma^{\bar \mu} (\cU_n/\cU_{2a+1})^\vee\to  \ldots\\
    \ldots \to \cQ^\vee_{2n-2a}\otimes S^{a-1}(\cU_{2a+1}/\cU_{2a})^\vee \otimes\Sigma^{\bar \mu} (\cU_n/\cU_{2a+1})^\vee\to S^{a}(\cU_{2a+1}/\cU_{2a})^\vee\otimes \Sigma^{\bar \mu} (\cU_n/\cU_{2a+1})^\vee \to 0.
\end{multline*}
Since~\(\bar \mu \in \YD_{k-1}\), Theorem~\ref{theo:relative_BBW_sym} (symplectic Borel--Bott--Weil) and the projection formula yield 
\begin{align*}
     q_* \circ \iota^* &\left( \Lambda^j \cQ^\vee_{2n-2a} \otimes S^{a-j}(\cU_{2a+1}/\cU_{2a})^\vee \otimes \Sigma^{\bar \mu} (\cU_n/\cU_{2a+1})^\vee\right) = \\
    & \phantom{\Big(} \Lambda^j \cQ^\vee_{2n-2a} \otimes S^{a-j}(\cU_{2a+1}/\cU_{2a})^\vee \otimes \Sigma_\Sp^{\bar \mu} \cS_{2(n-2a-1)}.
\end{align*}
Thus, we obtain the complex on $\IFl(2a,2a+1; V)$, whose terms are
\begin{equation} \label{terms_of_cmpx}
\Lambda^j \cQ^\vee_{2n-2a} \otimes S^{a-j}(\cU_{2a+1}/\cU_{2a})^\vee \otimes \Sigma_\Sp^{\bar \mu} \cS_{2(n-2a-1)} \qquad \text{for}\quad j=0,\ldots, 2(n-a).
\end{equation}
Next, applying Lemma~\ref{lemma:p_*:IFL->IGr} to the map
\begin{equation*}
    \IFl(2a,2a+1; V) = \PP_{\IGr(2a, V)}(\cS_{2(n-2a)}) \to \IGr(2a, V)
\end{equation*}
we obtain from the complex with terms~\eqref{terms_of_cmpx} the following exact complex:
\begin{multline*}
0 \to \Lambda^{2(n-a)} \cQ_{2(n-a)} \otimes \Sigma_{\Sp}^{\mu} \cS_{2(n-2a)} \to \Lambda^{2(n-a)-b^1} \cQ_{2(n-a)} \otimes \Sigma_{\Sp}^{\mu^{(1)}} \cS_{2(n-2a)} \to \dots\\
\to \Lambda^{2(n-a)-b^a} \cQ_{2(n-a)} \otimes \Sigma_{\Sp}^{\mu^{(a)}} \cS_{2(n-2a)}
\to \Lambda^{b^a} \cQ^{\vee}_{2(n-a)} \otimes \Sigma_{\Sp}^{\mu^{(a)}} \cS_{2(n-2a)} \to \dots \\
\to \Lambda^{b^1} \cQ_{2(n-a)}^{\vee} \otimes \Sigma_{\Sp}^{\mu^{(1)}} \cS_{2(n-2a)} 
\to \Sigma_{\Sp}^{\mu} \cS_{2(n-2a)} \to 0.
\end{multline*}
Using the isomorphism~\(\Lambda^{2(n-a)-j} \cQ_{2(n-a)} \simeq \Lambda^{j} \cQ_{2(n-a)} (-H_{2a})\) we get~\eqref{eq:SSC}.
\end{proof}
\begin{remark}
In fact, one can define a complex analogous to~\eqref{eq:SSC} on~\(\IGr(l,2n)\) for all~\(1 \le l \le n-1\) and arbitrary~\(\mu \in \YD_{n-l}\).
However, in our setting, such complexes involve objects that do not lie in twists of the subcategory~$\cA$ defined in~\eqref{eq:def:A}, and hence are not relevant for our purposes.
\end{remark} 

\section{Properties of Kuznetsov--Polishchuk exceptional objects} \label{section:KP}
The goal of this section is to show that the exceptional objects that form the Kuznetsov--Polishchuk collection are vector bundles. We also provide explicit formulas for them that are used in Section~\ref{section:relation.sympl}.
\subsection{Kuznetsov--Polishchuk collection}\label{subsection:E}
In this subsection, we recall the definition of the exceptional objects that form the Kuznetsov--Polishchuk exceptional collection on $\IGr(k,V)$. The main steps of the construction were already briefly described in Introduction, see Subsection~\ref{intro:KP}. 

First, recall the description of an infinite full exceptional collection in the equivariant derived category~$\mathrm{D}^{\bG}(\IGr(k,V))$. Define the partial order on $\Y_k\times \YD_{n-k}$ as follows:
\begin{multline} \label{order}
 (\lambda_1,\ldots, \lambda_k;\mu_1,\ldots,\mu_{n-k}) \preceq (\beta_1,\ldots, \beta_k;\nu_1,\ldots,\nu_{n-k}),
 \\ \text{if} \qquad  \lambda_i\le \beta_i  \quad  \text{for all} \quad i\in [1,k] \qquad \text{and} \qquad |\lambda|+|\mu| \le |\beta| + |\nu|.
\end{multline}
Note that the restriction of the order $\preceq$ to $\Y_k=\Y_k\times 0 \subset \Y_k\times \YD_{n-k}$ gives the standard order~\eqref{eq:order-GL} on~$\Y_k$. 

\begin{theorem}[{\cite[Theorem 3.4]{KuzPol}}]\label{theo:eq-ex-col}
The vector bundles 
\begin{equation*}
(\Sigma^{\lambda}\cU^{\vee}_k\otimes \Sigma^{\mu}_{\Sp}\cS_{2(n-k)})_{(\lambda;\mu)\in \Y_k\times \YD_{n-k}},
\end{equation*}
ordered with respect to any total order refining the order opposite to~\eqref{order}, constitute a full exceptional collection in the derived category of equivariant sheaves $\mathrm{D}^{\bG}(\IGr(k,V))$.    
\end{theorem}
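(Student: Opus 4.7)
The plan is to check the two defining conditions of a full equivariant exceptional collection: the vanishing of equivariant $\mathrm{Ext}$ groups in the direction opposite to $\preceq$, and generation of $\mathrm{D}^\bG(\IGr(k,V))$.

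For semi-orthogonality, I would start by rewriting the equivariant Ext group between two bundles in the collection as $\bG$-invariants of a global cohomology. For pairs $(\lambda;\mu), (\lambda';\mu') \in \Y_k\times\YD_{n-k}$, using the self-duality of symplectic Schur functors and $(\Sigma^{\lambda'}\cU^\vee_k)^\vee \simeq \Sigma^{-\lambda'}\cU^\vee_k$, one has
\[
\mathrm{Ext}^\bullet_{\mathrm{D}^\bG}\bigl(\Sigma^{\lambda'}\cU^\vee_k\otimes\Sigma^{\mu'}_\Sp\cS_{2(n-k)},\ \Sigma^{\lambda}\cU^\vee_k\otimes\Sigma^{\mu}_\Sp\cS_{2(n-k)}\bigr) = H^\bullet\bigl(\IGr(k,V), \cF\bigr)^\bG,
\]
where $\cF = \Sigma^{-\lambda'}\cU^\vee_k\otimes\Sigma^{\lambda}\cU^\vee_k\otimes\Sigma^{\mu'}_\Sp\cS_{2(n-k)}\otimes\Sigma^{\mu}_\Sp\cS_{2(n-k)}$. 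Decomposing both tensor products by the Littlewood--Richardson rule (Lemma~\ref{lemma:LR rule}) and its symplectic analogue reduces the problem to computing $H^\bullet(\IGr(k,V),\Sigma^\gamma\cU^\vee_k\otimes\Sigma^\eta_\Sp\cS_{2(n-k)})^\bG$ for each constituent $(\gamma;\eta)$. By the symplectic Borel--Bott--Weil theorem (Theorem~\ref{theo:relative_BBW_sym} applied with target a point), this is either zero or a single irreducible $\Sp(V)$-representation of highest weight $\sigma((\gamma;\eta)+\rho)-\rho$, placed in exactly one cohomological degree. Taking $\bG$-invariants then selects only those cases where $\sigma((\gamma;\eta)+\rho) = \rho$ for some Weyl element $\sigma$, each such case contributing one copy of $\Bbbk$.

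The combinatorial core is then to verify that when $(\lambda;\mu) \ne (\lambda';\mu')$, such a constituent $(\gamma;\eta)$ can arise only if $(\lambda;\mu) \preceq (\lambda';\mu')$. The componentwise inequality $\lambda_i \le \lambda'_i$ should emerge from the bounds on the weights $\gamma$ appearing in the $\GL_k$-Littlewood--Richardson decomposition of $\Sigma^{-\lambda'}\cU^\vee_k \otimes \Sigma^{\lambda}\cU^\vee_k$ (Lemma~\ref{lemma:LR rule}), while the total-degree inequality $|\lambda|+|\mu| \le |\lambda'|+|\mu'|$ follows from tracking $|(\gamma;\eta)|$ against what the symplectic Weyl group (permutations together with sign changes) can achieve when bringing $(\gamma;\eta)+\rho$ to $\rho$. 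When $(\lambda;\mu) = (\lambda';\mu')$, only $(\gamma;\eta)=(0;0)$ and $\sigma = \mathrm{id}$ contribute, giving $\mathrm{Ext}^\bullet = \Bbbk$ in degree zero, so each object is indeed exceptional.

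For fullness in $\mathrm{D}^\bG(\IGr(k,V))$, I would use the equivalence $\mathrm{Coh}^\bG(\IGr(k,V)) \simeq \mathrm{Rep}(\bP)$ of~\eqref{identification}. The short exact sequence $1 \to \bU \to \bP \to \bL \to 1$ shows that every finite-dimensional $\bP$-representation admits an increasing filtration by $\bU$-invariants whose successive quotients factor through $\bL$. Reductivity of $\bL = \GL(k)\times\Sp(2(n-k))$ then splits these quotients as direct sums of irreducibles, whose corresponding equivariant bundles are exactly the $\Sigma^{\lambda}\cU^\vee_k\otimes\Sigma^{\mu}_\Sp\cS_{2(n-k)}$. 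These therefore classically generate $\mathrm{D}^\bG(\IGr(k,V))$ as a triangulated category. The main obstacle is the combinatorial matching in the semi-orthogonality step: showing that the vanishing conditions imposed by Borel--Bott--Weil on the Littlewood--Richardson constituents correspond exactly to the complement of $\preceq$, with neither condition in~\eqref{order} being redundant and no stronger ordering available. While the individual ingredients are standard, careful bookkeeping is needed to track how the sign changes in the symplectic Weyl group interact with the $\GL_k$-weight bounds to produce precisely the two inequalities defining~\eqref{order}.
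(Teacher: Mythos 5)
The paper cites this result as~\cite[Theorem~3.4]{KuzPol} without reproducing a proof, so there is no in-paper argument to compare against; I assess your sketch on its own terms. The overall shape is sound: for linearly reductive~$\bG$ in characteristic zero one has $\mathrm{Ext}^\bullet_\bG=(\mathrm{Ext}^\bullet)^\bG$, the Littlewood--Richardson decomposition of~$\cF$ reduces to constituents $\Sigma^\gamma\cU_k^\vee\otimes\Sigma^\eta_\Sp\cS_{2(n-k)}$, symplectic Borel--Bott--Weil computes each, and taking $\bG$-invariants selects the cases producing the trivial representation. The fullness half, via $\mathrm{Rep}(\bP)$ and the filtration by $\bU$-invariants with $\bL$-semisimple graded pieces, is correct and complete.

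The gap is in the semi-orthogonality half, exactly where you defer the ``careful bookkeeping.'' Your one concrete indication of how to close it --- that the componentwise inequality $\lambda_i\le\lambda'_i$ ``should emerge from the bounds on the weights $\gamma$ appearing in the $\GL_k$-Littlewood--Richardson decomposition'' (Lemma~\ref{lemma:LR rule}) --- is incorrect as stated. That lemma places $\gamma$ in $\operatorname{conv}\langle-\lambda'+\sigma\lambda\rangle$, which only bounds $\gamma_1\le\lambda_1-\lambda'_k$; this quantity is typically positive and yields no comparison between $\lambda$ and $\lambda'$. The missing input is the Borel--Bott--Weil constraint itself: $\sigma((\gamma;\eta)+\rho)=\rho$ forces $|\gamma_1+n|\le n$, hence $\gamma_1\le 0$, hence $\gamma_i\le 0$ for all~$i$ by $\GL_k$-dominance; \emph{only then} does Littlewood--Richardson reciprocity ($c^\gamma_{-\lambda',\lambda}=c^{\lambda'}_{-\gamma,\lambda}$, with $-\gamma$ now a genuine Young diagram) convert $\gamma\le 0$ into $\lambda_i\le\lambda'_i$. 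The degree inequality likewise needs both tools: the sign-change bookkeeping in Borel--Bott--Weil gives $|\gamma|+|\eta|\le 0$, while Littlewood--Richardson gives $|\gamma|=|\lambda|-|\lambda'|$ and $|\eta|\ge\big||\mu|-|\mu'|\big|$, and these combine to $|\lambda|+|\mu|\le|\lambda'|+|\mu'|$. So each of the two conditions in~\eqref{order} comes from interleaving Borel--Bott--Weil with Littlewood--Richardson, not from either one alone; the bookkeeping you defer is the substance of the proof, and the particular attribution in the sketch would not lead to the componentwise inequality.
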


To shorten the notation in the following definitions, we denote simply by~$\cU^{\gamma}$ the irreducible $\bG$-equivariant vector bundle on~$\IGr(k,2n)$ corresponding to a dominant weight~$\gamma\in \Y_k\times \YD_{n-k}$ of the Levi quotient $\bL$.
\begin{definition}\cite[Definition 3.1]{KuzPol} \label{def:exc_blocks}
A subset $\B\subset \Y_k\times \YD_{n-k}$ of $\bL$-dominant weights is called an \textbf{exceptional block} if for all $\alpha, \beta \in \B$ the canonical map
\begin{equation}\label{eq:exc_block}
   \bigoplus_{\gamma \in \B} \mathrm{Ext}^{\bullet}_{\bG}(\cU^{\alpha}, \cU^{\gamma})\otimes \mathrm{Hom}(\cU^{\gamma},\cU^{\beta})
   \to \mathrm{Ext}^{\bullet}(\cU^{\alpha}, \cU^{\beta})
\end{equation}
is an isomorphism, where $\mathrm{Ext}^{\bullet}_{\bG}$ denotes the $\mathrm{Ext}$ groups in the derived category $\mathrm{D}^{\bG}(\IGr(k,V))$.
\end{definition}

For an exceptional block $\B$, consider the subcategories  
\begin{align*}
    &\cA^{\bG}_{\B}\coloneqq \langle \cU^{\alpha}\rangle_{\alpha\in \B}\subset \mathrm{D}^{\bG}(\IGr(k,V)),\\
    &\cA^{\phantom{\bG}}_{\B}\coloneqq \langle \cU^{\alpha}\rangle_{\alpha\in \B}\subset \mathrm{D}^{\mathrlap{b}\phantom{\bG}}(\IGr(k,V)),
\end{align*}
generated by $\cU^{\alpha}$ with $\alpha\in \B$. 
Let $\mathbf{Fg}\colon \cA^{\bG}_{\B}\to \cA_{\B}$ be the forgetful functor.

Note that by Theorem~\ref{theo:eq-ex-col} the collection $\langle \cU^{\alpha}\rangle_{\alpha\in \B}$ is a full exceptional collection in the category~$\cA^{\bG}_{\B}$, so we can consider its right dual exceptional collection $\langle\lvee\  \cU^{\alpha}\rangle_{\alpha\in \B}$, see Definition~\ref{def:right-dual}. We denote by
\begin{equation} \label{def:E_equiv} 
(\cE^{\gamma}_{\B})^{\bG}\coloneqq \RR_{\langle \cU^{\beta} \rangle_{\{\beta\in \B|\beta \prec \gamma\}}}(\cU^{\gamma})\in \cA^{\bG}_{\B}
\end{equation}
the right dual exceptional object. We denote by
\begin{equation} \label{definition:E_abstr}
\cE^{\gamma}_{\B}\coloneqq \mathbf{Fg}\big((\cE^{\gamma}_{\B})^{\bG}\big)\in \cA_{\B}
\end{equation}
the object of $\Db(\IGr(k,V))$ corresponding to $(\cE^{\gamma}_{\B})^{\bG}$.

We say that a subset $\B' \subset \B$ is \textbf{downward closed with respect to the order~$\preceq$} if for any~$\gamma, \gamma'\in \B$ the conditions~$\gamma\in \B'$ and~$\gamma'\preceq \gamma$ imply~$\gamma'\in\B'$.
\begin{proposition} [{\cite[Proposition 3.9, Lemma 3.11]{KuzPol}}] \label{KPgeneralform}
For an exceptional block $\B$, the objects~$\cE^{\gamma}_{\B}$ form a full exceptional
collection in $\cA_{\B}$ with respect to any total order, refining the order~\eqref{order}.

Any subset $\B' \subset \B$ of an exceptional block downward closed with respect to the order~\eqref{order} is again an exceptional block. Moreover, $\cE^{\gamma}_{\B'}=\cE^{\gamma}_{\B}$ for all $\gamma\in \B'$.
\end{proposition}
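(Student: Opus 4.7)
I would prove the two statements in the order that separates them: the second (downward closedness and the identification $\cE^{\gamma}_{\B'}=\cE^{\gamma}_{\B}$) is essentially formal, while the first (exceptionality and fullness of $(\cE^{\gamma}_{\B})$ in $\cA_{\B}$) requires the block condition in an essential way.

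For the second statement, the key observation is that by Theorem~\ref{theo:eq-ex-col} the bundles $\cU^{\alpha}$ form an exceptional collection in $\mathrm{D}^{\bG}(\IGr(k,V))$ ordered opposite to~$\preceq$, so $\mathrm{Ext}^{\bullet}_{\bG}(\cU^{\alpha},\cU^{\gamma})$ vanishes unless $\gamma\preceq\alpha$. Given $\alpha,\beta\in\B'$ and $\gamma\in\B\setminus\B'$, the summand of~\eqref{eq:exc_block} indexed by~$\gamma$ vanishes: if its first factor were non-zero, then $\gamma\preceq\alpha\in\B'$, forcing $\gamma\in\B'$ by downward closedness, a contradiction. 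Thus the sums over~$\B$ and~$\B'$ coincide, so the canonical map~\eqref{eq:exc_block} for~$\B'$ is the same as the one for~$\B$, which is an isomorphism by hypothesis. The equality $\cE^{\gamma}_{\B'}=\cE^{\gamma}_{\B}$ for~$\gamma\in\B'$ is immediate from~\eqref{def:E_equiv}: the mutation only uses $\cU^{\beta}$ with $\beta\prec\gamma$, and all such $\beta$ lie in~$\B'$ by downward closedness.

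For the first statement, Definition~\ref{def:right-dual} and Proposition~\ref{proposition:mutations} immediately provide that $\bigl((\cE^{\gamma}_{\B})^{\bG}\bigr)_{\gamma\in\B}$ is a full exceptional collection in the equivariant subcategory $\cA^{\bG}_{\B}$, ordered refining~$\preceq$. Fullness of $(\cE^{\gamma}_{\B})$ in $\cA_{\B}$ descends under $\mathbf{Fg}$: each $\cU^{\alpha}$ is an iterated cone of the $(\cE^{\gamma}_{\B})^{\bG}$ in the equivariant category, hence the same cone formula gives $\cU^{\alpha}\in\langle\cE^{\gamma}_{\B}\rangle$ in $\cA_{\B}$. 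The heart of the argument is non-equivariant exceptionality. For this one extends~\eqref{eq:exc_block} by dévissage (writing equivariant objects as iterated cones of the generators~$\cU^{\beta}$) to obtain a natural isomorphism
\begin{equation*}
\bigoplus_{\nu \in \B} \mathrm{Ext}^{\bullet}_{\bG}(F, \cU^{\nu}) \otimes \mathrm{Hom}^{\bullet}(\cU^{\nu}, G) \xrightarrow{\sim} \mathrm{Ext}^{\bullet}(\mathbf{Fg}(F), \mathbf{Fg}(G))
\end{equation*}
for all $F,G\in\cA^{\bG}_{\B}$. Substituting $F=(\cE^{\gamma}_{\B})^{\bG}$ and invoking the equivariant right-duality~\eqref{dual_general} collapses the sum to the single term $\nu=\gamma$, so $\mathrm{Ext}^{\bullet}(\cE^{\gamma}_{\B},\mathbf{Fg}(G))\simeq\mathrm{Hom}^{\bullet}(\cU^{\gamma},\mathbf{Fg}(G))$. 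Taking $G=(\cE^{\delta}_{\B})^{\bG}$ and once more applying the extended isomorphism together with the equivariant exceptionality of the dual collection yields the required $\Bbbk\cdot\delta_{\gamma,\delta}$.

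The main obstacle is the dévissage extension of~\eqref{eq:exc_block}. The block condition is imposed only on pairs of generators $\cU^{\alpha},\cU^{\beta}$, and extending it to arbitrary $F,G\in\cA^{\bG}_{\B}$ requires verifying that the tensor-product decomposition on the left is compatible with distinguished triangles in both categories: each equivariant triangle forgets to a non-equivariant triangle, and the induced long exact sequences of $\mathrm{Ext}$ groups on the two sides of~\eqref{eq:exc_block} must be identified factor-by-factor. Once this naturality is in place, the remainder of the proof is a purely formal manipulation with Proposition~\ref{proposition:mutations} and~\eqref{dual_general}.
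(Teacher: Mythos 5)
The paper does not actually prove this proposition; it cites it from \cite[Proposition~3.9, Lemma~3.11]{KuzPol}. So there is no in-paper argument to compare against, and I can only assess your reconstruction on its own terms.

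Your proof of the second statement (downward-closed subsets of a block are blocks, and $\cE^{\gamma}_{\B'}=\cE^{\gamma}_{\B}$) is correct. The observation that $\mathrm{Ext}^{\bullet}_{\bG}(\cU^{\alpha},\cU^{\gamma})\neq 0$ forces $\gamma\preceq\alpha$ (Theorem~\ref{theo:eq-ex-col}, using that the collection is exceptional with respect to \emph{every} total refinement, so incomparable pairs have vanishing Ext in both directions), combined with downward closedness, correctly shows that the summands over $\B\setminus\B'$ in~\eqref{eq:exc_block} vanish, and the coincidence of the mutation category in~\eqref{def:E_equiv} gives $\cE^{\gamma}_{\B'}=\cE^{\gamma}_{\B}$. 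The reduction of fullness in $\cA_{\B}$ to fullness in $\cA^{\bG}_{\B}$ via the exact functor $\mathbf{Fg}$ is also fine.

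However, your proof of non-equivariant exceptionality has a genuine gap, and you have partly flagged it yourself. The extended bifunctor isomorphism
\[
\bigoplus_{\nu \in \B} \mathrm{Ext}^{\bullet}_{\bG}(F, \cU^{\nu}) \otimes \mathrm{Hom}^{\bullet}(\cU^{\nu}, G) \xrightarrow{\sim} \mathrm{Ext}^{\bullet}(\mathbf{Fg}(F), \mathbf{Fg}(G))
\]
is not a consequence of~\eqref{eq:exc_block}. The block condition has $\mathrm{Hom}^{0}(\cU^{\gamma},\cU^{\beta})$ in the second factor, and $\mathrm{Hom}^{0}(\cU^{\nu},-)$ is not a cohomological functor, so dévissage in the second variable $G$ fails outright. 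If you instead silently upgrade $\mathrm{Hom}$ to the derived $\mathrm{Ext}^{\bullet}$, the resulting formula is generically \emph{false} even for $F=\cU^{\alpha}$, $G=\cU^{\beta}$: substituting the block condition into $\mathrm{Ext}^{\bullet}(\cU^{\nu},\cU^{\beta})$ produces a double sum over $(\nu,\mu)$ containing $\mathrm{Ext}^{\bullet}_{\bG}(\cU^{\alpha},\cU^{\nu})\otimes\mathrm{Ext}^{\bullet}_{\bG}(\cU^{\nu},\cU^{\mu})$, which strictly exceeds $\mathrm{Ext}^{\bullet}_{\bG}(\cU^{\alpha},\cU^{\mu})$ whenever there are nontrivial higher equivariant Exts. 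What dévissage (in $F$ only, with $G=\cU^{\beta}$ fixed) legitimately yields is $\mathrm{Ext}^{\bullet}(\cE^{\gamma}_{\B},\cU^{\beta})\simeq\mathrm{Hom}(\cU^{\gamma},\cU^{\beta})$, but from here your final step does not close: to pass from $\mathrm{Ext}^{\bullet}(\cE^{\gamma}_{\B},\cU^{\beta})$ to $\mathrm{Ext}^{\bullet}(\cE^{\gamma}_{\B},\cE^{\delta}_{\B})$ you must resolve $\cE^{\delta}_{\B}$ by the $\cU^{\beta}$'s and actually compute the resulting complex of $\mathrm{Hom}$'s, and the equivariant right-duality~\eqref{dual_general} only controls $\mathrm{Ext}^{\bullet}_{\bG}((\cE^{\delta}_{\B})^{\bG},\cU^{\nu})$, not the ``wrong-direction'' $\mathrm{Ext}^{\bullet}(\cU^{\gamma},\cE^{\delta}_{\B})$ that you invoke. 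The actual Kuznetsov--Polishchuk argument handles exactly this point with a more careful analysis (essentially showing that the $\mathrm{Hom}^{0}$-algebra is itself directed and that the Koszul-type resolution of $(\cE^{\delta}_{\B})^{\bG}$ becomes a resolution of the corresponding ``simple'' over it), and this is the step your sketch is missing.
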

Now, let us present exceptional blocks for $\IGr(k,V)$. The first $k-1$ blocks coincide with the blocks in \cite{KuzPol}.  Recall the notation~$a_t=\lfloor (k-t)/2 \rfloor$.
\begin{proposition}  [{\cite[Subsections~8.1 and~8.2]{KuzPol}}] \label{KP-first-block}
The sets of weights $\tilde \B_t$ indexed by integers $t=0, \ldots, k-1$ and defined by
    \begin{equation*}
        \tilde \B_t \coloneqq \YD_t^{2n-k-t+1}\times \YD_{n-k}^{a_t}=\{2n-k-t+1\ge  \lambda_1\ge \lambda_2\ge \ldots\ge \lambda_t\ge 0;\  a_t  \ge \mu_{1}\ge \ldots\ge  \mu_{n-k}\ge 0\}
    \end{equation*}
form exceptional blocks for $\IGr(k,V)$, which we call \textbf{big  blocks}. In particular, the following subsets of~$\tilde \B_t$
 \begin{equation*} 
 \B_t\coloneqq \YD_t^{2n-k-t}\times \YD_{n-k}^{a_t} \qquad \text{for} \quad t=0, \ldots, k-1
\end{equation*}
are also exceptional blocks for $\IGr(k,V)$; we call them \textbf{small blocks}.
\end{proposition}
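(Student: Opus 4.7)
The task is to verify Definition~\ref{def:exc_blocks} for~$\tilde\B_t$: for all $(\alpha;\mu), (\beta;\nu) \in \tilde\B_t$, the composition map
\begin{equation*}
\bigoplus_{\gamma \in \tilde\B_t} \mathrm{Ext}^{\bullet}_{\bG}(\cU^{(\alpha;\mu)}, \cU^{\gamma}) \otimes \mathrm{Hom}(\cU^{\gamma}, \cU^{(\beta;\nu)}) \to \mathrm{Ext}^{\bullet}(\cU^{(\alpha;\mu)}, \cU^{(\beta;\nu)})
\end{equation*}
is an isomorphism. I would follow the strategy of Kuznetsov--Polishchuk: compute both sides explicitly via Borel--Bott--Weil, then match the contributions weight by weight.

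First, I would decompose the bundle $(\cU^{(\alpha;\mu)})^\vee \otimes \cU^{(\beta;\nu)}$ into irreducible $\bG$-equivariant summands. Using the Littlewood--Richardson rule for $\GL_k$ on the $\Sigma^{\bullet}\cU_k^\vee$-factors (Lemma~\ref{lemma:LR rule}) together with its symplectic analogue on the $\Sigma^{\bullet}_\Sp\cS_{2(n-k)}$-factors, each summand has the form $\Sigma^\xi\cU_k^\vee \otimes \Sigma^\theta_\Sp\cS_{2(n-k)}$ with $\xi \in \beta - \alpha + \operatorname{conv}\mathfrak{S}_k \cdot \mu$ (and analogously for $\theta$). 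Second, for each such summand I would apply the symplectic Borel--Bott--Weil theorem on $\IGr(k,V)$ (a relative version of Theorem~\ref{theo:relative_BBW_sym} over a point) to determine the cohomology. The point of the block definition is that the bounds $2n-k-t+1$ on the $\YD_t$-factor and $a_t = \lfloor(k-t)/2\rfloor$ on the $\YD_{n-k}$-factor are calibrated precisely so that every nonvanishing contribution in the total Ext either arises from a summand $\gamma \in \tilde\B_t$ (giving the image of the block map) or factors through the trivial $\bG$-representation via a $\gamma \in \tilde\B_t$, ensuring the map is an isomorphism.

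For the small blocks $\B_t$, I would reduce to the big-block case by invoking Proposition~\ref{KPgeneralform}: it suffices to show that $\B_t$ is downward closed in $\tilde\B_t$ with respect to~$\preceq$. Suppose $(\beta;\nu)\in \B_t$, so $\beta_1 \le 2n-k-t$, and suppose $(\lambda;\mu)\in \tilde\B_t$ with $(\lambda;\mu)\preceq(\beta;\nu)$. Then the inequality $\lambda_1 \le \beta_1 \le 2n-k-t$ gives $\lambda\in \YD_t^{2n-k-t}$, and since $(\lambda;\mu)\in \tilde\B_t$ already enforces $\mu \in \YD_{n-k}^{a_t}$, we conclude $(\lambda;\mu)\in \B_t$. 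Thus $\B_t$ is downward closed and therefore an exceptional block, with $\cE^{\gamma}_{\B_t} = \cE^{\gamma}_{\tilde\B_t}$ for $\gamma \in \B_t$.

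The main obstacle will be the second step above. The symplectic Weyl group $\mathfrak{S}_n \ltimes (\mathbb{Z}/2)^n$ contains sign changes, so the combinatorics of which summands $(\xi;\theta)$ become dominant after shifting by~$\rho$ and applying a Weyl element is delicate. One has to check that for every pair $(\alpha;\mu), (\beta;\nu) \in \tilde\B_t$ and every summand $(\xi;\theta)$ appearing in the tensor product decomposition, the shifted weight $(\xi;\theta)+\rho$ either has a repeated absolute value (forcing vanishing) or produces a dominant weight of the required form. The bounds $2n-k-t+1$ and $a_t$ are exactly what is needed to rule out the spurious configurations; tightening either bound would break exceptionality. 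This is the content of \cite[Subsections~8.1 and~8.2]{KuzPol}, and reproducing it requires a careful case analysis rather than a single conceptual argument.
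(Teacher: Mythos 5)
Your proposal is correct and matches the paper's approach, which is essentially a citation: the big blocks $\tilde\B_t$ are taken directly from \cite[Subsections~8.1--8.2]{KuzPol}, and the small blocks $\B_t$ follow by the downward-closedness reduction via Proposition~\ref{KPgeneralform}, exactly as you argue. Your verification that $\B_t$ is downward closed in $\tilde\B_t$ (using $\lambda_1 \le \beta_1 \le 2n-k-t$ together with $\lambda \in \YD_t$ already being enforced by membership in $\tilde\B_t$) is the right and complete argument for the second claim; for the first claim you, like the paper, defer to the case analysis in \cite{KuzPol}, which is appropriate since this proposition is stated as a citation rather than proved in the text.
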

To construct the full exceptional collection we need to define one more exceptional block $\B_k$. It does not coincide with any of the exceptional blocks from \cite{KuzPol}, so we need to show that~$\B_k$ satisfies the condition~\eqref{eq:exc_block}. 

\begin{lemma} \label{prop:bigblockk}
The sets of weights $\B_k\subset \tilde \B_k$ defined by
    \begin{align*}
       & \tilde \B_k \coloneqq \YD_k^{2n-2k+1}=\{2n-2k+1\ge  \lambda_1\ge \lambda_2\ge \ldots \ge \lambda_k\ge 0\}; \\
       & \B_k \coloneqq \YD_k^{2n-2k\phantom{{}+1}}=\{2n-2k\phantom{{}+1}\ge \lambda_1\ge \lambda_2\ge \ldots\ge \lambda_k\ge 0\}
    \end{align*}
form exceptional blocks for $\IGr(k,V)$. Moreover, for~\(t = k-1\) and $t=k$ we have 
\begin{equation*}
\cE^{\lambda}_{\B_t} = \Sigma^{\lambda}\cU_k^\vee.
\end{equation*}
\end{lemma}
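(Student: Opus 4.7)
The plan is to reduce both the exceptional block property and the identification of dual objects to a direct cohomological analysis on $\IGr(k,V)$. The key observation is that for $\alpha, \beta \in \tilde\B_k = \YD_k^{2n-2k+1}$ the width bound $\alpha_1, \beta_1 \le 2n-2k+1$ keeps the symplectic Borel--Bott--Weil computation in a single, easily controlled regime.

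Concretely, I decompose $\Sigma^\alpha \cU_k \otimes \Sigma^\beta \cU_k^\vee$ into irreducibles $\Sigma^\nu \cU_k^\vee$ via Lemma~\ref{lemma:LR rule}; every coordinate of every summand then satisfies $|\nu_i| \le 2n-2k+1$. Applying Theorem~\ref{theo:relative_BBW_sym} to the weight $(\nu_1,\dots,\nu_k,0,\dots,0) + \rho$, the $k$-th entry $\nu_k + n-k+1$ lies in $[k-n,\, 3n-3k+2]$, and every value in the sub-range $[k-n,\, n-k]$ either produces a zero coordinate or clashes in absolute value with one of the fixed padded entries $\{1,\dots,n-k\}$ contributed by $\rho$; so the cohomology vanishes unless $\nu_k \ge 0$. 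When $\nu_k \ge 0$, the shifted weight is already $\Sp$-dominant, so $H^\bullet(\IGr(k,V), \Sigma^\nu \cU_k^\vee)$ is concentrated in degree zero and equals $\Sigma^{(\nu_1,\dots,\nu_k,0,\dots,0)}_\Sp V$ as a $\bG$-representation. The upshot is that $\mathrm{Ext}^{>0}(\cU^\alpha, \cU^\beta) = 0$, while taking $\bG$-invariants of the degree-zero part picks out only the summand with $\nu = 0$, whose Littlewood--Richardson multiplicity equals $\dim \mathrm{Hom}_{\GL_k}(\Sigma^\alpha \cU_k^\vee, \Sigma^\beta \cU_k^\vee) = \delta_{\alpha,\beta}$. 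Hence $\mathrm{Ext}^\bullet_\bG(\cU^\alpha, \cU^\beta) = \delta_{\alpha,\beta} \Bbbk$.

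With these two vanishings, the condition of Definition~\ref{def:exc_blocks} for $\tilde\B_k$ is immediate: only $\gamma = \alpha$ contributes on the left-hand side, giving $\mathrm{Hom}(\cU^\alpha, \cU^\beta)$, which matches the right-hand side $\mathrm{Ext}^\bullet(\cU^\alpha, \cU^\beta) = \mathrm{Hom}(\cU^\alpha, \cU^\beta)$, and the canonical map is the identity on this space. The inclusions $\B_k \subset \tilde\B_k$ and, via the embedding $(\lambda_1,\dots,\lambda_{k-1}) \mapsto (\lambda_1,\dots,\lambda_{k-1},0)$, also $\B_{k-1} \subset \tilde\B_k$, are downward closed with respect to $\preceq$---the first because the stricter width bound is preserved componentwise, the second because $\nu \preceq \mu$ with $\mu_k = 0$ forces $\nu_k = 0$---so Proposition~\ref{KPgeneralform} gives that $\B_k$ is an exceptional block and $\cE^\lambda_{\B_t} = \cE^\lambda_{\tilde\B_k}$ for $t \in \{k-1, k\}$. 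Finally, $\Sigma^\lambda \cU_k^\vee$ lies tautologically in $\cA^\bG_{\tilde\B_k}$ and satisfies the equivariant orthogonality just established, which is exactly the characterization~\eqref{dual_general} of the right dual object in the equivariant category; therefore $\cE^\lambda_{\tilde\B_k} = \Sigma^\lambda \cU_k^\vee$. The main technical hurdle is the careful symplectic Borel--Bott--Weil bookkeeping in the first step; the width bound $2n-2k+1$ is exactly what is needed to forbid sign-flip contributions to cohomology in the relevant range of $\nu_k$.
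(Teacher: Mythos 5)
Your proof is correct and follows essentially the same route as the paper: a symplectic Borel--Bott--Weil analysis controlled by the width bound $2n-2k+1$, followed by the downward-closedness argument and Proposition~\ref{KPgeneralform}. The one genuine variation is in how you pin down $\mathrm{Ext}^0_{\bG}$: you compute the multiplicity of the trivial representation in $\Sigma^\alpha\cU_k\otimes\Sigma^\beta\cU_k^\vee$ directly via Schur's lemma, giving $\delta_{\alpha,\beta}$, whereas the paper deduces $\alpha\le\beta$ from the Littlewood--Richardson rule and then invokes Theorem~\ref{theo:eq-ex-col} to get the reverse inequality $\beta\le\alpha$; your version is slightly more self-contained since it avoids appealing to the ordering of the infinite equivariant collection, but the underlying computation (dominance of $\nu+\rho$ forcing $\nu_k\ge 0$, hence cohomology concentrated in degree zero) is the same as in Lemma~\ref{lemma:exceptional_for_big_t}.
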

\begin{proof}
Note that $\B_{k-1},\B_k\subset \tilde \B_k$ are downward closed subsets with respect to the order~\eqref{order}. Thus, by Proposition~\ref{KPgeneralform}, it is enough to show that $\tilde \B_k$ is exceptional and that $\cE^{\lambda}_{\tilde \B_k} = \Sigma^{\lambda}\cU_k^\vee$.

To check the condition~\eqref{eq:exc_block}, it is enough to prove that the only non-zero equivariant morphisms in the subcategory~$\tilde \B_k$ are~\(\operatorname{Id}\in \mathrm{Ext}_{\bG}^{\bullet}(\Sigma^\alpha \cU_k^\vee,\Sigma^\alpha \cU_k^\vee)\).
First, Lemma~\ref{lemma:exceptional_for_big_t} yields
\begin{equation*}
\E{>0}_\bG(\Sigma^{\alpha}\cU^{\vee}_k,\Sigma^{\beta}\cU^{\vee}_k)=0 \qquad \text{for} \quad \alpha,\beta\in \tilde \B_k.
\end{equation*}
Now, suppose that for some~$\alpha,\beta\in \tilde \B_k\subset \YD_k$ we have
\begin{equation*}
\E{0}_\bG(\Sigma^{\alpha}\cU^{\vee}_k,\Sigma^{\beta}\cU^{\vee}_k)=\big(\operatorname{H}^{0}(\Sigma^\alpha \cU_k \otimes \Sigma^\beta \cU^\vee_k)\big)^{\bG}\ne 0.
\end{equation*} 
Let~\(\Sigma^\gamma \cU^\vee_k \inplus \Sigma^\alpha \cU_k \otimes \Sigma^\beta \cU^\vee_k\) be such that $\operatorname{H}^{0}(\Sigma^\gamma \cU^\vee_k) \neq 0$. 
Then~\(\gamma \in \YD_k\) by Theorem~\ref{theo:relative_BBW_sym} (symplectic Borel--Bott--Weil). Therefore,~\(\alpha \le \beta\) by Lemma~\ref{lemma:LR rule}, where $\le$ is the standard order~\eqref{eq:order-GL}. Next, if~\(\E{0}_\bG(\Sigma^{\alpha}\cU^{\vee}_k,\Sigma^{\beta}\cU^{\vee}_k) \neq 0\) then~\(\beta \le \alpha\) by Theorem~\ref{theo:eq-ex-col}. Thus,~\(\alpha=\beta\), and Theorem~\ref{theo:eq-ex-col} also yields $\E{0}_\bG(\Sigma^{\alpha}\cU^{\vee}_k,\Sigma^{\alpha}\cU^{\vee}_k)=\Bbbk$. So the block~$\tilde \B_t$ is exceptional.

In particular, we proved
\begin{equation*}
\E{\bullet}_\bG(\Sigma^{\alpha}\cU^{\vee}_k,\Sigma^{\beta}\cU^{\vee}_k)=0 \qquad \text{for} \quad \alpha,\beta\in \tilde \B_k \quad \text{such that} \quad \alpha>\beta.
\end{equation*}
Thus, the mutation~\eqref{def:E_equiv} is trivial and~$\cE^{\lambda}_{\tilde \B_t} = \Sigma^{\lambda}\cU_k^\vee$. 
\end{proof}


We use the following notations
\begin{align*} 
\tilde \cA_t & \coloneqq \cA_{\tilde\B_t}={}  \big\langle \Sigma^{\lambda}\cU^{\vee}_k\otimes \Sigma^{\mu}_{\Sp}\cS_{2(n-k)}\big\rangle_{(\lambda;\mu)\in \tilde \B_t}\subset \Db(\IGr(k,V)) \label{tildeA};\\
\cA_t &\coloneqq \cA_{\B_t}= \big\langle \Sigma^{\lambda}\cU^{\vee}_k\otimes \Sigma^{\mu}_{\Sp}\cS_{2(n-k)}\big\rangle_{(\lambda;\mu)\in \B_t}\subset \Db(\IGr(k,V));\\
\cE_t^{\lambda; \mu} & \coloneqq \cE_{\tilde \B_t}^{\lambda; \mu}\in \tilde \cA_t \qquad  \text{for} \quad (\lambda;\mu)\in \tilde \B_t,
\end{align*}
where $\cE_{\tilde \B_t}^{\lambda; \mu}$ is the exceptional object~\eqref{definition:E_abstr} for $\gamma=(\lambda;\mu)\in \tilde \B_t$. 
If~$\mu = 0$ then we use the notation~$\cE_t^{\lambda}\coloneqq \cE_t^{\lambda; 0}$. 
\begin{corollary}
For each $t=0, \ldots, k$ the subcategory $\tilde \cA_t$ is generated by a full exceptional collection
\begin{equation*}
\tilde \cA_t=\langle \cE_t^{\alpha; \mu} \rangle_{(\alpha; \mu)\in \tilde \B_t}
\end{equation*}
with respect to any complete order that refines~\eqref{order}.
\end{corollary}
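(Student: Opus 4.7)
The plan is to combine the exceptional block results established just above with the general dual-collection machinery of Kuznetsov--Polishchuk. Specifically, once we know that each $\tilde \B_t$ is an exceptional block in the sense of Definition~\ref{def:exc_blocks}, the corollary is an immediate application of Proposition~\ref{KPgeneralform}.

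First, I would verify that $\tilde \B_t$ is an exceptional block for every $t$ in the range $0 \le t \le k$. For $0 \le t \le k-1$ this is exactly the content of Proposition~\ref{KP-first-block}, so nothing further is needed. For $t = k$ the set $\tilde \B_k$ is not among the blocks treated in \cite{KuzPol}, but Lemma~\ref{prop:bigblockk} has just shown that $\tilde \B_k$ is itself an exceptional block and, moreover, that $\cE^{\lambda}_{\tilde \B_k} = \Sigma^{\lambda}\cU_k^\vee$ for all $\lambda \in \tilde \B_k$.

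With $\tilde \B_t$ identified as an exceptional block, I would then invoke Proposition~\ref{KPgeneralform} directly: it asserts that the objects $\cE^{\gamma}_{\tilde \B_t}$ constitute a full exceptional collection in the subcategory $\cA_{\tilde \B_t}$ with respect to any complete order refining the partial order~\eqref{order}. Since by definition $\tilde \cA_t = \cA_{\tilde \B_t}$ and $\cE^{\alpha;\mu}_t = \cE^{\alpha;\mu}_{\tilde \B_t}$, this yields exactly the claim of the corollary.

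There is no genuine obstacle here, as the statement is a formal consequence of results already established in the section. The only delicate point is ensuring that the edge case $t = k$, which lies outside the range of Proposition~\ref{KP-first-block}, is covered; this is handled by the separate verification carried out in Lemma~\ref{prop:bigblockk}. Thus the proof reduces to a one-line citation of Propositions~\ref{KP-first-block} and~\ref{KPgeneralform} together with Lemma~\ref{prop:bigblockk}.
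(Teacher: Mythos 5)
Your proposal is correct and matches the paper's proof essentially verbatim: the paper likewise cites Propositions~\ref{KP-first-block} and~\ref{KPgeneralform} together with Lemma~\ref{prop:bigblockk}. You have simply spelled out the same one-line argument in more detail.
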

\begin{proof}
    The statement immediately follows from Propositions~\ref{KPgeneralform} and~\ref{KP-first-block} and Lemma~\ref{prop:bigblockk}. 
\end{proof}
\begin{remark}
Note that by Proposition~\ref{KPgeneralform} for $(\lambda; \mu)\in \B_t\subset \tilde \B_t$ we have $\cE_{\B_t}^{\lambda; \mu}=\cE_{\tilde \B_t}^{\lambda; \mu}$, so the objects~$\cE_t^{\lambda; \mu}$ for~${(\lambda; \mu)\in \B_t}$ form a full exceptional collection in~$\cA_t$. 
\end{remark}
\begin{lemma} \label{E_description}
Fix an element $(\lambda;\mu)\in \tilde \B_t$. Suppose an object $\cE\in \Db(\IGr(k,V))$ has the following two properties: 
\begin{equation*}
\cE\in \tilde \cA_t, \qquad \text{and}\qquad \E{\bullet}_{\bG}\left(\cE,\, \Sigma^{\beta}\cU^{\vee}_k\otimes \Sigma^{\nu}_{\Sp}\cS_{2(n-k)} \right) = 
    \begin{cases}
        \Bbbk, & \text{if} \ (\beta;\nu) = (\lambda;\mu);\\
        0, & \text{if}\ 0 \preceq (\beta;\nu) \prec (\lambda;\mu).
    \end{cases}
\end{equation*} 
Then $\cE\simeq\cE_t^{(\lambda;\mu)}$.
\end{lemma}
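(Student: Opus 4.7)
The plan is to extract~\(\cE\)'s structure from its Postnikov decomposition with respect to the dual exceptional collection~\((\cE_t^\gamma)_{\gamma \in \tilde\B_t}\) in~\(\tilde\cA_t\), and to match the resulting data with that of~\(\cE_t^{(\lambda;\mu)}\).

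First I would verify that~\(\cE_t^{(\lambda;\mu)}\) itself satisfies the two hypotheses. Membership in~\(\tilde\cA_t\) is immediate from~\eqref{definition:E_abstr}, while the Ext vanishing for weights~\((\beta;\nu) \in \tilde\B_t\) is exactly the defining property~\eqref{dual_general} of the right dual exceptional object. For a dominant weight~\((\beta;\nu) \prec (\lambda;\mu)\) lying outside~\(\tilde\B_t\), the required vanishing follows from the fact that~\(\cE_t^{(\lambda;\mu)}\) actually lies in the smaller block~\(\cA_{\B'}\) with~\(\B' = \{\gamma \in \tilde\B_t \mid \gamma \preceq (\lambda;\mu)\}\) (Proposition~\ref{KPgeneralform}), combined with the exceptional block axiom~\eqref{eq:exc_block} that controls morphisms from block objects into arbitrary irreducible equivariant bundles.

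Since~\(\cE \in \tilde\cA_t\) and the block is generated by equivariant bundles, I take an equivariant lift of~\(\cE\) and expand it along~\((\cE_t^\gamma)_{\gamma \in \tilde\B_t}\) (ordered by any total refinement of~\(\preceq\)) as an iterated cone of~\(W_\gamma \otimes \cE_t^\gamma\) with graded multiplicity spaces~\(W_\gamma\). The standard telescoping argument, using~\(\E{\bullet}_\bG(\cE_t^\gamma, \Sigma^\delta \cU^\vee_k \otimes \Sigma^{\mu'}_\Sp \cS_{2(n-k)}) = \delta_{\gamma,(\delta;\mu')}\,\Bbbk\) for~\(\gamma,(\delta;\mu') \in \tilde\B_t\), identifies
\begin{equation*}
W_{(\delta;\mu')} = \E{\bullet}_\bG\!\big(\cE,\, \Sigma^\delta \cU^\vee_k \otimes \Sigma^{\mu'}_\Sp \cS_{2(n-k)}\big)^\vee.
\end{equation*}
The hypotheses immediately give~\(W_{(\lambda;\mu)} = \Bbbk\) and~\(W_\gamma = 0\) for every~\(\gamma \in \tilde\B_t\) with~\(\gamma \prec (\lambda;\mu)\).

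The main obstacle is to show that the remaining multiplicities~\(W_\gamma\) for~\(\gamma \in \tilde\B_t\) not~\(\preceq (\lambda;\mu)\) also vanish. For this I would exploit the Ext vanishing at dominant weights~\((\beta;\nu) \prec (\lambda;\mu)\) \emph{outside} of~\(\tilde\B_t\): each such condition translates along the Postnikov filtration into a linear relation
\begin{equation*}
\sum_{\gamma \in \tilde\B_t} W_\gamma^\vee \otimes \E{\bullet}_\bG\!\big(\cE_t^\gamma,\, \Sigma^\beta \cU^\vee_k \otimes \Sigma^\nu_\Sp \cS_{2(n-k)}\big) = 0,
\end{equation*}
whose coefficients are computable via symplectic Borel--Bott--Weil (Theorem~\ref{theo:relative_BBW_sym}) applied to the mutations defining~\(\cE_t^\gamma\). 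A descending induction on the~\(\preceq\)-height of the remaining non-vanishing~\(W_\gamma\), detecting each offending~\(\gamma \not\preceq (\lambda;\mu)\) by a strategically chosen non-trivial pairing, would then force~\(W_\gamma = 0\). Once this is established, the Postnikov filtration collapses to~\(\cE \simeq \Bbbk \otimes \cE_t^{(\lambda;\mu)} = \cE_t^{(\lambda;\mu)}\).
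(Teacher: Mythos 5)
Your first two paragraphs are the correct unpacking of the paper's one-line proof (``immediately follows from \eqref{def:E_equiv} and \eqref{dual_general}''): expand $\cE$ against the dual exceptional collection $(\cE_t^\gamma)_{\gamma \in \tilde\B_t}$ and read off the multiplicities $W_\gamma = \E{\bullet}_\bG(\cE, \cU^\gamma)^\vee$. The obstacle you name in the third paragraph is, however, the whole content of the lemma, and neither your sketch nor the paper's citation of \eqref{dual_general} actually closes it.

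To be precise about why \eqref{dual_general} does not directly apply: that characterization requires $\E{\bullet}(\lvee E_i, E_j) = 0$ for \emph{every} $j \ne i$ in the collection, whereas the stated hypothesis only controls $\E{\bullet}_\bG(\cE, \cU^{(\beta;\nu)})$ for $(\beta;\nu) \preceq (\lambda;\mu)$. Since $\preceq$ is a genuine partial order, weights $\gamma \in \tilde\B_t$ that are $\preceq$-incomparable with $(\lambda;\mu)$ exist as soon as $t \ge 2$ (for instance $(1,1)$ and $(2)$), and the corresponding multiplicities $W_\gamma$ are left undetermined. Your observation here is exactly right, and this is a real imprecision in the lemma as stated, not a step you are failing to see.

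What I am not convinced by is the proposed patch. You want to harvest the Ext vanishing at dominant $(\beta;\nu) \prec (\lambda;\mu)$ lying \emph{outside} $\tilde\B_t$ and run a descending induction. Two problems. First, the ``strategically chosen non-trivial pairing'' is never produced, so the induction has no content. Second, and more seriously, those outside conditions are in tension with the lemma itself: for $t = k-1$ one has $\cE_t^{((1,1);0)} \simeq \Lambda^2\cU_k^\vee$, the weight $(0;(2))$ satisfies $0 \preceq (0;(2)) \prec ((1,1);0)$, yet a symplectic Borel--Bott--Weil check at the weight $(0,-1,-1,2,0)$ on $\IGr(3,V)$ shows $\E{\bullet}_\bG(\Lambda^2\cU_3^\vee, \Sigma^{(2)}_\Sp\cS_{2(n-3)})$ is a shifted copy of $\Bbbk$, not zero. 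So the would-be target object does not even satisfy the displayed hypothesis as literally written, and the verification in your first paragraph does not go through.

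The way to make the statement robust, and what the application in Theorem~\ref{theo:formulas_for_F} implicitly verifies, is to strengthen $\cE \in \tilde\cA_t$ to $\cE \in \cA_{\B'}$ with $\B' \coloneqq \{\gamma \in \tilde\B_t : \gamma \preceq (\lambda;\mu)\}$, and to let the Ext conditions range over $\B'$ only. Then $\B'$ is a downward-closed exceptional sub-block by Proposition~\ref{KPgeneralform}, the conditions listed are exactly the full \eqref{dual_general} orthogonality in $\cA^\bG_{\B'}$, and uniqueness is immediate. Concretely, in Lemma~\ref{lemma:Phi(S)_in_A} the filtration pieces $\Sigma^\gamma\cU_k \otimes \Sigma^\eta_\Sp\cS_{2(n-k)}$ satisfy both $\gamma_i \le \lambda_i$ and, by bookkeeping the degrees through Lemma~\ref{skew_decomposition}, $|\gamma| + |\eta| \le |\lambda| + |\mu|$, i.e. $(\gamma;\eta) \preceq (\lambda;\mu)$; this is the missing ingredient that replaces your descending induction.
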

\begin{proof}
    The statement immediately follows from the definition~\eqref{def:E_equiv} and the description~\eqref{dual_general}. 
\end{proof}   
\begin{remark}\label{rem:E(a_t)}
In general, the definition of $\cE_t^{\lambda; \mu}$ depends on the block number~$t$. However, if the sets 
    \begin{equation*}
\{\beta\times \nu\in \tilde \B_{t_1}\, |\, \beta\times \nu\prec \lambda\times \mu\} \qquad \text{and} \qquad \{\beta\times \nu \in \tilde \B_{t_2}\,|\,\beta \times \nu\prec \lambda\times \mu\}
\end{equation*}
    coincide, then~\(\cE^{\lambda;\mu}_{t_1}=\cE^{\lambda;\mu}_{t_2}\) by Proposition~\ref{KPgeneralform}. This happens whenever~\(a_{t_1} = a_{t_2}\). In particular, in the Lagrangian case~$\mathrm{LGr}(n,V)$ the exceptional object~$\cE_t^{\lambda}$ is fully characterized by the weight~$\lambda$ and
    does not depend on the block number~$t$.
\end{remark} 

\begin{remark}
    In the construction of~\cite{KuzPol} there are~\(2n-k+1\) blocks. The first~\(k\) blocks coincide with~\(\B_0, \dots, \B_{k-1}\). 
    
    To define the last~\(2n-2k+1\) blocks used in~\cite{KuzPol} we introduce the following notation
    \begin{equation*}
\YD^w_{k-1}(l)\coloneqq \{w+l \ge  \alpha_1\ge \alpha_2\ge \ldots \ge \alpha_k = l\}
\end{equation*}
    for the set of weights corresponding to the set of vector bundles~\(\{\Sigma^{\lambda} \cU_k^\vee(l)\mid \lambda \in \YD_{k-1}^{w} \}\). 
    The last~\(2n-2k+1\) blocks in~\cite{KuzPol} are indexed by~\(t =k,\ldots, 2n-k\) and defined by
    \begin{equation*}
       \B'_t\coloneqq \YD_{k-1}^{2n-k-t}(t) = \{2n-k \ge  \alpha_1\ge \alpha_2\ge \ldots \ge \alpha_k=t;\  \mu_{1}=\ldots=\mu_{n-k} = 0\};\\
    \end{equation*}

Note that the block $\B_k$ defined in Lemma~\ref{prop:bigblockk} can be described as the union
    \begin{equation*}
\B_k = \bigsqcup_{t=k}^{2n-k} \YD_{k-1}^{2n-k-t}(t-k).
\end{equation*}
It follows from the above that the last~\(2n-2k+1\) blocks in~\cite{KuzPol} can be replaced by one exceptional block $\B_k$.
\end{remark}

\begin{theorem}[{\cite[Theorem 9.2]{KuzPol}}]
The collection $\cA_0, \cA_1(1),\ldots,\cA_{k}(k)$ of subcategories is semiorthogonal in $\Db(\IGr(k,V))$, and each subcategory is generated by the exceptional collection:
\begin{equation*}
 \cA_t= \langle \cE_t^{\alpha; \mu} \rangle_{(\alpha; \mu)\in \B_t}.
\end{equation*}
\end{theorem}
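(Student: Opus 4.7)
The plan is to derive both assertions from the exceptional-block formalism established earlier, together with a direct comparison to the Kuznetsov--Polishchuk decomposition.

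The exceptional generation of each $\cA_t$ by $\langle \cE_t^{\alpha;\mu}\rangle_{(\alpha;\mu)\in\B_t}$ is immediate: Proposition~\ref{KP-first-block} (for $0\le t\le k-1$) and Lemma~\ref{prop:bigblockk} (for $t=k$) establish that each $\B_t$ is an exceptional block in the sense of Definition~\ref{def:exc_blocks}, and Proposition~\ref{KPgeneralform} then yields that the right-dual objects $\cE_t^{\alpha;\mu}$ constitute a full exceptional collection inside $\cA_{\B_t}=\cA_t$.

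The substance is semiorthogonality. Since each $\cA_t(t)$ is triangulated-generated by the equivariant bundles $\Sigma^{\lambda}\cU^\vee_k\otimes\Sigma^{\mu}_\Sp\cS_{2(n-k)}(t)$ with $(\lambda;\mu)\in\B_t$, it suffices to verify that for all $0\le s<t\le k$, $(\lambda;\mu)\in\B_s$, and $(\lambda';\mu')\in\B_t$,
\[
\E{\bullet}\bigl(\Sigma^{\lambda'}\cU^\vee_k\otimes\Sigma^{\mu'}_\Sp\cS_{2(n-k)}(t),\;\Sigma^{\lambda}\cU^\vee_k\otimes\Sigma^{\mu}_\Sp\cS_{2(n-k)}(s)\bigr)=0.
\]
I would rewrite this as the total cohomology on $\IGr(k,V)$ of the dual tensor product twisted by $\cO(-(t-s))$, decompose using Lemma~\ref{lemma:LR rule} together with its symplectic analogue into a direct sum of irreducible $\bG$-equivariant bundles $\Sigma^{\alpha}\cU^\vee_k\otimes\Sigma^{\nu}_\Sp\cS_{2(n-k)}\otimes\cO(-(t-s))$, and then apply Theorem~\ref{theo:relative_BBW_sym} to each constituent, now viewed via the projection $\IGr(k,V)\to\mathrm{pt}$.

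The main obstacle is the combinatorial check that every Littlewood--Richardson constituent, after twisting by $\cO(-(t-s))$, has a $\rho$-shifted weight with a repeated or zero coordinate and therefore contributes zero cohomology. The constraints $\lambda_1\le 2n-k-s$, $\lambda'_1\le 2n-k-t$, $\mu_1\le a_s$, $\mu'_1\le a_t$ carving out the blocks are calibrated precisely to enforce this singularity, and a case analysis on the parities of $k-s$ and $k-t$ reduces matters to explicit inequalities. This is carried out in \cite[Theorem~9.2]{KuzPol} for the first $k$ blocks. In our formulation, the new block $\B_k$ of Lemma~\ref{prop:bigblockk} coincides (after the identification $\Sigma^{\alpha_1,\ldots,\alpha_{k-1},j-k}\cU^\vee_k(k)\simeq\Sigma^{\alpha_1-(j-k),\ldots,\alpha_{k-1}-(j-k),0}\cU^\vee_k(j)$) with the union $\bigsqcup_{j=k}^{2n-k}\B'_j$ of the last blocks of \cite{KuzPol}; hence $\cA_k(k)=\bigl\langle\cA_{\B'_k}(k),\ldots,\cA_{\B'_{2n-k}}(2n-k)\bigr\rangle$, and the asserted semiorthogonality for our coarsened decomposition is obtained by collapsing the corresponding consecutive blocks of \cite[Theorem~9.2]{KuzPol}.
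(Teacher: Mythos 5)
Your proposal is correct and essentially reconstructs the (unstated) argument the paper implicitly relies on. The paper quotes this result directly as \cite[Theorem~9.2]{KuzPol} without writing a proof; since the statement here differs slightly from the original (the last $2n-2k+1$ blocks $\B'_k,\ldots,\B'_{2n-k}$ of \cite{KuzPol} are merged into the single block $\B_k$), the implicit justification is exactly the pair of observations you give: Propositions~\ref{KP-first-block}, \ref{KPgeneralform} and Lemma~\ref{prop:bigblockk} for the exceptional-collection structure of each block, and the identification $\cA_k(k)=\langle\cA_{\B'_k}(k),\ldots,\cA_{\B'_{2n-k}}(2n-k)\rangle$ (which the paper records in the Remark following Lemma~\ref{prop:bigblockk} as $\B_k=\bigsqcup_{j=k}^{2n-k}\YD_{k-1}^{2n-k-j}(j-k)$), after which semiorthogonality follows from \cite[Theorem~9.2]{KuzPol} by collapsing the consecutive final blocks. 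Your displayed twist identity is right, and the general principle that $\langle A, B_1,\ldots,B_m\rangle$ semiorthogonal implies $\langle A, \langle B_1,\ldots,B_m\rangle\rangle$ semiorthogonal is exactly what makes the collapse legitimate. The middle paragraph re-deriving the Borel--Bott--Weil vanishing is superfluous since you ultimately cite \cite{KuzPol} for it, but it is not wrong.
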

\subsection{Description of \texorpdfstring{\((\cE^{\lambda; \mu}_t)^{\vee}\)}{cF}} \label{subsec:general_formulas_for_E}
The goal of this subsection is to provide an explicit description of objects~$(\cE^{\lambda; \mu}_t)^{\vee}$ and, as a consequence, to prove that $\cE^{\lambda; \mu}_t$ is a vector bundle for $(\lambda;\mu)\in \B_t$.

First, for $t=k-1$ and $t=k$ the explicit description of the exceptional objects $\cE^{\lambda}_t$ is provided by Lemma~\ref{prop:bigblockk}, where it was shown that $\cE^{\lambda}_t\simeq \Sigma^{\lambda}\cU^{\vee}_k$ for these $t$. 

Let us now describe the exceptional objects for $t=0$. Recall that $\cA_0=\langle \Sigma^{\mu}_\Sp \cS_{2(n-k)} \rangle_{\mu\in \YD^{a_0}_{n-k}}$.
\begin{lemma}\label{prop:0th_block}
    Let~\(\mu \in \YD^{a_0}_{n-k}=\B_0\). Then~\(\cE_0^{0;\mu} = \Sigma^{\mu}_\Sp \cS_{2(n-k)}\). 
\end{lemma}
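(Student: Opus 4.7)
My plan is to apply the characterization in Lemma~\ref{E_description} with~\(t = 0\), \((\lambda;\mu) = (0;\mu)\), and~\(\cE = \Sigma^{\mu}_\Sp \cS_{2(n-k)}\). This reduces the claim to checking two conditions: that~\(\cE\) lies in~\(\tilde{\cA}_0\), and that its equivariant Ext groups against the irreducible equivariant bundles indexed by weights~\((\beta;\nu)\) with~\(0 \preceq (\beta;\nu) \preceq (0;\mu)\) match the prescribed values.

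The first condition is immediate: since~\(\mu \in \YD^{a_0}_{n-k}\), we have~\((0;\mu) \in \B_0 \subset \tilde{\B}_0\), so~\(\Sigma^{\mu}_\Sp \cS_{2(n-k)}\) is one of the generators of~\(\tilde{\cA}_0\).

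For the second condition, I would invoke Theorem~\ref{theo:eq-ex-col} directly. First I note that any weight~\((\beta;\nu)\) satisfying~\(0 \preceq (\beta;\nu) \preceq (0;\mu)\) forces~\(\beta_i \ge 0\) and~\(\beta_i \le 0\) coordinatewise, hence~\(\beta = 0\); so it suffices to analyze~\(\operatorname{Ext}^{\bullet}_{\bG}(\Sigma^{\mu}_\Sp \cS_{2(n-k)}, \Sigma^{\nu}_\Sp \cS_{2(n-k)})\) for~\(\nu \in \YD_{n-k}\) with~\((0;\nu) \preceq (0;\mu)\). By Theorem~\ref{theo:eq-ex-col}, the family of all~\(\Sigma^{\lambda}\cU^{\vee}_k \otimes \Sigma^{\nu}_\Sp \cS_{2(n-k)}\) is a full exceptional collection in~\(\mathrm{D}^{\bG}(\IGr(k,V))\) with respect to any total order refining the opposite of~\(\preceq\); in particular, the subcollection with~\(\lambda = 0\) is exceptional. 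Exceptionality gives~\(\operatorname{Ext}^{\bullet}_{\bG}(\Sigma^{\mu}_\Sp \cS_{2(n-k)}, \Sigma^{\mu}_\Sp \cS_{2(n-k)}) = \Bbbk\). For the vanishing part, if~\((0;\nu) \prec (0;\mu)\) then in every total order refining~\(\preceq^{\mathrm{op}}\), the object~\(\Sigma^{\nu}_\Sp \cS_{2(n-k)}\) appears before~\(\Sigma^{\mu}_\Sp \cS_{2(n-k)}\), so semiorthogonality yields the required vanishing.

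Combining these two verifications and applying Lemma~\ref{E_description} gives~\(\cE_0^{0;\mu} \simeq \Sigma^{\mu}_\Sp \cS_{2(n-k)}\). There is no genuine obstacle here: this is one of the rare cases where the Kuznetsov--Polishchuk mutation construction is trivial, since the putative exceptional object is already equivariantly right-orthogonal to everything of lower~\(\preceq\)-order inside~\(\tilde{\cA}_0\); the only subtlety is bookkeeping the direction of the order and checking that~\(\beta\) is forced to be zero in the relevant range.
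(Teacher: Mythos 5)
Your overall plan---verify that $\Sigma^\mu_\Sp\cS_{2(n-k)}$ satisfies the characterization of Lemma~\ref{E_description}---is reasonable, and the observation that $0 \preceq (\beta;\nu) \preceq (0;\mu)$ forces $\beta = 0$ is correct. However, the semiorthogonality step contains a genuine error of direction. The exceptional collection of Theorem~\ref{theo:eq-ex-col} is ordered by a refinement of the \emph{opposite} of $\preceq$; unwinding this, $\preceq$-larger weights come \emph{earlier} in the collection and $\preceq$-smaller ones later. So if $(0;\nu) \prec (0;\mu)$, then $\Sigma^\mu_\Sp\cS_{2(n-k)}$ precedes $\Sigma^\nu_\Sp\cS_{2(n-k)}$, contrary to what you wrote. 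Semiorthogonality from Theorem~\ref{theo:eq-ex-col} therefore gives only $\E{\bullet}_\bG\!\left(\Sigma^\nu_\Sp\cS_{2(n-k)},\, \Sigma^\mu_\Sp\cS_{2(n-k)}\right) = 0$, whereas Lemma~\ref{E_description} (equivalently, triviality of the right mutation in~\eqref{def:E_equiv}, since $\RR_E(F)=F$ iff $\mathrm{Ext}^\bullet(F,E)=0$) requires the vanishing with the arguments in the other order, $\E{\bullet}_\bG\!\left(\Sigma^\mu_\Sp\cS_{2(n-k)},\, \Sigma^\nu_\Sp\cS_{2(n-k)}\right) = 0$. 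This direction is not a consequence of Theorem~\ref{theo:eq-ex-col} alone; indeed, already on $\PP^{2n-1}$ one has $\E{2n-1}_\bG(\cO(2n),\cO)\ne 0$, illustrating that the order-reversed $\mathrm{Ext}$ groups need not vanish in general.

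The paper closes exactly this gap by invoking~\cite[Lemma~A.8]{F19}, which gives the full \emph{two-sided} orthogonality $\E{\bullet}_\bG\!\left(\Sigma^\mu_\Sp\cS_{2(n-k)},\,\Sigma^{\mu'}_\Sp\cS_{2(n-k)}\right) = \Bbbk$ if $\mu=\mu'$ and $0$ otherwise, which is strictly stronger than the one-sided semiorthogonality from Theorem~\ref{theo:eq-ex-col}. Your argument could alternatively be repaired by observing that each $\Sigma^\mu_\Sp\cS_{2(n-k)}$ is self-dual (it arises from a self-dual $\Sp$-representation), so the relevant equivariant $\mathrm{Ext}$ groups are symmetric in $\mu$ and $\nu$, and one-sided vanishing implies both; but this additional observation must be made explicit. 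As written, the proof inverts the order relation and so does not actually establish the required vanishing.
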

\begin{proof}
By~\cite[Lemma~A.8]{F19} for any~\(\mu' \in \YD_{n-k}\) we have \begin{equation*}
\E{\bullet}_{\bG}\left(\Sigma_\Sp^{\mu} \cS_{2(n-k)},\, \Sigma_\Sp^{\mu'} \cS_{2(n-k)} \right) = 
    \begin{cases}
		\Bbbk, & \text{if} \ \mu = \mu';\\
		0, & \text{otherwise.}
    \end{cases}
\end{equation*} 
Thus, the mutation~\eqref{def:E_equiv} in $\cA^{\bG}_0=\langle \Sigma^{\mu'}_\Sp \cS_{2(n-k)} \rangle_{\mu'\in \B_0}$ is trivial and~\(\cE^{0;\mu}_0= \Sigma^{\mu}_\Sp \cS_{2(n-k)}\).
\end{proof}

Thus, for $t=0,k-1,k$ the exceptional objects $\cE^{\lambda; \mu}_t$ are isomorphic to irreducible $\bG$-equivariant vector bundles.

In cases $1 \le t \le k-2$, we cannot describe $\cE^{\lambda; \mu}_t$ so easily. To prove that $\cE^{\lambda; \mu}_t$ are vector bundles, we provide a description of dual objects $(\cE^{\lambda; \mu}_t)^{\vee}$. This  description expresses $(\cE^{\lambda; \mu}_t)^{\vee}$ as the pushforwards of some $\bG$-equivariant irreducible vector bundles on partial flag varieties. It also implies that $(\cE^{\lambda; \mu}_t)^{\vee}$ and, consequently, $\cE^{\lambda; \mu}_t$ are vector bundles.

Let us introduce the notations that we will use
\begin{align*}
    \cF_t^{\lambda;\mu} & \coloneqq (\cE_t^{\lambda;\mu})^{\vee};\\
    \tilde \cA_t^{\vee}& \coloneqq \langle \Sigma^{\lambda}\cU_k\otimes \Sigma^{\mu}_{\Sp}\cS_{2(n-k)}\rangle_{(\lambda;\mu)\in \tilde \B_t}\subset \Db(\IGr(k,V)). 
\end{align*}

Note that the duality functor is an anti-autoequivalence, so the characterization from Lemma~\ref{E_description} for the objects~$\cF_t^{\lambda;\mu}$ can be rewritten:
\begin{equation} \label{F_description}
    \cF_t^{\lambda; \mu}\in \tilde \cA^{\vee}_t\qquad \text{and} \qquad
    \E{\bullet}_{\bG}\left(\Sigma^{\beta}\cU_k\otimes \Sigma^{\nu}_{\Sp}\cS_{2(n-k)},\, \cF_t^{\lambda;\mu} \right) = 
    \begin{cases}
	\Bbbk, & \text{if} \ (\beta;\nu) = (\lambda;\mu);\\
	0, & \text{if}\ 0 \preceq (\beta;\nu) \prec (\alpha;\mu).
    \end{cases}
\end{equation}

Let~$1 \le t \le k-1$. Consider the following diagram:
\begin{equation}\label{diagr}
    \begin{tikzcd} 
	&\IFl(k - t, k; V) \ar[dl, swap, "p_{t}"] \ar[dr, "q_{t}"]\\
	\IGr(k - t, V) && \IGr(k, V).
    \end{tikzcd}
\end{equation}
\begin{definition}
Let~\(1\le t\le k-1\) and~$\lambda \in \Y_{t}$. We define the functor $\hat \Phi_\lambda^t \colon \Db(\IGr(k-t,V))\to \Db(\IGr(k,V))$ as the Fourier--Mukai transform with the kernel~$\Sigma^{\lambda-a_t} (\cU_k/\cU_{k-t})$, which takes 
$E \in \Db(\IGr(k-t,V))$~to
\begin{equation}\label{eq:Phi}
        \hat \Phi^t_{\lambda}(E)\coloneqq q_{t*}(\Sigma^{\lambda - a_t} (\cU_k/\cU_{k-t})\otimes p_t^*(E)).
\end{equation}    
\end{definition}
Furthermore, we denote
\begin{equation}\label{eq:def:hat-mu}
    \hat \mu \coloneqq ((a_t)^t, \mu)\in \YD^{a_t}_{n-k+t} \qquad \text{for} \quad \mu\in \YD^{a_t}_{n-k}.
\end{equation}
Note that~\(\hat \mu\) depends on the block number~\(t\). However, it should be clear from the context, so we omit it. Recall that for $(\lambda; \mu)\in \tilde\B_t$ we have $\lambda\in \YD_t$, hence the weight $\lambda-a_t=(\lambda_1-a_t,\lambda_2-a_t, \ldots, \lambda_t-a_t)$ can be applied to the rank $t$ vector bundle~$\cU_k/\cU_{k-t}$ on the flag variety $\IFl(k-t,k;V)$.

\begin{theorem}\label{theo:formulas_for_F}
Let~\(1 \le t \le k-2\). For even $k-t$ let $(\lambda; \mu)\in \tilde \B_t$ and for odd $k-t$ let $(\lambda; \mu)\in \B_t$. There is an isomorphism:
        \begin{equation*}
\cF^{\lambda; \mu}_t \simeq \hat \Phi^t_{\lambda}(\Sigma_\Sp^{\hat \mu} \cS_{2(n-k+t)})= q_{t*}\left(\Sigma^{\lambda - a_t} (\cU_k/\cU_{k-t})\otimes p_t^*(\Sigma_\Sp^{\hat \mu} \cS_{2(n-k+t)})\right).
\end{equation*}
Moreover, $\cF^{\lambda; \mu}_t$ is a vector bundle.
\end{theorem}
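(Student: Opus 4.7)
The plan is to verify that the candidate $\cG \coloneqq \hat\Phi^t_\lambda(\Sigma^{\hat\mu}_{\Sp}\cS_{2(n-k+t)})$ satisfies the characterization \eqref{F_description} of the dual exceptional object $\cF^{\lambda;\mu}_t$, while simultaneously showing that $\cG$ is concentrated in cohomological degree zero and hence is a vector bundle. Together these give both assertions of the theorem.

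To analyse $\cG$, I would identify $q_t\colon \IFl(k-t,k;V) = \Gr_{\IGr(k,V)}(k-t,\cU_k) \to \IGr(k,V)$ as a relative Grassmannian and apply Lemma~\ref{lemma:filtration_on_flags} directly to $\Sigma^{\lambda - a_t}(\cU_k/\cU_{k-t}) \otimes p_t^*\Sigma^{\hat\mu}_{\Sp}\cS_{2(n-k+t)}$. This produces a filtration whose graded pieces are irreducible $\bG$-equivariant bundles $\Sigma^\gamma(\cU_k/\cU_{k-t}) \otimes q_t^*\Sigma^\eta_{\Sp}\cS_{2(n-k)}$ with $\gamma_i \in [\lambda_i - 2a_t,\, \lambda_i]$ and $\eta \in \YD^{a_t}_{n-k}$ contained in $\hat\mu$. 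The projection formula combined with Theorem~\ref{theo:relative_BBW} then computes $R^\bullet q_{t*}$ of each graded piece by relative Borel--Bott--Weil applied to the $\GL_k$-weight $(0^{k-t},\gamma)$. The bounds defining $\tilde\B_t$ (resp.\ $\B_t$ when $k-t$ is odd) should be exactly what is needed so that for every $(\gamma,\eta)$ arising, this weight is either singular (giving zero) or produces a $\GL_k$-dominant representative $\alpha$ with trivial Weyl-length shift and $(\alpha;\eta)\in\tilde\B_t$. This simultaneously implies that $\cG$ has no higher derived pushforwards and that $\cG\in\tilde\cA^\vee_t$.

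To verify the Ext orthogonality of \eqref{F_description}, I would use the adjunction $q_t^* \dashv q_{t*}$ to rewrite
\begin{equation*}
\E{\bullet}_\bG\!\left(\Sigma^\beta\cU_k \otimes \Sigma^\nu_{\Sp} \cS_{2(n-k)},\, \cG\right) \cong \E{\bullet}_{\IFl}\!\left(q_t^*\!\left(\Sigma^\beta\cU_k \otimes \Sigma^\nu_{\Sp} \cS_{2(n-k)}\right),\, \Sigma^{\lambda-a_t}(\cU_k/\cU_{k-t}) \otimes p_t^*\Sigma^{\hat\mu}_{\Sp}\cS_{2(n-k+t)}\right).
\end{equation*}
Recognizing $p_t\colon \IFl(k-t,k;V) \to \IGr(k-t,V)$ as the isotropic Grassmannian bundle of $t$-dimensional isotropic subspaces in $\cS_{2(n-k+t)}$, one computes the resulting $Rp_{t*}$ via the symplectic relative Borel--Bott--Weil Theorem~\ref{theo:relative_BBW_sym} and reduces the whole expression to an Ext between irreducible equivariant bundles on $\IGr(k-t,V)$. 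Applying the symplectic Littlewood--Richardson rule (Lemma~\ref{lemma:LR rule}) together with Lemma~\ref{lemma:PRV} should isolate the unique non-vanishing case $(\beta;\nu) = (\lambda;\mu)$ and show that the resulting space of invariants is one-dimensional, as required by \eqref{F_description}.

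The main technical obstacle is the bookkeeping in these two Borel--Bott--Weil steps: tracking which Weyl-group elements in the combined $\GL$ and $\Sp$ computations produce surviving contributions, and matching the resulting non-vanishing conditions exactly with the inequalities defining $\tilde\B_t$ versus $\B_t$ for the two parities of $k-t$. The dichotomy in the hypothesis is explained here as well: in the odd $k-t$ case, the boundary column of width $2n-k-t+1$ in $\tilde\B_t\setminus\B_t$ would produce a spurious invariant through an exceptional Weyl element, and must therefore be excluded by restricting to $\B_t$.
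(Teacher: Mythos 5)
Your overall strategy is the paper's own: verify the characterization \eqref{F_description} for the candidate $\hat\Phi^t_\lambda(\Sigma^{\hat\mu}_{\Sp}\cS_{2(n-k+t)})$, using Lemma~\ref{lemma:filtration_on_flags} plus relative Borel--Bott--Weil for $q_t$ to handle membership in $\tilde\cA^\vee_t$ and concentration in degree zero (this is exactly Lemma~\ref{lemma:Phi(S)_in_A}), then an adjunction to transfer the Ext computation to the flag variety, push forward along $p_t$ via symplectic Borel--Bott--Weil, and finish with Lemmas~\ref{lemma:LR rule} and \ref{lemma:PRV}. Your reading of the parity dichotomy is also essentially right: the extra column in $\tilde\B_t\setminus\B_t$ is exactly what violates the lower bound on $\gamma_t$ that the paper needs in Lemma~\ref{incl} (via Lemma~\ref{lemma:BBW_tilde_mu}) when $k-t$ is odd.

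The genuine gap is the step where you say ``one computes the resulting $Rp_{t*}$ via the symplectic relative Borel--Bott--Weil Theorem~\ref{theo:relative_BBW_sym} and reduces the whole expression to an Ext between irreducible equivariant bundles on $\IGr(k-t,V)$.'' After your single adjunction, the bundle you need to push forward still contains the factor $\Sigma^\beta\cU_k^\vee$, and $\cU_k$ is \emph{not} built from $\cU_k/\cU_{k-t}$ and $\cS_{2(n-k)}$ relative to $p_t$, so Theorem~\ref{theo:relative_BBW_sym} does not apply directly. You have to filter $\Sigma^\beta\cU_k^\vee$ via $0\to\cU_{k-t}\to\cU_k\to\cU_k/\cU_{k-t}\to 0$, obtaining graded pieces $\Sigma^\sigma\cU_{k-t}^\vee\otimes\Sigma^\tau(\cU_k/\cU_{k-t})^\vee$, push the $(\cU_k/\cU_{k-t})$-part down, and then observe that taking $\bG$-invariant cohomology kills every piece with $\sigma\neq 0$; this is precisely what the paper's Corollary~\ref{symp_coh} establishes, and it is the step that licenses the replacement of $\Sigma^\beta\cU_k^\vee$ by $\Sigma^\beta(\cU_k/\cU_{k-t})^\vee$ before applying $p_{t*}$. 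Without this replacement the claim that the computation collapses to an Ext on $\IGr(k-t,V)$ is not yet justified; once it is inserted, your outline lines up with the paper's chain of isomorphisms in Lemma~\ref{lemma:computationExtg} and the rest of the argument proceeds as you describe.
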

To prove Theorem~\ref{theo:formulas_for_F}, we show that the object~$\hat \Phi^t_{\lambda}(\Sigma_\Sp^{\hat \mu} \cS_{2(n-k+t)})$ satisfies the conditions~\eqref{F_description}. We split the proof into several lemmas. 
\begin{lemma}\label{lemma:Phi(S)_in_A}
Let~\(1 \le t \le k-1\) and~$(\lambda; \mu)\in \tilde \B_t$. Then~\(\hat \Phi^t_{\lambda}(\Sigma_\Sp^{\hat \mu} \cS_{2(n-k + t)})\) is a vector bundle in~\(\tilde \cA^{\vee}_t\). 
\end{lemma}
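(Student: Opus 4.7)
The plan is to compute the pushforward $\hat\Phi^t_\lambda(\Sigma^{\hat\mu}_\Sp \cS_{2(n-k + t)}) = q_{t*}(\Sigma^{\lambda - a_t}(\cU_k/\cU_{k-t}) \otimes p_t^* \Sigma^{\hat\mu}_\Sp \cS_{2(n-k + t)})$ directly, by combining a filtration argument on $\IFl(k-t, k; V)$ with relative Borel--Bott--Weil along $q_t$. First I would apply Lemma~\ref{lemma:filtration_on_flags} with $\xi = \lambda - a_t$ and $\theta = \hat\mu \in \YD^{a_t}_{n-k+t}$ to produce a $\bG$-equivariant filtration whose associated graded pieces take the form $\Sigma^\gamma(\cU_k/\cU_{k-t}) \otimes \Sigma^\eta_\Sp \cS_{2(n-k)}$, with $\gamma \in \Y_t$ satisfying $\gamma_i \in [\lambda_i - 2a_t,\, \lambda_i]$ and $\eta \in \YD^{a_t}_{n-k}$ contained in $\hat\mu$. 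The projection formula applied along $q_t$ then reduces the computation to $Rq_{t*}\Sigma^\gamma(\cU_k/\cU_{k-t}) \otimes \Sigma^\eta_\Sp \cS_{2(n-k)}$, since $\Sigma^\eta_\Sp \cS_{2(n-k)}$ is pulled back from $\IGr(k,V)$.

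Next, I would identify $q_t$ with the relative Grassmannian $\Gr_{\IGr(k,V)}(k-t, \cU_k)$ and apply Theorem~\ref{theo:relative_BBW}. Writing $\Sigma^\gamma(\cU_k/\cU_{k-t}) \simeq \Sigma^{-\gamma}(\cU_k/\cU_{k-t})^\vee$, the BBW weight is $\nu = (0, \ldots, 0, -\gamma_t, -\gamma_{t-1}, \ldots, -\gamma_1) \in \Y_k$; after adding $\rho_k = (k, k-1, \ldots, 1)$, the sequence becomes $(k, k-1, \ldots, t+1,\, t-\gamma_t,\, t-1-\gamma_{t-1},\, \ldots,\, 1-\gamma_1)$. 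The first $k-t$ and the last $t$ sub-blocks are each strictly decreasing (the latter by dominance of $\gamma$), so the only possible obstruction is at the junction, controlled by $t - \gamma_t$. A direct case analysis then yields: if $\gamma_t \ge 0$, the sequence is already strictly decreasing and BBW gives $\Sigma^\gamma \cU_k$ in cohomological degree zero; if $\gamma_t \in [t-k,\, -1]$, the value $t - \gamma_t \in [t+1,\, k]$ collides with an entry of the first sub-block and the pushforward vanishes; if $\gamma_t < t-k$, a nontrivial reordering would produce cohomology in positive degree.

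The crux is to rule out this third case under our hypotheses. Since $\lambda_t \ge 0$ and $2a_t = 2\lfloor (k-t)/2 \rfloor \le k - t$, the filtration bound forces $\gamma_t \ge \lambda_t - 2a_t \ge -(k-t) = t - k$, so case (c) never occurs. Consequently, every graded piece is either zero or concentrated in degree zero, which proves that the total pushforward is a vector bundle. Each non-vanishing contribution is $\Sigma^\gamma \cU_k \otimes \Sigma^\eta_\Sp \cS_{2(n-k)}$ with $\gamma \in \YD_t$ (since $\gamma_t \ge 0$), $\gamma_1 \le \lambda_1 \le 2n-k-t+1$, and $\eta \in \YD^{a_t}_{n-k}$, so $(\gamma;\eta) \in \tilde\B_t$ and the assembled object lies in $\tilde\cA_t^\vee$.

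The main obstacle is the tight numerical coincidence between the filtration range $\gamma_i \in [\lambda_i - 2a_t,\, \lambda_i]$, the width bound $\lambda_1 \le 2n-k-t+1$ defining $\tilde\B_t$, and the precise threshold $\gamma_t \ge t-k$ needed to exclude BBW contributions in positive degree; verifying that the potentially bad boundary value $\gamma_t = t-k$, attained only when $k-t$ is even and $\lambda_t = 0$, still lies inside the clash interval rather than producing genuine higher cohomology is the combinatorial fact that makes the vector bundle property work out.
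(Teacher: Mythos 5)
Your proposal is correct and follows essentially the same route as the paper: it invokes Lemma~\ref{lemma:filtration_on_flags} with $\xi=\lambda-a_t$, $\theta=\hat\mu$ to filter the kernel, applies the projection formula, and runs relative Borel--Bott--Weil along $q_t$ with the same case analysis on $t-\gamma_t$. The only cosmetic difference is that the paper folds the arithmetic $-2a_t\ge t-k$ into the filtration bound up front (stating $\gamma_i\in[t-k,w]$ directly), so it never needs to name your ``case (c)'', whereas you introduce it and then rule it out by the identical inequality.
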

\begin{proof}
Denote $w=2n-k-t+1$. Recall that 
\begin{equation*}
\tilde \cA^{\vee}_t= \big\langle \Sigma^{\lambda}\cU_k\otimes \Sigma^{\mu}_{\Sp}\cS_{2(n-k)}\big\rangle, \quad 
\text{where} \quad (\lambda;\mu)\in\YD_t^{w}\times \YD_{n-k}^{a_t}.
\end{equation*}

Lemma~\ref{lemma:filtration_on_flags} for~\(\xi=\lambda-a_t\) and~\(\theta=\hat \mu\) yields that the vector bundle~$\Sigma^{\lambda-a_t} (\cU_k/\cU_{k - t}) \otimes p^*_{t}  \Sigma_\Sp^{\hat \mu} \cS_{2(n-k + t)}$ on $\IGr(k-t,k;V)$ has the filtration with the following irreducible~$\bG$-equivariant subquotients:
\begin{equation} \label{filtration_for_interpr}
\Sigma^{\gamma} (\cU_k/\cU_{k - t}) \otimes \Sigma_\Sp^\eta \cS_{2(n - k)}, \quad \text{where} \quad \gamma \in \Y_t \ \text{such that}\ \gamma_i \in [t - k, w] \quad %
    \text{and} \quad \eta \in \YD^{a_t}_{n - k}.
\end{equation}

Let us now prove that for a weight $\gamma$ in the filtration~\eqref{filtration_for_interpr} we have
\begin{equation} \label{direct-image-filtr}
    q_{t*}\Sigma^{\gamma} (\cU_k/\cU_{k - t}) = 
    \begin{cases}
        \Sigma^{\gamma} \cU_k, & \text{for~\(\gamma \in \YD^w_t\)};\\
        0, &\text{otherwise}.
    \end{cases}
\end{equation}
To compute $q_{t*}\Sigma^{\gamma} (\cU_k/\cU_{k - t})$ we apply Theorem~\ref{theo:relative_BBW} (Borel--Bott--Weil) to $X=\IGr(k,V)$, $\cE=\cU_k$ and $\pi_t=q_t$. As an intermediate step, we get:
\begin{equation*}
((0)^{k-t}, -\gamma_t, \ldots, -\gamma_1)+(k,k-1, \ldots, 1)=(k,k-1,\ldots, t+1,t-\gamma_t, \ldots, 1-\gamma_1).
\end{equation*}
From the conditions in~\eqref{filtration_for_interpr} we get $k \ge t-\gamma_t \ge t-w$. Thus, there are two possible cases:
\begin{itemize}
    \item if $k \ge t-\gamma_t \ge t+1$, then $q_{t*} \Sigma^{\gamma} (\cU_k/\cU_{k - t})=0$;
    \item if $t \ge t-\gamma_t$, then $\gamma\in \YD_t^w$, and the weight $(k,k-1,\ldots, t+1,t-\gamma_t, \ldots, 1-\gamma_1)$ is already dominant, so Theorem~\ref{theo:relative_BBW} yields $q_{t*} \Sigma^{\gamma} (\cU_k/\cU_{k - t})= \Sigma^{\gamma} \cU_k\in \tilde \cA_t^{\vee}$. 
\end{itemize}

It follows from~\eqref{filtration_for_interpr}, projection formula and~\eqref{direct-image-filtr} that the object~$\hat \Phi^t_{\lambda}(\Sigma_\Sp^{\hat \mu} \cS_{2(n-k+t)})$ has the filtration with the irreducible subquotients
\begin{equation*} 
q_{t*}\Sigma^{\gamma}\cU_k\otimes \Sigma_\Sp^\eta \cS_{2(n - k)}, \quad \text{where} \quad \gamma \in \YD^w_t \quad \text{and} \quad \eta \in \YD^{a_t}_{n - k},
\end{equation*}
that lie in~\(0\)th cohomological degree, which yields the lemma. 
\end{proof}
Now, we would like to show that the vector bundle $\hat \Phi^t_{\lambda}( \Sigma_\Sp^{\hat \mu} \cS_{2(n-k+t)})$ satisfies the second condition of~\eqref{F_description}:
\begin{equation} \label{Ext_G}
    \E{\bullet}_{\bG}\left(\Sigma^{\alpha}\cU_k\otimes \Sigma^{\eta}_{\Sp}\cS_{2(n-k)},\hat \Phi^t_{\lambda}(\Sigma_\Sp^{\hat \mu} \cS_{2(n-k+t)} )\right) = 
    \begin{cases}
		\Bbbk, & \text{if} \ (\alpha;\eta) = (\lambda;\mu);\\
		0, & \text{if}\ 0 \preceq (\alpha;\eta) \prec (\lambda;\mu),
    \end{cases}
\end{equation}
where the partial order~\(\preceq\) is defined in~\eqref{order}.

\newcommand{\tcircled}[1]{\tikz[baseline=(char.base)]{
    \node[shape=circle,draw,inner sep=1pt] (char) {#1};}}
\begin{lemma}\label{lemma:computationExtg}
For any 
\((\lambda; \mu)\in \YD_t\times \YD_{n-k}\) and $(\alpha; \eta)\in \YD_t\times \YD_{n-k}$ we have
\begin{multline*}
   \E{\bullet}_{\bG}\left(\Sigma^{\alpha}\cU_k\otimes \Sigma^{\eta}_{\Sp}\cS_{2(n-k)},\hat \Phi^t_{\lambda}\big(\Sigma_\Sp^{\hat \mu} \cS_{2(n-k+t)}\big)\right)\simeq \\
   \simeq \E{\bullet}_{\bG}\left(\Sigma_\Sp^{\hat \mu} \cS_{2(n-k + t)},\, p_{t*}\left(\Sigma^\alpha \left(\cU_k / \cU_{k - t} \right)^{\vee} \otimes \Sigma^{ \lambda-a_t} (\cU_k/\cU_{k - t}) \otimes \Sigma_\Sp^\eta \cS_{2(n-k)} \right)\right).
\end{multline*}
\end{lemma}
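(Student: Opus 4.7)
My plan is to reduce both sides of the stated identity to $\bG$-invariant cohomology on the flag variety $\IFl(k-t,k;V)$ via standard adjunctions, and then to compare them through a Borel--Bott--Weil vanishing argument.

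First I would apply the $(q_t^*, q_{t*})$-adjunction to the LHS. Using that $q_t^*\cU_k = \cU_k$ and $q_t^*\cS_{2(n-k)} = \cS_{2(n-k)}$ on $\IFl(k-t,k;V)$, together with the self-duality of symplectic Schur functors of symplectic bundles, the LHS rewrites as
\[
    H^\bullet\bigl(\IFl(k-t,k;V),\, \Sigma^\alpha \cU_k^\vee \otimes M\bigr)^\bG,
\]
where $M \coloneqq \Sigma_\Sp^\eta \cS_{2(n-k)} \otimes \Sigma^{\lambda-a_t}(\cU_k/\cU_{k-t}) \otimes p_t^*\Sigma_\Sp^{\hat\mu}\cS_{2(n-k+t)}$. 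Symmetrically, the $(p_t^*, p_{t*})$-adjunction combined with self-duality of $\Sigma_\Sp^{\hat\mu}\cS_{2(n-k+t)}$ rewrites the RHS as $H^\bullet\bigl(\IFl(k-t,k;V),\, \Sigma^\alpha (\cU_k/\cU_{k-t})^\vee \otimes M\bigr)^\bG$.

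Next I would compare these two cohomologies by filtering $\Sigma^\alpha \cU_k^\vee$ via the short exact sequence $0 \to (\cU_k/\cU_{k-t})^\vee \to \cU_k^\vee \to \cU_{k-t}^\vee \to 0$ on $\IFl$ and Lemma~\ref{skew_decomposition}; the subquotients are $\Sigma^\beta(\cU_k/\cU_{k-t})^\vee \otimes \Sigma^{\alpha/\beta}\cU_{k-t}^\vee$ for $0 \le \beta \le \alpha$. The piece with $\beta = \alpha$ is exactly $\Sigma^\alpha(\cU_k/\cU_{k-t})^\vee$, which reproduces the RHS. The remaining task is therefore to show that each piece with $\beta < \alpha$ contributes zero to the $\bG$-invariant cohomology of the tensor product with $M$.

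The final step invokes Borel--Bott--Weil on $\IFl = \bG/\bP_{k-t,k}$. The crucial observation is that $M$ carries trivial $\mathrm{GL}(k-t)$-weight as an $\bL_{k-t,k}$-representation: the factors $\Sigma_\Sp^\eta \cS_{2(n-k)}$ and $\Sigma^{\lambda-a_t}(\cU_k/\cU_{k-t})$ live in the $\mathrm{Sp}(2(n-k))$ and $\mathrm{GL}(t)$ blocks, and the pullback $p_t^*\cS_{2(n-k+t)} = \cU_{k-t}^\perp/\cU_{k-t}$ admits an $\bL$-filtration with subquotients $\cU_k/\cU_{k-t}$, $\cS_{2(n-k)}$ and $\cU_{k-t}^\perp/\cU_k^\perp \cong (\cU_k/\cU_{k-t})^\vee$, none of which involves $\cU_{k-t}$. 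Consequently, for any $\beta < \alpha$, every $\bL$-irreducible summand of the corresponding subquotient tensored with $M$ inherits a nonzero partition $\xi$ in the $\mathrm{GL}(k-t)$-component from $\Sigma^{\alpha/\beta}\cU_{k-t}^\vee$. But a nonzero $\bG$-invariant BBW contribution requires $\chi + \rho$ to be a signed permutation of $\rho = (n,n-1,\dots,1)$, whereas $\xi_1 \ge 1$ already forces the first coordinate $\xi_1 + n > n$, which is impossible. I expect the main obstacle to lie in the careful $\bL$-representation bookkeeping through the filtration and tensor product spectral sequences, though the absence of $\mathrm{GL}(k-t)$-weight in $M$ should make the final BBW forcing transparent.
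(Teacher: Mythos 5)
Your proposal is correct and follows essentially the same route as the paper: both adjunctions and the self-duality of symplectic Schur functors reduce each side to $\bG$-invariant cohomology on $\IFl(k-t,k;V)$, after which one must show that replacing $\Sigma^\alpha\cU_k^\vee$ by $\Sigma^\alpha(\cU_k/\cU_{k-t})^\vee$ is harmless. The paper packages precisely that last step as Corollary~\ref{symp_coh}, whose proof is your filtration-plus-BBW-vanishing argument for the subquotients carrying nontrivial $\GL(k-t)$-weight.
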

\begin{proof}
We have
\begin{align*}
    &\E{\bullet}_{\bG}\left(\Sigma^{\alpha}\cU_k\otimes \Sigma^{\eta}_{\Sp}\cS_{2(n-k)},\, \hat \Phi^t_{\lambda} \big(\Sigma_\Sp^{\hat \mu} \cS_{2(n-k+t)}\big)\right) \overset{\tcircled{1}}{=}\\
    &=\E{\bullet}_{\bG}\left(\Sigma^\alpha \cU_k \otimes \Sigma_\Sp^{\eta} \cS_{2(n-k)},\, q_{t*}  \big(\Sigma^{ \lambda-a_t} (\cU_k/\cU_{k - t}) \otimes p^*_{t}  \Sigma_\Sp^{\hat \mu} \cS_{2(n-k + t)}\big)\right) \overset{\tcircled{2}}{\simeq}
    \\
    & \simeq \E{\bullet}_{\bG}\left(q^*_{t} (\Sigma^\alpha \cU_k \otimes \Sigma_\Sp^{\eta} \cS_{2(n-k)}),\, \Sigma^{ \lambda-a_t} (\cU_k/\cU_{k - t}) \otimes p^*_{t}  \Sigma_\Sp^{\hat \mu} \cS_{2(n-k + t)}\right) \overset{\tcircled{3}}{\simeq} \\
    & \simeq \operatorname{H}^{\bullet}\left(\IFl(k - t, k; V),\, \Sigma^\alpha \cU_k^{\vee} \otimes \Sigma_\Sp^{\eta} \cS_{2(n-k)} \otimes \Sigma^{\lambda-a_t} (\cU_k/\cU_{k - t}) \otimes p^*_{t} \Sigma_\Sp^{\hat \mu} \cS_{2(n-k + t)}\right)^{\bG} \overset{\tcircled{4}}{\simeq}\\
    &\simeq \operatorname{H}^{\bullet}\left(\IFl(k - t, k; V),\, \Sigma^\alpha (\cU_k/\cU_{k-t})^{\vee} \otimes \Sigma_\Sp^{\eta} \cS_{2(n- k)} \otimes \Sigma^{\lambda-a_t} (\cU_k/\cU_{k - t}) \otimes p^*_{t} \Sigma_\Sp^{\hat \mu} \cS_{2(n-k + t)}\right)^{\bG} \overset{\tcircled{5}}{\simeq}
    \\
    &\simeq \E{\bullet}_{\bG}\left(p^*_{t} \Sigma_\Sp^{\hat \mu} \cS_{2(n-k + t)},\, \Sigma^\alpha \left(\cU_k / \cU_{k - t} \right)^{\vee} \otimes \Sigma^{\lambda-a_t} (\cU_k/\cU_{k - t}) \otimes \Sigma_\Sp^\eta\cS_{2(n-k)} \right) \overset{\tcircled{6}}{\simeq}
    \\
    & \simeq \E{\bullet}_{\bG}\left(\Sigma_\Sp^{\hat \mu} \cS_{2(n-k + t)},\, p_{t*}\left(\Sigma^\alpha \left(\cU_k / \cU_{k - t} \right)^{\vee} \otimes \Sigma^{ \lambda-a_t} (\cU_k/\cU_{k - t}) \otimes \Sigma_\Sp^\eta \cS_{2(n-k)} \right)\right),
\end{align*}
where
\begin{enumerate}
    \item[\tcircled{1}] follows from the definition of~$\hat\Phi^t_{\lambda}$;
    \item[\tcircled{2}] follows from the adjunction;
    \item[\tcircled{3}] follows from the fact that~\(q^*_{t} (\Sigma^\alpha \cU_k \otimes \Sigma_\Sp^{\eta} \cS_{2(n-k)}) \simeq \Sigma^\alpha \cU_k \otimes \Sigma_\Sp^{\eta} \cS_{2(n-k)}\) is a vector bundle;
    \item[\tcircled{4}] follows by Corollary~\ref{symp_coh};
    \item[\tcircled{5}] follows from the fact that~\(p^*_{t} \Sigma_\Sp^{\hat \mu} \cS_{2(n-k + t)}\) is a self-dual vector bundle;
    \item[\tcircled{6}] follows from the adjunction.\qedhere
\end{enumerate} 
\end{proof}
Lemma~\ref{lemma:computationExtg} shows that to prove~\eqref{Ext_G} we need to investigate the pushforwards
\begin{equation*}
p_{t*}\left(\Sigma^\alpha \left(\cU_k / \cU_{k - t} \right)^{\vee} \otimes \Sigma^{ \lambda-a_t} (\cU_k/\cU_{k - t}) \otimes \Sigma_\Sp^\eta \cS_{2(n-k)} \right).
\end{equation*}
Note that $p_t\colon \IGr_{\IGr(k-t,V)}(t,\cS_{2(n-k + t)})\to \IGr(k-t,V)$ is the projection of the relative isotropic Grassmannian, so Theorem~\ref{theo:relative_BBW_sym} (symplectic Borel--Bott--Weil) yields:
\begin{equation} \label{obvious-decomposition}
    p_{t*}\left(\Sigma^\alpha \left(\cU_k / \cU_{k - t} \right)^{\vee} \otimes \Sigma^{\lambda-a_t} (\cU_k/\cU_{k - t}) \otimes \Sigma_\Sp^\eta \cS_{2(n-k)} \right) \simeq \bigoplus \Sigma_\Sp^{\mu'} \cS_{2(n-k + t)},
\end{equation}
for some Young diagrams $\mu'\in \YD_{n-k+t}$.
\begin{lemma} \label{incl} 
Let~\(1 \le t \le k-2\). For even $k-t$ let $(\lambda; \mu)\in \tilde \B_t$ and for odd $k-t$ let $(\lambda; \mu)\in \B_t$. For the partial order~\eqref{order} let~$0 \preceq (\alpha;\eta) \preceq (\lambda;\mu)$. Recall that $\hat \mu = ((a_t)^t, \mu)$. Then
\begin{equation} \label{S:inclu}
\Sigma_\Sp^{\hat \mu} \cS_{2(n - k + t)} \inplus p_{t*}\left(\Sigma^\alpha (\cU_k / \cU_{k - t})^{\vee} \otimes \Sigma^{\lambda-a_t} (\cU_k/\cU_{k - t}) \otimes \Sigma_\Sp^\eta \cS_{2(n-k)} \right) \quad \Leftrightarrow \quad (\alpha; \eta) = (\lambda; \mu).
\end{equation}
Moreover, if~\((\alpha; \eta) = (\lambda; \mu)\), then the multiplicity of $\Sigma_\Sp^{\hat \mu} \cS_{2(n - k + t)}$ in the decomposition~\eqref{S:inclu} is~\(1\).    
\end{lemma}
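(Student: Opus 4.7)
The plan is to compute the pushforward via the relative symplectic Borel--Bott--Weil theorem (Theorem~\ref{theo:relative_BBW_sym}) applied fiberwise to $p_t\colon \IFl(k-t,k;V)\to \IGr(k-t,V)$, whose fibers are isotropic Grassmannians $\IGr(t,2(n-k+t))$ on which $\cU_k/\cU_{k-t}$ and $\cS_{2(n-k)}$ restrict to the tautological subbundle and symplectic quotient, respectively.

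First, I decompose $\Sigma^\alpha(\cU_k/\cU_{k-t})^\vee \otimes \Sigma^{\lambda-a_t}(\cU_k/\cU_{k-t})$ into irreducible $\GL_t$-summands $\Sigma^\beta(\cU_k/\cU_{k-t})^\vee$ via Littlewood--Richardson (after rewriting $\Sigma^{\lambda-a_t}(\cU_k/\cU_{k-t}) \simeq \Sigma^{-(\lambda-a_t)}(\cU_k/\cU_{k-t})^\vee$). The admissible $\beta\in\Y_t$ satisfy $|\beta| = |\alpha| + t\, a_t - |\lambda|$. For each summand, Theorem~\ref{theo:relative_BBW_sym} computes $p_{t*}(\Sigma^\beta(\cU_k/\cU_{k-t})^\vee \otimes \Sigma^\eta_\Sp \cS_{2(n-k)})$ as a shift of $\Sigma^{\mu'}_\Sp \cS_{2(n-k+t)}$, where $\mu' + \rho$ is the dominant signed permutation of $(\beta, \eta) + \rho$, with $\rho = (n-k+t, n-k+t-1, \dots, 1)$.

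The key observation is the structure of $\hat\mu + \rho$: its entries split into a ``top'' set $T = \{a_t+n-k+i : i\in [1,t]\}$ and a ``bottom'' multiset $B = \{\mu_j + n-k+1-j : j \in [1,n-k]\}$, with $\max B \le a_t+n-k < \min T$. The constraint $(\alpha;\eta) \preceq (\lambda;\mu)$ forces $\eta \in \YD^{a_t}_{n-k}$, so the positive entries $\eta_j + n-k+1-j$ all lie in $[1, a_t+n-k]$ and cannot match any value in $T$. Comparing strictly decreasing sequences of positive integers forces $\eta = \mu$, and the multiset $\{|\beta_i + n-k+t+1-i|\}_{i=1}^t$ must coincide with $T$. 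Parameterizing the sign choices by the number $s$ of negated entries at indices $\{j_1 < \dots < j_s\} \subset [1,t]$ in $T$, combining the signed-sum identity with $|\beta| = |\alpha| + t a_t - |\lambda|$ yields
\[
|\lambda| - |\alpha| = 2 s(a_t + n-k) + 2 \sum_{i=1}^s j_i.
\]

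When $(\alpha;\eta) = (\lambda;\mu)$, the case $s=0$ forces $\beta = (a_t)^t$, which appears in the LR decomposition with multiplicity exactly $1$ by Lemma~\ref{lemma:PRV} (via the longest Weyl element $w_0$, so that $\lambda + w_0 \cdot (-(\lambda - a_t)) = (a_t)^t$); the resulting pushforward is $\Sigma^{\hat\mu}_\Sp \cS_{2(n-k+t)}$ in cohomological degree $0$, giving the required multiplicity $1$. For the converse, when $(\alpha;\eta) \prec (\lambda;\mu)$ the identity above forces $s \ge 1$, and the BBW sign pattern then determines $\beta_t = -a_t - 2(n-k) - 1 - j_s \le -a_t - 2(n-k) - 2$. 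I plan to show this value is strictly below the convex-hull lower bound $\alpha_t + a_t - \lambda_1$ provided by Lemma~\ref{lemma:LR rule}, so $\beta$ cannot appear in the LR decomposition. The bound is tight and depends on the width constraint $\lambda_1 \le 2n-k-t+1$ from $\tilde\B_t$ for even $k-t$ and on the narrower width $\lambda_1 \le 2n-k-t$ from $\B_t$ for odd $k-t$; verifying this parity-sensitive exclusion is the main combinatorial obstacle.
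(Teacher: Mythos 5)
Your proposal follows essentially the same route as the paper: decompose the $\GL_t$-part by Littlewood--Richardson, apply the relative symplectic Borel--Bott--Weil theorem to the fibers of~$p_t$, split the entries of $\hat\mu + \rho$ into a top set~$T$ and a bottom set, and exclude nontrivial sign flips by comparing $\beta_t$ against the lower bound coming from the convex-hull estimate (Lemma~\ref{lemma:LR rule}) and the parity-dependent width of the block. The paper packages the entire sign-flip analysis into a separate statement (Lemma~\ref{lemma:BBW_tilde_mu}), together with the observation that the convex-hull estimate yields exactly $\gamma_t \ge -2(n-k)-a_t-1$; your version unrolls that lemma inline and additionally records the degree identity $|\lambda|-|\alpha| = 2s(a_t+n-k) + 2\sum j_i$, which is a valid and slightly more explicit way to see that $s=0$ iff $\alpha=\lambda$ (the paper instead derives $\alpha=\lambda$ from the degree count hidden in the Littlewood--Richardson/convex-hull argument for the summand $(a_t)^t$). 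Your use of Lemma~\ref{lemma:PRV} via $\lambda + w_0(-(\lambda-a_t)) = (a_t)^t$ for the multiplicity-one statement is the same as the paper's $c^0_{\lambda,-\lambda}=1$. One caveat: you write that $(\alpha;\eta)\preceq(\lambda;\mu)$ forces $\eta\in\YD^{a_t}_{n-k}$, but the partial order~\eqref{order} only bounds $|\eta|$, not $\eta_1$; the top/bottom separation therefore requires a short additional argument when $\eta_1>a_t$. The paper has exactly the same implicit assumption (Lemma~\ref{lemma:BBW_tilde_mu} hypothesizes $\eta\in\YD^a_{n-k}$), so this is not a distinguishing defect, but it is a gap you should close if you write this out in full.
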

 \begin{proof}
Consider a direct summand 
\begin{equation*}
\Sigma^{\gamma} \left(\cU_k / \cU_{k - t} \right)^{\vee} \inplus \Sigma^\alpha \left(\cU_k / \cU_{k - t} \right)^{\vee} \otimes \Sigma^{\lambda-a_t} (\cU_k/\cU_{k - t}).
\end{equation*}
Lemma~\ref{lemma:LR rule} and our assumptions imply
\begin{equation*}
\gamma_t \ge \alpha_t +a_t -\lambda_1 \ge a_t-\lambda_1 \ge -2(n-k)-a_t-1.
\end{equation*}
Thus,~\eqref{S:inclu} holds if and only if~\(\gamma = (a_t)^t\) and~$\eta=\mu$ by Lemma~\ref{lemma:BBW_tilde_mu}. Next, by Lemma~\ref{lemma:LR rule}
\begin{equation*}
\Sigma^{(a_t)^t}(\cU_k/\cU_{k-t})^{\vee}\inplus \Sigma^\alpha \left(\cU_k / \cU_{k - t} \right)^{\vee} \otimes \Sigma^{\lambda-a_t} (\cU_k/\cU_{k - t}) \quad \Leftrightarrow \quad \alpha = \lambda.
\end{equation*}
Moreover,~\(c^0_{\lambda, -\lambda} = 1\) by Lemma~\ref{lemma:PRV}. 
\end{proof}   

\begin{proof}[Proof of Theorem~\ref{theo:formulas_for_F}]
By~\cite[Lemma~A.8]{F19} we have:
    \begin{equation*}
\E{\bullet}_{\bG}\left(\Sigma_\Sp^{\hat \mu} \cS_{2(n-k + t)},\, \Sigma_\Sp^{\mu'} \cS_{2(n-k + t)} \right) = 
    \begin{cases}
		\Bbbk, & \text{if} \ \mu' = \hat \mu;\\
		0, & \text{otherwise.}
    \end{cases}
\end{equation*}
Thus, Lemma~\ref{lemma:computationExtg}, formula~\eqref{obvious-decomposition} and Lemma~\ref{incl} imply
\begin{equation*}
    \E{\bullet}_{\bG}\left(\Sigma^{\alpha}\cU_k\otimes \Sigma^{\eta}_{\Sp}\cS_{2(n-k)},\hat \Phi^t_{\lambda}(\Sigma_\Sp^{\hat \mu} \cS_{2(n-k+t)} )\right) = 
    \begin{cases}
		\Bbbk, & \text{if} \ (\alpha;\eta) = (\lambda;\mu);\\
		0, & \text{if}\ 0 \preceq (\alpha;\eta) \prec (\lambda;\mu),
    \end{cases}
\end{equation*}
that is the second condition of the characterization~\eqref{F_description} of $\cF^{\lambda; \mu}_t$. The first condition of~\eqref{F_description} follows from Lemma~\ref{lemma:Phi(S)_in_A}. Thus, 
\begin{equation*}
\cF^{\lambda; \mu}_t \simeq \hat \Phi^t_{\lambda}( \Sigma_\Sp^{\hat \mu} \cS_{2(n-k+t)}).
\end{equation*}
Lemma~\ref{lemma:Phi(S)_in_A} implies that $\cF^{\lambda; \mu}_t$ is a vector bundle. 
\end{proof}
\begin{remark}\label{rem:F-Lagr}
In the Lagrangian case, another interpretation of the object~\(\cF^\lambda\) for~\(\lambda \in \tilde\B_t\) is given in~\cite[Subsection~3.1]{F19}:
\begin{equation*}
\cF^\lambda\simeq q_{t*}p_t^*(\Sigma_\Sp^\lambda \cS_{2t}),
\end{equation*}
where 
\begin{equation*}
\xymatrix@1{
\IGr(n-t, V) &
\IFl(n-t, n; V) \ar[l]_{p_{t}} \ar[r]^(0.55){q_{t}} &
\IGr(n, V).
}
\end{equation*}
As noted in Remark~\ref{rem:E(a_t)}, in the Lagrangian case the object~\(\cF^\lambda\) does not depend on~\(t\). 
\end{remark}
\begin{theorem} \label{theo:vector.bundle}
For $(\lambda; \mu)\in \B_t$ the object $\cE^{\lambda; \mu}_t$ is a vector bundle. 
\end{theorem}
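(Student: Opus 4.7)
The plan is to observe that the statement is essentially a repackaging of the explicit descriptions of $\cE^{\lambda;\mu}_t$ (or of its dual) already established in the preceding three results, so the proof reduces to a case analysis on~$t$ with no further computation.

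First, I would dispose of the boundary blocks. For $t=0$, Lemma~\ref{prop:0th_block} identifies $\cE^{0;\mu}_0$ with the irreducible equivariant bundle $\Sigma^{\mu}_{\Sp}\cS_{2(n-k)}$, which is manifestly a vector bundle. For $t=k-1$ and $t=k$, Lemma~\ref{prop:bigblockk} yields $\cE^{\lambda}_t \simeq \Sigma^{\lambda}\cU^{\vee}_k$, again a vector bundle. These handle all $(\lambda;\mu)\in \B_t$ in those cases.

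For the remaining range $1 \le t \le k-2$ I would invoke Theorem~\ref{theo:formulas_for_F}. Since $\B_t \subset \tilde\B_t$, every $(\lambda;\mu)\in \B_t$ satisfies the hypothesis of that theorem regardless of the parity of $k-t$. The theorem then exhibits
\begin{equation*}
\cF^{\lambda;\mu}_t = (\cE^{\lambda;\mu}_t)^{\vee} \simeq q_{t*}\bigl(\Sigma^{\lambda-a_t}(\cU_k/\cU_{k-t})\otimes p_t^{*}\Sigma^{\hat\mu}_{\Sp}\cS_{2(n-k+t)}\bigr),
\end{equation*}
and the proof of Lemma~\ref{lemma:Phi(S)_in_A} shows that this pushforward is concentrated in cohomological degree zero and admits a filtration by irreducible equivariant bundles of the form $\Sigma^{\gamma}\cU_k\otimes \Sigma^{\eta}_{\Sp}\cS_{2(n-k)}$. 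In particular $\cF^{\lambda;\mu}_t$ is a vector bundle. Dualizing, $\cE^{\lambda;\mu}_t \simeq (\cF^{\lambda;\mu}_t)^{\vee}$ is again a vector bundle.

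There is no genuine obstacle here: the only point requiring a line of justification is that the hypothesis of Theorem~\ref{theo:formulas_for_F} (which restricts to $\B_t$ when $k-t$ is odd and allows the larger $\tilde\B_t$ when $k-t$ is even) is automatically satisfied for any $(\lambda;\mu)\in \B_t$, thanks to the inclusion $\B_t\subset \tilde\B_t$. Combining the three cases above yields the theorem.
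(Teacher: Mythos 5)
Your proposal is correct and follows exactly the paper's route: the paper's proof of Theorem~\ref{theo:vector.bundle} is a one-line invocation of Lemma~\ref{prop:bigblockk}, Lemma~\ref{prop:0th_block}, and Theorem~\ref{theo:formulas_for_F}, which is precisely the case split you carry out. The only redundancy is that you re-derive the vector-bundle property from the proof of Lemma~\ref{lemma:Phi(S)_in_A}, whereas Theorem~\ref{theo:formulas_for_F} already asserts it directly in its final sentence.
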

\begin{proof}
    The statement immediately follows from Lemma~\ref{prop:bigblockk}, Lemma~\ref{prop:0th_block} and Theorem~\ref{theo:formulas_for_F}.
\end{proof}

\section{Relations generated by symplectic staircase complexes} \label{section:relation.sympl}
In this section we assume that~\(k \neq n\). The main goal of this section is to prove Theorem~\ref{lemma:fullness_induction_step} for $t$ of the same parity as $k$ and $t=0$, that is to show 
\begin{equation*}
    \begin{aligned}
        \cA_0(1),\, \tilde \cA_{1}(1) \subset \cD_0 &= \langle \cA_0,\, \cA_1(1)\rangle,\\
        \cA_t(t+1),\, \tilde \cA_{t + 1}(t+1) \subset \cD_t &= \langle \cA_{t-1}(t),\,\tilde \cA_{t}(t),\, \cA_{t + 1}(t+1) \rangle\qquad \text{for}\quad 1 \le t\le k-2, \quad t\equiv k\  (\mathrm{mod}\ 2). 
    \end{aligned}
\end{equation*}
Note that if~\(k-t\) is even then~\(a_{t-1}=\lfloor \frac{k-t+1}{2}\rfloor=a_t\) and~\(\cA_{t-1} \subset \tilde\cA_t\).
Moreover,~\(\cA_0=\tilde \cA_0\). Therefore, in both cases,~\(\cD_t = \langle\tilde \cA_{t}(t),\, \cA_{t + 1}(t+1) \rangle\), and we can reformulate these inclusions in the following form:
\begin{equation}\label{eq:A-in-D-even}
    \cA_t(t+1),\, \tilde \cA_{t + 1}(t+1) \subset \cD_t = \langle \tilde \cA_{t}(t),\, \cA_{t + 1}(t+1) \rangle \quad \text{for either} \quad t=0 \quad \text{or} \quad \text{\(k-t =2a_t\)}, \quad 0< a_t.
\end{equation}
These relations follow from the symplectic staircase complexes~\eqref{eq:SSC}.


Let us sketch the proof.
Let $k-t=2a_t$. First, recall that we denote by 
\begin{equation*}
\hat \Phi^t_{\lambda+1}(-) \coloneqq q_{t*}(\Sigma^{\lambda+1- a_t} (\cU_k/\cU_{k-t})\otimes p_t^*(-))\colon \IGr(k-t,V)\to \IGr(k,V)
\end{equation*} 
the Fourier--Mukai transform~\eqref{eq:Phi}, where
$\xymatrix@1{
\IGr(k-t, V) &
\IFl(k-t, k; V) \ar[l]_{p_{t}} \ar[r]^(0.55){q_{t}} &
\IGr(k, V).
}$
Recall also that for a Young diagram~\(\mu \in \YD^{a_t}_{n-k}\) we denote~$\hat \mu=((a_t)^t,\mu)\in \YD^{a_t}_{n-k+t}$.

Consider the symplectic staircase complexes on~\(\IGr(k-t, V)\):
\begin{multline} \label{eq:SSSk-t}
0 \to \Sigma_{\Sp}^{\hat \mu} \cS_{2(n-k+t)} (-H_{k-t}) \to \Lambda^{b^1} \cQ_{2n-k+t} \otimes \Sigma_{\Sp}^{\hat \mu^{(1)}} \cS_{2(n-k+t)} (-H_{k-t}) \to \dots\\
\to \Lambda^{b^{a_t}} \cQ_{2n-k+t} \otimes \Sigma_{\Sp}^{\hat \mu^{(a_t)}} \cS_{2(n-k+t)} (-H_{n-k}) 
\to \Lambda^{b^{a_t}} \cQ^{\vee}_{2n-k+t} \otimes \Sigma_{\Sp}^{\hat \mu^{(a_t)}} \cS_{2(n-k+t)} \to \dots \\
\to \Lambda^{b^1} \cQ_{2n-k+t}^{\vee} \otimes \Sigma_{\Sp}^{\hat \mu^{(1)}} \cS_{2(n-k+t)} 
\to \Sigma_{\Sp}^{\hat \mu} \cS_{2(n-k+t)} \to 0,
\end{multline} 
These symplectic staircase complexes provide some explicit relations between vector bundles $\Sigma_{\Sp}^{\hat \mu} \cS_{2(n-k+t)}$ and $\Sigma_{\Sp}^{\hat \mu} \cS_{2(n-k+t)} (-H_{k-t})$, see Lemma~\ref{lemma:SSC-middle-terms} for the details. For $t=0$ these relations allow us to prove that~$\cA_0(1), \tilde \cA_{1}(1) \subset \cD_0$, see Proposition~\ref{lemma:base_of_induction}. 

To prove the inclusions~\eqref{eq:A-in-D-even} 
we apply~$\hat \Phi^t_{\lambda+1}$ to the relations constructed in Lemma~\ref{lemma:SSC-middle-terms}, see Lemma~\ref{lemma:F-inclusion}. By analyzing the filtrations on the objects appearing in Lemma~\ref{lemma:F-inclusion}, we obtain explicit relations between the objects $\cE_t^{\lambda; \mu}(1)$, $\cE_t^{\lambda+1;\mu}$, and the subcategory $\cA_{t+1}(1)$; see Proposition~\ref{proposition:incl_e}. These relations allow us to conclude the proof of Theorem~\ref{lemma:fullness_induction_step} for $t$ of the same parity as $k$.

First, let us prove that the middle terms of the complex~\eqref{eq:SSSk-t} belong to the following subcategory:
    \begin{equation} \label{def:B_t}
        \cB_t \coloneqq \langle S^{m}\cU_{k-t}(-H_{k-t}) \otimes \Sigma_{\Sp}^{\nu} \cS_{2(n-k+t)} \mid 0\le m \le 2n-k-1, \nu \in \YD^{a_t-1}_{n-k+t}\rangle \subset \Db(\IGr(k-t,V)).
    \end{equation}
\begin{lemma}\label{lemma:SSC-middle-terms}
Suppose~\(k-t>0\) is even and let~\(\mu \in \YD^{a_t}_{n-k}\).
\begin{enumerate}
\item If~\(\mu_1 = a_t\) then
\begin{equation*}
  \Sigma_{\Sp}^{\hat \mu} \cS_{2(n-k+t)} \in \left\langle \cB_{t}, \Sigma_{\Sp}^{\hat \mu} \cS_{2(n-k+t)} (-H_{k-t}) \right\rangle, \quad \text{and}\quad \Sigma_{\Sp}^{\hat \mu} \cS_{2(n-k+t)}(-H_{k-t}) \in \left\langle \cB_{t}, \Sigma_{\Sp}^{\hat \mu} \cS_{2(n-k+t)}  \right\rangle. 
  \end{equation*}
\item If~\(\mu_1 < a_t\) then 
  \begin{equation*}
  S^{2n - k}\cU_{k - t}(-H_{k-t})\otimes \Sigma_\Sp^{((a_t-1)^t,\, \mu)} \cS_{2(n-k+t)} \in \left\langle \cB_{t}, \Sigma_{\Sp}^{\hat \mu} \cS_{2(n-k+t)} (-H_{k-t}), \Sigma_{\Sp}^{\hat \mu} \cS_{2(n-k+t)} \right\rangle.      
  \end{equation*}
\end{enumerate}
\end{lemma}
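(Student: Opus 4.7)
Proof proposal. The plan is to apply Theorem~\ref{theo:ssc} to the weight $\hat \mu$ on $\IGr(k-t, V)$. Its first entry equals $a_t$ whenever $t > 0$, because $\hat \mu = ((a_t)^t, \mu)$ starts with $a_t$'s, and also in case~(1) with $t = 0$ by the assumption $\mu_1 = a_t$; the only situation where Theorem~\ref{theo:ssc} does not apply directly is case~(2) with $t = 0$, which I expect to require a variant argument using a neighbouring weight. In the generic situation the theorem produces the symplectic staircase complex~\eqref{eq:SSSk-t}, whose outer terms are precisely $\Sigma_\Sp^{\hat \mu}\cS_{2(n-k+t)}(-H_{k-t})$ and $\Sigma_\Sp^{\hat \mu}\cS_{2(n-k+t)}$, so each of them lies in the triangulated subcategory generated by the other together with the middle terms. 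The problem thus reduces to placing the middle terms inside $\cB_t$ in case~(1), and inside the larger category $\langle\cB_t,\,S^{2n-k}\cU_{k-t}(-H_{k-t})\otimes\Sigma_\Sp^{((a_t-1)^t,\mu)}\cS_{2(n-k+t)}\rangle$ in case~(2).

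Each middle term has the form $\Lambda^{b^j}\cQ_{2n-k+t}\otimes\Sigma_\Sp^{\hat\mu^{(j)}}\cS_{2(n-k+t)}(-H_{k-t})$ in the left half or $\Lambda^{b^j}\cQ^\vee_{2n-k+t}\otimes\Sigma_\Sp^{\hat\mu^{(j)}}\cS_{2(n-k+t)}$ in the right half. To analyze them I would use two successive filtrations. The symplectic short exact sequence $0 \to \cS_{2(n-k+t)} \to \cQ_{2n-k+t} \to \cU_{k-t}^\vee \to 0$ on $\IGr(k-t,V)$ (and its dual) filters $\Lambda^{b^j}\cQ_{2n-k+t}$ and $\Lambda^{b^j}\cQ^\vee_{2n-k+t}$ with associated graded $\bigoplus_{a+b=b^j}\Lambda^a\cS_{2(n-k+t)}\otimes\Lambda^b\cU_{k-t}^\vee$ and $\bigoplus_{a+b=b^j}\Lambda^a\cS_{2(n-k+t)}\otimes\Lambda^b\cU_{k-t}$ respectively. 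Then Lemma~\ref{lemma:LR rule} decomposes $\Lambda^a\cS_{2(n-k+t)}\otimes\Sigma_\Sp^{\hat\mu^{(j)}}\cS_{2(n-k+t)}$ into symplectic Schur summands $\Sigma_\Sp^\nu\cS_{2(n-k+t)}$. The $\GL$-factor $\Lambda^b\cU_{k-t}^{\vee}$ (or $\Lambda^b\cU_{k-t}$) combined with the $\cO(-H_{k-t})$ twist must finally be identified, modulo $\cB_t$, with bundles of the form $S^m\cU_{k-t}(-H_{k-t})$; this requires additional manipulations built from the Koszul complex~\eqref{Koszul} on $\IGr(k-t,V)$ together with standard Schur identities.

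The key combinatorial check is that all resulting summands $S^m\cU_{k-t}(-H_{k-t})\otimes\Sigma_\Sp^\nu\cS_{2(n-k+t)}$ satisfy $\nu_1 \le a_t - 1$ and $0 \le m \le 2n - k - 1$, and so lie in $\cB_t$. The explicit formula~\eqref{eq:def-bbw} for $\hat\mu^{(j)}$, combined with $\hat\mu_1 = a_t$, yields this in case~(1); in case~(2), precisely one subquotient, coming from the extreme pair $(a,b)$ at one specific middle term of~\eqref{eq:SSSk-t}, produces the symplectic part $\Sigma_\Sp^{((a_t-1)^t,\mu)}\cS_{2(n-k+t)}$ together with a $\GL$-part that gives $S^{2n-k}\cU_{k-t}(-H_{k-t})$, lying just outside the range of $\cB_t$ and matching exactly the additional summand in the statement. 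The main obstacle is this two-part bookkeeping: controlling both the width $\nu_1$ of the symplectic pieces coming from the Littlewood--Richardson decomposition and the range of the symmetric-power index $m$ coming from the $\GL$-conversion. It reduces to an explicit case analysis on the BBW data $(z_j, b^j, \hat\mu^{(j)})$ developed in Subsection~\ref{subsection:alpha^i}, and fits naturally into the appendix-level identities already used elsewhere in the paper.
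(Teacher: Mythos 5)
The proposal correctly identifies the starting point (the symplectic staircase complex $\eqref{eq:SSSk-t}$ on $\IGr(k-t,V)$ with $\hat\mu$), and correctly isolates the shape of the extra term in case~(2). However, the proposed mechanism for placing the middle terms in $\cB_t$ has a genuine gap: you filter $\Lambda^{b^j}\cQ_{2n-k+t}$ by the short exact sequence $0 \to \cS_{2(n-k+t)} \to \cQ_{2n-k+t} \to \cU^\vee_{k-t} \to 0$, which produces subquotients $\Lambda^a\cS_{2(n-k+t)} \otimes \Sigma^{\hat\mu^{(j)}}_{\Sp}\cS_{2(n-k+t)}$. The symplectic Littlewood--Richardson decomposition of this product yields irreducible factors $\Sigma^\nu_\Sp\cS_{2(n-k+t)}$ with $\nu_1$ as large as $\hat\mu^{(j)}_1 + 1 = a_t$, since tensoring with $\Lambda^a\cS$ (whose symplectically irreducible pieces have weights $(1^b)$) can raise the top entry by one. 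Such weights fall outside the requirement $\nu \in \YD^{a_t-1}_{n-k+t}$ in the definition $\eqref{def:B_t}$ of $\cB_t$, and your claim that the BBW combinatorics ``yields this in case~(1)'' is exactly the step that fails: it is not true term-by-term, and no cancellation argument is supplied.

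The paper circumvents this entirely by never touching the symplectic factor. Instead of filtering $\cQ_{2n-k+t}$ by $\cS \subset \cQ$, the paper resolves $\Lambda^m\cQ_{2n-k+t}$ by the dual Koszul complex $\eqref{eq:Koszul-dual}$ built from the tautological sequence $\cU_{k-t} \hookrightarrow V\otimes\cO \twoheadrightarrow \cQ_{2n-k+t}$: this expresses $\Lambda^m\cQ_{2n-k+t}$ through $S^j\cU_{k-t} \otimes \Lambda^{m-j}V$ with the \emph{trivial} vector space $V$ in place of $\cS$, so the symplectic Schur functor $\Sigma_\Sp^{\hat\mu^{(j)}}\cS_{2(n-k+t)}$ passes through unchanged. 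The constraint $\hat\mu^{(j)}_1 \le a_t - 1$ (and $b^j \in [t, n-a_t-1]$) then follows directly from the explicit formula $\eqref{explicit_formula_i}$ and is all that is needed; the additional combinatorial control you anticipate in the $\GL$-factor is handled by the isomorphism $\Lambda^m\cQ^\vee_{2n-k+t} \simeq \Lambda^{2n-k+t-m}\cQ_{2n-k+t}(-H_{k-t})$ together with $\eqref{eq:Q-incl}$--$\eqref{eq:Q-incl2}$, and the distinction $b^1 = t$ versus $b^1 > t$ (depending on whether $\mu_1 < a_t$ or $\mu_1 = a_t$) is exactly what produces the single out-of-range term $S^{2n-k}\cU_{k-t}(-H_{k-t})\otimes\Sigma_\Sp^{((a_t-1)^t,\mu)}\cS_{2(n-k+t)}$ in case~(2). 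Your two successive filtrations are therefore the wrong tool; replacing them by the single Koszul resolution in the ambient trivial bundle $V$ is the missing idea.
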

\begin{proof}
To prove the statement it is enough to show that the middle terms of the complex~\eqref{eq:SSSk-t} on~$\IGr(k-t,V)$ satisfy:
\begin{align} \label{incl-B-1}
  \Lambda^{b^i} \cQ_{2n-k+t}(-H_{k-t}) \otimes \Sigma_{\Sp}^{\hat \mu^{(i)}} \cS_{2(n-k+t)} & \in \cB_{t} &\text{for} \quad i = 1, \dots, a_t;\\
  \label{incl-B-2}
  \Lambda^{b^i} \cQ_{2n-k+t}^\vee \otimes \Sigma_{\Sp}^{\hat \mu^{(i)}} \cS_{2(n-k+t)} & \in \cB_{t} &\text{for} \quad i = 2, \dots, a_t,
\intertext{and}
\label{incl-B-3}
\Lambda^{b^1} \cQ^{\vee}_{2n-k+t} \otimes \Sigma_{\Sp}^{\hat \mu^{(1)}} \cS_{2(n-k+t)}& \in \cB_t & \text{if} \quad \mu_1 = a_t;\\ \label{incl-B-4}
 S^{2n - k}\cU_{k - t}(-H_{k-t})\otimes \Sigma_\Sp^{((a_t-1)^t,\,\mu)} \cS_{2(n-k+t)} & \in \left\langle \cB_t, \Lambda^{b^1} \cQ^{\vee}_{2n-k+t} \otimes \Sigma_{\Sp}^{\hat \mu^{(1)}} \cS_{2(n-k+t)} \right\rangle & \text{if}\quad \mu_1 < a_t.
\end{align} 

Consider the dual complexes to the Koszul complexes~\eqref{Koszul} on $\IGr(k-t,V)$ for~\(0 \le m \le 2n-k\):
\begin{equation}\label{eq:Koszul-dual}
     0\to \Lambda^{m}\cQ_{2n-k+t}\to  \Lambda^{m}V \otimes \cO \to \ldots \to V\otimes S^{m-1}\cU_{k-t} \to S^{m}\cU_{k-t} \to 0.
\end{equation}
Together with the isomorphism $\Lambda^{m} \cQ^\vee_{2n-k + t}\simeq \Lambda^{2n-k+t-m}\cQ_{2n-k+t}(-H_{k-t})$, these 
complexes imply the following inclusions:
\begin{align} \label{eq:Q-incl}
    \Lambda^{m} \cQ_{2n-k+t} (-H_{k-t}) \otimes \Sigma_{\Sp}^{\hat \mu^{(i)}} \cS_{2(n-k+t)} &\in \left\langle S^{j}\cU_{k-t}(-H_{k-t})  \otimes \Sigma_{\Sp}^{\hat \mu^{(i)}} \cS_{2(n-k+t)}\right\rangle_{0\le j \le m},  \\\label{eq:Q-incl1}
    \Lambda^{2n-k+t-m} \cQ_{2n-k+t} \otimes \Sigma_{\Sp}^{\hat \mu^{(i)}} \cS_{2(n-k+t)} &\in  \left\langle S^{j}\cU_{k-t}(-H_{k-t}) \otimes \Sigma_{\Sp}^{\hat \mu^{(i)}} \cS_{2(n-k+t)} \right\rangle_{0\le j\le m}.
\end{align}
Moreover, using the complex~\eqref{eq:Koszul-dual} as a left resolution for~\(S^m \cU_{k-t}\), we analogously get
\begin{multline}\label{eq:Q-incl2}
    S^{m}\cU_{k-t}(-H_{k-t}) \otimes \Sigma_{\Sp}^{\hat \mu^{(i)}} \cS_{2(n-k+t)} \in  \\
    \left\langle \Lambda^{2n-k-t-m} \cQ_{2n-k+t} \otimes \Sigma_{\Sp}^{\hat \mu^{(i)}} \cS_{2(n-k+t)},\, S^{j}\cU_{k-t}(-H_{k-t}) \otimes \Sigma_{\Sp}^{\hat \mu^{(i)}} \cS_{2(n-k+t)} \right\rangle_{0\le j \le m-1}.
\end{multline}

Applying the explicit formula~\eqref{explicit_formula_i} to $\hat \mu^{(i)}$, we deduce:
\begin{equation} \label{range}
    \hat \mu^{(i)}\in \YD^{a_t-1}_{n-k+t} \quad \text{and}\quad b^i=|\hat \mu| - |\hat \mu^{(i)}| \in [t, n-a_t-1] \qquad \text{for}\quad i = 1, \ldots, a_t.
\end{equation}
The constraints described in~\eqref{range} together with the inclusion~\eqref{eq:Q-incl} for~\(m=b^i\) imply~\eqref{incl-B-1}. 

Moreover, note that for~\(i\ge 2\) we have~\(b^i>b^1\ge t\). Thus, the inclusion~\eqref{eq:Q-incl1} for~\(m=2n-k+t-b^i\), combined with the conditions~\eqref{range}, implies~\eqref{incl-B-2}. Furthermore, if~\(\mu_1 = a_t\), then~\(b^1>t\), and the same argument implies~\eqref{incl-B-3}. 

Now, we consider the last case $\mu_1<a_t$. 
In this case~\(\hat \mu^{(1)}= ((a_t-1)^t, \mu)\) and~\(b^1=t\), so the inclusion~\eqref{eq:Q-incl2} for $m=2n-k$ and $i=1$ implies~\eqref{incl-B-4}.
\end{proof}
We are now ready to prove Theorem~\ref{lemma:fullness_induction_step} for $t=0$. 
\begin{proposition} \label{lemma:base_of_induction}
    We have~\(\cA_0(1),\, \tilde \cA_{1}(1) \subset \cD_0= \langle \cA_0,\, \cA_1(1)\rangle\). 
\end{proposition}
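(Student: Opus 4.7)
The plan is to combine Lemma~\ref{lemma:SSC-middle-terms} (the consequences of the symplectic staircase complex on $\IGr(k,V)$) with the explicit pushforward formula of Theorem~\ref{theo:formulas_for_F}. Since $\cA_1(1) \subset \cD_0$ is immediate from the definition of $\cD_0$, the real content is to verify two things: (i) $\Sigma^{\mu}_{\Sp}\cS_{2(n-k)}(1) \in \cD_0$ for every $\mu \in \YD^{a_0}_{n-k}$, and (ii) $\cE^{2n-k;\mu}_1(1) \in \cD_0$ for every $\mu \in \YD^{a_1}_{n-k}$, corresponding to the extra generators of $\tilde\cA_1 \setminus \cA_1$ (with $\lambda_1 = 2n-k$).

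When $k$ is even, I would apply Lemma~\ref{lemma:SSC-middle-terms} with $t = 0$ and twist by $\cO(1) = \cO(H_k)$. The first case ($\mu_1 = a_0$) gives $\Sigma^{\mu}_{\Sp}\cS_{2(n-k)}(1) \in \langle \cA_0, \cB_0(1) \rangle$, while the second case ($\mu_1 < a_0$) gives the same relation plus an additional boundary generator $S^{2n-k}\cU_k \otimes \Sigma^{\mu}_{\Sp}\cS_{2(n-k)}$. The proof then reduces to two claims. First, $\cB_0(1) \subset \cD_0$, which one establishes by resolving each generator $S^m \cU_k \otimes \Sigma^{\nu}_{\Sp}\cS_{2(n-k)}$ (with $0 \le m \le 2n-k-1$ and $\nu \in \YD^{a_1}_{n-k}$) via the dual Koszul complex~\eqref{eq:Koszul-dual} and inducting on $m$: the remaining $\Lambda^j \cQ_{2n-k}$ factors are decomposed using the symplectic filtration $0 \to \cS_{2(n-k)} \to \cQ_{2n-k} \to \cU^{\vee}_k \to 0$ and Lemma~\ref{lemma:LR rule}, yielding subquotients that land in $\cA_0$ or $\cA_1(1)$. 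Second, the boundary generator $S^{2n-k}\cU_k \otimes \Sigma^{\mu}_{\Sp}\cS_{2(n-k)}$ is identified with a representative of $\cE^{2n-k;\mu}_1(1)$ via Theorem~\ref{theo:formulas_for_F}, using the description $\cU_k/\cU_{k-1} \simeq \cO(1)$ on $\IFl(k-1,k;V) = \PP_{\IGr(k,V)}(\cU^{\vee}_k)$; this identification establishes (i) and (ii) simultaneously.

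When $k$ is odd, the equality $a_0 = a_1 = (k-1)/2$ yields $\cA_0 \subset \tilde\cA_1$, since every generator $\Sigma^{\mu}_{\Sp}\cS_{2(n-k)}$ of $\cA_0$ appears among the generators of $\tilde\cA_1$ corresponding to $\lambda = 0$; hence $\cA_0(1) \subset \tilde\cA_1(1)$ and the proposition reduces to claim (ii). This case is handled by applying the symplectic staircase machinery one level down, on $\IGr(k-1,V)$ (where $k-1$ is even, so Lemma~\ref{lemma:SSC-middle-terms} applies with $t = 1$), and pushing the resulting relations forward through $\hat\Phi^1$ to obtain the relations on $\IGr(k,V)$ needed for $\cE^{2n-k;\mu}_1(1)$. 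The main obstacle, common to both parities, is the precise identification of the boundary term $S^{2n-k}\cU_k \otimes \Sigma^{\mu}_{\Sp}\cS_{2(n-k)}$ with $\cE^{2n-k;\mu}_1(1)$: this requires careful use of Theorem~\ref{theo:formulas_for_F} together with the relative Borel--Bott--Weil theorem for the projection $\IFl(k-1,k;V) \to \IGr(k,V)$, and it is precisely this identification that intertwines the two inclusions $\cA_0(1)\subset \cD_0$ and $\tilde\cA_1(1)\subset \cD_0$ into a single argument.
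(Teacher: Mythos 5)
Your overall instinct — combine the symplectic staircase consequence (Lemma~\ref{lemma:SSC-middle-terms}) with a Koszul argument — points in the right direction, but the execution has a concrete gap and is considerably more convoluted than what is needed.

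The gap is in your argument that \(\cB_0(1) \subset \cD_0\). You propose to resolve \(S^m\cU_k\) via the dual Koszul complex and then handle the factor \(\Lambda^m\cQ_{2n-k}\) by the filtration \(0\to\cS_{2(n-k)}\to\cQ_{2n-k}\to\cU^\vee_k\to 0\). But the associated graded pieces include terms \(\Lambda^a\cS_{2(n-k)}\otimes\Lambda^b\cU^\vee_k\) with \(b\ge 2\), and \(\Lambda^b\cU^\vee_k=\Sigma^{(1)^b}\cU^\vee_k\) corresponds to a weight in \(\Y_b\) with two (or more) nonzero rows, which lies in neither \(\B_0\) nor \(\B_1\). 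Such subquotients do \emph{not} land in \(\cA_0\) or \(\cA_1(1)\), so this reduction breaks. The correct move is the isomorphism \(\Lambda^m\cQ_{2n-k}\cong\Lambda^{2n-k-m}\cQ^\vee_{2n-k}(1)\) followed by a further Koszul resolution — or, more directly, avoid twisting altogether: in the case \(t=0\) with \(k\) even one has \(\cB_0^\vee = \langle S^m\cU^\vee_k(1)\otimes\Sigma_\Sp^\nu\cS_{2(n-k)}\mid 0\le m\le 2n-k-1,\ \nu\in\YD^{k/2-1}_{n-k}\rangle=\cA_1(1)\). Thus one should \emph{dualize} the inclusion \(\Sigma_\Sp^\mu\cS(-1)\in\langle\cB_0,\Sigma_\Sp^\mu\cS\rangle\) from Lemma~\ref{lemma:SSC-middle-terms} (using self-duality of \(\Sigma_\Sp^\mu\)), which immediately gives \(\Sigma_\Sp^\mu\cS(1)\in\langle\cA_1(1),\cA_0\rangle\) with no further work. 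By twisting instead of dualizing, you trade one clean observation for an unproved (and as you've stated it, false) subsidiary claim.

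Separately, your claim (ii) and the identification of the boundary generator with \(\cE_1^{2n-k;\mu}(1)\) via Theorem~\ref{theo:formulas_for_F}, together with the odd-\(k\) detour through \(\hat\Phi^1\) on \(\IGr(k-1,V)\), are not needed at all. The inclusion \(\tilde\cA_1(1)\subset\cD_0\) is established uniformly in \(k\) by tensoring the Koszul complex~\eqref{Koszul} for \(m=2n-k\) with \(\Sigma_\Sp^\mu\cS_{2(n-k)}(1)\): the leftmost term \(\Lambda^{2n-k}\cQ^\vee_{2n-k}(1)\otimes\Sigma_\Sp^\mu\cS\cong\Sigma_\Sp^\mu\cS\) lies in \(\cA_0\), all intermediate terms \(\Lambda^jV^\vee\otimes S^{2n-k-j}\cU^\vee(1)\otimes\Sigma_\Sp^\mu\cS\) with \(j\ge1\) lie in \(\cA_1(1)\), so the rightmost term \(S^{2n-k}\cU^\vee(1)\otimes\Sigma_\Sp^\mu\cS\) lies in \(\cD_0\). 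One works directly with the irreducible generators of \(\tilde\cA_1(1)\); there is no reason to pass to the dual exceptional objects \(\cE_1^{\lambda;\mu}\), and doing so only imports the much heavier machinery of Theorem~\ref{theo:formulas_for_F} (which in any case requires \(1\le t\le k-2\)) into what should be the easiest step of the induction.
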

\begin{proof}
Recall that 
\begin{align*}
    &\cA_0 \phantom{(1)}= \left\langle \Sigma_{\Sp}^{\mu} \cS_{2(n-k)} \mid \lfloor k/2 \rfloor \ge \mu_1\ge \mu_2\ge \ldots \ge \mu_{n-k}\ge 0\right\rangle;\\
    &\cA_{1}(1) = \left\langle S^{j}\cU^{\vee}(1)\otimes \Sigma_{\Sp}^{\mu} \cS_{2(n-k)} \mid 2n-k-1\ge j \ge 0;\, \lfloor (k-1)/2 \rfloor \ge \mu_1\ge \mu_2\ge \ldots \ge \mu_{n-k}\ge 0\right\rangle;\\
    &\tilde \cA_{1}(1) = \left\langle S^{j}\cU^{\vee}(1)\otimes \Sigma_{\Sp}^{\mu} \cS_{2(n-k)} \mid 2n-k \phantom{{}-1}\ge j\ge 0;\, \lfloor (k-1)/2 \rfloor \ge \mu_1\ge \mu_2\ge \ldots \ge \mu_{n-k}\ge 0\right\rangle.
\end{align*}

To prove $\tilde \cA_{1}(1) \subset \cD_0$, 
it is enough to show 
\begin{equation*}
S^{2n-k}\cU^{\vee}(1)\otimes \Sigma_{\Sp}^{\mu} \cS^{\vee}_{2n-k} \in \cD_0 \qquad \text{for} \quad \mu \in \YD^{\lfloor (k-1)/2 \rfloor}_{n-k}.
\end{equation*} 
Consider the Koszul complex~\eqref{Koszul} for $m = 2n-k$ tensored with~$\Sigma_{\Sp}^{\mu} \cS_{2(n-k)}(1)$:
\begin{multline*}
    0\to \Sigma_{\Sp}^{\mu} \cS_{2(n-k)}\to  \Lambda^{2n-k}V^{\vee}\otimes \Sigma_{\Sp}^{\mu} \cS_{2(n-k)}(1)\to \Lambda^{2n-k-1}V^{\vee}\otimes \cU^{\vee}(1)\otimes \Sigma_{\Sp}^{\mu} \cS_{2(n-k)} \to \ldots \\
    \to V^{\vee}\otimes S^{2n-k-1}\cU^{\vee}(1)\otimes \Sigma_{\Sp}^{\mu} \cS_{2(n-k)}\to S^{2n-k}\cU^{\vee}(1)\otimes \Sigma_{\Sp}^{\mu} \cS_{2(n-k)}\to 0.
\end{multline*}
Since all terms of the complex, except for $S^{2n-k}\cU^{\vee}(1)\otimes \Sigma_{\Sp}^{\mu} \cS_{2(n-k)}$, belong to~\(\cD_0\), it follows that the vector bundle~$S^{2n-k}\cU^{\vee}(1)\otimes \Sigma_{\Sp}^{\mu} \cS_{2(n-k)}$ must also belong to~\(\cD_0\). Thus, the inclusion~$\tilde \cA_{1}(1)\subset \cD_0$ follows.

If $k$ is odd then $\lfloor k/2 \rfloor= \lfloor (k-1)/2 \rfloor$, i.e. $\cA_0(1)\subset \cA_1(1)$, so we deduce the proposition. 

Next, if $k$ is even, it remains to prove 
\begin{equation*}
\Sigma_{\Sp}^{\mu} \cS_{2(n-k)}(1) \in \cD_0 \qquad \text{for} \quad \mu \in \YD^{k/2}_{n-k}\quad \text{with} \quad \mu_1=k/2.    
\end{equation*}
Observe that when $t=0$ and $k$ is even, the category dual to~\eqref{def:B_t} coincides with~$\cA_1(1)$:
\begin{equation*}
        \cB_0^{\vee} \coloneqq \left\langle S^{m}\cU^{\vee}_{k}(1) \otimes \Sigma_{\Sp}^{\nu} \cS_{2(n-k)} \mid m \in \YD_1^{2n-k-1},\, \nu \in \YD^{k/2-1}_{n-k}\right\rangle = \cA_1(1).
    \end{equation*}
Thus, applying Lemma~\ref{lemma:SSC-middle-terms} in the case $t=0$ and $\mu_1=k/2$, we deduce
\begin{equation*}
\Sigma_{\Sp}^{\mu} \cS_{2(n-k)}(1) \in \big\langle \cB_{0}^{\vee}, \Sigma_{\Sp}^{\mu} \cS_{2(n-k)} \big\rangle\subset \big\langle \cA_1(1),\, \cA_0\big\rangle=\cD_0. \qedhere
\end{equation*}
\end{proof}
The remaining part of Section~\ref{section:relation.sympl} is devoted to proving Lemma~\ref{lemma:fullness_induction_step} for~$t>0$ of the same parity as $k$. 
First, we apply the Fourier--Mukai transform $\hat \Phi^t_{\lambda+1}$, defined in~\eqref{eq:Phi}, to the inclusions in Lemma~\ref{lemma:SSC-middle-terms}.
\begin{lemma} \label{lemma:F-inclusion}
   Suppose~\(k-t>0\) is even, and~\((\lambda;\mu) \in \B_t=\YD_t^{2n-k-t}\times \YD_{n-k}^{a_t}\). Then 
 \begin{align*}
  \cF_t^{\lambda; \mu}(-1) & \in \left\langle \hat \Phi^t_{\lambda+1}(\cB_{t}),\, \cF_t^{\lambda+1;\mu} \right\rangle, \qquad & \text{if}\quad \mu_1 = a_t; \\
  \hat \Phi^t_{\lambda+1}\left(S^{2n - k}\cU_{k - t}(-H_{k-t})\otimes \Sigma_\Sp^{((a_t-1)^t,\, \mu)} \cS_{2(n-k+t)}\right) & \in \left\langle \hat \Phi^t_{\lambda+1}(\cB_{t}),\, \cF_t^{\lambda+1;\mu},\, \cF_t^{\lambda; \mu}(-1)\right\rangle,  \qquad & \text{if} \quad \mu_1 < a_t. 
\end{align*}
\end{lemma}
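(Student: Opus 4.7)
The plan is to apply the Fourier--Mukai transform $\hat\Phi^t_{\lambda+1}$ termwise to the inclusions established in Lemma~\ref{lemma:SSC-middle-terms} on $\IGr(k-t,V)$, and identify the resulting objects on $\IGr(k,V)$ via Theorem~\ref{theo:formulas_for_F} together with a projection-formula computation.

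The main ingredient is the twisting identity
\begin{equation*}
\hat\Phi^t_{\lambda+1}\bigl(F(-H_{k-t})\bigr) \simeq \hat\Phi^t_{\lambda}(F)(-1),
\end{equation*}
valid for any $F\in \Db(\IGr(k-t,V))$. To establish it, I apply the projection formula along the map $q_t$ in the diagram~\eqref{diagr} and use
\begin{equation*}
q_t^*\cO(-1) \simeq p_t^*\cO(-H_{k-t}) \otimes \det(\cU_k/\cU_{k-t})
\end{equation*}
on $\IFl(k-t,k;V)$, which follows from $\cO(-H_l) = \det\cU_l$ applied to the tautological exact sequence $0 \to \cU_{k-t}\to \cU_k \to \cU_k/\cU_{k-t}\to 0$. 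The extra $\det(\cU_k/\cU_{k-t})$ factor precisely shifts the kernel $\Sigma^{\lambda - a_t}(\cU_k/\cU_{k-t})$ of $\hat\Phi^t_{\lambda}$ to the kernel $\Sigma^{(\lambda+1) - a_t}(\cU_k/\cU_{k-t})$ of $\hat\Phi^t_{\lambda+1}$.

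Specializing the identity to $F = \Sigma_\Sp^{\hat\mu} \cS_{2(n-k+t)}$ and invoking Theorem~\ref{theo:formulas_for_F} (applicable since $(\lambda;\mu)\in \B_t\subset \tilde\B_t$), I obtain
\begin{equation*}
\hat\Phi^t_{\lambda+1}\bigl(\Sigma_\Sp^{\hat\mu}\cS_{2(n-k+t)}(-H_{k-t})\bigr) \simeq \cF_t^{\lambda;\mu}(-1).
\end{equation*}
The same theorem applied directly, using that $(\lambda;\mu)\in \B_t$ implies $(\lambda+1;\mu)\in \tilde\B_t$, yields $\hat\Phi^t_{\lambda+1}\bigl(\Sigma_\Sp^{\hat\mu}\cS_{2(n-k+t)}\bigr) \simeq \cF_t^{\lambda+1;\mu}$.

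To conclude, I apply the triangulated functor $\hat\Phi^t_{\lambda+1}$ to the two inclusions of Lemma~\ref{lemma:SSC-middle-terms}. For $\mu_1 = a_t$, the inclusion $\Sigma_\Sp^{\hat\mu} \cS_{2(n-k+t)}(-H_{k-t}) \in \langle \cB_t,\, \Sigma_\Sp^{\hat\mu} \cS_{2(n-k+t)}\rangle$ is sent to $\cF_t^{\lambda;\mu}(-1) \in \langle \hat\Phi^t_{\lambda+1}(\cB_t),\,\cF_t^{\lambda+1;\mu}\rangle$, which is the first claim. For $\mu_1 < a_t$, applying $\hat\Phi^t_{\lambda+1}$ to the inclusion of $S^{2n-k}\cU_{k-t}(-H_{k-t})\otimes \Sigma_\Sp^{((a_t-1)^t,\mu)} \cS_{2(n-k+t)}$ yields the second claim directly. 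The only delicate point is verifying the twisting identity above; the remainder is bookkeeping with the projection formula and Theorem~\ref{theo:formulas_for_F}.
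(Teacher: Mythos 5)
Your proof is correct and essentially the same as the paper's: the paper identifies $\cF_t^{\lambda+1;\mu}$ and $\cF_t^{\lambda;\mu}(-1)$ as images under $\hat\Phi^t_{\lambda+1}$ of $\Sigma_\Sp^{\hat\mu}\cS_{2(n-k+t)}$ and $\Sigma_\Sp^{\hat\mu}\cS_{2(n-k+t)}(-H_{k-t})$ via Theorem~\ref{theo:formulas_for_F} and an inline projection-formula chain, then applies the functor to Lemma~\ref{lemma:SSC-middle-terms}. You simply package the projection-formula chain as the general twisting identity $\hat\Phi^t_{\lambda+1}(F(-H_{k-t}))\simeq\hat\Phi^t_\lambda(F)(-1)$, which is a clean and accurate way to present the same computation.
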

\begin{proof}
Theorem~\ref{theo:formulas_for_F} yields~\(\cF_t^{\lambda+1;\mu}\simeq \hat \Phi^t_{\lambda+1}(\Sigma_\Sp^{\hat \mu} \cS_{2(n-k+t)})\) and, again, Theorem~\ref{theo:formulas_for_F} and the projection formula yield
\begin{align*}
  & \cF_t^{\lambda; \mu}(-1)\simeq \hat \Phi^t_{\lambda}\left(\Sigma_\Sp^{\hat \mu} \cS_{2(n-k+t)}\right) \otimes \cO(-H_k) = \\
  & = q_{t*}\left(\Sigma^{\lambda - a_t \phantom{{}+1}} (\cU_k/\cU_{k-t})\otimes p^*_{t}\big(\Sigma_{\Sp}^{\hat \mu} \cS_{2(n-k+t)}\big)\right)\otimes \cO(-H_k) \simeq \\
  & \simeq q_{t*}\left(\Sigma^{\lambda - a_t+1} (\cU_k/\cU_{k-t})\otimes p^*_{t}\big(\Sigma_{\Sp}^{\hat \mu} \cS_{2(n-k+t)}\big)\otimes \cO(-H_{k-t})\right)\simeq \\
  & \simeq q_{t*}\left(\Sigma^{\lambda- a_t+1} (\cU_k/\cU_{k-t})\otimes p^*_t\big(\Sigma_{\Sp}^{\hat \mu} \cS_{2(n-k+t)}(-H_{k-t})\big) \right) \simeq\\
  &\simeq \hat \Phi^t_{\lambda+1}\left(\Sigma_\Sp^{\hat \mu} \cS_{2(n-k+t)}(-H_{k-t})\right).  
\end{align*}
Thus, applying the Fourier--Mukai transform $\hat \Phi^t_{\lambda+1}$ to the inclusions in Lemma~\ref{lemma:SSC-middle-terms} we deduce the lemma. 
\end{proof}
Our next goal is to study in more detail the objects appearing in Lemma~\ref{lemma:F-inclusion}, namely the object 
\begin{equation*}
\hat \Phi^t_{\lambda+1}\left(S^{2n - k}\cU_{k - t}(-H_{k-t})\otimes \Sigma_\Sp^{((a_t-1)^t,\, \mu)} \cS_{2(n-k+t)}\right),
\end{equation*}
and the subcategory
\begin{samepage}
\begin{equation} \label{Phi_B_t}
    \hat \Phi^t_{\lambda+1}(\cB_t) = \left\langle \hat \Phi^t_{\lambda+1}\big(S^{m}\cU_{k-t}(-H_{k-t}) \otimes \Sigma_{\Sp}^{\nu} \cS_{2(n-k+t)}\big)\right\rangle \subset \Db(\IGr(k,V)), 
\end{equation}
where $0\le m \le 2n-k-1$ and $\nu \in \YD^{a_t-1}_{n-k+t}$. 
\end{samepage}
\begin{lemma} \label{lemma:phi-filtr}
Suppose~\(k-t > 0\) is even. 
Let~$(\lambda; \nu) \in \YD_{t}\times \YD^{a_t-1}_{n-k+t}$, and let $m\in \mathbb{Z}$. 
The object
\begin{equation*}
\hat \Phi^t_{\lambda+1}\big(S^{m}\cU_{k-t}(-H_{k-t}) \otimes\Sigma_{\Sp}^{\nu} \cS_{2(n-k+t)}\big)\in \IGr(k,V)
\end{equation*}
admits a filtration whose associated subquotients are of the form
\begin{equation*}
q_{t*}\big(S^{m} \cU_{k-t}\otimes \Sigma^{\gamma} (\cU_k/\cU_{k - t})\big) \otimes \Sigma_\Sp^{\eta} \cS_{2(n - k)}(-1),
    \end{equation*} 
where~\(\gamma \in \Y_t\) satisfies~\(\gamma_i \in [-k+t+1, \lambda_i-1]\), 
and~\(\eta \in \YD^{a_t-1}_{n - k}\).
\end{lemma}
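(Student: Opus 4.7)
My plan is to unwind the definition of $\hat\Phi^t_{\lambda+1}$, apply Lemma~\ref{lemma:filtration_on_flags} on the isotropic flag variety $\IFl(k - t, k; V)$, push the resulting filtration forward along $q_t$, and convert the determinantal factor $\det\cU_{k-t}$ that comes from $(-H_{k-t})$ into the twist $\cO(-1)$ on $\IGr(k, V)$ using $\det\cU_k = \det\cU_{k-t}\otimes \det(\cU_k/\cU_{k-t})$. The only nontrivial part is the final line-bundle bookkeeping — distinguishing $\cO(-H_{k-t})$ from $\cO(-H_k)$ and tracking how the shift $\gamma' \mapsto \gamma' - 1$ is absorbed.

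First, since $S^m\cU_{k-t}(-H_{k-t})\otimes \Sigma_\Sp^\nu\cS_{2(n-k+t)}$ is pulled back along $p_t$ from $\IGr(k-t, V)$, the projection formula combined with the identity $p_t^*\cO(-H_{k-t}) = \det\cU_{k-t}$ yields
\begin{equation*}
\hat\Phi^t_{\lambda+1}\big(S^m\cU_{k-t}(-H_{k-t}) \otimes \Sigma_\Sp^\nu\cS_{2(n-k+t)}\big) = q_{t*}\big(S^m\cU_{k-t}\otimes \det\cU_{k-t}\otimes \Sigma^{\lambda+1-a_t}(\cU_k/\cU_{k-t})\otimes p_t^*\Sigma_\Sp^\nu\cS_{2(n-k+t)}\big).
\end{equation*}
Next, I will apply Lemma~\ref{lemma:filtration_on_flags} to the subfactor $\Sigma^{\lambda+1-a_t}(\cU_k/\cU_{k-t}) \otimes p_t^*\Sigma_\Sp^\nu\cS_{2(n-k+t)}$ with $\xi = \lambda + 1 - a_t \in \Y_t$ and $\theta = \nu \in \YD^{a_t-1}_{n-k+t}$ (so that $a = a_t - 1$), producing a filtration whose graded pieces are $\Sigma^{\gamma'}(\cU_k/\cU_{k-t})\otimes \Sigma_\Sp^\eta\cS_{2(n-k)}$ for $\gamma' \in \Y_t$ with $\gamma'_i \in [\lambda_i + 2 - 2a_t,\,\lambda_i]$ and $\eta \in \YD^{a_t-1}_{n-k}$.

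Finally, tensoring this filtration by the vector bundle $S^m\cU_{k-t}\otimes \det\cU_{k-t}$ and pushing it down via $q_t$ (using the projection formula to pull $\Sigma_\Sp^\eta\cS_{2(n-k)}$ outside of $q_{t*}$) together with the identities
\begin{equation*}
\Sigma^{\gamma'}(\cU_k/\cU_{k-t}) = \Sigma^{\gamma' - 1}(\cU_k/\cU_{k-t})\otimes \det(\cU_k/\cU_{k-t}), \qquad \det\cU_{k-t}\otimes \det(\cU_k/\cU_{k-t}) = \det\cU_k = q_t^*\cO(-H_k),
\end{equation*}
each graded piece rewrites as $q_{t*}\big(S^m\cU_{k-t}\otimes \Sigma^{\gamma}(\cU_k/\cU_{k-t})\big)\otimes \Sigma_\Sp^\eta\cS_{2(n-k)}(-1)$, where $\gamma \coloneqq \gamma' - 1 \in \Y_t$ satisfies $\gamma_i \in [\lambda_i + 1 - 2a_t,\,\lambda_i - 1]$. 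Since $2a_t = k - t$ and $\lambda_i \ge 0$, this interval is contained in $[-k + t + 1,\,\lambda_i - 1]$, giving the asserted form. As a sanity check, the trivial case $m = 0$, $\nu = 0$ should recover $\cF^{\lambda;0}_t(-1)$ in accordance with Theorem~\ref{theo:formulas_for_F}, which pins down the direction of the shift $\gamma' \mapsto \gamma' - 1$.
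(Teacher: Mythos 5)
Your proof is correct and essentially reproduces the paper's argument: unwind $\hat\Phi^t_{\lambda+1}$, apply Lemma~\ref{lemma:filtration_on_flags} with $a = a_t - 1$, and use the determinant identities to convert $\cO(-H_{k-t})$ into $\cO(-H_k)$; you only differ in performing the line-bundle absorption after the filtration (re-indexing $\gamma = \gamma' - 1$) rather than before as the paper does. One small inaccuracy is your closing sanity check: taking $m = 0$, $\nu = 0$ does not recover $\cF^{\lambda;0}_t(-1)$, since by Theorem~\ref{theo:formulas_for_F} one has $\cF^{\lambda;0}_t(-1) \simeq \hat\Phi^t_{\lambda+1}\bigl(\Sigma_\Sp^{(a_t)^t}\cS_{2(n-k+t)}(-H_{k-t})\bigr)$ with $(a_t)^t \neq 0$ (recall $a_t \ge 1$ here), but this remark is not used in the proof.
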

\begin{proof}
First, observe that: 
\begin{align*}
     \hat \Phi^t_{\lambda+1}&\big(S^{m}\cU_{k-t}(-H_{k-t}) \otimes\Sigma_{\Sp}^{\nu} \cS_{2(n-k+t)}\big)=\\
    =q_{t*} &\left(S^{m} \cU_{k-t} \otimes \Sigma^{\lambda- a_t+1} (\cU_k/\cU_{k - t}) \otimes p^*_t\Sigma_\Sp^{\nu} \cS_{2(n-k+t)} \otimes \cO(-H_{k-t}) \right) \simeq  \\
    \simeq q_{t*} & \left(S^{m} \cU_{k-t} \otimes\Sigma^{\lambda- a_t\phantom{{}+1}} (\cU_k/\cU_{k - t}) \otimes  p^*_t \Sigma_\Sp^{\nu} \cS_{2(n-k+t)} \otimes \cO(-H_{k})\right) \simeq \\
    \simeq q_{t*} &\left(S^{m} \cU_{k-t}\otimes \Sigma^{\lambda-a_t\phantom{{}+1}} (\cU_k/\cU_{k - t})\otimes p^*_t \Sigma_\Sp^{\nu} \cS_{2(n-k+t)} \right) \otimes \cO(-1),
\end{align*}
where the final isomorphism follows from the projection formula.

Next, by Lemma~\ref{lemma:filtration_on_flags} applied to~\(\xi=\lambda-a_t\) and~\(\theta=\nu\), under our assumptions, the vector bundle
\begin{equation*}
    S^{m} \cU_{k-t} \otimes \Sigma^{\lambda-a_t} (\cU_k/\cU_{k - t})\otimes  p^*_t \Sigma_\Sp^{\nu} \cS_{2(n-k+t)}
\end{equation*}
admits a filtration whose associated subquotients are of the form
\begin{equation*}
S^{m} \cU_{k-t}\otimes \Sigma^{\gamma} (\cU_k/\cU_{k - t}) \otimes \Sigma_\Sp^{\eta} \cS_{2(n - k)}, 
    \end{equation*}    
where~\(\gamma \in \Y_t\) satisfies~\(\gamma_i \in [- k+t+1, \lambda_i-1]\) and~\(\eta \in \YD^{a_t-1}_{n - k}\).

Applying the projection formula again yields:
\begin{equation*}
q_{t*}\left(S^{m} \cU_{k-t}\otimes \Sigma^{\gamma} (\cU_k/\cU_{k - t}) \otimes \Sigma_\Sp^{\eta} \cS_{2(n - k)} \right) = q_{t*}\big(S^{m} \cU_{k-t}\otimes \Sigma^{\gamma} (\cU_k/\cU_{k - t})\big)\otimes \Sigma_\Sp^\eta \cS_{2(n-k)}.
\end{equation*}
Combining all of the above we get the lemma.
\end{proof}
\begin{corollary} \label{corollary:second-filtr-phi}
Suppose~\(k-t > 0\) is even. 
Let~\((\lambda;\mu) \in \YD_t\times \YD_{n-k}^{a_t-1}\).
The object 
\begin{equation*}
\hat \Phi^t_{\lambda+1}\big(S^{2n-k}\cU_{k-t}(-H_{k-t}) \otimes\Sigma_{\Sp}^{((a_t-1)^t,\, \mu)} \cS_{2(n-k+t)}\big)\in \IGr(k,V)
\end{equation*}
admits a filtration whose associated subquotients are:
\begin{itemize}
    \item the term 
    \begin{equation*}
q_{t*}\big(S^{2n-k} \cU_{k-t}\otimes\Sigma^{\lambda-1}(\cU_k/\cU_{k-t})\big)\otimes \Sigma_\Sp^\mu \cS_{2(n-k)}(-1)
\end{equation*} 
    appearing with multiplicity exactly~one;
\item  terms of the form 
\begin{equation*}
q_{t*}\big(S^{2n-k} \cU_{k-t}\otimes \Sigma^{\gamma} (\cU_k/\cU_{k-t})\big)\otimes \Sigma_\Sp^\eta \cS_{2(n-k)}(-1),
\end{equation*}
where~\(\gamma \in \Y_t\) satisfies~\(\gamma < \lambda-1\) and~\(\eta\in \YD_{n-k}^{a_t-1}\).
\end{itemize}
\end{corollary}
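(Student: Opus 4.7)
The plan is to derive the statement from Lemma~\ref{lemma:phi-filtr} by specializing to $m=2n-k$ and $\nu=((a_t-1)^t,\mu)$, and then to isolate the extremal subquotient with $\gamma=\lambda-1$ through a componentwise weight comparison. Since $\mu\in\YD_{n-k}^{a_t-1}$ implies $\nu\in\YD_{n-k+t}^{a_t-1}$, Lemma~\ref{lemma:phi-filtr} applies and yields a filtration whose associated subquotients are
\[
q_{t*}\bigl(S^{2n-k}\cU_{k-t}\otimes\Sigma^\gamma(\cU_k/\cU_{k-t})\bigr)\otimes\Sigma_{\Sp}^\eta\cS_{2(n-k)}(-1),
\]
with $\gamma\in\Y_t$ satisfying $\gamma_i\in[-k+t+1,\lambda_i-1]$ and $\eta\in\YD_{n-k}^{a_t-1}$. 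In particular every $\gamma$ satisfies $\gamma\le\lambda-1$ in the partial order on $\Y_t$, so any subquotient with $\gamma\ne\lambda-1$ is automatically $\gamma<\lambda-1$ strictly and therefore lies in the second class of terms claimed.

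Next I would isolate the subquotients with $\gamma=\lambda-1$. Tracing through the proofs of Lemmas~\ref{lemma:phi-filtr} and~\ref{lemma:filtration_on_flags}, the filtration on $\Sigma^{\lambda-a_t}(\cU_k/\cU_{k-t})\otimes p_t^*\Sigma_{\Sp}^{\nu}\cS_{2(n-k+t)}$ arises from Lemma~\ref{skew_decomposition} applied iteratively to the three-step filtration $p_t^*\cS_{2(n-k+t)}=[\cU_k/\cU_{k-t},\cS_{2(n-k)},(\cU_k/\cU_{k-t})^\vee]$, so its subquotients take the shape~\eqref{eq:filtr_flags}
\[
\Sigma^{\lambda-a_t}(\cU_k/\cU_{k-t})\otimes\Sigma^\alpha(\cU_k/\cU_{k-t})\otimes\Sigma_{\Sp}^{\nu'}\cS_{2(n-k)}\otimes\Sigma^\beta(\cU_k/\cU_{k-t})^\vee,
\]
with $\alpha,\beta\in\Y_t$ satisfying $\alpha_i,\beta_i\le a_t-1$ and with $\nu'\subset\nu$ governed by the multiplicity data. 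Comparing highest weights of the $\cU_k/\cU_{k-t}$-factor componentwise, the equality $\gamma=\lambda-1$ forces $\alpha_i-\beta_{t+1-i}\ge a_t-1$ for every $i$; together with $\alpha_i\le a_t-1$ and $\beta_{t+1-i}\ge 0$, this pins down $\alpha=(a_t-1)^t$ and $\beta=0$. The $\cU_k/\cU_{k-t}$-factor then collapses to
\[
\Sigma^{\lambda-a_t}(\cU_k/\cU_{k-t})\otimes\det(\cU_k/\cU_{k-t})^{a_t-1}=\Sigma^{\lambda-1}(\cU_k/\cU_{k-t}),
\]
which appears with multiplicity one by Lemma~\ref{lemma:PRV}.

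It remains to determine $\nu'$ and to confirm that the overall multiplicity is exactly one. For the classical Schur functor, the multiplicity of this subquotient equals the triple Littlewood--Richardson coefficient $c^\nu_{(a_t-1)^t,\nu',0}=c^\nu_{(a_t-1)^t,\nu'}$, which counts LR tableaux of the skew shape $\nu/(a_t-1)^t=\mu$ with content $\nu'$; since this skew shape is the straight partition $\mu$, the count is $\delta_{\nu',\mu}$, forcing $\nu'=\mu$ with multiplicity one. For the symplectic Schur functor $\Sigma_{\Sp}^\nu$ the multiplicity does not drop: the stratum indexed by $\alpha=(a_t-1)^t$ sits at the top of the filtration on $p_t^*\Sigma_{\Sp}^\nu\cS_{2(n-k+t)}$, where the projection $\Sigma^\nu\cS\twoheadrightarrow\Sigma_{\Sp}^\nu\cS$ restricts to an isomorphism, since no symplectic contractions involve the extremal $\cU_k/\cU_{k-t}$-component. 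Applying $q_{t*}$ and the twist by $\cO(-1)$ then identifies the unique extremal subquotient as claimed. The main obstacle is this final step: verifying that passage from the classical to the symplectic Schur functor preserves the multiplicity at the extremal stratum, which I expect to follow from the King--Koike--Terada symplectic branching rule applied to the $\GL_t\times\Sp_{2(n-k)}$ Levi.
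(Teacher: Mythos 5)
Your proposal follows the same overall path as the paper: specialize Lemma~\ref{lemma:phi-filtr} to $m=2n-k$, $\nu=((a_t-1)^t,\mu)$, and then isolate the extremal stratum $\gamma=\lambda-1$. The difference is that the paper simply invokes Lemma~\ref{lemma:filtration_on_flags1} (in the appendix) with $a=a_t-1$, which was proved precisely to carry out the multiplicity count you re-derive by hand; you appear not to have noticed that lemma, so you effectively reprove it inline.

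Two remarks on your inlined argument. First, the step where you pin down $\alpha=(a_t-1)^t$, $\beta=0$ is stated via a componentwise inequality $\alpha_i-\beta_{t+1-i}\ge a_t-1$; the cleaner and actually correct route (used implicitly in the paper via Lemma~\ref{lemma:LR rule}) is a degree count: equating $|\gamma|=|\lambda|-t$ with $|\lambda|-ta_t+|\alpha|-|\beta|$ gives $|\alpha|-|\beta|=t(a_t-1)$, and the width bound $\alpha_i\le a_t-1$ together with $\beta\ge 0$ forces $\alpha=(a_t-1)^t$, $\beta=0$. Second, the ``obstacle'' you flag --- whether the multiplicity survives the passage from $\Sigma^\nu\cS$ to $\Sigma^\nu_\Sp\cS$ --- is a genuine point but is settled without any symplectic branching machinery: the kernel of $\Sigma^\nu\cS\twoheadrightarrow\Sigma_\Sp^\nu\cS$ is a sum of $\Sigma^\kappa_\Sp\cS$ with $|\kappa|<|\nu|$, hence contributes only to strata with $|\alpha|-|\beta|<t(a_t-1)$, so the top graded piece with $\alpha=(a_t-1)^t$, $\beta=0$ is unaffected. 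Combined with $c^\nu_{(a_t-1)^t,\mu}=1$ from Lemma~\ref{lemma:PRV}, this gives multiplicity exactly one, and your argument is then complete.
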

\begin{proof}
We obtain the filtration with weights~\(\gamma \le \lambda-1\) by Lemma~\ref{lemma:phi-filtr} for~$m=2n-k$ and~$\nu=((a_t-1)^t, \mu)$. 
We compute the multiplicities of the terms with~\(\gamma=\lambda-1\) by Lemma~\ref{lemma:filtration_on_flags1} for~\(a=a_t-1\).
\end{proof}
Using the filtrations from Lemma~\ref{lemma:phi-filtr} and Corollary~\ref{corollary:second-filtr-phi}, together with the Borel–-Bott–-Weil theorem, we can explicitly describe the objects appearing in Lemma~\ref{lemma:F-inclusion}. 
\begin{lemma} \label{lemma:phi-BBW}
Suppose~\(k-t>0\) is even. Let~\((\lambda;\mu) \in \YD_t^{2n-k-t}\times \YD_{n-k}^{a_t-1}\). 
\begin{enumerate}
\item We have 
\begin{equation*}
\big(\Phi^t_{\lambda+1}(\cB_t)\big)^{\vee} \in \cA_{t+1}(1).
\end{equation*}
\item The object 
\begin{equation*}
\Phi^t_{\lambda+1}\big(S^{2n-k}\cU_{k - t}(-H_{k-t}) \otimes \Sigma_\Sp^{((a_t-1)^t,\, \mu)} \cS_{2(n-k+t)}\big)^{\vee}
\end{equation*} 
admits a filtration whose associated subquotients are:
\begin{itemize}
    \item the term $\Sigma^{(2n-k-t,\, \lambda)}\cU_k^\vee(1)\otimes \Sigma_\Sp^\mu \cS_{2(n-k)}[-t]$
appearing with multiplicity exactly one;
\item terms of the form~\(\Sigma^{(2n-k-t, \, \beta)} \cU_k^\vee(1)\otimes \Sigma_\Sp^\eta \cS_{2(n-k)}[-t]\), where~\(\beta \in \YD_t^{2n-k-t}\) satisfies~\(\beta < \lambda\), and~\(\eta\in \YD_{n-k}^{a_t-1}\).
\end{itemize} 
\end{enumerate}
\end{lemma}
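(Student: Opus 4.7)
My plan is to combine the filtrations from Lemma~\ref{lemma:phi-filtr} (for part~(1)) and Corollary~\ref{corollary:second-filtr-phi} (for part~(2)) with a careful application of the relative Borel--Bott--Weil theorem (Theorem~\ref{theo:relative_BBW}). View the projection
\[
q_t \colon \IFl(k - t, k; V) = \Gr_{\IGr(k,V)}(k-t,\cU_k) \to \IGr(k, V)
\]
as a relative Grassmannian of $(k-t)$-subbundles in $\cU_k$, with tautological subbundle $\cU_{k-t}$ and quotient $\cU_k/\cU_{k-t}$. Then every subquotient in the above filtrations is of the form
\[
q_{t*}\bigl(S^m \cU_{k-t} \otimes \Sigma^\gamma (\cU_k/\cU_{k-t})\bigr) \otimes \Sigma_\Sp^\eta \cS_{2(n-k)}(-1),
\]
and applying Theorem~\ref{theo:relative_BBW} with $\cW = \cU_k$, $s = k-t$ reduces the computation to the combinatorics of the sequence
\[
(k, k-1, \dots, t+2,\ t+1-m,\ t - \gamma_t,\ t-1-\gamma_{t-1},\ \dots,\ 1 - \gamma_1)
\]
obtained by adding $\rho_k = (k,\dots,1)$ to the weight $(0,\dots,0,-m;\,-\gamma_t,\dots,-\gamma_1)$.

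For part~(1), I would observe that dualizing a subquotient gives $q_{t*}(\ldots)^\vee \otimes \Sigma_\Sp^\eta \cS_{2(n-k)}(1)$, where $\eta \in \YD^{a_t-1}_{n-k} = \YD^{a_{t+1}}_{n-k}$ matches the constraint on weights in $\cA_{t+1}(1)$. The main point is then that, whenever the Borel--Bott--Weil output is nonzero, giving some $\Sigma^\delta \cU_k^\vee[-\ell(\sigma)]$, the dual weight $\alpha \coloneqq -\delta^{\mathrm{rev}} = (-\delta_k,\dots,-\delta_1)$ lies in $\YD_{t+1}^{2n-k-t-1}$. The crucial numerical input is that the minimum element of the sorted sequence is bounded below by $t+1-m \ge k+t+2-2n$ (since $m \le 2n-k-1$) and by $1 - \gamma_1 \ge 1-(\lambda_1-1) \ge k+t+2-2n$ (since $\lambda_1 \le 2n-k-t$); after subtracting $\rho_k$ this yields $\delta_k \ge k+t+1-2n$, hence $\alpha_1 \le 2n-k-t-1$. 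Similarly, the top $k-t-1$ positions of the sorted sequence are occupied by $k, k-1, \dots, t+2$, forcing $\delta_i = 0$ for $i \le k-t-1$ and hence $\alpha$ has at most $t+1$ nonzero parts, and $\alpha_{t+1} \ge 0$ follows from the overall inequalities. Since shifts preserve membership in the triangulated subcategory $\cA_{t+1}(1)$, the result follows.

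For part~(2), I would apply the same dualization to the finer filtration of Corollary~\ref{corollary:second-filtr-phi}. For the distinguished term with $m = 2n-k$ and $\gamma = \lambda-1$, the shifted weight becomes
\[
(k, k-1, \dots, t+2,\ k+t+1-2n,\ t+1-\lambda_t,\dots,\ 2-\lambda_1),
\]
and one checks that sorting this in decreasing order and subtracting $\rho_k$ gives precisely the weight $\delta$ whose dual is $(2n-k-t,\lambda) \in \Y_k$, with permutation length $t$; this yields the summand $\Sigma^{(2n-k-t,\lambda)} \cU_k^\vee(1) \otimes \Sigma_\Sp^\mu \cS_{2(n-k)}[-t]$. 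Its multiplicity~one is inherited from the multiplicity~one in Corollary~\ref{corollary:second-filtr-phi} and the fact that Borel--Bott--Weil produces at most a single irreducible summand. The remaining subquotients, corresponding to $\gamma < \lambda - 1$ and $\eta \in \YD_{n-k}^{a_t-1}$, are handled by the same BBW computation: the first entry $2n-k-t$ of the dual weight is rigidly produced by the minimum $k+t+1-2n$ of the shifted sequence (coming from $m = 2n-k$), while the remaining $t$ coordinates form a Young diagram $\beta$ strictly dominated by~$\lambda$.

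The main obstacle will be the careful bookkeeping in the BBW computation: namely, verifying that (a)~in part~(1), whenever the sequence has all distinct entries the resulting dominant weight truly yields $\alpha \in \YD_{t+1}^{2n-k-t-1}$ with no negative parts or overly long first row, and (b)~in part~(2), the specific sorted order of the sequence for $\gamma = \lambda - 1$ produces exactly the weight $(2n-k-t,\lambda)$ with shift $[-t]$, and that all strictly smaller $\gamma$ still produce first coordinate equal to $2n-k-t$ (rather than something larger that would spoil the membership in $\cA_{t+1}(1)$).
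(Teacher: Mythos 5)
Your proposal is correct and follows essentially the same route as the paper. Both arguments start from the filtrations of Lemma~\ref{lemma:phi-filtr} and Corollary~\ref{corollary:second-filtr-phi}, and both reduce the two statements to computing $q_{t*}\bigl(S^m\cU_{k-t}\otimes\Sigma^\gamma(\cU_k/\cU_{k-t})\bigr)$ by the relative Borel--Bott--Weil theorem applied to $\Gr_{\IGr(k,V)}(k-t,\cU_k)$, using exactly the intermediate weight $(k,k-1,\dots,t+2,\ t+1-m,\ t-\gamma_t,\dots,1-\gamma_1)$ you wrote down. The only real difference is organizational: the paper factors the BBW bookkeeping through the helper Lemma~\ref{BBW-SC}, which packages the output as $\Sigma^{-\alpha^{(0)}}\cU_k[-\dd(\alpha,0)+1]$ in the combinatorial language of Section~\ref{subsection:alpha^i} (with $\alpha=(-m,-\gamma_t,\dots,-\gamma_1)$), and then reads off part (1) from the bound $-\alpha^{(0)}\in\YD^{\max(\gamma_1,m-t)}_{t+1}$ and part (2) from the explicit last case of \eqref{eq:def-bbw}. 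You instead carry out the BBW sorting directly from first principles. Both lead to identical conclusions; the paper's route is a bit more modular (since Lemma~\ref{BBW-SC} is reused elsewhere), while yours is more self-contained. The numerical estimates you give — $t+1-m\ge k+t+2-2n$, $1-\gamma_1\ge k+t+2-2n$, forcing the top $k-t-1$ sorted entries to be $k,\dots,t+2$ in the non-vanishing case (since any other entry $\ge t+2$ would create a repeat), and for part (2) the explicit sorted sequence for $\gamma=\lambda-1$ producing $(2n-k-t,\lambda)$ with $\ell(\sigma)=t$ — are all correct and are exactly the content that the paper's helper lemma encapsulates. The multiplicity-one argument also goes through as you say, since for $\gamma<\lambda-1$ the BBW output has first row $2n-k-t$ but the remaining rows form $\gamma+1<\lambda$, so it cannot coincide with $(2n-k-t,\lambda)$.
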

\begin{proof}
Examining~\eqref{Phi_B_t}, Lemma~\ref{lemma:phi-filtr}, and Corollary~\ref{corollary:second-filtr-phi}, we see that to prove the lemma we need to compute
\begin{equation*}
q_{t*}\big(S^m \cU_{k - t} \otimes \Sigma^{\gamma} (\cU_k/\cU_{k - t})\big) \qquad \text{for}\quad 0\le m \le 2n-k \quad \text{and}\quad \gamma \in \Y_t,
\end{equation*}
where~\(\gamma\) satisfies~\(-k+t+1\le \gamma_i \le \lambda_i-1\le 2n-k-t-1\). 

To simplify the notation, set
\begin{align*}
&\alpha = (-m,-\gamma_t,\ldots,-\gamma_1);\\
& w=2n-k-t.   
\end{align*}

To compute $q_{t*}(S^m \cU_{k - t} \otimes \Sigma^{\gamma} (\cU_k/\cU_{k - t}))$, we apply Theorem~\ref{theo:relative_BBW} (Borel--Bott--Weil) to $X=\IGr(k,V)$, the vector bundle~$\cU_k$ and the projection $q_t$. As an intermediate step, we obtain:
\begin{equation*}
((0)^{k-t-1},\alpha) + (k,k-1, \ldots, 1)=(k,k-1,\ldots, t+1-m,t-\gamma_t, \ldots, 1-\gamma_1).
\end{equation*}
Since~\(\gamma_i \in [- k+t+1, w-1]\), we deduce
\begin{equation*}
k-1 \ge  t-\gamma_t \ge t-w+1.
\end{equation*} 
If $k-1 \ge t-\gamma_t \ge t+2$, then Theorem~\ref{theo:relative_BBW} gives 
\begin{equation*}
q_{t*}\big(S^m \cU_{k - t} \otimes \Sigma^{\gamma} (\cU_k/\cU_{k - t})\big)=0.
\end{equation*}

Next, assume~$t+1 \ge t-\gamma_t \ge t-w+1$. It is enough to consider the case where the pushforward is non-zero. In this case, Lemma~\ref{BBW-SC} implies 
\begin{equation} \label{filtr_goal}
    q_{t*}\big(S^m \cU_{k - t} \otimes \Sigma^{\gamma} \big)=\Sigma^{-\alpha^{(0)}} \cU_k[-\dd(\alpha)+1], 
\end{equation} 
where~\(-\alpha^{(0)}\in \YD^{\max\{\gamma_1,\, m-t\}}_{t+1}\).

By~\eqref{Phi_B_t} and Lemma~\ref{lemma:phi-filtr}, to prove (1), it suffices to show that for $m\le 2n-k-1$, we have
\begin{equation*} 
q_{t*}\big(S^m \cU_{k - t} \otimes \Sigma^{\gamma} (\cU_k/\cU_{k - t})\big)\in \cA_{t+1}^{\vee}.    
\end{equation*}
If~$m\le 2n-k-1$ then~$\max\{\gamma_1, m-t\}\le w-1$, so~\(-\alpha^{(0)} \in \YD_{t+1}^{w-1}\), and hence~\eqref{filtr_goal} implies (1).

Using Corollary~\ref{corollary:second-filtr-phi}, to prove (2) it remains to establish
\begin{equation} \label{dir1}
   q_{t*}\big(S^{2n-k} \cU_{k-t}\otimes\Sigma^{\lambda-1}(\cU_k/\cU_{k-t})\big)=\Sigma^{(w,\, \lambda)}\cU_k[-t],
\end{equation}
and to verify that for all~\(\gamma \in \Y_t\) with~\(\gamma < \lambda-1\), we have
\begin{equation} \label{dir2}
  q_{t*}\big(S^{2n-k} \cU_{k-t}\otimes \Sigma^{\gamma} (\cU_k/\cU_{k-t})\big)=\Sigma^{(w,\, \beta)}\cU_k[-t],\quad \text{where} \quad \beta\in \YD_t \quad \text{such that}\quad \beta<\lambda.  
\end{equation}

If $m=2n-k$ then~\(\dd_{-\gamma}(-m) = t+1\), so by the last line of~\eqref{eq:def-bbw}:
\begin{equation*}
\alpha^{(0)}=(k-2n, -\gamma_t,\ldots, -\gamma_1)^{(0)}=(-\gamma_t-1,\ldots, -\gamma_1-1, -w),
\end{equation*}
so~\(-\alpha^{(0)} = (w,\gamma+1)\) and~\(q_{t*}\big(S^m \cU_{k - t} \otimes \Sigma^{\gamma} (\cU_k/\cU_{k - t})\big) \neq 0\). Thus, the formulas~\eqref{dir1} and~\eqref{dir2} follow from~\eqref{filtr_goal}, and we deduce (2).
\end{proof}
We are now ready to prove the key proposition.
\begin{proposition} \label{proposition:incl_e}
Suppose~\(k-t>0\) is even and~\((\lambda;\mu) \in \B_t=\YD_t^{2n-k-t}\times \YD_{n-k}^{a_t}\). 
\begin{enumerate}
    \item If~\(\mu_1 = a_t\), then
\begin{equation*}
  \cE_t^{\lambda; \mu}(1) \in \left\langle \cE_t^{\lambda+1;\mu},\, \cA_{t+1}(1) \right\rangle.
  \end{equation*}
\item If~\(\mu_1 < a_t\), then 
  \begin{equation*}
  \Sigma^{(2n-k-t,\, \lambda)}\cU_k^\vee(1)\otimes \Sigma_\Sp^\mu \cS_{2(n-k)} \in \left\langle \cE_t^{\lambda; \mu}(1), \,\cE_t^{\lambda+1;\mu},\, \cA_{t+1}(1), \, \Sigma^{(2n-k-t,\, \beta)} \cU_k^\vee(1)\otimes \Sigma_\Sp^\eta \cS_{2(n-k)}\right\rangle, 
  \end{equation*}
  where the last group of terms runs over various pairs $(\beta,\eta) \in \YD_t^{2n-k-t}\times \YD_{n-k}^{a_t-1}$ satisfying~\(\beta < \lambda\).
\end{enumerate}
\end{proposition}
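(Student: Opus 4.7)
The plan is to dualize the inclusions furnished by Lemma~\ref{lemma:F-inclusion}, use the identification $\cE_t^{\lambda;\mu}=(\cF_t^{\lambda;\mu})^\vee$ from Theorem~\ref{theo:formulas_for_F} to rewrite everything in terms of the exceptional objects~$\cE_t^{\bullet}$, and then read off the desired bundles from the filtration analysis of Lemma~\ref{lemma:phi-BBW}. The crucial input will be the multiplicity-one statement in Lemma~\ref{lemma:phi-BBW}(2), which is what lets us single out one subquotient from the filtration.

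For part~(1), where $\mu_1=a_t$, I will simply apply the duality anti-autoequivalence to the first inclusion of Lemma~\ref{lemma:F-inclusion}, which yields
\begin{equation*}
\cE_t^{\lambda;\mu}(1) \in \left\langle \cE_t^{\lambda+1;\mu},\, \big(\hat\Phi^t_{\lambda+1}(\cB_t)\big)^\vee \right\rangle.
\end{equation*}
Lemma~\ref{lemma:phi-BBW}(1) identifies $\big(\hat\Phi^t_{\lambda+1}(\cB_t)\big)^\vee \subset \cA_{t+1}(1)$, finishing this case immediately.

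For part~(2), where $\mu_1<a_t$, I will introduce the abbreviation
\begin{equation*}
\mathcal{G} \coloneqq \Big(\hat\Phi^t_{\lambda+1}\big(S^{2n-k}\cU_{k-t}(-H_{k-t})\otimes \Sigma_\Sp^{((a_t-1)^t,\,\mu)}\cS_{2(n-k+t)}\big)\Big)^\vee.
\end{equation*}
Dualizing the second inclusion of Lemma~\ref{lemma:F-inclusion} and then absorbing $\big(\hat\Phi^t_{\lambda+1}(\cB_t)\big)^\vee$ into $\cA_{t+1}(1)$ by Lemma~\ref{lemma:phi-BBW}(1) gives
\begin{equation*}
\mathcal{G} \in \left\langle \cA_{t+1}(1),\, \cE_t^{\lambda+1;\mu},\, \cE_t^{\lambda;\mu}(1)\right\rangle.
\end{equation*}
On the other hand, Lemma~\ref{lemma:phi-BBW}(2) provides a filtration of $\mathcal{G}$ whose associated graded consists, up to the cohomological shift $[-t]$, of a single copy of the target bundle $\Sigma^{(2n-k-t,\,\lambda)}\cU_k^\vee(1)\otimes \Sigma_\Sp^\mu \cS_{2(n-k)}$ together with various bundles of the form $\Sigma^{(2n-k-t,\,\beta)}\cU_k^\vee(1)\otimes \Sigma_\Sp^\eta \cS_{2(n-k)}$ with $\beta<\lambda$ and $\eta\in \YD_{n-k}^{a_t-1}$.

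The main step, and the only one that requires some care, will be to invert this filtration: unraveling the distinguished triangles of the filtration of $\mathcal{G}$ shows that the unique subquotient equal to the target bundle belongs to the triangulated subcategory generated by $\mathcal{G}$ together with the remaining subquotients. Substituting the displayed inclusion for $\mathcal{G}$ then produces precisely the relation asserted in~(2). I do not expect serious difficulty beyond bookkeeping of the peel-off argument; however, the multiplicity-one part of Lemma~\ref{lemma:phi-BBW}(2) is essential, because without it the target bundle could in principle cancel against itself in the successive cones and fail to be isolated.
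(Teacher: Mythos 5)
Your proposal matches the paper's proof exactly: both dualize Lemma~\ref{lemma:F-inclusion} and then invoke Lemma~\ref{lemma:phi-BBW}, with the multiplicity-one clause being the essential input for isolating the target bundle in part~(2). The only difference is cosmetic --- the paper compresses the filtration peel-off step into a single sentence, whereas you spell it out --- and your slightly informal phrasing ``cancel against itself'' is better read as ``the peel-off argument can no longer single out $T$'' (there is no literal cancellation, just a failure to isolate a repeated subquotient).
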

\begin{proof}
The dual statements corresponding to the inclusions in Lemma~\ref{lemma:F-inclusion} are:
 \begin{align*}
 \cE_t^{\lambda; \mu}(1)  & \in \left\langle \big(\hat \Phi^t_{\lambda+1}(\cB_{t})\big)^{\vee},\, \cE_t^{\lambda+1;\mu} \right\rangle \qquad & \text{if} \quad \mu_1 = a_t; \\
  \left(\hat \Phi^t_{\lambda+1}(S^{2n - k}\cU_{k - t}(-H_{k-t})\otimes \Sigma_\Sp^{((a_t-1)^t,\mu)} \cS_{2(n-k+t)})\right)^{\vee} & \in \left\langle (\hat \Phi^t_{\lambda+1}\big(\cB_{t})\big)^{\vee},\, \cE_t^{\lambda+1;\mu},\, \cE_t^{\lambda; \mu}(1) \right\rangle  \qquad & \text{if} \quad\mu_1 < a_t. 
\end{align*}
Thus, the statements follow from Lemma~\ref{lemma:phi-BBW}.
\end{proof} 
It remains to prove the inclusions~\eqref{eq:A-in-D-even} for~\(t>0\). We split the proof into two parts.
\begin{proposition}\label{prop:mutations_even_t+1}
    Let~\(1 \le t \le k-2\) be such that $k-t$ is even. Then
    \begin{equation*}
\tilde \cA_{t + 1}(t+1) \subset 
    \cD_t.
\end{equation*}
\end{proposition}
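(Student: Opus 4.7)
The plan is to separate the generators of $\tilde\cA_{t+1}(t+1)$ into two groups and handle the non-trivial one by induction. Since $k-t$ is even we have $a_{t+1}=a_t-1$, so $\tilde\cA_{t+1}$ is generated by the vector bundles $\Sigma^\beta\cU_k^\vee\otimes \Sigma^\eta_\Sp\cS_{2(n-k)}$ with $(\beta;\eta)\in \YD_{t+1}^{2n-k-t}\times \YD_{n-k}^{a_t-1}$. For those with $\beta_1\le 2n-k-t-1$ the pair already lies in $\B_{t+1}$, so the corresponding twist belongs to $\cA_{t+1}(t+1)\subset \cD_t$. The remaining generators have $\beta_1=2n-k-t$; writing such a $\beta$ as $(2n-k-t,\lambda)$ with $\lambda\in \YD_t^{2n-k-t}$, it suffices to prove
\begin{equation*}
\Sigma^{(2n-k-t,\,\lambda)}\cU_k^\vee(t+1)\otimes \Sigma^\eta_\Sp\cS_{2(n-k)}\in \cD_t\qquad \text{for every } \lambda\in \YD_t^{2n-k-t},\ \eta\in \YD_{n-k}^{a_t-1}. \qquad (\star)
\end{equation*}

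I would prove $(\star)$ by induction on $\lambda$ in the partial order on $\YD_t^{2n-k-t}$. Since $\eta_1\le a_t-1<a_t$, Proposition~\ref{proposition:incl_e}(2) applies to the pair $(\lambda;\eta)\in \B_t$; twisting the resulting semiorthogonal containment by $\cO(t)$ expresses the bundle in $(\star)$ as lying in
\begin{equation*}
\bigl\langle \cE_t^{\lambda;\eta}(t+1),\ \cE_t^{\lambda+1;\eta}(t),\ \cA_{t+1}(t+1),\ \Sigma^{(2n-k-t,\,\beta')}\cU_k^\vee(t+1)\otimes \Sigma^{\eta'}_\Sp\cS_{2(n-k)}\bigr\rangle,
\end{equation*}
where the final family ranges over $(\beta';\eta')\in \YD_t^{2n-k-t}\times \YD_{n-k}^{a_t-1}$ with $\beta'<\lambda$. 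I then verify each term lies in $\cD_t$: the subcategory $\cA_{t+1}(t+1)$ is in $\cD_t$ by the definition~\eqref{eq:def:D_t}; since $\lambda+1\in \YD_t^{2n-k-t+1}$ and $\eta\in\YD_{n-k}^{a_t-1}\subset\YD_{n-k}^{a_t}$ we have $(\lambda+1;\eta)\in\tilde\B_t$, so $\cE_t^{\lambda+1;\eta}(t)\in\tilde\cA_t(t)\subset \cD_t$; the lower-order terms are precisely the bundles covered by the inductive hypothesis, as $\beta'<\lambda$; and $\cE_t^{\lambda;\eta}(t+1)\in\cA_t(t+1)\subset\cD_t$ by Proposition~\ref{cor:mutations_even t}. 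The base case $\lambda=0$ has no lower-order terms and reduces to a single application of Proposition~\ref{proposition:incl_e}(2).

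The main obstacle I anticipate is the logical dependency on Proposition~\ref{cor:mutations_even t} for placing $\cE_t^{\lambda;\eta}(t+1)$ in $\cD_t$ when $\eta_1<a_t$: every other term above is either in $\cD_t$ by the definition of $\cD_t$, by the inclusion $\B_t\subset\tilde\B_t$, or by the induction hypothesis. If the two propositions must be proved in tandem rather than sequentially, I would run a combined induction on $(\lambda,\eta)$, using Proposition~\ref{proposition:incl_e}(1) to dispatch $\cE_t^{\lambda;\eta}(t+1)\in\cD_t$ directly when $\eta_1=a_t$, and then pairing the remaining instances of $(\star)$ with the corresponding instances of $\cE_t^{\lambda;\eta}(t+1)$ through the single relation of part (2); the induction is controlled by the partial order on the first component, and so terminates in finitely many steps.
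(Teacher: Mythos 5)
Your argument tracks the paper's proof of this proposition quite closely up to the decisive point: both split the generators of $\tilde\cA_{t+1}(t+1)$ according to whether the first entry equals $2n-k-t$, both apply Proposition~\ref{proposition:incl_e}(2) to the truncated pair $(\lambda;\eta)$, and both reduce to placing the four resulting families of terms inside $\cD_t$. You correctly identify the one term that causes trouble, namely $\cE_t^{\lambda;\eta}(t+1)$ with $\eta_1<a_t$, and correctly observe that invoking Proposition~\ref{cor:mutations_even t} for it would be circular, since that proposition is proved afterwards and depends on the present one.

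However, your proposed repair does not close the circularity. The relation coming from Proposition~\ref{proposition:incl_e}(2) for $(\lambda;\eta)$ links the two objects $\Sigma^{(2n-k-t,\lambda)}\cU_k^\vee(t+1)\otimes\Sigma_\Sp^\eta\cS_{2(n-k)}$ and $\cE_t^{\lambda;\eta}(t+1)$ modulo terms you already know to be in $\cD_t$; this is a single relation with two unknowns carrying the \emph{same} first component $\lambda$, so "pairing" them and inducting on $\lambda$ cannot resolve both. Part (1) is unavailable here because in $(\star)$ you always have $\eta_1 \le a_t - 1$. The key idea the paper uses, and which your outline is missing, is to dispatch $\cE_t^{\lambda;\eta}(t+1)$ not through Proposition~\ref{cor:mutations_even t} nor through the one relation, but through Proposition~\ref{KPgeneralform}: the object $\cE_t^{\lambda;\eta}$ lies in the subcategory generated by the irreducible bundles $\Sigma^\alpha\cU_k^\vee\otimes\Sigma_\Sp^{\eta'}\cS_{2(n-k)}$ with $(\alpha;\eta')\preceq(\lambda;\eta)$, and all such $\alpha$ satisfy $\alpha\le\lambda$, hence carry a strictly smaller value of the auxiliary invariant $l(\cdot)$ (number of columns of full width $2n-k-t$) than the weight $(2n-k-t,\lambda)$ you started with. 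The paper therefore organizes its primary induction on $l$ rather than on the partial order on $\lambda$, with a secondary finite induction on $\beta<\lambda$ to absorb the remaining lower-order terms, and this is precisely what makes the argument non-circular. You should replace the appeal to Proposition~\ref{cor:mutations_even t} (and the "pairing" heuristic) with this application of Proposition~\ref{KPgeneralform} plus induction on $l$.
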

\begin{proof}
By assumption, $k-t$ is even, so $a_{t+1}=a_t-1$ and~\(\cD_t = \langle \tilde \cA_{t}(t),\, \cA_{t + 1}(t+1) \rangle\) by~\eqref{eq:A-in-D-even}. Recall that 
\begin{align*}
    \cA_{t + 1}(t+1) = \left\langle \Sigma^\alpha \cU^{\vee}_k(t+1) \otimes \Sigma_\Sp^\mu \cS_{2(n-k)} \mid \alpha \in \YD_{t+1}^{2n-k-t-1}; \mu\in \YD_{n-k}^{a_t-1}\right\rangle,\\
    \tilde \cA_{t + 1}(t+1) = \left\langle \Sigma^\alpha \cU^{\vee}_k(t+1) \otimes \Sigma_\Sp^\mu \cS_{2(n-k)} \mid \alpha \in \YD_{t+1}^{2n-k-t\phantom{{}-1}}; \mu\in \YD_{n-k}^{a_t-1}\right\rangle.
\end{align*}
We will prove that $\Sigma^\alpha \cU^{\vee}_k(t+1) \otimes \Sigma_\Sp^\mu \cS_{2(n-k)}\in \cD_t$ by induction on the number of entries of $\alpha$ equal to~$2n-k-t$, denoted 
\begin{equation*}
l(\alpha)=\#\ \{i\mid \alpha_i = 2n-k-t\}.
\end{equation*}
Note that $0\le l(\alpha)\le t+1$.

Base case: if $l(\alpha)=0$, then~\(\alpha\in \YD_{t+1}^{2n-k-t-1}\). In this case, by definition of~\(\cA_{t+1}\) we have
\begin{equation*}
\Sigma^\alpha \cU^{\vee}_k(t+1) \otimes \Sigma_\Sp^\mu \cS_{2(n-k)} \in \cA_{t+1}(t+1)\subset \cD_t.
\end{equation*}

Induction step: assume that 
\begin{equation*}
\Sigma^\alpha \cU^{\vee}_k(t+1) \otimes \Sigma_\Sp^\mu \cS_{2(n-k)} \in \cD_t
\end{equation*} for all $\alpha\in \YD^{2n-k-t}_{t+1}$ with $l(\alpha)\le m$. Let~\(\lambda \in \YD_{t}^{2n-k-t}\) be such that $l(\lambda)=m+1$. Consider~$\bar \lambda=(\lambda_2,\ldots,\lambda_{t+1})\in \YD^{2n-k-t}_{t}$. Then, by Proposition~\ref{proposition:incl_e}, we have
\begin{equation} \label{inclusion_Sigma0}
  \Sigma^{\lambda}\cU_k^\vee(t+1)\otimes \Sigma_\Sp^\mu \cS_{2(n-k)} \in \left\langle \cE_t^{\bar \lambda; \mu}(t+1), \,\cE_t^{\bar \lambda+1;\mu}(t),\, \cA_{t+1}(t+1),\, \Sigma^{\beta} \cU_k^\vee(t+1)\otimes \Sigma_\Sp^\eta \cS_{2(n-k)}\right\rangle,    
  \end{equation}
where the last group of terms runs over various pairs $(\beta,\eta)\in \YD_t^{2n-k-t}\times \YD_{n-k}^{a_t-1}$ satisfying~\(\beta < \lambda\). 
 
Therefore, it enough to check that the subcategory on the right side of~\eqref{inclusion_Sigma0} lies in~\(\cD_t\). First,~\(\cA_{t+1}(t+1)\subset \cD_t\) and~\(\cE_t^{\bar \lambda+1;\mu}(t) \in \tilde\cA_{t}(t) \subset \cD_t\) by definition. Next, note that~$l(\bar \lambda)=m$, so by Proposition~\ref{KPgeneralform} and the induction hypothesis, we have
\begin{equation*}
\cE_t^{\bar \lambda; \mu}(t+1) \in \left\langle \Sigma^\alpha \cU^{\vee}_k(t+1) \otimes \Sigma_\Sp^\eta \cS_{2(n-k)} \mid 0 \preceq(\alpha;\eta)\preceq (\bar\lambda,\mu) \right\rangle \subset \cD_t.
\end{equation*}
Lastly, since the set~\(\{\beta < \lambda\}\) is finite, by another induction we may assume that the last set in~\eqref{inclusion_Sigma0} also lies in~\(\cD_t\). Thus,~\(\Sigma^{\lambda}\cU_k^\vee(t+1)\otimes \Sigma_\Sp^\mu \cS_{2(n-k)} \in \cD_t\), and this proves the induction step.
\end{proof}
\begin{proposition}\label{cor:mutations_even t}
Let~\(1 \le t \le k-2\) be such that $k-t$ is even. Then
\begin{equation*}
\cA_t(t+1) \subset \cD_t.
\end{equation*}
\end{proposition}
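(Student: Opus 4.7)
The plan is to prove that every raw generator $\Sigma^\lambda\cU_k^\vee(t+1)\otimes\Sigma_\Sp^\mu\cS_{2(n-k)}$ with $(\lambda;\mu)\in\B_t$ lies in $\cD_t$; this yields $\cA_t(t+1)\subset\cD_t$ since these bundles generate $\cA_t$ as a triangulated subcategory. Because $k-t$ is even we have $a_{t+1}=a_t-1$ and $\cD_t=\langle\tilde\cA_t(t),\cA_{t+1}(t+1)\rangle$ by~\eqref{eq:A-in-D-even}. The argument proceeds by induction on $(\lambda;\mu)$ with respect to any linear extension of the partial order~\eqref{order} on the finite set $\B_t$, and the inductive step splits according to whether $\mu_1<a_t$ or $\mu_1=a_t$.

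If $\mu_1<a_t$, then $\mu\in\YD_{n-k}^{a_{t+1}}$, and padding $\lambda$ by a zero gives $\lambda\in\YD_{t+1}^{2n-k-t}$, so $(\lambda;\mu)\in\tilde\B_{t+1}$. Hence the raw generator already lies in $\tilde\cA_{t+1}(t+1)\subset\cD_t$ by Proposition~\ref{prop:mutations_even_t+1}, without invoking the induction hypothesis. If instead $\mu_1=a_t$, I first establish that $\cE_t^{\lambda;\mu}(t+1)\in\cD_t$: Proposition~\ref{proposition:incl_e}(1) gives
\begin{equation*}
    \cE_t^{\lambda;\mu}(1)\in\langle\cE_t^{\lambda+1;\mu},\,\cA_{t+1}(1)\rangle,
\end{equation*}
and twisting by $\cO(t)$ places $\cE_t^{\lambda+1;\mu}(t)$ in $\tilde\cA_t(t)\subset\cD_t$ (since $(\lambda+1;\mu)\in\tilde\B_t$) while $\cA_{t+1}(t+1)\subset\cD_t$ by definition of $\cD_t$, so $\cE_t^{\lambda;\mu}(t+1)\in\cD_t$. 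Next, the defining exact triangle of $\cE_t^{\lambda;\mu}$ as the right mutation of the raw generator through the subcategory spanned by the strictly $\prec$-smaller raw generators from $\B_t$ yields
\begin{equation*}
    \Sigma^\lambda\cU_k^\vee\otimes\Sigma_\Sp^\mu\cS_{2(n-k)}\in\langle\cE_t^{\lambda;\mu},\,\Sigma^\alpha\cU_k^\vee\otimes\Sigma_\Sp^\eta\cS_{2(n-k)}\mid (\alpha;\eta)\in\B_t,\,(\alpha;\eta)\prec(\lambda;\mu)\rangle.
\end{equation*}
Twisting by $\cO(t+1)$, the induction hypothesis places the $\prec$-smaller raw generators in $\cD_t$; combined with the just-proved containment $\cE_t^{\lambda;\mu}(t+1)\in\cD_t$, this completes the inductive step.

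The main difficulty is that a predecessor $(\alpha;\eta)\prec(\lambda;\mu)$ of a pair with $\mu_1=a_t$ can itself satisfy $\eta_1=a_t$---for instance, by decreasing $\alpha$ componentwise while allowing $|\eta|$ to exceed $|\mu|$, keeping the total $|\alpha|+|\eta|$ within the bound $|\lambda|+|\mu|$. Consequently the hard case does not reduce to the easy case in a single step, and a genuine induction along a linear refinement of $\preceq$ on the finite poset $\B_t$ is needed to break the apparent circularity between Proposition~\ref{proposition:incl_e}(1), which controls the exceptional object $\cE_t^{\lambda;\mu}(t+1)$, and the right-mutation relation, which then transfers that control to the raw generator.
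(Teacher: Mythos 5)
Your proof is correct, and it follows a genuinely different route from the paper's.

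The paper argues directly with the exceptional objects $\cE_t^{\lambda;\mu}(t+1)$, which generate $\cA_t(t+1)$. For $\mu_1=a_t$ it applies Proposition~\ref{proposition:incl_e}(1), exactly as you do, to place $\cE_t^{\lambda;\mu}(t+1)$ in $\langle\cE_t^{\lambda+1;\mu}(t),\cA_{t+1}(t+1)\rangle\subset\cD_t$. For $\mu_1<a_t$, the paper simply asserts $\cE_t^{\lambda;\mu}(t+1)\in\tilde\cA_{t+1}(t+1)$ and concludes by Proposition~\ref{prop:mutations_even_t+1}. This last containment is not obviously immediate: since $a_{t+1}=a_t-1\ne a_t$, the mutation~\eqref{def:E_equiv} defining $\cE_t^{\lambda;\mu}$ a priori runs through $\cU^{(\alpha;\eta)}$ with $\eta_1=a_t$, which fall outside $\tilde\B_{t+1}$; the equality $\cE_t^{\lambda;\mu}=\cE_{t+1}^{\lambda;\mu}$ is not covered by Remark~\ref{rem:E(a_t)} because the $a$-values differ. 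So the paper leaves the $\mu_1<a_t$ branch terse.

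You instead work with the raw $\bG$-equivariant generators $\Sigma^\lambda\cU_k^\vee\otimes\Sigma_\Sp^\mu\cS_{2(n-k)}$, which also generate $\cA_t$ by definition. For $\mu_1<a_t$, padding $\lambda$ by a trailing zero gives $(\lambda;\mu)\in\tilde\B_{t+1}$ directly, so the raw generator visibly lies in $\tilde\cA_{t+1}(t+1)\subset\cD_t$; no claim about exceptional objects across blocks is needed. The cost is that the $\mu_1=a_t$ branch no longer closes in one step, so you introduce a descending induction along a linear refinement of $\preceq$ on the finite poset $\B_t$: using the right-mutation triangle $\cU^{(\lambda;\mu)}\in\langle\cE_t^{\lambda;\mu},\,\cU^\beta\mid\beta\in\B_t,\,\beta\prec(\lambda;\mu)\rangle$ (valid in $\Db$ since $\mathbf{Fg}$ is exact) together with $\cE_t^{\lambda;\mu}(t+1)\in\cD_t$ from Proposition~\ref{proposition:incl_e}(1), you push the membership back to the raw generator from the inductive hypothesis on $\preceq$-smaller weights. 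The base $(0;0)$ has $\mu_1=0<a_t$ since $a_t\ge 1$ for $t\le k-2$, so the induction starts. Both approaches lean on Propositions~\ref{proposition:incl_e} and~\ref{prop:mutations_even_t+1}; yours trades the paper's brevity for a more self-contained $\mu_1<a_t$ case at the price of an explicit induction.
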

\begin{proof}
Recall that
$\cA_{t}(t+1) = \langle \cE_t^{\lambda;\mu}(t+1) \mid \lambda \in \YD_{t}^{2n-k-t}; \mu\in \YD_{n-k}^{a_t}\rangle$.

If~$\mu_1=a_t$, then Proposition~\ref{proposition:incl_e} implies
\begin{equation*}
\cE_t^{\lambda;\mu}(t+1) \in \langle \cE_t^{\lambda+1;\mu}(t),\, \cA_{t+1}(t+1) \rangle\subset \cD_t,
\end{equation*}
since $\cE_t^{\lambda+1;\mu}(t)\in \tilde \cA_{t}(t)$ by definition.

If~$\mu_1<a_t$, then~\(\cE_t^{\lambda;\mu}(t+1) \in \tilde \cA_{t+1}(t+1)\), so the claim follows from Proposition~\ref{prop:mutations_even_t+1}.
\end{proof}

\section{Secondary staircase complexes} \label{section:secondary.staircase}
The goal of this section is to define secondary staircase complexes that allow us to prove, in Section~\ref{section:odd}, Lemma~\ref{lemma:fullness_induction_step} for~\(t\) such that~\(k-t = 2a_t+1\).
\subsection{Truncations of the generalized staircase complexes}
In this subsection, we introduce vector bundles that form secondary staircase complexes. To do this, we need to describe certain truncations of generalized staircase complexes.

Recall the notation from Section~\ref{section:generalized}. Let~\(\alpha \in \Y_1\times \Y_{l-1}\). Associated with $\alpha$, we have the integers~\(\mm(\alpha)\) and~$\vv(\alpha)$, characterized by~\eqref{eq:def-ti} and~\eqref{eq:def-vv}, 
and the explicit formula~\eqref{explicit_formula_i} for~$\alpha^{(i)}$ given by:
\begin{equation*} 
    \alpha^{(i)}\coloneqq (\alpha_2-1,\ldots, \alpha_{\dd(\alpha, i)}-1,\alpha_1-i+\vv(\alpha),\alpha_{\dd(\alpha, i)+1},\ldots,\alpha_l).   
\end{equation*}

Consider the generalized staircase complex~\eqref{eq:gsc} on~\(\Gr(l,V)\):
\begin{equation} \label{generalized_staircase1}
    0\to \Lambda^{b^{m}} V^\vee \otimes\Sigma^{\alpha^{(m)}}\cU_l^{\vee}\to \ldots\to \Lambda^{b^i} V^\vee \otimes \Sigma^{\alpha^{(i)}}\cU_l^{\vee}\to \ldots \to  \Lambda^{b^0} V^\vee \otimes \Sigma^{\alpha^{(0)}}\cU_l^{\vee}\to 0, \qquad m=\mm(\alpha),
\end{equation} 
whose rightmost term lies in degree~\(0\). Recall that~\(2n-k+1 \le \mm(\alpha) \le 2n\). 
We consider the truncation of the above complex at degree~$-(\alpha_1+\vv(\alpha))$.
\begin{definition} \label{def:K}
Let~\(\alpha \in \Y_1\times \Y_{l-1}\). 
The vector bundle~\(\cK_l^{\alpha}\) on~\(\Gr(l,V)\) is defined as the cohomology of the truncation of the complex~\eqref{generalized_staircase1} in degree $-(\alpha_1 + \vv(\alpha))$; that is, $\cK_l^{\alpha}$ is defined by the following exact sequence
\begin{equation}\label{eq:kernels}
	0 \to \cK_l^{\alpha} \to \Lambda^{b^{\alpha_1 + \vv(\alpha)}} V^\vee \otimes \Sigma^{\alpha^{(\alpha_1 + \vv(\alpha))}} \cU_l^{\vee} \to  \ldots  \to \Lambda^{b^0} V^\vee \otimes \Sigma^{\alpha^{(0)}}\cU_l^{\vee}\to 0.
\end{equation} 
\end{definition}
Furthermore, taking the other part of the complex~\eqref{generalized_staircase1}, we can represent~\(\cK^\alpha_l\) as follows:
\begin{equation}\label{eq:cokernels}
	0 \to \Lambda^{b^{m}} V^\vee \otimes \Sigma^{\alpha^{(m)}}\cU_l^{\vee} \to \ldots \to  \Lambda^{b^{\alpha_1 + \vv(\alpha) + 1}} V^\vee \otimes \Sigma^{\alpha^{(\alpha_1 + \vv(\alpha) + 1)}}\cU_l^\vee\to \cK_l^{\alpha}\to 0, \qquad m=\mm(\alpha).
\end{equation}
In particular, from the resolution~\eqref{eq:cokernels} we obtain
\begin{equation}\label{eq:K^{2n-k,alpha}=U(-1)}
    \text{if}\quad \alpha_1=2n-l \quad \text{and} \quad \bar \alpha \in \YD^{2n-l}_{l-1} \qquad \text{then} \qquad \cK_l^\alpha \simeq \Sigma^{\bar \alpha}\cU_l(-1).
\end{equation}
Moreover, if~\(\alpha_1+\vv(\alpha) \notin [0,\mm(\alpha)-1]\), then comparing~\eqref{generalized_staircase1} and~\eqref{eq:kernels}, we get~\(\cK^{\alpha}_l \simeq 0\).

The following property of the weights appearing in the resolution~\eqref{eq:kernels} will be used in Section~\ref{section:odd}. 
\begin{lemma}\label{lemma:alpha^i_in_YD}
    Let~\(\alpha \in \YD^w_1\times \YD^w_{l-1}\) and~\(0 \le i \le \alpha_1+\vv(\alpha)\). 
    Then
    \[\alpha^{(i)} \in \YD_l^{w}.\]
    If, additionally,~\(b^i \ge 1\) then~\(\alpha^{(i)} \in \YD_l^{w-1}\). 
\end{lemma}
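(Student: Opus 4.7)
The plan is to use the explicit formula~\eqref{explicit_formula_i}, namely
\[\alpha^{(i)}=(\alpha_2-1,\ldots, \alpha_{d}-1,\alpha_1-i+\vv(\alpha),\alpha_{d+1},\ldots,\alpha_l),\qquad d\coloneqq \dd(\alpha,i),\]
check that every entry lies in the interval~\([0,w]\), and refine the upper bound to~\(w-1\) under the hypothesis~\(b^i\ge 1\). The weak monotonicity of~\(\alpha^{(i)}\) is automatic from~\eqref{eq:d-ineq}, so only the extremal bounds are at stake.

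The first step is to derive a bound on~\(\vv(\alpha)\) from its combinatorial definition~\eqref{eq:def-vv}. Since~\(\bV(\bar\alpha)\subset (-\infty,\alpha_2-1]\subset (-\infty,w-1]\), any element of~\(\bV(\bar\alpha)\) greater than~\(\alpha_1\) must lie in~\((\alpha_1,w-1]\cap\ZZ\). Counting these integers yields~\(\vv(\alpha)\le \max\{0,w-1-\alpha_1\}\), and in particular
\[\alpha_1+\vv(\alpha)\le w.\]
Combined with~\(0\le i\le \alpha_1+\vv(\alpha)\), this gives~\(0\le \alpha_1-i+\vv(\alpha)\le w\), so the middle entry of~\(\alpha^{(i)}\) is in~\([0,w]\). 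The remaining entries~\(\alpha_j-1\) for~\(2\le j\le d\) are bounded above by~\(w-1\), and the entries~\(\alpha_{d+1},\ldots,\alpha_l\) are in~\([0,w]\) since~\(\alpha\in \YD_1^w\times \YD_{l-1}^w\). This establishes~\(\alpha^{(i)}\in\YD_l^w\).

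For the refined statement, the width of~\(\alpha^{(i)}\) is determined by the first entry, which I split into two cases according to~\(d=\dd(\alpha,i)\). If~\(d\ge 2\), the first entry is~\(\alpha_2-1\le w-1\) with no further condition. If~\(d=1\), the first entry is~\(\alpha_1-i+\vv(\alpha)\), and the hypothesis~\(b^i\ge 1\) translated through~\eqref{eq:b=i-v+d} gives~\(i-\vv(\alpha)\ge 1\), hence~\(\alpha^{(i)}_1=\alpha_1-(i-\vv(\alpha))\le \alpha_1-1\le w-1\). In both cases~\(\alpha^{(i)}\in \YD_l^{w-1}\), completing the proof.

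The main obstacle is essentially a bookkeeping one: unpacking the definition of~\(\vv(\alpha)\) to obtain the clean inequality~\(\alpha_1+\vv(\alpha)\le w\); after that, the rest is a direct case analysis on~\(d\) using the formula for~\(b^i\).
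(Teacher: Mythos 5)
Your proof is correct and follows essentially the same route as the paper: unpack the explicit formula~\eqref{explicit_formula_i} together with~\eqref{eq:b=i-v+d} and split on $d=\dd(\alpha,i)$. The only variation is that you derive the auxiliary bound $\alpha_1+\vv(\alpha)\le w$ directly from the definition~\eqref{eq:def-vv}, whereas the paper instead observes that $d=1$ forces $\alpha\in\YD_l$ and hence $\vv(\alpha)=0$ by~\eqref{eq:v=0}, which reaches the same conclusion a bit more economically.
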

\begin{proof}
    Recall from~\eqref{explicit_formula_i} that $\alpha^{(i)}$ is given by:
    \[
        \alpha^{(i)} =(\alpha_2-1,\ldots, \alpha_{\dd(\alpha, i)}-1,\alpha_1+\vv(\alpha)-i,\alpha_{\dd(\alpha, i)+1},\ldots,\alpha_l). 
    \]
    
First, observe that~\(\alpha^{(i)} \in \YD_l\) for all~$0\le i\le \alpha_1+\vv(\alpha)$.

If~\(\dd(\alpha,i)>1\), then~\(\alpha^{(i)} \in \YD_l^{\alpha_2-1}\subset \YD^{w-1}_l\).

If instead~\(\dd(\alpha,i)=1\), then~\(\dd(\alpha,0)=1\), since $i\ge 0$. This implies that \(\alpha \in \YD_l\), and hence~\(\vv(\alpha)=0\) by~\eqref{eq:v=0}. From the formula above, it follows that in this case~\(\alpha^{(i)} \in \YD_l^{\alpha_1-i} \subset \YD_l^{w}\). Moreover, by equation~\eqref{eq:b=i-v+d}, we have~\(i = b^i\), so~\(\alpha^{(i)} \in \YD_l^{w-b^i}\).

Therefore, since~\(i\ge 0\), we get~\(\alpha^{(i)} \in \YD_l^{w}\) in both cases. Furthermore, if~\(b^i\ge 1\) then~\(\alpha^{(i)} \in \YD_l^{w-1}\).
\end{proof}

We will need the following interpretation for the objects~\(\cK_l^\alpha\).
\begin{lemma} 
Let~\(\alpha \in \Y_1\times \Y_{l-1}\) and let~\(\psi_{l} \colon \Fl(1, l; V) \to \Gr(l, V)\) be the projection. We have:
\begin{equation} \label{K:descr}
    \cK_l^\alpha \simeq \psi_{l*}(\Lambda^{\alpha_1 + r_{\alpha}} \cQ_{2n - 1}^{\vee} \otimes S^{r_{\alpha}} \cU_1 \otimes \Sigma^{\bar \alpha} (\cU_l/\cU_1)^{\vee}), 
\end{equation} 
where~\(r_{\alpha} = |\{j \in [2,l] \mid 0 < \alpha_j\}|\).
\end{lemma}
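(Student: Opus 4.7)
The plan is to realize $\Lambda^{\alpha_1 + r_{\alpha}} \cQ_{2n - 1}^{\vee} \otimes S^{r_{\alpha}} \cU_1$ as the leftmost term of a truncated Koszul resolution on $\PP(V) = \Gr(1, V)$, and then to compute its pushforward using the same Borel--Bott--Weil computation that underlies the construction of the generalized staircase complex. Concretely, applying the Koszul complex~\eqref{Koszul} on $\PP(V)$ with $m = \alpha_1 + r_\alpha$ and twisting by $S^{r_\alpha}\cU_1 = \cO(-r_\alpha)$ yields the exact sequence of vector bundles
\begin{equation*}
    0 \to \Lambda^{\alpha_1 + r_\alpha}\cQ_{2n-1}^\vee \otimes S^{r_\alpha}\cU_1 \to \Lambda^{\alpha_1 + r_\alpha}V^\vee \otimes \cO(-r_\alpha) \to \ldots \to V^\vee \otimes \cO(\alpha_1 - 1) \to \cO(\alpha_1) \to 0,
\end{equation*}
resolving the leftmost term by $\alpha_1 + r_\alpha + 1$ locally free sheaves indexed by $z \in [-r_\alpha, \alpha_1]$.

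Next, I would pull this resolution back to $\Fl(1, l; V)$ along the natural projection to $\PP(V)$, tensor with $\Sigma^{\bar\alpha}(\cU_l/\cU_1)^\vee$---all of which preserve exactness---and apply $R\psi_{l*}$. This identifies the pushforward of the leftmost term with the total complex of a double complex whose $z$-column is $\Lambda^{\alpha_1 - z}V^\vee \otimes R\psi_{l*}\big(\cO(z) \otimes \Sigma^{\bar\alpha}(\cU_l/\cU_1)^\vee\big)$. Since $\psi_l$ is the projection $\PP_{\Gr(l,V)}(\cU_l) \to \Gr(l,V)$, the relative Borel--Bott--Weil Theorem~\ref{theo:relative_BBW} applied to the weight $(z, \alpha_2, \ldots, \alpha_l)$ on $\cU_l$ computes each column: it vanishes exactly when $z \in \bV(\bar\alpha)$, and for $z = z_i$ it equals $\Sigma^{\alpha^{(i)}}\cU_l^\vee$ concentrated in cohomological degree $\dd(\alpha, i) - 1$. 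The combinatorial identity $z_{\alpha_1 + \vv(\alpha)} = -r_\alpha$ (which follows from $\dd(\alpha, \alpha_1 + \vv(\alpha)) = r_\alpha + 1$ via~\eqref{eq:d-ineq}) ensures that the surviving contributions in the range $z \in [-r_\alpha, \alpha_1]$ are indexed precisely by $i = 0, 1, \ldots, \alpha_1 + \vv(\alpha)$.

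Assembling, the total complex representing the pushforward has exactly the terms $\Lambda^{b^i}V^\vee \otimes \Sigma^{\alpha^{(i)}}\cU_l^\vee$ appearing in the truncation~\eqref{eq:kernels} from Definition~\ref{def:K}. Moreover, its differentials coincide with those of the truncated generalized staircase complex, because both are produced by the same Fourier--Mukai procedure used in the proof of Theorem~\ref{thm:gsc}. By~\eqref{eq:kernels} this truncation has a single nonzero cohomology sheaf, $\cK_l^\alpha$, and the identification~\eqref{K:descr} follows.

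The main obstacle is the combinatorial bookkeeping of cohomological degrees: one must combine the horizontal position in the Koszul resolution with the vertical Borel--Bott--Weil shift $\dd(\alpha, i) - 1$ to conclude that the total complex is concentrated in a single cohomological degree, and one must verify that the differentials (not merely the terms) agree with those of~\eqref{eq:kernels}. Both reduce to the observation that the truncated Koszul resolution here is precisely the portion of the untruncated Koszul input of Theorem~\ref{thm:gsc} indexed by $z \in [-r_\alpha, \alpha_1]$, so its image under $R\psi_{l*} \circ \psi_1^*(\Sigma^{\bar\alpha}(\cU_l/\cU_1)^\vee \otimes -)$ is the truncation of the generalized staircase complex~\eqref{eq:gsc} whose leftmost cohomology is by definition $\cK_l^\alpha$.
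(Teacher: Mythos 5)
Your proposal is correct and is essentially the paper's argument: both identify the generalized staircase complex as the Fourier--Mukai transform (via $\Fl(1,l;V)$ with kernel $\Sigma^{\bar\alpha}(\cU_l/\cU_1)^\vee$) of the twisted Koszul complex on $\PP(V)$, and then observe that the truncation of the Koszul complex resolves $\Lambda^{\alpha_1+r_\alpha}\cQ_{2n-1}^\vee\otimes\cO(-r_\alpha)$, so $\cK_l^\alpha$ is its pushforward. Your verification of the combinatorial fact $z_{\alpha_1+\vv(\alpha)}=-r_\alpha$ via~\eqref{eq:d-ineq} is exactly the identity $b^{\alpha_1+\vv(\alpha)}=\alpha_1+r_\alpha$ that the paper quotes from~\eqref{eq:def:b}.
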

\begin{proof}
Recall from the proof of Theorem~\ref{thm:gsc} that the complex~\eqref{generalized_staircase1} is obtained by applying the Fourier--Mukai transform with the kernel $\Sigma^{\bar \alpha} (\cU_l/\cU_1)^{\vee}$ to the twist of the Koszul complex~\eqref{eq:gsc-Koszul}. In particular, the object~$\cK_l^{\alpha}$, the cohomology of the truncation of~\eqref{generalized_staircase1} in 
degree~$-(\alpha_1 + \vv(\alpha))$, is isomorphic to the Fourier--Mukai transform with the kernel $\Sigma^{\bar \alpha} (\cU_l/\cU_1)^{\vee}$ applied to the truncation of the twisted Koszul complex, i.e. 
\begin{equation*}
\cK_l^{\alpha}\simeq \psi_{l*}\big(\Lambda^{b^{\alpha_1 + \vv(\alpha)}} \cQ_{2n - 1}^{\vee} \otimes S^{\alpha_1-b^{\alpha_1 + \vv(\alpha)}} \cU_1^{\vee} \otimes \Sigma^{\bar \alpha} (\cU_l/\cU_1)^{\vee}\big).
\end{equation*}
By the formula~\eqref{eq:def:b} we have $b^{\alpha_1 + \vv(\alpha)}=\alpha_1+r_{\alpha}$, so the lemma follows.
\end{proof}
Denote by
\begin{equation}\label{eq:C=K-dual}
    \cC_l^{\alpha}\coloneqq (\cK_l^{\alpha})^{\vee},
\end{equation}
the dual vector bundle on $\Gr(l,V)$.
\begin{lemma}\label{lemma:C=K(1)}
Let~\(\alpha \in \Y_l\) be such that~\(0 \le \alpha_1\) and~\(\alpha_1 - \alpha_l \le 2n - l\). Then
\begin{equation*}
\cC_l^{\alpha} \simeq \cK_l^{(2n-l-\alpha_1, -\alpha_l, \ \ldots\ ,-\alpha_2)}(H_l).
\end{equation*}
\end{lemma}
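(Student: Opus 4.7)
The plan is to compare the resolution of $\cC_l^\alpha = (\cK_l^\alpha)^\vee$ obtained by dualizing~\eqref{eq:cokernels} with the resolution~\eqref{eq:kernels} for $\cK_l^\beta(H_l)$, where $\beta = (2n - l - \alpha_1, -\alpha_l, \ldots, -\alpha_2)$. First I would observe that the hypothesis $\alpha_1 - \alpha_l \leq 2n - l$ is equivalent to $\beta_1 \geq -\alpha_l$, so $\beta \in \Y_l$; by~\eqref{eq:v=0} both $\vv(\alpha)$ and $\vv(\beta)$ vanish, and a direct count gives $\mm(\alpha) = \mm(\beta) = 2n + 1 - l$.

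Dualizing~\eqref{eq:cokernels} and using $\Lambda^p V \simeq \Lambda^{2n-p}V^\vee$ (valid since $\det V \simeq \cO$ for symplectic $V$) produces a right-resolution of $\cC_l^\alpha$ with terms $\Lambda^{2n - b^i(\alpha)}V^\vee \otimes \Sigma^{-\alpha^{(i)}}\cU_l^\vee$ for $i = \alpha_1 + 1, \ldots, 2n+1-l$. Under the bijection $i \leftrightarrow j := 2n + 1 - l - i \in [0, \beta_1]$ I would establish the combinatorial identities
\begin{equation*}
\dd(\beta, j) = l + 1 - \dd(\alpha, i), \quad \beta^{(j)} + 1 = -\alpha^{(i)}, \quad b^j(\beta) = 2n - b^i(\alpha),
\end{equation*}
from~\eqref{eq:d-ineq},~\eqref{explicit_formula_i}, and~\eqref{eq:b=i-v+d} respectively. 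Combined with $\Sigma^{\beta^{(j)}+1}\cU_l^\vee = \Sigma^{\beta^{(j)}}\cU_l^\vee \otimes \cO(H_l)$, this identifies the two resolutions term by term.

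To promote this term-by-term match to an isomorphism of complexes, I would trace through the Fourier--Mukai construction from the proof of Theorem~\ref{thm:gsc}: both resolutions arise as $R\psi_{l*}$ of an appropriate Koszul-type complex on $\Fl(1, l; V)$ tensored with a Schur functor of $(\cU_l/\cU_1)^\vee$, where $\psi_l \colon \Fl(1, l; V) \to \Gr(l, V)$. Applying Grothendieck--Serre duality along $\psi_l$ (with $\omega_{\psi_l} = \cU_1^{l-1} \otimes \det(\cU_l/\cU_1)^{-1}$), together with the self-duality of the Koszul complex (identifying the dual of its twist by $\cO(\alpha_1)$ with its twist by $\cO(2n - \alpha_1) = \cO(\beta_1 + l)$) and the identity $\Sigma^{-\bar\alpha}(\cU_l/\cU_1)^\vee \otimes \det(\cU_l/\cU_1)^{-1} \simeq \Sigma^{\bar\beta + 1}(\cU_l/\cU_1)^\vee$, identifies the dualized $\alpha$-staircase with the $\beta$-staircase twisted by $\cO(H_l)$. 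The main obstacle is the careful bookkeeping of twists in this Serre duality step: the cancellation $\cU_1^{-l} \otimes \cU_1^{l-1} = \cU_1^{-1}$ must supply exactly the factor of $\cO(H_l) = \det\cU_l^\vee$, and the cohomological shift $[l-1]$ from Serre duality must be absorbed by the spectral sequence producing the staircase as an honest exact complex. As a less delicate alternative, a Borel--Bott--Weil computation of $\Sp(V)$-equivariant Hom spaces between consecutive terms should show these spaces are one-dimensional, so any two exact $\Sp(V)$-equivariant complexes with identical terms have isomorphic leftmost objects.
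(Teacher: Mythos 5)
Your ``less delicate alternative'' is essentially the paper's proof, so your proposal ultimately arrives at the right argument; the primary route you describe via Grothendieck--Serre duality on the Fourier--Mukai level is a genuinely different (and more complicated) approach that the paper avoids. Concretely, the paper compares two left resolutions: the dual of~\eqref{eq:kernels} for~$\alpha$ (a complex $0 \to \Sigma^{\alpha^{(0)}}\cU_l \to \ldots \to \Lambda^{b^{\alpha_1}}V \otimes \Sigma^{\alpha^{(\alpha_1)}}\cU_l \to \cC_l^\alpha \to 0$) against~\eqref{eq:cokernels} for~$\s(\alpha) = (2n-l-\alpha_1, -\alpha_l, \ldots, -\alpha_2)$ twisted by~$\cO(H_l)$, matches the terms via the bijection~$i \leftrightarrow 2n-l+1-i$ using precisely the identities you state (including~$b^i(\alpha) = 2n - b^{2n-l+1-i}(\s(\alpha))$ and~$-\alpha^{(i)} = \s(\alpha)^{(2n-l+1-i)}+1$, from~\eqref{explicit_formula_i} and~\eqref{eq:b=i-v+d}), and concludes by observing that all maps in both complexes are the unique (nonzero) equivariant ones. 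You instead dualize~\eqref{eq:cokernels} for~$\alpha$ and compare with~\eqref{eq:kernels} for~$\beta = \s(\alpha)$ --- a right resolution rather than a left one --- but the combinatorics is identical, and your formula~$\dd(\beta, j) = l+1-\dd(\alpha, i)$ follows from the paper's second identity via~\eqref{eq:b=i-v+d}.

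The Serre-duality route you prefer would likely work, and your identification~$\omega_{\psi_l} = \cU_1^{l-1} \otimes \det(\cU_l/\cU_1)^{-1}$ is correct, but the bookkeeping you flag (tracking the twist contributed by~$\omega_{\psi_l}$ and absorbing the degree-$(l-1)$ shift through the spectral sequence that collapses the FM image of the truncated Koszul complex) is real and not trivial; the paper sidesteps all of it. Two small remarks on the alternative argument: it is cleanest to phrase the uniqueness of the maps as~$\GL(V)$-equivariance (or~$\SL(V)$, since the identification~$\Lambda^p V \simeq \Lambda^{2n-p}V^\vee$ uses a trivialization of~$\det V$) rather than~$\Sp(V)$, as the lemma lives on~$\Gr(l,V)$ and neither side of the isomorphism sees the symplectic form; and the conclusion from one-dimensional Hom spaces requires, as you say, that the differentials in both complexes are nonzero, which holds because an exact complex with a vanishing differential would have a nonzero cohomology object.
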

\begin{proof}
Denote 
\begin{equation*}
\s(\alpha)=(2n-l-\alpha_1, -\alpha_l,\ldots,-\alpha_2).
\end{equation*}
By assumptions $\alpha$ and $\s(\alpha)$ are $\GL_l$-dominant, thus $\vv(\alpha)=\vv(\s(\alpha))=0$. The dual complex to~\eqref{eq:kernels} has the form
\begin{equation}
    0 \to \Sigma^{\alpha^{(0)}}\cU_l \to  \ldots  \to \Lambda^{b^{\alpha_1}} V \otimes \Sigma^{\alpha^{(\alpha_1)}}\cU_l \to \cC_l^{\alpha} \to 0.
\end{equation}
The complex~\eqref{eq:cokernels} for~$\s(\alpha)$ has the form
\begin{equation}
    0 \to \Sigma^{\s(\alpha)^{(2n-l+1)}}\cU_l^{\vee} \to \ldots \to  \Lambda^{b^{2n-l-\alpha_1 + 1}} V^{\vee} \otimes \Sigma^{\s(\alpha)^{(2n-l-\alpha_1+ 1)}}\cU_l^\vee\to \cK_l^{\s(\alpha)}\to 0.
\end{equation}
All maps in both complexes are unique~\(\GL_l\)-equivariant and nonzero. Thus, to prove the statement, it is enough to show that 
\begin{equation}
  -\alpha^{(i)}=\s(\alpha)^{(2n-l+1-i)}+1 \qquad \text{and}\qquad  b^{i}(\alpha)=  2n-b^{2n-l+1-i}(\s(\alpha))
\end{equation}
for $i=0,\ldots, \alpha_1$. Indeed, using~\eqref{explicit_formula_i} and~\eqref{eq:b=i-v+d}, we deduce
\begin{align*}
 -&\alpha^{(i)}= (-\alpha_l, \ldots, -\alpha_{\dd(\alpha, i) + 1},  i -\alpha_1,1-\alpha_{\dd(\alpha, i)}, \ldots, 1-\alpha_2) =\s(\alpha)^{(2n-l+1-i)}+1,\\
 & b^{i}(\alpha) = \dd(\alpha, i) +i-1 =  2n-b^{2n-l+1-i}(\s(\alpha)). \qedhere
\end{align*}
\end{proof}

\subsection{Secondary staircase complexes} \label{subsection:double}
The restrictions of the vector bundles~$\cK_l^{\alpha}$ and $\cC_l^{\alpha}$ to the isotropic Grassmannian $\IGr(l,V)\subset \Gr(l,V)$ will be denoted by the same letters. Secondary staircase complexes are formed by vector bundles~$\cK_l^{\alpha}$ on~$\IGr(l,V)$ for~\(2 \le l\). In the case~\(\alpha \in \YD_2^{2n-k}\)
these complexes were defined in~\cite{Novikov}. 
\begin{proposition}[{\cite[Theorem~1.1]{Novikov}}]\label{prop:Cohomologies_of_H}
Let $(2n-2,b)\in \YD_2^{2n-2}$. There is the following complex on~\(\IGr(2, V)\):
\begin{equation}\label{eq:def:cH_2}
	\cH_2^{(2n-2,b)} \coloneqq \cK_2^{(2n-2-b,0)}\to \dots \to \cK_2^{(2n-2-1,b-1)} \to \cK_2^{(2n-2,b)},
\end{equation}
    where~\(\cK_2^{(2n-2-j,b-j)}\) lies in degree~\(b - j\).
    Its cohomologies are given by
    \begin{equation*}
\operatorname{H}^\bullet\left(\cH_2^{(2n-2,b)}\right) = 
    \begin{cases}
	\Lambda_\Sp^{b}\cS_{2(n-2)}(-H_2), &\text{if } 0 \le b \le n - 2;\\
        0, & \text{if } b = n - 1;\\
        \Lambda_\Sp^{2n - 2 - b}\cS_{2(n-2)}(-H_2)[-1], &\text{if } n \le b \le 2n - 2.
    \end{cases}
\end{equation*}
\end{proposition}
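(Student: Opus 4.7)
The plan is to build a double complex on $\IGr(2,V)$ whose rows are the defining resolutions~\eqref{eq:cokernels} of each term $\cK_2^{(2n-2-b+j,j)}$, chained vertically by the differentials of $\cH_2^{(2n-2,b)}$. One of the two spectral sequences associated with this double complex collapses by construction to $\cH_2^{(2n-2,b)}$ (since the row cohomologies are the $\cK$-bundles themselves), so it suffices to compute the cohomology of the total complex by the other spectral sequence.

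Using the description~\eqref{K:descr}, the entire double complex is obtained as the pushforward, along $\psi_2 \colon \IFl(1,2;V) \to \IGr(2,V)$, of a single complex on $\IFl(1,2;V)$ whose terms are of the shape $\Lambda^{\bullet}\cQ_{2n-1}^{\vee} \otimes \cU_1^{\otimes \bullet} \otimes S^{\bullet}(\cU_2/\cU_1)^{\vee}$. I would identify this lifted complex with a Koszul-type complex attached to the two natural sections available on $\IFl(1,2;V)$: the tautological inclusion $\cU_1 \hookrightarrow \cQ_{2n-1}^{\vee}$, and the map $\cU_2/\cU_1 \hookrightarrow \cQ_{2n-1}^{\vee}/\cU_1$ arising from the isotropy of $\cU_2$ together with the symplectic self-duality $V \simeq V^\vee$.

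After this identification, the Koszul complex on $\IFl(1,2;V)$ collapses to a single equivariant line-like bundle, whose pushforward under $\psi_{2*}$ is evaluated by the relative symplectic Borel--Bott--Weil theorem~\ref{theo:relative_BBW_sym}. The three cases in the statement correspond exactly to the three regimes for the resulting weight on $\IFl(1,2;V) = \PP_{\IGr(2,V)}(\cU_2)$: for $0 \le b \le n-2$ the weight is already regular dominant for the symplectic Weyl group acting on $\cS_{2(n-2)}$, producing $\Lambda_\Sp^b \cS_{2(n-2)}(-H_2)$; for $b = n-1$ the weight lies on a wall of the symplectic Weyl chamber, forcing the cohomology to vanish; for $n \le b \le 2n-2$ the weight becomes dominant after a single sign-change reflection, yielding $\Lambda_\Sp^{2n-2-b}\cS_{2(n-2)}(-H_2)$ shifted by $[-1]$.

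The main obstacle I expect will be verifying that the lifted total complex on $\IFl(1,2;V)$ really does assemble into a single Koszul-type complex rather than a more complicated filtered object: this requires the horizontal differentials from~\eqref{eq:cokernels} and the vertical differentials of $\cH_2^{(2n-2,b)}$ to anticommute and jointly encode the two tautological sections described above, which is a careful sign-bookkeeping argument. Once this geometric matching is established, all three cohomology computations reduce uniformly to one application of Theorem~\ref{theo:relative_BBW_sym}.
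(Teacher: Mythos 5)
This proposition is not proved in the present paper: it is imported by citation to \cite[Theorem~1.1]{Novikov}, so there is no internal argument to compare your sketch against. Taking your proposal on its own terms, the gap you flag as mere ``sign bookkeeping'' is actually the crux, and as currently set up it appears to fail. You invoke~\eqref{K:descr} to claim the whole double complex is~$\psi_{2*}$ of a single complex whose terms have the shape~$\Lambda^{\bullet}\cQ_{2n-1}^{\vee}\otimes\cU_1^{\otimes\bullet}\otimes S^{\bullet}(\cU_2/\cU_1)^{\vee}$; but~\eqref{K:descr} describes each~$\cK_2^{\alpha}$ \emph{itself} as the pushforward of a single bundle of that form, not the terms~$\Lambda^{b^i}V^\vee\otimes\Sigma^{\alpha^{(i)}}\cU_2^\vee$ of the defining resolution~\eqref{eq:cokernels}. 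Those resolution terms lift to~$\PP(\cU_2)=\IFl(1,2;V)$ with a constant factor~$\Lambda^{b^i}V^\vee$, not a factor~$\Lambda^{\bullet}\cQ_{2n-1}^\vee$, so the claimed lift of the double complex does not follow from~\eqref{K:descr}. Moreover, if one applies~\eqref{K:descr} directly to the terms of~$\cH_2^{(2n-2,b)}$, the exterior power on~$\cQ_{2n-1}^\vee$ drops by one between consecutive~$j<b$ but by two at the last step~$j=b-1\to j=b$ (because~$r_\alpha$ drops from~$1$ to~$0$ exactly there), so the terms do not fit a uniform Koszul grading.

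A more serious issue is the claim that the lifted complex ``collapses to a single equivariant line-like bundle.'' Both maps you name --- the inclusion~$\cU_1\hookrightarrow\cQ_{2n-1}^\vee$ (that is,~$\cU_1\subset\cU_1^\perp$) and~$\cU_2/\cU_1\hookrightarrow\cQ_{2n-1}^\vee/\cU_1$ --- are fiberwise injective, hence nowhere-vanishing. The Koszul complex of a nowhere-vanishing section is \emph{exact}, so a genuine Koszul complex attached to these two sections has no cohomology at all; if that were your lifted complex, it would force~$\operatorname{H}^\bullet\big(\cH_2^{(2n-2,b)}\big)=0$ for every~$b$, contradicting the statement except at~$b=n-1$. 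What must actually occur is a \emph{truncation} of such a Koszul complex (consistent with~$\cK_2^\alpha$ being defined as the cohomology of a truncation), and the cohomology of a truncated Koszul complex of nowhere-vanishing sections is a nontrivial higher-rank bundle --- an exterior power of the cokernel bundles~$\cU_1^\perp/\cU_1$ and~$\cU_1^\perp/\cU_2$ --- not a line bundle. Pushing that forward to~$\IGr(2,V)$ requires filtering it by~$\bG$-irreducible pieces and summing several Borel--Bott--Weil contributions; it does not reduce ``uniformly to one application of Theorem~\ref{theo:relative_BBW_sym}.'' Before this sketch can become a proof you need to construct the lifted complex explicitly, identify the truncation point, and carry out the resulting multi-term BBW computation; none of this is a formality.
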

Consider the diagram of maps
\begin{equation*}
\xymatrix@1{
\IGr(2, V) &
\IFl(2, k; V) \ar[l]_{p_{k-2}} \ar[r]^{q_{k-2}} &
\IGr(k, V).
}
\end{equation*}
\begin{definition}
Let $\lambda \in \Y_{k-2}$. We define the functor $\Phi_{\lambda} \colon \Db(\IGr(2,V))\to \Db(\IGr(k,V))$ as the Fourier--Mukai transform with the kernel~$\Sigma^{\lambda} (\cU_k/\cU_{2})$ that takes an object $E \in \Db(\IGr(2,V))$~to
\begin{equation*}
   \Phi_{\lambda}(E)\coloneqq q_{k-2*}(\Sigma^{\lambda} (\cU_k/\cU_{2})\otimes p_{k-2}^*(E)).
\end{equation*} 
\end{definition}
In particular, note that~\(\Phi_{\lambda-a_{k-2}} = \hat \Phi_{\lambda}^{k-2}\).

Secondary staircase complexes on $\IGr(k,V)$ are defined for any weight~\(\alpha\in \YD^{2n-k}_1\times \YD^{2n-k}_{k-2}\) and will be denoted by $\cH_k^{(2n-k, \alpha)}$.
Consider~\(\beta = (2n-2,\, \alpha_1+k-2)\in \YD_2^{2n-2}\). The dual to the secondary staircase complex $\left(\cH_k^{(2n-k, \alpha)}\right)^{\vee}$ will be obtained as the twist by $\cO(1)$ of the Fourier--Mukai transform of the complex $\left(\cH_2^{\beta}(H_2)\right)^\vee$, more precisely we will show that
\begin{equation*}
      \left(\cH_k^{(2n-k,\, \alpha)}(1)\right)^\vee = \Phi_{\bar\alpha-1}\left(\big(\cH_2^{\beta}(H_2)\big)^\vee\right). 
  \end{equation*}
By~\eqref{eq:def:cH_2} and~\eqref{eq:C=K-dual} 
we have
\begin{equation} \label{mult12}
\left(\cH_2^{\beta}(H_2)\right)^\vee \simeq \cC_2^{\beta}(-H_2) \to \cC_2^{\beta-1}(-H_2) \to \ldots \to \cC_2^{\beta-\beta_2}(-H_2). 
\end{equation}
First, we compute the Fourier--Mukai functor applied to the terms of $\left(\cH_2^{\beta}(H_2)\right)^\vee$. 
\begin{lemma}\label{lemma:Phi(C_2)}
Let~\(\alpha = (\alpha_1; \alpha_2,\ldots, \alpha_{k-1}) \in \YD^{2n - k}_1 \times \YD^{2n - k}_{k - 2}\). For an integer~\(j\in [0,\alpha_1+k-2]\), there is an isomorphism
    \begin{equation}\label{eq:Phi(C_2)=C_k}
        \Phi_{\bar\alpha-1}\left(\cC_2^{(2n-2-j,\,\alpha_1+k-2-j)}(-H_2)\right) \simeq 
        \begin{cases}
            \cC_k^{(2n-k-b^i(\alpha),\,\alpha^{(i)})}(-H_k)[\dd(\alpha,i)- k+1],& \text{if}\ j = b^i(\alpha);\\
            0, &\text{otherwise},
        \end{cases}
    \end{equation}
where~\(i \in [0,\alpha_1+\vv(\alpha)]\).
\end{lemma}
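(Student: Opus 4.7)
The plan is to reduce both sides of the asserted isomorphism to pushforwards of explicit bundles via $\psi_{k*}$, and match them using an intermediate Borel--Bott--Weil computation on a $\PP^{k-2}$-bundle. First, Lemma~\ref{lemma:C=K(1)} rewrites $\cC_2^{(2n-2-j,\,\alpha_1+k-2-j)}(-H_2)$ as $\cK_2^{(j,\,j-\alpha_1-k+2)}$; since $0 \le j \le \alpha_1+k-2$ the second coordinate is nonpositive, so the parameter $r$ in~\eqref{K:descr} vanishes and this bundle equals $\psi_{2*}(\Lambda^j \cQ_{2n-1}^\vee \otimes (\cU_2/\cU_1)^{\otimes(\alpha_1+k-2-j)})$.

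Next, I would apply $\Phi_{\bar\alpha-1}$ and interchange the two pushforwards using flat base change on the Cartesian square involving $\IFl(1,2,k;V)$, and then refactor the remaining projection $\IFl(1,2,k;V) \to \IGr(k,V)$ through $\IFl(1,k;V)$ via the $\PP^{k-2}$-bundle $\hat\psi \colon \IFl(1,2,k;V) \to \IFl(1,k;V)$ and the natural map $\hat q \colon \IFl(1,k;V) \to \IGr(k,V)$. Since $\Lambda^j \cQ_{2n-1}^\vee$ is pulled back from $\IFl(1,k;V)$, the projection formula lets us pull it out, reducing the problem to computing
\begin{equation*}
\hat\psi_*(\Sigma^{\bar\alpha-1}(\cU_k/\cU_2) \otimes (\cU_2/\cU_1)^{\otimes(\alpha_1+k-2-j)})
\end{equation*}
on $\IFl(1,k;V)$.

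The heart of the proof is a Borel--Bott--Weil computation on this $\PP^{k-2}$-bundle via Theorem~\ref{theo:relative_BBW}. Under the identifications $\cU_2/\cU_1 = \cO_\PP(-1)$ and $\cU_k/\cU_2$ the tautological quotient on $\PP(\cU_k/\cU_1)$, the input corresponds to the $\GL_1\times\GL_{k-2}$-weight $\lambda=j-\alpha_1-k+2$ and $\mu=(1-\alpha_{k-1},\dots,1-\alpha_2)$ for the rank $k-1$ bundle $\cU_k/\cU_1$. Setting $z_i \coloneqq \alpha_1-j$, the sequence $(\lambda,\mu)+\rho$ has first entry $1-z_i$ and remaining strictly decreasing entries $\{s-\alpha_s\}_{s=2}^{k-1}$. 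The distinct-entries vanishing criterion becomes $1-z_i \ne s-\alpha_s$ for all $s$, which by~\eqref{eq:def-bv} is exactly $z_i \notin \bV(\bar\alpha)$, equivalently $j=b^i(\alpha)$ for some $i \in [0,\alpha_1+\vv(\alpha)]$. In the nonvanishing case, the strict monotonicity of $s-\alpha_s$ partitions $\{2,\dots,k-1\}$ into two disjoint sets above and below $1-z_i$, of sizes $k-1-\dd(\alpha,i)$ and $\dd(\alpha,i)-1$ respectively by~\eqref{eq:def-dd}, so the Weyl element has length $k-1-\dd(\alpha,i)$ and the cohomological shift is $\dd(\alpha,i)-k+1$. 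Subtracting $\rho$ from the sorted sequence and using the identity $z_i=\alpha_1+\vv(\alpha)-\dd(\alpha,i)+1-i$, the resulting weight becomes precisely $(-\alpha^{(i)}_{k-1},\dots,-\alpha^{(i)}_1)$ via~\eqref{explicit_formula_i}.

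Finally, I would identify the output using Lemma~\ref{lemma:C=K(1)}, which gives $\cC_k^{(2n-k-b^i(\alpha),\,\alpha^{(i)})}(-H_k) \simeq \cK_k^{\s(2n-k-b^i(\alpha),\,\alpha^{(i)})}$ with $\s$-weight $(b^i(\alpha);\,-\alpha^{(i)}_{k-1},\dots,-\alpha^{(i)}_1)$. By Lemma~\ref{lemma:alpha^i_in_YD}, $\alpha^{(i)} \in \YD_{k-1}^{2n-k}$ has nonnegative entries for $i$ in our range, so the $r$-parameter in~\eqref{K:descr} applied to this $\s$-weight vanishes, and $\cK_k^{\s(\ldots)}$ equals the $\hat q_*$-pushforward exactly matching the Borel--Bott--Weil output. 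The main obstacle is the combinatorial matching in the third paragraph: verifying simultaneously that the vanishing criterion, the Weyl element length, and the sorted weight all align with the generalized staircase combinatorics defined by $\alpha^{(i)}$, $b^i(\alpha)$, $\dd(\alpha,i)$, and $\vv(\alpha)$ from Subsection~\ref{subsection:alpha^i}, together with leveraging Lemma~\ref{lemma:alpha^i_in_YD} to keep the right-hand side of~\eqref{K:descr} in its simplest form.
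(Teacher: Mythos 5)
Your proposal follows essentially the same route as the paper: rewrite~$\cC_2^{(2n-2-j,\,\alpha_1+k-2-j)}(-H_2)$ as a~$\cK_2$ via Lemma~\ref{lemma:C=K(1)}, express it by~\eqref{K:descr} and push the Fourier--Mukai transform through the triple flag~$\IFl(1,2,k;V)$ by base change and the projection formula, compute the intermediate~$\PP^{k-2}$-bundle pushforward by Borel--Bott--Weil (your third paragraph re-derives exactly the appendix Lemma~\ref{cor:BBW-12k}), and finally reassemble via~\eqref{K:descr} and Lemma~\ref{lemma:C=K(1)} again. This matches the paper's proof step for step. The one thing you state but do not justify is that~$j\in[0,\alpha_1+k-2]$ with~$\alpha_1-j\notin\bV(\bar\alpha)$ forces~$i\in[0,\alpha_1+\vv(\alpha)]$, which is what licenses the appeal to Lemma~\ref{lemma:alpha^i_in_YD} and makes the~$r$-parameter in~\eqref{K:descr} vanish; the paper closes this by a short argument by contradiction (if~$i>\alpha_1+\vv(\alpha)$ then~$\dd(\alpha,i)=k-1$, whence~$b^i(\alpha)=i-\vv(\alpha)+k-2>\alpha_1+k-2$). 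You should supply this check, but you have correctly identified it as the remaining combinatorial verification.
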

\begin{proof}
Consider the following commutative diagram:
\begin{equation}\label{eq:flags}
    \begin{tikzcd}
		&\IFl(1,2,k;V) \ar[d, "\phi"] \ar[dr, "\phi_{k}"] \ar[dl, swap, "\phi_{2}"]\\
		\IFl(1,2;V) \ar[d, swap,"\psi_2"]  &  \IFl(2, k;V) \ar[dr, swap, "q_{k-2}"] \ar[dl,"p_{k-2}"]& \IFl(1,k; V) \ar[d,"q_{k-1}"]  &\\ 
		\IGr(2, V)&& \IGr(k,V)
    \end{tikzcd}
\end{equation}    
Denote
\[b=\alpha_1+k-2.\]
By Lemma~\ref{lemma:C=K(1)} we have~\(\cC_2^{(2n-2-j,b-j)} \simeq \cK_2^{(j,j-b)}(H_2)\). Next, from~\eqref{K:descr}, the base change, commutativity of~\eqref{eq:flags}, and projection formula, we get:
\begin{equation}
\begin{aligned} \label{mult2}
\Phi_{\bar\alpha-1}\left(\cK_2^{(j, j - b)}\right) 
&=(q_{k-2})_*\left(\Sigma^{\bar \alpha - 1} (\cU_k/\cU_{2}) \otimes p^*_{k-2} \cK_2^{(j, j - b)} \right) \simeq \\ 
&\simeq (q_{k-2})_*\left(\Sigma^{\bar \alpha - 1} (\cU_k/\cU_{2}) \otimes p^*_{k-2} \left(\psi_{2*}(S^{b - j} (\cU_2/\cU_1) \otimes \Lambda^{j} \cQ_{2n-1}^\vee)\right)\right)\simeq\\
&\simeq (q_{k-2})_*\left(\Sigma^{\bar \alpha - 1} (\cU_k/\cU_{2}) \otimes \phi_* \left(\phi_{2}^*(S^{b - j} (\cU_2/\cU_1) \otimes \Lambda^{j} \cQ_{2n-1}^\vee)\right)\right)\simeq \\
&\simeq (q_{k-2})_*\left(\phi_{*\phantom{k}}\big(\Sigma^{\bar \alpha - 1} (\cU_k/\cU_{2}) \otimes S^{b - j} (\cU_2/\cU_1) \otimes \Lambda^{j} \cQ_{2n-1}^\vee\big)\right)\simeq \\
&\simeq (q_{k-1})_*\left(\phi_{k*}\big(\Sigma^{\bar \alpha - 1} (\cU_k/\cU_{2}) \otimes S^{b - j} (\cU_2/\cU_1) \otimes \Lambda^{j} \cQ_{2n-1}^\vee\big)\right)\simeq \\ 
&\simeq (q_{k-1})_*\left(\phi_{k*}\big(\Sigma^{\bar \alpha - 1} (\cU_k/\cU_{2}) \otimes S^{b - j} (\cU_2/\cU_1)\big) \otimes \Lambda^{j} \cQ_{2n-1}^\vee\right).
\end{aligned}
\end{equation}
By Lemma~\ref{cor:BBW-12k} it follows that
\begin{equation} \label{direct_secondary}
\phi_{k*}\left(\Sigma^{\bar \alpha - 1} (\cU_k/\cU_{2}) \otimes S^{b- j}(\cU_2/\cU_1)\right) \simeq 
\begin{cases}
\Sigma^{\alpha^{(i)}} (\cU_k/\cU_{1})[\dd(\alpha,j)- k+1],& \text{if} \ j = b^i(\alpha);\\
0, &\text{otherwise}.
\end{cases}
\end{equation}
So, the second line of~\eqref{eq:Phi(C_2)=C_k} follows.

Let us show that for $j \in [0,\alpha_1 + k - 2]$, we have~
$\alpha^{(i)} \ge 0$ in the formula~\eqref{direct_secondary}. Since we assume~\(\alpha_2,\ldots, \alpha_{k-1}\ge 0\), the explicit formula~\eqref{explicit_formula_i} for $\alpha^{(i)}$ implies that it suffices to prove that~\(i \le \alpha_1+\vv(\alpha)\). 

We argue by contradiction, assume~$\alpha_1< i -\vv(\alpha)$. 
Then,~$\alpha_1+\vv(\alpha)-i < 0 \le \alpha_{k-1}$, so in this case~$\dd(\alpha,i) = k-1$ by~\eqref{eq:d-ineq}. 
Thus, applying~\eqref{eq:b=i-v+d}, we obtain~$j=b^i(\alpha) = i-\vv(\alpha)+k-2 > \alpha_1+k-2$, contradicting the assumption~\(j \in [0, \alpha_1 + k - 2]\).
Therefore,~\(0 \le \alpha_1 + \vv(\alpha)-i\) and~\(0\le \alpha^{(i)}\). 
In particular, we deduce the last statement~$0 \le i\le \alpha_1+\vv(\alpha)$. 

Finally, combining everything together for~\(j=b^i(\alpha)\in [0,\alpha_1+k-2]\) we obtain:
\begin{multline*}
\Phi_{\bar\alpha-1}\left(\cC_2^{(2n-2-j,\, \alpha_1+k-2-j)}(-H_2)\right) \simeq \Phi_{\bar\alpha-1}\left(\cK_2^{(j,\, j-(\alpha_1 + k - 2))}\right)\simeq \\
\simeq (q_{k-1})_*\left(\Sigma^{\alpha^{(i)}} (\cU_k/\cU_{1})[\dd(\alpha,i)- k+1] \otimes \Lambda^{j} \cQ_{2n-1}^\vee\right) \simeq \\
\simeq \cK_k^{(b^i(\alpha),\, -\alpha^{(i)})}[\dd(\alpha,i)- k+1]
\simeq \cC_k^{(2n-k - b^i(\alpha),\, \alpha^{(i)})}(-H_k)[\dd(\alpha,i)- k+1],
\end{multline*}
where the first and the last isomorphism follow from Lemma~\ref{lemma:C=K(1)}, the second one follows from~\eqref{mult2} and~\eqref{direct_secondary}, and the third isomorphism follows from the description~\eqref{K:descr} in the case~\(-\bar\alpha^{(i)}\le 0\).
\end{proof}

\begin{theorem}[Secondary staircase complexes]\label{theo:complexes_for_F^*,1^b}
For each~\(\alpha = (\alpha_1; \alpha_2,\ldots, \alpha_{k-1}) \in \YD^{2n - k}_1 \times \YD^{2n - k}_{k - 2}\) there is a complex of vector bundles on~\(\IGr(k, V)\):
\begin{equation}\label{eq:cH=k^bullet}
    \cH^{(2n-k,\, \alpha)}_k \coloneqq \cK_k^{(2n-k- b^{\delta(\alpha)}(\alpha),\, \alpha^{(\delta(\alpha))})} \to \ldots \to \cK_k^{(2n-k - b^{1}(\alpha),\, \alpha^{(1)})} \to   \cK_k^{(2n-k - b^0(\alpha),\, \alpha^{(0)})},
\end{equation}
where~\(\delta(\alpha)=\alpha_1+\vv(\alpha) \in \mathbb{Z}\). Its cohomology is given by 
\begin{equation}\label{eq:H(cH)}
\operatorname{H}^\bullet(\cH_k^{(2n-k,\, \alpha)})=
    \begin{cases}
        \left(\Phi_{\bar\alpha-1}\left(\Lambda^{\alpha_1 + k - 2}_\Sp \cS_{2(n-2)}\right)(H_k)\right)^\vee, & \text{if~\(0 \le \alpha_1 \le n - k\);}\\
        0, & \text{if~\(\alpha_1 = n - k + 1\);} \\
        \left(\Phi_{\bar\alpha-1}\left(\Lambda^{2n - k - \alpha_1}_\Sp \cS_{2(n-2)}\right)(H_k)\right)^\vee[-1],  & \text{if~\(n - k+2 \le \alpha_1 \le 2n - k\).}
    \end{cases}
\end{equation}
\end{theorem}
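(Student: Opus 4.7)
The plan is to reduce the statement to the analogous result on $\IGr(2, V)$, namely Proposition~\ref{prop:Cohomologies_of_H}, by applying the Fourier--Mukai functor $\Phi_{\bar\alpha - 1}$. As announced in the paragraph preceding Lemma~\ref{lemma:Phi(C_2)}, the central identification to establish is
\begin{equation*}
\bigl(\cH_k^{(2n-k,\,\alpha)}(1)\bigr)^{\vee} \;\simeq\; \Phi_{\bar\alpha - 1}\bigl((\cH_2^{\beta}(H_2))^{\vee}\bigr),
\qquad \beta \coloneqq (2n - 2,\,\alpha_1 + k - 2) \in \YD_2^{2n - 2}.
\end{equation*}
Once this is in place, both the existence of the complex~\eqref{eq:cH=k^bullet} and the cohomology calculation~\eqref{eq:H(cH)} will follow by transporting the corresponding facts about $\cH_2^{\beta}$ through $\Phi_{\bar\alpha - 1}$, duality, and the twist by $\cO(1) = \cO(H_k)$.

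For the first step, I would use~\eqref{mult12} to represent $(\cH_2^{\beta}(H_2))^{\vee}$ as the explicit complex $\cC_2^{\beta}(-H_2) \to \cC_2^{\beta - 1}(-H_2) \to \dots \to \cC_2^{\beta - \beta_2}(-H_2)$, and apply $\Phi_{\bar\alpha - 1}$ term by term. Setting $b \coloneqq \alpha_1 + k - 2$ and $\delta \coloneqq \delta(\alpha) = \alpha_1 + \vv(\alpha)$, Lemma~\ref{lemma:Phi(C_2)} identifies $\Phi_{\bar\alpha - 1}(\cC_2^{(2n - 2 - j,\,b - j)}(-H_2))$ with zero unless $j = b^i(\alpha)$ for some $i \in [0, \delta]$, in which case the image equals $\cC_k^{(2n - k - b^i(\alpha),\,\alpha^{(i)})}(-H_k)$ shifted by $\dd(\alpha, i) - k + 1$. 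Formula~\eqref{eq:b=i-v+d} then gives the clean identity $(b^i(\alpha) - b) - (\dd(\alpha, i) - k + 1) = i - \delta$, which shows that the surviving shifted images occupy pairwise distinct cohomological positions $i - \delta$ for $i \in [0, \delta]$, thereby filling the interval $[-\delta, 0]$ exactly once. Dualizing and twisting by $\cO(H_k)$ turns each $\cC_k(-H_k)$ into $\cK_k$ and reproduces the list of terms in~\eqref{eq:cH=k^bullet}; the differentials between these terms are the ones read off from the spectral sequence of the double complex associated to the termwise application of $\Phi_{\bar\alpha - 1}$.

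For the cohomology formula, I would use that $\Phi_{\bar\alpha - 1}$ is a triangulated functor, hence commutes with taking cohomology in the triangulated sense, so that $H^\bullet(\Phi_{\bar\alpha - 1}((\cH_2^{\beta}(H_2))^{\vee})) = \Phi_{\bar\alpha - 1}(H^\bullet((\cH_2^{\beta}(H_2))^{\vee}))$. The cohomology of $\cH_2^{\beta}(H_2)$ is immediate from Proposition~\ref{prop:Cohomologies_of_H} (the twist by $\cO(H_2)$ cancels the $(-H_2)$ factor), and the self-duality of $\Lambda^\bullet_\Sp \cS_{2(n-2)}$ handles the dualization; the three ranges $b \in [0, n-2]$, $b = n-1$, $b \in [n, 2n-2]$ translate to the three ranges $\alpha_1 \in [0, n-k]$, $\alpha_1 = n-k+1$, $\alpha_1 \in [n-k+2, 2n-k]$ in~\eqref{eq:H(cH)}. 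A final dualization and twist by $\cO(-H_k)$ then yield the three lines of~\eqref{eq:H(cH)} exactly.

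The main obstacle is in the first step, namely showing that the termwise application of $\Phi_{\bar\alpha - 1}$ truly assembles into a complex of vector bundles representing $\Phi_{\bar\alpha - 1}((\cH_2^{\beta}(H_2))^{\vee})$ with exactly the prescribed terms of~\eqref{eq:cH=k^bullet} --- i.e.\ that the spectral sequence of the double complex produces a single sheaf at each total cohomological position, connected by the expected differentials (possibly higher-page $d_r$'s corresponding to gaps in the set $\{b^i(\alpha)\}$). The heart of this verification is the injectivity of the map $i \mapsto i - \delta$ on $[0, \delta]$, which is exactly what the identity~\eqref{eq:b=i-v+d} supplies; beyond this, the remaining work is careful bookkeeping of duals, twists, and cohomological shifts.
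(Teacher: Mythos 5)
Your proposal matches the paper's proof essentially verbatim: both define $\cH_k^{(2n-k,\alpha)}$ by transporting $(\cH_2^{\beta}(H_2))^\vee$ through $\Phi_{\bar\alpha-1}$, identify the surviving terms via Lemma~\ref{lemma:Phi(C_2)}, place them in the distinct cohomological degrees $i-\delta(\alpha)$ using~\eqref{eq:b=i-v+d}, and then derive the cohomology from Proposition~\ref{prop:Cohomologies_of_H} after dualizing and twisting. The only blemish is a sign slip in the twist (dualizing $\cC_k(-H_k)$ already produces $\cK_k(H_k)$, so one must twist by $\cO(-H_k)$, not $\cO(H_k)$, to reach $\cK_k$), but the paper's formula~\eqref{def:H1} appears to carry the same sign typo, and the headline identification $(\cH_k(1))^\vee \simeq \Phi_{\bar\alpha-1}((\cH_2^{\beta}(H_2))^\vee)$ that you start from is the correct one.
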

\begin{proof}
Let~\(\beta = (2n - 2, \alpha_1 + k - 2)\in \YD_2^{2n-2}\). Consider the complex: 
\begin{equation} \label{mult1}
(\cH_2^{\beta}(H_2))^\vee \simeq \cC_2^{\beta}(-H_2) \to \cC_2^{\beta-1}(-H_2) \to \ldots \to \cC_2^{\beta-\beta_2}(-H_2), 
\end{equation}
that is the dual complex to~\eqref{eq:def:cH_2} twisted by $\cO(-H_2)$. 
Lemma~\ref{lemma:Phi(C_2)} yields that the terms of the complex~\(\Phi_{\bar\alpha-1}\big((\cH_2^{\beta}(H_2))^\vee \big)\) are 
\begin{equation*}
    \text{\(\cC_k^{(2n-k-b^i(\alpha),\,\alpha^{(i)})}(-H_k)\)} \qquad \text{for} \quad 0 \le i \le \alpha_1+\vv(\alpha),
\end{equation*} 
and the term $\cC_k^{(2n-k-b^i(\alpha),\,\alpha^{(i)})}(-H_k)$ lies in degree
\begin{equation*}
    (b^i(\alpha)-(\alpha_1+k-2))-(\dd(\alpha,i)-k+1)=i-\alpha_1-\vv(\alpha),
\end{equation*}
where we use the equality~\eqref{eq:b=i-v+d}. Finally, we obtain the complex~\eqref{eq:cH=k^bullet} as the dual complex to~\(\Phi_{\bar\alpha-1}\left(\big(\cH_2^{\beta}(H_2)\big)^\vee \right)\), twisted by~$\cO(1)$, that is
\begin{equation} \label{def:H1}
    \cH^{(2n-k,\, \alpha)}_k \coloneqq \left(\Phi_{\bar\alpha-1}\left(\big(\cH_2^{\beta}(H_2)\big)^\vee\right)\right)^{\vee}(1).
\end{equation}

By Proposition~\ref{prop:Cohomologies_of_H}, over~\(\IGr(2, V)\) we have:
\begin{equation} 
 \left(\cH^{\beta}(H_2)\right)^\vee \simeq 
    \begin{cases}
	\Lambda_\Sp^{\alpha_1 + k - 2} \cS_{2(n-2)}, & \text{if} \ 0 \le \alpha_1 \le n - k;\\
	0, & \text{if} \ \alpha_1 = n-k+1;\\
	\Lambda_\Sp^{2n-\alpha_1-k} \cS_{2(n-2)}[1], & \text{if} \ n-k+2 \le \alpha_1\le 2n-k,
    \end{cases}   
\end{equation}
so applying~\eqref{def:H1} we deduce~\eqref{eq:H(cH)}.
\end{proof}
Recall that $\cA_{l} = \langle \Sigma^{\alpha}\cU^{\vee}_k\otimes \Sigma^{\mu}_{\Sp}\cS_{2(n-k)} \mid \alpha \in \YD_{l}^{2n-k-l};\, \mu\in \YD_{n-k}^{a_l}\rangle$. The following property of secondary staircase complexes plays an important role in the proof of Theorem~\ref{lemma:fullness_induction_step} when~$k-t$ is odd. 
\begin{lemma}\label{lemma:H_lies_in_blocks}
Let $l\in [0,k-2]$,~\(\alpha \in \YD^{2n-k}_1 \times \YD^{2n-k-l}_{l}\), and~\(\mu \in \YD^{a_l - 1}_{n - k}\). Then \begin{equation*}
\cH_k^{(2n-k,\, \alpha)}(1) \otimes \Sigma^{\mu}_\Sp \cS_{2(n-k)} \in \cA_l.
\end{equation*}
\end{lemma}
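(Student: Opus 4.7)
Since $\cA_l$ is a triangulated subcategory of $\Db(\IGr(k,V))$, one might hope to argue term-by-term, but the individual terms $\cK_k^{(2n-k-b^i(\alpha),\,\alpha^{(i)})}(1)$ of $\cH_k^{(2n-k,\alpha)}(1)$ are generally not themselves in $\cA_l$. Instead, I would use Theorem~\ref{theo:complexes_for_F^*,1^b}: the cohomology of $\cH_k^{(2n-k,\alpha)}$ is concentrated in a single degree, so the complex is quasi-isomorphic (up to shift) to $\bigl(\Phi_{\bar\alpha-1}(\Lambda_\Sp^c \cS_{2(n-2)})\bigr)^\vee \otimes \cO(-1)$, for $c = \alpha_1+k-2$ when $\alpha_1 \le n-k$ and $c = 2n-k-\alpha_1$ when $\alpha_1 \ge n-k+2$; when $\alpha_1 = n-k+1$ it is zero. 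Consequently,
\[
\cH_k^{(2n-k,\alpha)}(1) \otimes \Sigma^\mu_\Sp \cS_{2(n-k)} \simeq \bigl(\Phi_{\bar\alpha-1}(\Lambda_\Sp^c \cS_{2(n-2)})\bigr)^\vee \otimes \Sigma^\mu_\Sp \cS_{2(n-k)}
\]
up to a cohomological shift (or is zero), and it suffices to place the right-hand side inside $\cA_l$.

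To do so, I would combine Lemma~\ref{lemma:filtration_on_flags} with relative Borel--Bott--Weil. On $\IFl(2,k;V)$, the pullback $p^*_{k-2}\Lambda_\Sp^c \cS_{2(n-2)}$ carries a filtration whose graded pieces are $\Sigma^\gamma(\cU_k/\cU_2) \otimes \Sigma^\eta_\Sp \cS_{2(n-k)}$, with $\gamma$ controlled entrywise and $\eta \in \YD^1_{n-k}$ contained in $(1^c)$. Tensoring with the kernel $\Sigma^{\bar\alpha - 1}(\cU_k/\cU_2)$ and pushing down by $q_{k-2}$ via the projection formula and Theorem~\ref{theo:relative_BBW} yields a filtration of $\Phi_{\bar\alpha-1}(\Lambda_\Sp^c \cS_{2(n-2)})$ by bundles of the form $\Sigma^\delta \cU_k^\vee \otimes \Sigma^\eta_\Sp \cS_{2(n-k)}$ for explicit $\GL_k$-dominant weights $\delta$. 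Dualizing and tensoring with $\Sigma^\mu_\Sp \cS_{2(n-k)}$ gives a filtration of the target by irreducible equivariant bundles, and the hypothesis $\bar\alpha \in \YD_l^{2n-k-l}$ together with $\mu \in \YD^{a_l - 1}_{n-k}$ forces each of these pieces to lie in $\B_l = \YD_l^{2n-k-l} \times \YD_{n-k}^{a_l}$, hence in $\cA_l$.

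The main obstacle lies in the detailed verification that the output weights $\delta$ indeed land in $\YD_l^{2n-k-l}$. This requires a careful case analysis based on the value of $\alpha_1$ (which controls $c$), on the possible $\gamma$ arising in the filtration of $p^*_{k-2}\Lambda^c_\Sp\cS_{2(n-2)}$, and on the BBW shift applied to $(0,0,\bar\alpha-1)+\rho_k$. The hypothesis $l \le k-2$ leaves enough room in the weight lattice for this bookkeeping to close, and the combinatorial identities for $\alpha^{(i)}$ from Subsection~\ref{subsection:alpha^i} --- in particular the explicit formula \eqref{explicit_formula_i} --- provide the combinatorial data needed to match the output weights with $\YD_l^{2n-k-l}$.
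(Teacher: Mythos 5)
Your skeleton matches the paper's: start from Theorem~\ref{theo:complexes_for_F^*,1^b}, reduce to controlling $\Phi_{\bar\alpha-1}(\Lambda^r_\Sp \cS_{2(n-2)})$, then use Lemma~\ref{lemma:filtration_on_flags}, the relative Borel--Bott--Weil theorem, the projection formula, and finally Lemma~\ref{lemma:LR rule} to absorb $\Lambda^s_\Sp \cS_{2(n-k)} \otimes \Sigma^\mu_\Sp \cS_{2(n-k)}$ into $\YD^{a_l}_{n-k}$. However, the proposal stops at exactly the point that carries the content of the lemma: you assert that the output weights $\delta$ land in $\YD_l^{2n-k-l}$, defer this to an unexecuted ``careful case analysis,'' and your stated route to do it is misleading. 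There is no case analysis on $\alpha_1$: the reduction already collapses to showing the claim for every $r\in[0,n-2]$, so $\alpha_1$ never reappears. Likewise, the $\alpha^{(i)}$ combinatorics from Subsection~\ref{subsection:alpha^i} plays no role here --- it governs the individual terms $\cK_k^{(2n-k-b^i,\alpha^{(i)})}$ of the complex, which you have deliberately replaced with the cohomology of the whole complex.

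The actual bookkeeping is short and depends only on two features you haven't isolated. First, the graded pieces from Lemma~\ref{lemma:filtration_on_flags} have trivial $\GL_2$-weight on $\cU_2$, so the Borel--Bott--Weil input is $((0)^2,-\gamma_{k-2},\dots,-\gamma_1)+\rho = (k,\, k-1,\, k-2-\gamma_{k-2},\dots,1-\gamma_1)$. Second, Lemma~\ref{lemma:filtration_on_flags} with $a=1$ gives $\gamma_i \in [\alpha_{i+1}-2,\alpha_{i+1}]$, hence $\gamma_{k-2}\ge -2$ and $\gamma_1\le 2n-k-l$, and $\gamma_i\le 0$ for $i>l$ since $\alpha_{i+1}=0$ there. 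If $\gamma_{k-2}\in\{-1,-2\}$ the sequence has a repeated entry and the pushforward vanishes; if $\gamma_{k-2}\ge 0$ the sequence is already strictly decreasing, dominance forces $\gamma_{l+1}=\dots=\gamma_{k-2}=0$, and $q_{k-2*}\Sigma^\gamma(\cU_k/\cU_2)\simeq\Sigma^{(\gamma_1,\dots,\gamma_l)}\cU_k$ with $(\gamma_1,\dots,\gamma_l)\in\YD_l^{2n-k-l}$. Without this (or an equivalent verification) the proof is incomplete; with it, it coincides with the paper's argument.
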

\begin{proof}
Theorem~\ref{theo:complexes_for_F^*,1^b} implies that it is enough to show:
\begin{equation*}
(q_{k-2})_*\left(\Sigma^{\bar \alpha - 1} (\cU_{k}/\cU_2) \otimes p^*_{k-2}(\Lambda^{r}_\Sp \cS_{2(n-2)}) \right) \otimes \Sigma^{\mu}_\Sp \cS_{2(n-k)} \in \cA^{\vee}_l \qquad \text{for}\quad r\in [0,n-2].
\end{equation*}
By Lemma~\ref{lemma:filtration_on_flags}, the vector bundle $\Sigma^{\bar \alpha - 1} (\cU_{k}/\cU_2) \otimes p^*_{k-2}(\Lambda^{r}_\Sp \cS_{2(n-2)})$
on $\IFl(2,k;V)$ admits a filtration whose irreducible $\bG$-equivariant associated subquotients are of the form:
\begin{equation*} 
  \Sigma^{\gamma} (\cU_k/\cU_{2}) \otimes \Lambda_\Sp^s \cS_{2(n - k)}, \quad  \text{where} \quad \text{\(\gamma \in \Y^{n-k-l}_{k-2}\) is such that $\gamma_{k-2}\ge -2$}  \quad \text{and} \quad s \in [0,n-k].
    \end{equation*}    
By Theorem~\ref{theo:relative_BBW} (Borel–Bott–Weil), we have $(q_{k-2})_*\big(\Sigma^{\gamma} (\cU_k/\cU_{2})\big)\in \cA_l^{\vee}$. Moreover, using our assumption that $\mu\in \YD^{a_l-1}_{n-k}$ and applying Lemma~\ref{lemma:LR rule}, we deduce: 
\begin{equation*}
\Lambda_\Sp^s \cS_{2(n - k)} \otimes \Sigma^{\mu}_\Sp \cS_{2(n-k)} \in \YD_{n-k}^{a_l}.
\end{equation*}
Combining this with the projection formula, we obtain:
\begin{equation*}
    (q_{k-2})_*\big(\Sigma^{\gamma} (\cU_k/\cU_{2}) \otimes \Lambda_\Sp^s \cS_{2(n - k)}\big) \otimes \Sigma^{\mu}_\Sp \cS_{2(n-k)}\simeq  (q_{k-2})_*\big(\Sigma^{\gamma} (\cU_k/\cU_{2})\big) \otimes \Lambda_\Sp^s \cS_{2(n - k)} \otimes \Sigma^{\mu}_\Sp \cS_{2(n-k)}  \in \cA^{\vee}_l. \qedhere
\end{equation*}
\end{proof}

\section{Relations generated by secondary staircase complexes} \label{section:odd}
The goal of this section is to prove Theorem~\ref{lemma:fullness_induction_step} in the case when $t$ has parity opposite to that of $k$; that is to show the inclusion
\begin{equation*}
\cA_t(t+1),\, \tilde \cA_{t + 1}(t+1) \subset \cD_t=\big\langle \cA_{t-1}(t),\, \tilde \cA_t(t),\, \cA_{t+1}(t+1) \big\rangle.
\end{equation*}
We begin by observing that when~\(k-t\) is odd, we have~\(a_{t+1}=\lfloor \frac{k-t-1}2\rfloor = a_t\).
This immediately implies the inclusion~\(\cA_t(t+1) \subset \tilde \cA_t(t+1)\), reducing our task to proving 
\[\tilde \cA_{t + 1}(t+1) \subset \cD_t.\]

Next, we note that, to prove the above inclusion, it suffices to establish that 
\begin{equation} \label{goal:K}
   \cK_k^{(2n-k- t,\, \beta)}(t + 1) \otimes \Sigma_\Sp^{\mu} \cS_{2(n-k)} \in \cD_t(t) \qquad \text{for all} \quad \beta \in \YD^{2n-k- t}_{t} \quad \text{and} \quad \mu \in \YD^{a_{t}}_{n - k}. 
\end{equation}
Indeed, using the resolution~\eqref{eq:kernels} for all~\(\cK_k^{(2n-k- t,\, \beta)}\), we easily deduce the inclusion~$\tilde \cA_{t + 1}(t+1) \subset \cD_t$ from~\eqref{goal:K}, see Corollary~\ref{corollary:odd}.

To prove the inclusion~\eqref{goal:K} we use secondary staircase complexes. Specifically, we begin by showing that for any weights~\(\alpha \in \YD^{2n-k}_1 \times \YD^{2n-k- t+1}_{t - 1}\) and~\(\mu \in \YD^{a_{t}}_{n - k}\), the corresponding tensored staircase complex satisfies
	\begin{equation*}
		\cH_k^{(2n-k,\, \alpha)}(t+1)\otimes \Sigma_\Sp^{\mu} \cS_{2(n-k)} \in \cD_t,
	\end{equation*}  
see Lemma~\ref{corollary:secondary_in_D}. Next, we observe that some terms of this complex already belong to~\(\cD_t\). The remaining terms form a subcomplex, which we denote by $\tau\big(\cH_k^{(2n-k,\, \alpha)}\big)$;
see Definition~\ref{def:tau}.
Then, using Lemma~\ref{corollary:secondary_in_D}, we show that the truncated complex also lies in $\cD_t$, that is,
\begin{equation} \label{tau-H_in}
   \tau\big(\cH_k^{(2n-k,\, \alpha)}\big)(t+1)\otimes \Sigma_\Sp^{\mu} \cS_{2(n-k)} \in \cD_t, 
\end{equation}
see Lemma~\ref{lemma:where}. 

Then we consider the following subcategory
\begin{equation*}
   \cR_t\coloneqq \left\langle \cK_k^{(2n-k- d,\, \beta)}\mid \beta \in \YD^{2n-k- t}_{t},\, d\in [1,t] \right\rangle \subset \Db(\IGr(k,V)). 
\end{equation*}
To prove the inclusion~\eqref{goal:K}, it is enough to show that the subcategory $\cR_t$ is generated by the following collection of truncated staircase complexes:
\begin{equation} \label{set-tau-terms}
  \cR_t=\left\langle \tau\big(\cH_k^{(2n-k,\, \alpha)}\big)\mid \alpha \in \YD^{2n-k-t+1}_1 \times \YD^{2n-k- t+1}_{t-1}, \, \alpha_1>0 \right\rangle. 
\end{equation}
Indeed, the equality~\eqref{set-tau-terms} together with~\eqref{tau-H_in} implies that
$$\cR_t(t+1)\otimes \Sigma_\Sp^{\mu} \cS_{2(n-k)} \subset \cD_t.$$ In particular, this proves the inclusion~\eqref{goal:K}.

To establish~\eqref{set-tau-terms}, we observe that the set of terms appearing in the truncated staircase complexes on the right-hand side of~\eqref{set-tau-terms} coincides with the generating set of $\cR_t$:
\begin{equation} \label{generating-set-K}
   \{\cK_k^{(2n-k- d,\, \beta)}\mid \beta \in \YD^{2n-k- t}_{t},\, d\in [1,t] \}. 
\end{equation}
This immediately implies the inclusion 
\begin{equation*}
\left\langle \tau\big(\cH_k^{(2n-k,\, \alpha)}\big)\mid \alpha \in \YD^{2n-k-t+1}_1 \times \YD^{2n-k- t+1}_{t-1}, \, \alpha_1>0 \right\rangle \subset \cR_t. 
\end{equation*}

Thus, to prove \eqref{set-tau-terms}, it remains to show that
\begin{equation} \label{intro:goal:Kin tau}
\cK_k^{(2n-k- d,\, \beta)}\in\left\langle \tau\big(\cH_k^{(2n-k,\, \alpha)}\big)\mid \alpha \in \YD^{2n-k-t+1}_1 \times \YD^{2n-k- t+1}_{t-1}, \, \alpha_1>0 \right\rangle 
\end{equation}
for all vector bundles $\cK_k^{(2n-k- d,\, \beta)}$ from \eqref{generating-set-K}. 

To achieve this, we introduce a special integer-valued function~$f$, defined on the generating set \eqref{generating-set-K}. We show that, when restricted to the terms of each complex~$\tau\big(\cH_k^{(2n-k,\, \alpha)}\big)$ from \eqref{intro:goal:Kin tau}, the function~\(f\) attains a unique minimum; see Lemma~\ref{lemma:minimal_f_in_H}. Then, using Corollary~\ref{corollary:lambda_beta}, we prove that each bundle~\(\cK_k^{(2n-k-d,\, \beta)}\) in~\eqref{generating-set-K} appears as a term in some truncated staircase complex~\(\tau(\cH_k^{{(2n-k,\, \alpha)}})\) from \eqref{intro:goal:Kin tau}, and that the minimum of~\(f\) on \(\tau(\cH_k^{{(2n-k,\, \alpha)}})\) is attained precisely at \(\cK_k^{(2n-k-d,\, \beta)}\). This observation allows us to establish~\eqref{intro:goal:Kin tau} by descending induction; see Proposition~\ref{prop:<H>=<K>1}. 
\begin{remark}\label{rem:Lagr_case}
	The same approach can be adapted to prove Theorem~\ref{lemma:fullness_induction_step} in the Lagrangian case for any~\(1 \le t \le k-1\). 
	More precisely, in the following results exactly the same arguments work for~\(k=n\), if we use the following conventions:~\(\YD^{a_t+1}_{0} = \YD^{a_t}_0=\{0\}\) and~\(\Sigma_\Sp^{0} \cQ_{n}^\vee /\cU_n \coloneq \cO\). This provides an alternative proof of Theorem~\ref{theo:intro} in the Lagrangian case that is different from~\cite{F19}. 
\end{remark}
\begin{lemma} \label{corollary:secondary_in_D}
	Let~\(1 \le t \le k-1\) be such that~\(k - t\) is odd. Let~\(\alpha \in \YD^{2n-k}_1 \times \YD^{2n-k- t+1}_{t - 1}\) and~\(\mu \in \YD^{a_{t}}_{n - k}\). 
	Then 
	\begin{equation*}
		\cH_k^{(2n-k,\, \alpha)}(t+1)\otimes \Sigma_\Sp^{\mu} \cS_{2(n-k)} \in \cD_t.
	\end{equation*}  
\end{lemma}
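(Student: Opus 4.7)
The plan is to deduce the statement directly from Lemma~\ref{lemma:H_lies_in_blocks} applied with auxiliary parameter~$l = t - 1$, followed by a twist by~$\cO(t)$ to land inside~$\cD_t$. Since the heavy lifting (the construction of secondary staircase complexes and the block-membership statement for them) is already carried out in Theorem~\ref{theo:complexes_for_F^*,1^b} and Lemma~\ref{lemma:H_lies_in_blocks}, the only remaining work here is to verify that the Young-diagram constraints in our hypotheses match those required for the application.

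Concretely, I would first note that~$1 \le t \le k - 1$ gives~$l = t - 1 \in [0, k - 2]$, matching the range in Lemma~\ref{lemma:H_lies_in_blocks}, and that the hypothesis~$\alpha \in \YD^{2n-k}_1 \times \YD^{2n-k-t+1}_{t-1}$ is literally the~$l = t - 1$ instance of the shape constraint on~$\alpha$ imposed there. The single subtle point --- and the only place where the parity hypothesis on~$k - t$ enters --- is the comparison of symplectic widths: a short computation with the floor function yields
\begin{equation*}
a_{t-1} \;=\; \Bigl\lfloor \tfrac{k-t+1}{2} \Bigr\rfloor \;=\; a_t + 1
\end{equation*}
whenever~$k - t$ is odd, so~$\YD^{a_{t-1} - 1}_{n-k} = \YD^{a_t}_{n-k}$, and the hypothesis~$\mu \in \YD^{a_t}_{n-k}$ is exactly what Lemma~\ref{lemma:H_lies_in_blocks} needs. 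Had~$k - t$ been even we would instead have~$a_{t-1} = a_t$ and this matching would fail, consistently with the even case having been handled separately via the symplectic staircase complexes.

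With the indices matched, Lemma~\ref{lemma:H_lies_in_blocks} immediately yields~$\cH_k^{(2n-k,\,\alpha)}(1) \otimes \Sigma_\Sp^{\mu} \cS_{2(n-k)} \in \cA_{t-1}$. Tensoring by~$\cO(t)$ and using the definition~\eqref{eq:def:D_t} of~$\cD_t$ for~$t \ge 1$ then gives~$\cH_k^{(2n-k,\,\alpha)}(t+1) \otimes \Sigma_\Sp^{\mu} \cS_{2(n-k)} \in \cA_{t-1}(t) \subset \cD_t$, which is the desired conclusion. The hard work --- understanding how the secondary staircase complexes sit inside the exceptional blocks --- has been absorbed into Lemma~\ref{lemma:H_lies_in_blocks}, so no genuine obstacle is anticipated at this step; the argument is essentially a matter of aligning combinatorial data.
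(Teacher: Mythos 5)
Your proposal is correct and matches the paper's proof exactly: the paper also applies Lemma~\ref{lemma:H_lies_in_blocks} with $l=t-1$, using the parity of $k-t$ to identify $a_{t-1}-1=a_t$, and then twists by $\cO(t)$ to land in $\cA_{t-1}(t)\subset\cD_t$. You have simply spelled out the index-matching that the paper leaves implicit.
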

\begin{proof}
    If~\(k - t\) is odd then~$a_{t-1}-1=a_t=a_{t+1}$,
    so the claim follows from Lemma~\ref{lemma:H_lies_in_blocks} for~$l=t-1$.
\end{proof} 
\begin{definition} \label{def:tau}
    Let~\(1 \le t \le k-1\) and~\(\alpha \in \YD_{t}^{2n-k}\). 
    We define the complex $\tau\big(\cH_k^{(2n-k,\, \alpha)}\big)$ as the piece of the secondary staircase complex $\cH^{(2n-k,\, \alpha)}_k$ formed by $\cK_k^{(2n-k - b^i(\alpha),\, \alpha^{(i)})}$ satisfying~\(1 \le b^i \le t\). 
\end{definition}
\begin{example}\label{ex:tau-rectangle}
Let $0 \le w\le 2n-k-t$, and set~\(\alpha = (w+1)^t \in \YD_t\). Then, by~\eqref{eq:def-bbw}, we compute~\[\alpha^{(0)} = \alpha, \quad \alpha^{(1)} = (w)^t,\quad  \text{and} \quad \alpha^{(2)}= ((w)^{t-1},w-1).\] Applying~\eqref{eq:def:b}, we find \[b^0(\alpha)=0,\qquad  b^1(\alpha)=t,\qquad \text{and} \qquad  b^{i}(\alpha)>t\quad  \text{for all}\quad i\ge 2.\]
Therefore, the truncated staircase complex $ \tau(\cH_k^{(2n-k,\,\alpha)})$ consists of a single term: 
\begin{equation*}
     \tau(\cH_k^{(2n-k,\,\alpha)})=\cK_k^{(2n-k-d,\,(w)^t)}.
\end{equation*}
\end{example}
\begin{lemma} \label{lemma:where}
    Let~\(1 \le t \le k-1\) be such that~\(k - t\) is odd,~\(\alpha \in \YD^{2n-k-t+1}_1 \times \YD^{2n-k-t+1}_{t - 1}\), and~\(\mu \in \YD^{a_{t}}_{n - k}\). 
Then
    \begin{equation*}
	\tau\big(\cH_k^{(2n-k,\, \alpha)}\big)\otimes \Sigma_\Sp^{\mu} \cS_{2(n-k)} (t+1)\in\cD_t.
    \end{equation*}
\end{lemma}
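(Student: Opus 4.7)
The proof combines Lemma~\ref{corollary:secondary_in_D}, which places the full secondary staircase complex $\cH_k^{(2n-k,\alpha)}(t+1)\otimes \Sigma_\Sp^{\mu}\cS_{2(n-k)}$ in $\cD_t$, with an analysis of the terms that are removed when passing to the truncated subcomplex $\tau(\cH_k^{(2n-k,\alpha)})$. I would split $\cH = \cH_k^{(2n-k,\alpha)}$ into three consecutive pieces according to the value of $b^i(\alpha)$: the leftmost part $T_L$ consisting of terms with $b^i > t$, the middle part $\tau$ consisting of terms with $1 \le b^i \le t$, and the rightmost single term $T_R = \cK_k^{(2n-k,\alpha^{(0)})}$ at $b^0 = 0$ (present precisely when $\alpha_1\notin \bV(\bar\alpha)$). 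These pieces fit into two distinguished triangles in $\Db(\IGr(k,V))$ arising from the stupid truncations of the complex of vector bundles. Tensoring with $\Sigma_\Sp^{\mu}\cS_{2(n-k)}(t+1)$ and invoking Lemma~\ref{corollary:secondary_in_D}, since $\cD_t$ is triangulated it suffices to show that $T_R(t+1)\otimes \Sigma_\Sp^{\mu}\cS_{2(n-k)}$ and every term of $T_L(t+1)\otimes \Sigma_\Sp^{\mu}\cS_{2(n-k)}$ individually lie in $\cD_t$.

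For the right term $T_R$: the identification~\eqref{eq:K^{2n-k,alpha}=U(-1)} applied to $(2n-k,\alpha^{(0)})$, combined with Lemma~\ref{lemma:alpha^i_in_YD} (which ensures $\alpha^{(0)}\in \YD^{2n-k-t+1}_{k-1}$), gives $T_R \simeq \Sigma^{\alpha^{(0)}}\cU_k(-1)$. Twisting by $\cO(t+1)\otimes \Sigma_\Sp^{\mu}\cS_{2(n-k)}$ produces $\Sigma^{\alpha^{(0)}}\cU_k(t)\otimes \Sigma_\Sp^{\mu}\cS_{2(n-k)}$; using the dualities $\Sigma^{\alpha}\cV = \Sigma^{-\alpha}\cV^{\vee}$ and $\Sigma^{\alpha-m}\cV = \Sigma^{\alpha}\cV\otimes(\det\cV^{\vee})^{m}$, this rewrites as an irreducible $\bG$-equivariant bundle with $\bL$-weight $(t,\,t-\alpha^{(0)}_{k-1},\,\ldots,\,t-\alpha^{(0)}_1;\,\mu)$. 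A case analysis on the shape of $\alpha^{(0)}$ (especially whether $\alpha^{(0)}_1 \le t$), possibly combined with a filtration in the spirit of Lemma~\ref{lemma:filtration_on_flags}, then places this bundle in $\cD_t$.

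For the left part $T_L$: each term $\cK_k^{(2n-k-b^i,\,\alpha^{(i)})}$ with $b^i > t$ admits the presentation~\eqref{eq:cokernels} by vector bundles of the form $\Lambda^{b^j}V^{\vee}\otimes \Sigma^{(2n-k-b^i,\,\alpha^{(i)})^{(j)}}\cU_k^{\vee}$. The explicit formula~\eqref{explicit_formula_i} combined with the constraint $b^i > t$ and Lemma~\ref{lemma:alpha^i_in_YD} forces the resulting $\cU_k^{\vee}$-weights, after a further twist by $\cO(t+1)$, into $\YD^{2n-k-t-1}_{t+1}$, placing them in $\cA_{t+1}(t+1) \subset \cD_t$ once tensored with $\Sigma_\Sp^{\mu}\cS_{2(n-k)}$ (using Lemma~\ref{lemma:LR rule} to control the symplectic factor).

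The main obstacle is the detailed weight bookkeeping. In particular, for $T_R$ the structure of $\alpha^{(0)}$ depends nontrivially on $\vv(\alpha)$ and $\dd(\alpha,0)$ through~\eqref{explicit_formula_i}, so identifying the dualized weight with an element of $\cD_t$ may well require an auxiliary filtration argument rather than a single direct identification, and keeping track of the symplectic factor via Lemma~\ref{lemma:LR rule} throughout adds an extra layer of bookkeeping.
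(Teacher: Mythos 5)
Your high-level strategy --- split $\cH_k^{(2n-k,\alpha)}$ at the truncation boundaries, invoke Lemma~\ref{corollary:secondary_in_D} for the full complex, and reduce to placing the removed terms in $\cD_t$ --- matches the paper's proof exactly. However, both of your case analyses for the removed terms contain concrete errors that would block the argument. For $T_R$: you applied~\eqref{eq:K^{2n-k,alpha}=U(-1)} as displayed to get $\Sigma^{\alpha^{(0)}}\cU_k(-1)$, but the correct identification (which the paper in fact uses inside its proof, even though the displayed formula drops the dual) is $\cK_l^\alpha \simeq \Sigma^{\bar\alpha}\cU_l^{\vee}(-1)$; this is visible from the Fonarev staircase, whose leftmost term is $\Sigma^{\bar\alpha}\cU_k^{\vee}(-1)$. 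Your version, after the twist by $\cO(t+1)$, produces a $\bG$-equivariant bundle whose $\cU_k^{\vee}$-weight has all entries $\le t$, which is on the wrong side of the cone: the generators of $\tilde\cA_t(t)$ have all entries $\ge t$, so it is not a generator of anything in $\cD_t$, and no amount of ``case analysis on the shape of $\alpha^{(0)}$'' fixes that without first restoring the $\vee$. (You also write $\alpha^{(0)}\in\YD^{2n-k-t+1}_{k-1}$; the correct length is $t$.) With the dual in place the claim is one line: $\alpha^{(0)}\in\YD_t^{2n-k-t+1}$ by Lemma~\ref{lemma:alpha^i_in_YD}, so $\Sigma^{\alpha^{(0)}}\cU_k^\vee(t)\otimes\Sigma_\Sp^{\mu}\cS_{2(n-k)}\in\tilde\cA_t(t)$.

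For $T_L$: you cite~\eqref{eq:cokernels}, but its terms are $\Sigma^{\beta^{(j)}}\cU_k^{\vee}$ with $j > \beta_1 + \vv(\beta)$, and those $\beta^{(j)}$ have a strictly negative entry $\beta_1 - j + \vv(\beta)$, so they do not land in $\YD_{t+1}^{2n-k-t-1}$. The presentation you need is the other one,~\eqref{eq:kernels}, whose terms range over $0\le j\le\beta_1 + \vv(\beta)$; there Lemma~\ref{lemma:alpha^i_in_YD} can be applied twice --- first to $\alpha$ in the $b^i(\alpha)\ge 1$ case to deduce $\bar\beta=\alpha^{(i)}\in\YD_t^{2n-k-t}$, then to $\beta$ with $\beta_1 = 2n-k-b^i(\alpha)\le 2n-k-t-1$ --- to get $\beta^{(j)}\in\YD_{t+1}^{2n-k-t-1}$ and hence a generator of $\cA_{t+1}(t+1)$ after the twist. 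The invocation of Lemma~\ref{lemma:LR rule} for the symplectic factor is unnecessary: $\Sigma_\Sp^{\mu}\cS_{2(n-k)}$ with $\mu\in\YD^{a_t}_{n-k}$ is carried along unchanged and already matches the allowed symplectic weights of the generators.
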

\begin{proof}
    By definition~\eqref{eq:cH=k^bullet}, the complex~\(\cH_k^{(2n-k,\, \alpha)}\) consists of the terms
    \[
        \{\cK_k^{(2n-k-b^i(\alpha),\, \alpha^{(i)})}\}_{0 \le i \le \alpha_1+\vv(\alpha)}.
    \]
    We have~\(\alpha^{(i)} \in \YD_t^{2n-k-t+1}\) for all $0 \le i \le \alpha_1+\vv(\alpha)$ by Lemma~\ref{lemma:alpha^i_in_YD}. 
    
By Definition~\ref{def:tau} of~$\tau(\cH_k^{(2n-k,\, \alpha)})$ and Lemma~\ref{corollary:secondary_in_D}, it is enough to show that for all~$i$ satisfying~$b^i(\alpha)>t$ or~$b^i(\alpha)=0$, we have
    \begin{equation*}
	\cK_k^{(2n-k-b^i(\alpha),\, \alpha^{(i)})}(t+1)\otimes \Sigma_\Sp^{\mu} \cS_{2(n-k)}\in \cD_t=\langle \cA_{t-1}(t),\, \tilde \cA_t(t),\, \cA_{t+1}(t+1) \rangle. 
    \end{equation*}
    
Recall that
    \begin{equation*}
        \begin{aligned}
            \tilde \cA_{t}(t) &= \left\langle \Sigma^\alpha \cU^{\vee}_k(t)\phantom{{}+1} \otimes \Sigma_\Sp^\mu \cS_{2(n-k)} \mid \alpha \in \YD_{t}^{2n-k-t+1};\, \mu\in \YD_{n-k}^{a_t}\right\rangle,\\
            \cA_{t + 1}(t+1) &= \left\langle \Sigma^\alpha \cU^{\vee}_k(t+1) \otimes \Sigma_\Sp^\mu \cS_{2(n-k)} \mid \alpha \in \YD_{t+1}^{2n-k-t-1};\, \mu\in \YD_{n-k}^{a_t}\right\rangle.
        \end{aligned}    
    \end{equation*}
       
Suppose that~$b^i(\alpha)=0$. Then, by~\eqref{eq:K^{2n-k,alpha}=U(-1)} for~\(l=k\), we have the isomorphism~\[\cK_k^{(2n-k-b^i(\alpha),\, \alpha^{(i)})} \simeq \Sigma^{\alpha^{(i)}} \cU_k^{\vee}(-1),\] and hence we deduce
    \begin{equation*}
        \cK_k^{(2n-k-b^i(\alpha),\, \alpha^{(i)})}(t+1)\otimes \Sigma_\Sp^{\mu} \cS_{2(n-k)}\simeq \Sigma^{\alpha^{(i)}}\cU_k^{\vee} (t)\otimes 	\Sigma_\Sp^{\mu} \cS_{2(n-k)}\in \tilde \cA_{t}(t)\subset \cD_t \qquad \text{if} \quad b^{i}(\alpha)=0.
    \end{equation*}
    
   Now assume that~$b^i(\alpha)>t$, and set $\beta=(2n-k-b^i(\alpha),\, \alpha^{(i)})$. Consider the resolution~\eqref{eq:kernels} of~$\cK_k^{\beta}$; its terms have the form
    \begin{equation*}
        \Lambda^{b^j(\beta)}V\otimes \Sigma^{\beta^{(j)}}\cU^{\vee}_k \qquad \text{for} \quad j=0,\dots, \beta_1+\vv(\beta). 
    \end{equation*}
    Our goal is to show that~\(\beta^{(j)} \in \YD_{t+1}^{2n-k-t-1}\). 
    
    First, by Lemma~\ref{lemma:alpha^i_in_YD} for~\(w=2n-k-t+1\) we get~\(\bar \beta = \alpha^{(i)} \in \YD_{t}^{2n-k-t}\), since~\(b^i(\alpha)>t\ge 1\). 
    
    Next, since~\(\bar \beta \in \YD_{t}^{2n-k-t}\) and~\(\beta_1 = 2n-k-b^i(\alpha) \le 2n-k-t-1\) by assumptions on~\(\beta_1\) and~\(b^i\), we verify again by Lemma~\ref{lemma:alpha^i_in_YD} that~\(\beta^{(j)} \in \YD_{t+1}^{2n-k-t-1}\) for all~\(0 \le j \le \beta_1+\vv(\beta)\). 
    Thus,
    \begin{equation*}
        \cK_k^{\beta}(t+1)\otimes \Sigma_\Sp^{\mu} \cS_{2(n-k)}\in \cA_{t+1}(t+1)\subset \cD_t \qquad \text{if}\quad b^i(\alpha)>t. \qedhere
    \end{equation*}
\end{proof}
Consider now the subcategory
\begin{equation} \label{def:R_t}
   \cR_t\coloneqq \left\langle \cK_k^{(2n-k- d,\, \beta)}\mid \beta \in \YD^{2n-k- t}_{t},\, d\in [1,t] \right\rangle \subset \Db(\IGr(k,V)). 
\end{equation}
We aim to prove~\eqref{set-tau-terms}, namely, that the subcategory~$\cR_t$ is generated by a specific collection of truncated staircase complexes. To show this, we introduce the following definition. 
\begin{definition} \label{def:marked}
	A \textbf{marked weight} is a pair of a $\GL_t$-dominant weight and an index~\(d \in [1, t]\). For a marked weight~\((\beta, d)\), where $\beta=(\beta_1,\ldots,\beta_t)\in \Y_t$, we define an integer-valued function 
	\begin{equation}\label{eq:def_f}
		f\colon \Y_t\times \mathbb{Z}\to \mathbb{Z}, \qquad  f(\beta, d) \coloneqq d^2 + \sum_{i=1}^{t} 2i\beta_i.
	\end{equation}
\end{definition}
To each term~$\cK_k^{(2n-k-b^i(\alpha),\, \alpha^{(i)})}$ of the complex~$\tau(\cH^{(2n-k,\, \alpha)}_k)$ we assign the marked weight~$(\alpha^{(i)},b^i(\alpha))$. 
Thus, the function~$f$ is defined on the set of terms of~$\tau(\cH^{(2n-k,\, \alpha)}_k)$:
\begin{equation} \label{f:defined}
	f\big(\cK^{(2n-k-b^i(\alpha),\, \alpha^{(i)})}_t\big)\coloneqq f\big(\alpha^{(i)},b^i(\alpha)\big).
\end{equation}

The next lemma is proved in Appendix~\ref{subsection:appendix:proof}. 
\begin{lemma}\label{lemma:minimal_f_in_H}
	Let~\(\alpha \in \YD_1 \times \YD_{t-1}\). 
	Consider the set of marked weights
	\begin{equation} \label{alphaset}
		\{(\alpha^{(i)}, b^i(\alpha))\}_{1\le b^i(\alpha) \le t}   
	\end{equation} 
	corresponding to $\tau\big(\cH^{(2n-k,\, \alpha)}_k\big)$. 
	There exists a unique index~\(i_{\alpha}\in [1,t]\) such that~$b^{i_{\alpha}}(\alpha)=\dd(\alpha,i_{\alpha})$. 
	The strictly smallest value of the function $f$ on the set~\eqref{alphaset} is attained at $(\alpha^{(i_{\alpha})}, b^{i_\alpha}(\alpha))$. 
\end{lemma}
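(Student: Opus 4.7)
The plan is to reduce $f$ evaluated on the marked weights from the truncation to a transparent expression depending only on $i$ and $d_i \coloneqq \dd(\alpha, i)$, and then to separate the minimization into a quadratic part (vanishing at a single value of $i$) and a piece controlled by $d_i$. Combining the explicit formula~\eqref{explicit_formula_i} for $\alpha^{(i)}$ with the relation $b^i(\alpha) - d_i = i - \vv(\alpha) - 1$ coming from~\eqref{eq:b=i-v+d}, a direct expansion of $\sum_{j=1}^{t} j\,\alpha^{(i)}_j$ by splitting the sum at $j = d_i$ yields, after elementary rearrangement,
\[
f(\alpha^{(i)}, b^i(\alpha)) \;=\; (i - \vv(\alpha) - 1)^2 \,+\, h(d_i) \,+\, 2C, \qquad h(d) \;\coloneqq\; -d + 2\sum_{j=2}^{d}(\alpha_1 - \alpha_j),
\]
where $C \coloneqq \sum_{j=1}^{t} j\,\alpha_j$ is independent of $i$.

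The squared term vanishes precisely for $i = \vv(\alpha) + 1$, which proves both existence and uniqueness of $i_\alpha$; since $\vv(\alpha) \in [0, t-1]$ by~\eqref{eq:def-vv}, one has $i_\alpha \in [1, t]$ and $b^{i_\alpha}(\alpha) = d_{i_\alpha} \in [1, t]$, so the corresponding marked weight belongs to the set in~\eqref{alphaset}. Writing $D \coloneqq d_{i_\alpha}$ and evaluating~\eqref{eq:d-ineq} at $i = i_\alpha$, one identifies $D$ as the largest index $j \in [1,t]$ with $\alpha_j \ge \alpha_1$ (with $D = 1$ if no such $j \ge 2$ exists); in particular $\alpha_j \ge \alpha_1$ for $j \le D$ and $\alpha_j < \alpha_1$ for $j > D$.

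Next, I would establish the key monotonicity
\[
h(d) - h(D) \;\ge\; |d - D| \qquad \text{for every } d \in [1, t].
\]
Indeed, the forward difference $h(d+1) - h(d) = -1 + 2(\alpha_1 - \alpha_{d+1})$ is $\ge +1$ whenever $d \ge D$ (because $\alpha_{d+1} < \alpha_1$) and $\le -1$ whenever $d+1 \le D$ (because $\alpha_{d+1} \ge \alpha_1$), so telescoping from $D$ either up to $d > D$ or down to $d < D$ gives the claim.

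Combining these pieces, for any $i \ne i_\alpha$ in the index set of~\eqref{alphaset},
\[
f(\alpha^{(i)}, b^i(\alpha)) - f(\alpha^{(i_\alpha)}, b^{i_\alpha}(\alpha)) \;=\; (i - i_\alpha)^2 + \bigl(h(d_i) - h(D)\bigr) \;\ge\; 1 + 0 \;>\; 0,
\]
which is the desired strict minimality. The only step requiring real care is the reduction of $f$ to the two-piece form, since $\alpha^{(i)}$ is defined piecewise at $j = d_i$ and the cross-terms $d\alpha_1$, $-\sum_{j=2}^{d}\alpha_j$, $-di + d\vv(\alpha)$ must be collected cleanly against $(b^i)^2$; once that bookkeeping is in place, the remaining inequality is purely combinatorial, and I foresee no genuine obstacle.
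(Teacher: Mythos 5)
Your proof is correct, and it is a genuinely different route from the paper's. The paper computes the first forward difference
\[
f\bigl(\alpha^{(i+1)}, b^{i+1}(\alpha)\bigr) - f\bigl(\alpha^{(i)}, b^{i}(\alpha)\bigr) = \bigl(b^{i+1}(\alpha) - b^i(\alpha)\bigr)\bigl(1 + 2(b^i(\alpha) - \dd(\alpha,i))\bigr),
\]
derived combinatorially from the observation that $\alpha^{(i+1)}$ is obtained from $\alpha^{(i)}$ by removing one box from each of the rows $\dd(\alpha,i),\ldots,\dd(\alpha,i+1)$ of the outer strip; combined with $b^i(\alpha) - \dd(\alpha,i) = i - \vv(\alpha) - 1$, this shows the difference is strictly negative for $i < i_\alpha$ and strictly positive for $i \ge i_\alpha$, yielding strict unimodality directly. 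You instead derive the closed form $f\bigl(\alpha^{(i)}, b^i(\alpha)\bigr) = (i - i_\alpha)^2 + h(d_i) + 2C$ and then establish unimodality of $h$ separately, with minimum at $D = d_{i_\alpha}$. I checked the algebra: expanding $\sum_j 2j\,\alpha^{(i)}_j$ by splitting at $j = d_i$ and writing $(b^i)^2 = (i-\vv(\alpha)-1)^2 + 2d_i(i-\vv(\alpha)-1) + d_i^2$, the cross-terms $2d_i(i-\vv(\alpha)-1)$ and $-2d_i(i-\vv(\alpha))$ collapse to $-2d_i$, and $d_i^2 - d_i(d_i-1) = d_i$, giving precisely $h(d_i) = -d_i + 2\sum_{j=2}^{d_i}(\alpha_1 - \alpha_j)$ as you claim. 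Both arguments ultimately rest on the same identity~\eqref{eq:b=i-v+d}; the paper's differencing proof is shorter, but your decomposition is conceptually cleaner, separates the dependence on $i$ and on $d_i$, and yields the slightly stronger quantitative bound $f\bigl(\alpha^{(i)}, b^i(\alpha)\bigr) - f\bigl(\alpha^{(i_\alpha)}, b^{i_\alpha}(\alpha)\bigr) \ge (i - i_\alpha)^2 + |d_i - D|$.
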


We are now ready to prove the key proposition.

\begin{proposition} \label{prop:<H>=<K>1}
Let~\(1 \le t \le k-1\). We have
\begin{equation*}
\cR_t=\left\langle \tau\big(\cH_k^{(2n-k,\, \alpha)}\big)\mid \alpha \in \YD^{2n-k-t+1}_1 \times \YD^{2n-k- t+1}_{t-1}, \, \alpha_1>0 \right\rangle.
\end{equation*}
\end{proposition}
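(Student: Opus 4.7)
The plan is to prove the two inclusions separately. The inclusion ``$\supseteq$'' is essentially formal and the inclusion ``$\subseteq$'' uses descending induction on the function $f$ from Definition~\ref{def:marked}.

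For the easy inclusion, fix $\alpha \in \YD^{2n-k-t+1}_1 \times \YD^{2n-k-t+1}_{t-1}$ with $\alpha_1>0$. By Definition~\ref{def:tau}, the terms of $\tau\big(\cH_k^{(2n-k,\,\alpha)}\big)$ are of the form $\cK_k^{(2n-k-b^i(\alpha),\,\alpha^{(i)})}$ with $1\le b^i(\alpha)\le t$. Since $\alpha\in \YD^{2n-k-t+1}_1 \times \YD^{2n-k-t+1}_{t-1}$ and $b^i(\alpha)\ge 1$, Lemma~\ref{lemma:alpha^i_in_YD} (applied with $w=2n-k-t+1$) yields $\alpha^{(i)}\in \YD_t^{2n-k-t}$. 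Hence every term of $\tau\big(\cH_k^{(2n-k,\,\alpha)}\big)$ is a generator of $\cR_t$, proving that the right-hand side is contained in $\cR_t$.

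For the reverse inclusion ``$\subseteq$'', denote by $\cR_t'$ the subcategory on the right-hand side. We must show that $\cK_k^{(2n-k-d,\,\beta)}\in \cR_t'$ for every $\beta\in \YD_t^{2n-k-t}$ and every $d\in [1,t]$. We argue by descending induction on the integer $f(\beta,d)=d^2+\sum_{i=1}^{t}2i\beta_i$, which is bounded above by $t^2+t(t+1)(2n-k-t)$ on the set of marked weights with $\beta\in \YD_t^{2n-k-t}$ and $d\in [1,t]$. Given such a pair $(\beta,d)$, apply Corollary~\ref{corollary:lambda_beta} to obtain a unique weight
\[
\varrho=(\beta_d+1;\,\beta_1+1,\dots,\beta_{d-1}+1,\beta_{d+1},\dots,\beta_t)\in \YD^{2n-k-t+1}_1 \times \YD^{2n-k-t+1}_{t-1}
\]
together with an index $i_0$ satisfying $\varrho^{(i_0)}=\beta$ and $b^{i_0}(\varrho)=\dd(\varrho,i_0)=d$. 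Since $\varrho_1=\beta_d+1>0$, the complex $\tau\big(\cH_k^{(2n-k,\,\varrho)}\big)$ belongs to the generating family of $\cR_t'$, and $\cK_k^{(2n-k-d,\,\beta)}$ appears as one of its terms.

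By Lemma~\ref{lemma:minimal_f_in_H}, among the marked weights attached to the terms of $\tau\big(\cH_k^{(2n-k,\,\varrho)}\big)$, the value of $f$ is strictly minimized at $(\varrho^{(i_0)},b^{i_0}(\varrho))=(\beta,d)$. Therefore every other term $\cK_k^{(2n-k-b^j(\varrho),\,\varrho^{(j)})}$ of $\tau\big(\cH_k^{(2n-k,\,\varrho)}\big)$ has strictly larger $f$-value; by the ``$\supseteq$'' inclusion already established, such a term is a generator of $\cR_t$, i.e.\ $\varrho^{(j)}\in \YD_t^{2n-k-t}$ with $1\le b^j(\varrho)\le t$, so the induction hypothesis applies and places it in $\cR_t'$. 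Since $\tau\big(\cH_k^{(2n-k,\,\varrho)}\big)$ is itself in $\cR_t'$, we conclude $\cK_k^{(2n-k-d,\,\beta)}\in \cR_t'$, which closes the induction.

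The main obstacle is Lemma~\ref{lemma:minimal_f_in_H}, whose combinatorial proof is deferred to the appendix; the weights $f(\beta,d)=d^2+\sum 2i\beta_i$ are designed precisely so that at each step of the staircase the ``heavier tail'' forces the minimum to be attained uniquely at the index $i_0$ with $b^{i_0}(\varrho)=\dd(\varrho,i_0)$, which is what makes the descending induction terminate exactly at the desired generator.
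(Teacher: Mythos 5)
Your proof is correct and follows essentially the same strategy as the paper: establish the ``$\supseteq$'' inclusion via Lemma~\ref{lemma:alpha^i_in_YD}, then prove ``$\subseteq$'' by descending induction on the potential function $f$, using Corollary~\ref{corollary:lambda_beta} to produce the relevant $\varrho$ and Lemma~\ref{lemma:minimal_f_in_H} to certify that the target bundle carries the strictly minimal $f$-value among the terms of $\tau\big(\cH_k^{(2n-k,\,\varrho)}\big)$. The only cosmetic difference is that the paper spells out the base case ($\beta=(2n-k-t)^t$, $d=t$) separately via the rectangle computation of Example~\ref{ex:tau-rectangle}, whereas you fold it into the general step — which is fine, since when $f(\beta,d)$ is maximal the set of ``other terms'' is forced to be empty, so the induction hypothesis is vacuously satisfied and the argument still closes.
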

\begin{proof}
By definition~\eqref{def:R_t}, the subcategory $\cR_t$ is generated by the vector bundles
\begin{equation} \label{set:K-d}
   \{\cK_k^{(2n-k- d,\, \beta)}\mid \beta \in \YD^{2n-k- t}_{t},\, d\in [1,t] \}.
   \end{equation}
The inclusion 
\begin{equation*}
\left\langle \tau\big(\cH_k^{(2n-k,\, \alpha)}\big)\mid \alpha \in \YD^{2n-k-t+1}_1 \times \YD^{2n-k- t+1}_{t-1}, \, \alpha_1>0 \right\rangle \subset \cR_t. 
\end{equation*}
follows immediately from Definition~\ref{def:tau} of $\tau\big(\cH_k^{(2n-k,\, \alpha)}\big)$ and the observation that~\(\alpha^{(i)} \in \YD_t^{2n-k-t}\) whenever~$1 \le b^i \le t$ by Lemma~\ref{lemma:alpha^i_in_YD} for~\(w=2n-k-t+1\). 

Thus, to prove the statement, it suffices to show that for all weights~$\beta \in \YD^{2n-k- t}_{t}$ and for all integers~$d\in [1,t]$, we have
\begin{equation} \label{goal:set:K-d}
   \cK_k^{(2n-k- d,\, \beta)} \in \left\langle \tau\big(\cH_k^{(2n-k,\, \alpha)}\big)\mid \alpha \in \YD^{2n-k-t+1}_1 \times \YD^{2n-k- t+1}_{t-1}, \, \alpha_1>0 \right\rangle.
   \end{equation}
   
The function $f$ is defined on the set~\eqref{set:K-d} by the rule
$$f(\cK^{(2n-k-d,\,\beta)}_t)\coloneqq f(\beta,d)= d^2 + \sum_{i=1}^{t} 2i\beta_i.$$
Since the set~\(\YD^{2n-k- t}_{t}\times [1,t]\) is finite, the function~$f$ is bounded both above and below on~\eqref{set:K-d}. We prove~\eqref{goal:set:K-d} by decreasing induction on~$f(\beta,d)$.

For the base case, consider the maximum value of $f$, which corresponds to
$$\beta=(2n-k-t)^t\in \YD_t, \quad\text{and}\quad d=t.$$
Set $\alpha=(2n-k-t+1)^t\in \YD_t$.
By Example~\ref{ex:tau-rectangle}, for this choice of $\alpha$ we obtain: 
\begin{equation*}
\cK_t^{(2n-k-t,\,\alpha^{(1)})} (t + 1) \otimes \Sigma_\Sp^{\mu} \cS_{2(n-k)} = \tau\big(\cH_t^{(2n-k,\, \alpha)}\big)(t + 1) \otimes \Sigma_\Sp^{\mu} \cS_{2(n-k)},
\end{equation*}
so that the base of induction follows.

For the induction step, assume that for some~$z\in \mathbb{Z}$, the statement~\eqref{goal:set:K-d} holds for all marked weights~$(\beta,d)\in \YD^{2n-k- t}_{t}\times [1,t]$ such that~\(f(\beta,d) > z\).

Suppose that a marked weight $(\lambda,d)\in \YD^{2n-k- t}_{t}\times [1,t]$ satisfies $$f(\lambda, d)=z.$$ By Corollary~\ref{corollary:lambda_beta}, there exists~\(\varrho \in \YD^{2n-k- t+1}_1 \times \YD_{t - 1}^{2n-k- t+1}\) and an index $i\in \mathbb{Z}$ such that
\[\varrho_1\ge 1,\quad \varrho^{(i)} = \lambda, \quad \text{and}\quad b^{i}(\varrho)=\dd(\varrho, i) = d.\] 
Hence, the vector bundle~$\cK^{(2n-k- d,\, \lambda)}$ appears as a term in the complex $\tau(\cH^{(2n-k,\, \varrho)})$. 

Now, by Lemma~\ref{lemma:minimal_f_in_H}, we know that $i=i_{\varrho}$ is the index at which the strictly minimal value of $f$ is attained on~$\tau(\cH^{(2n-k,\, \varrho)})$. Thus,
\begin{equation*}
\cK^{(2n-k-d,\, \gamma)}_t \in \left\langle \tau(\cH^{(2n-k,\, \varrho)}),\, \cK_t^{(2n-k-b^i(\varrho),\, \varrho^{(i)})} \right\rangle, \quad \text{where} \quad f(\varrho^{(i)},b^i(\varrho))> z.
\end{equation*} 
Thus, the induction assumption implies the induction step, and the statement of the proposition follows.

Finally, note that the argument above also shows that the set of terms appearing in the truncated staircase complexes on the right-hand side of~\eqref{goal:set:K-d} coincides precisely with the generating set of $\cR_t$.
\end{proof}
\begin{corollary}  \label{prop:<H>=<K>}
Let~$1 \le t \le k-1$ be such that~\(k=t\) is odd, and let $\mu\in \YD^{a_t}_{n-k}$. For~$\beta \in \YD^{2n-k- t}_{t}$ and~$1 \le d \le t$ we have 
\[\cK^{(2n-k-t,\,\beta)} (t + 1) \otimes \Sigma_\Sp^{\mu} \cS_{2(n-k)} \in \cD_t.\]
\end{corollary}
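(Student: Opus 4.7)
The plan is to combine Proposition~\ref{prop:<H>=<K>1} with Lemma~\ref{lemma:where} directly. Proposition~\ref{prop:<H>=<K>1} shows that
\begin{equation*}
\cR_t=\left\langle \cK_k^{(2n-k-d,\,\beta)}\mid \beta\in \YD_t^{2n-k-t},\ d\in[1,t]\right\rangle =\left\langle \tau\bigl(\cH_k^{(2n-k,\,\alpha)}\bigr)\mid \alpha\in \YD^{2n-k-t+1}_1\times \YD^{2n-k-t+1}_{t-1},\ \alpha_1>0\right\rangle,
\end{equation*}
so any given $\cK_k^{(2n-k-d,\,\beta)}$ with $\beta\in \YD^{2n-k-t}_t$ and $d\in[1,t]$ lies in the triangulated subcategory generated by objects of the form $\tau(\cH_k^{(2n-k,\,\alpha)})$ with $\alpha\in \YD^{2n-k-t+1}_1\times \YD^{2n-k-t+1}_{t-1}$ and $\alpha_1>0$.

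Next, I would apply the exact endofunctor $(-)\otimes \Sigma_\Sp^{\mu}\cS_{2(n-k)}(t+1)$ on $\Db(\IGr(k,V))$ to this membership relation. Since this functor preserves the triangulated structure, we obtain
\begin{equation*}
\cK_k^{(2n-k-d,\,\beta)}(t+1)\otimes \Sigma_\Sp^{\mu}\cS_{2(n-k)}\in \left\langle \tau\bigl(\cH_k^{(2n-k,\,\alpha)}\bigr)(t+1)\otimes \Sigma_\Sp^{\mu}\cS_{2(n-k)}\mid \alpha\in \YD^{2n-k-t+1}_1\times \YD^{2n-k-t+1}_{t-1},\ \alpha_1>0\right\rangle.
\end{equation*}
Now Lemma~\ref{lemma:where}, applied to each such $\alpha$ together with our fixed $\mu\in \YD^{a_t}_{n-k}$, shows that every generator on the right-hand side lies in $\cD_t$. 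Therefore $\cK_k^{(2n-k-d,\,\beta)}(t+1)\otimes \Sigma_\Sp^{\mu}\cS_{2(n-k)}\in \cD_t$, which is exactly the desired conclusion.

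There is essentially no obstacle at this stage: all the hard work has already been done in Proposition~\ref{prop:<H>=<K>1} (the combinatorial descending induction on the function $f$, powered by Lemma~\ref{lemma:minimal_f_in_H} and Corollary~\ref{corollary:lambda_beta}) and in Lemma~\ref{lemma:where} (which shows that the terms of a truncated staircase complex already split into pieces lying in $\tilde\cA_t(t)$ or $\cA_{t+1}(t+1)$ via the resolution~\eqref{eq:kernels}). The corollary is simply the statement of Proposition~\ref{prop:<H>=<K>1} tensored by $\Sigma_\Sp^{\mu}\cS_{2(n-k)}(t+1)$ and interpreted in light of Lemma~\ref{lemma:where}.
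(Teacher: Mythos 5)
Your proof is correct and takes essentially the same route as the paper: both invoke Proposition~\ref{prop:<H>=<K>1} to rewrite $\cR_t$ as the subcategory generated by the truncated staircase complexes $\tau(\cH_k^{(2n-k,\alpha)})$, then tensor by $\Sigma_\Sp^{\mu}\cS_{2(n-k)}(t+1)$ and apply Lemma~\ref{lemma:where} to conclude each generator lands in $\cD_t$. (The condition ``$k=t$ is odd'' in the statement is a typo in the paper for ``$k-t$ is odd,'' which your argument correctly assumes in order to invoke Lemma~\ref{lemma:where}.)
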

\begin{proof}
By definition~\eqref{def:R_t} of the subcategory $\cR_t$, we have
\begin{equation*}
  \cK^{(2n-k-t,\,\beta)} (t + 1) \otimes \Sigma_\Sp^{\mu} \cS_{2(n-k)}\in \cR_t(t+1)\otimes \Sigma_\Sp^{\mu} \cS_{2(n-k)}.
\end{equation*}
At the same time, Lemma~\ref{lemma:where}, combined with Proposition~\ref{prop:<H>=<K>1}, implies the inclusion
\begin{equation*}
\cR_t(t+1)\otimes \Sigma_\Sp^{\mu} \cS_{2(n-k)} \subset \cD_t,
\end{equation*}
and hence the statement follows. 
\end{proof}

\begin{corollary} \label{corollary:odd}
Let~$1 \le t \le k-1$ such that~$k-t=2a_t+1$. Then 
\begin{equation*}
    \cA_t(t+1)\subset \tilde \cA_{t + 1}(t+1) \subset \cD_t. 
\end{equation*}
\end{corollary}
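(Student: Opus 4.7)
The plan is first to absorb both containments into the single statement $\tilde\cA_{t+1}(t+1) \subset \cD_t$. Since $k-t = 2a_t+1$ is odd, I compute $a_{t+1} = \lfloor(k-t-1)/2\rfloor = a_t$, and padding the $\GL_t$-part of a weight with a trailing zero gives $\B_t = \YD_t^{2n-k-t}\times\YD_{n-k}^{a_t} \subset \YD_{t+1}^{2n-k-t}\times\YD_{n-k}^{a_{t+1}} = \tilde\B_{t+1}$. Hence $\cA_t(t+1) \subset \tilde\cA_{t+1}(t+1)$ for free, and only the second inclusion needs a real argument.

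To establish $\tilde\cA_{t+1}(t+1) \subset \cD_t$, I would verify it on generators $\Sigma^\alpha \cU_k^\vee(t+1) \otimes \Sigma_\Sp^\mu \cS_{2(n-k)}$ with $\alpha \in \YD_{t+1}^{2n-k-t}$ and $\mu \in \YD_{n-k}^{a_t}$, splitting on the value of $\alpha_1$. If $\alpha_1 \le 2n-k-t-1$, then $\alpha \in \YD_{t+1}^{2n-k-t-1}$, so the generator lies in $\cA_{t+1}(t+1) \subset \cD_t$ by definition. The only remaining case is $\alpha_1 = 2n-k-t$, in which I write $\alpha = (2n-k-t,\beta)$ with $\beta \in \YD_t^{2n-k-t}$.

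For such $\alpha$ I would invoke the resolution~\eqref{eq:kernels} of $\cK_k^\alpha$. Since $\alpha$ is $\GL_{t+1}$-dominant one has $\vv(\alpha) = 0$, $\dd(\alpha,0) = 1$, $b^0 = 0$, and $\alpha^{(0)} = \alpha$, so the rightmost term is $\Sigma^\alpha \cU_k^\vee$. Lemma~\ref{lemma:alpha^i_in_YD} applied with $w = 2n-k-t$ shows that every $\alpha^{(i)}$ with $i \ge 1$ satisfies $b^i \ge 1$ and hence $\alpha^{(i)} \in \YD_{t+1}^{2n-k-t-1}$. After twisting~\eqref{eq:kernels} by $\cO(t+1)\otimes \Sigma_\Sp^\mu \cS_{2(n-k)}$, all middle terms therefore lie in $\cA_{t+1}(t+1) \subset \cD_t$.

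The leftmost term of the twisted sequence is exactly $\cK_k^{(2n-k-t,\beta)}(t+1) \otimes \Sigma_\Sp^\mu \cS_{2(n-k)}$, and this lies in $\cD_t$ by Corollary~\ref{prop:<H>=<K>}. Chasing the exact sequence yields $\Sigma^\alpha \cU_k^\vee(t+1) \otimes \Sigma_\Sp^\mu \cS_{2(n-k)} \in \cD_t$, completing the proof. There is no real obstacle at this stage: all of the substantive content has been packed into Corollary~\ref{prop:<H>=<K>}, which itself rests on the secondary staircase complexes of Theorem~\ref{theo:complexes_for_F^*,1^b} and the combinatorial Lemma~\ref{lemma:minimal_f_in_H}. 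Granted those, the corollary is a short unpacking of~\eqref{eq:kernels} combined with a case split on $\alpha_1$.
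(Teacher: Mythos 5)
Your proof is correct and follows essentially the same route as the paper: note $a_{t+1} = a_t$ to get the first inclusion, reduce the second to the generators with $\alpha_1 = 2n-k-t$, and then chase the twisted resolution~\eqref{eq:kernels} whose middle terms land in $\cA_{t+1}(t+1)$ (via Lemma~\ref{lemma:alpha^i_in_YD}) and whose leftmost term is handled by Corollary~\ref{prop:<H>=<K>}. The only cosmetic difference is that you spell out the case split on $\alpha_1$, which the paper leaves implicit.
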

\begin{proof}
Recall that
\begin{align*}
\cA_{t}(t+1) &= \langle \Sigma^{\alpha}\cU^{\vee}_k(t+1)\otimes \Sigma^{\mu}_{\Sp}\cS_{2(n-k)} \mid \alpha \in \YD_{t}^{2n-k-t};\, \mu\in \YD_{n-k}^{a_t}\rangle;\\
\tilde \cA_{t+1}(t+1) &= \langle \Sigma^{\alpha}\cU^{\vee}_k(t+1)\otimes \Sigma^{\mu}_{\Sp}\cS_{2(n-k)} \mid \alpha \in \YD_{t+1}^{2n-k-t};\, \mu\in \YD_{n-k}^{a_{t+1}}\rangle.
\end{align*}
If $k-t$ is odd, then $a_t=a_{t+1}$, and hence the inclusion $\cA_t(t+1) \subset \tilde \cA_{t + 1}(t+1)$ follows. 

Therefore, to prove the statement, it suffices to show that for any Young diagrams~\(\beta \in \YD^{2n-k- t}_{t}\) and~\(\mu \in \YD^{a_{t}}_{n - k}\),
\begin{equation} \label{last-goal}
    \Sigma^{(2n-k-t,\, \beta)}\cU_k^{\vee}(t + 1) \otimes \Sigma_\Sp^{\mu} \cS_{2(n-k)} \in \cD_t. 
\end{equation}

First, Corollary~\ref{prop:<H>=<K>} yields that 
\begin{equation*}
\cK_t^{(2n-k-t,\, \beta)}(t + 1) \otimes \Sigma_\Sp^{\mu} \cS_{2(n-k)} \in \cD_t.
\end{equation*} 
Next, consider the resolution~\eqref{eq:kernels} of~\(\cK_t^{(2n-k-t,\, \beta)}\), twisted by $\Sigma_\Sp^{\mu} \cS_{2(n-k)}(t+1)$:
\begin{multline} \label{resolution}
    0 \to \cK_t^{(2n-k-t,\, \beta)}(t + 1) \otimes \Sigma_\Sp^{\mu} \cS_{2(n-k)} \to \Lambda^{b^{2n-k-t}} V^\vee \otimes \Sigma^{(2n-k-t,\, \beta)^{(2n-k- t)}} \cU_k^{\vee} (t + 1) \otimes \Sigma_\Sp^{\mu} \cS_{2(n-k)} \to  \\\to \ldots \to  \Sigma^{(2n-k-t,\, \beta)}\cU_k^{\vee}(t + 1) \otimes \Sigma_\Sp^{\mu} \cS_{2(n-k)}\to 0.
\end{multline}  
Observe that for all~\(0 < i \le 2n-k-t\), the terms $$\Sigma^{(2n-k-t,\, \beta)^{(i)}} \cU_k^{\vee} (t+1) \otimes \Sigma_\Sp^{\mu} \cS_{2(n-k)}$$
belong to the subcategory $\cA_{t+1}(t+1)$. Thus, the resolution~\eqref{resolution} and Corollary~\ref{prop:<H>=<K>} imply~\eqref{last-goal}.
\end{proof}

\appendix
\section{Some computations} \label{appendix}
\subsection{Borel--Bott--Weil computations} 
Here we collect the concrete computations used in the article, all of which are consequences of the Borel–Bott–Weil theorems.
\begin{lemma} \label{lemma:exceptional_for_big_t}
    Let~\(\alpha, \beta \in \YD^{2n - 2k + 1}_{k}\). Then~\(\E{>0}(\Sigma^{\alpha}\cU^\vee, \, \Sigma^{\beta}\cU^\vee) = 0\) over~\(\IGr(k, V)\).
\end{lemma}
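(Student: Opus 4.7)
The plan is to compute $\mathrm{Ext}^{\bullet}(\Sigma^{\alpha}\cU^\vee,\, \Sigma^{\beta}\cU^\vee)$ as the cohomology
$H^{\bullet}(\IGr(k,V),\, \Sigma^{\alpha}\cU \otimes \Sigma^{\beta}\cU^\vee)$ and then analyze each irreducible summand of the tensor product using the symplectic Borel--Bott--Weil theorem. Writing $\Sigma^{\alpha}\cU = \Sigma^{-\alpha}\cU^\vee$, where $-\alpha = (-\alpha_k,\ldots,-\alpha_1)$, Lemma~\ref{lemma:LR rule} gives a decomposition
\[
\Sigma^{\alpha}\cU \otimes \Sigma^{\beta}\cU^\vee \simeq \bigoplus \big(\Sigma^{\gamma}\cU^\vee\big)^{\oplus c^{\gamma}_{-\alpha,\beta}},
\]
where each weight $\gamma \in \ZZ^k$ is $\GL_k$-dominant and lies in the convex hull of $\{-\alpha + \sigma(\beta) \mid \sigma \in \mathfrak S_k\}$. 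The key numerical observation is the lower bound
\[
\gamma_k \;\ge\; -\alpha_1 + \beta_k \;\ge\; -(2n-2k+1),
\]
which follows from taking the $k$-th coordinate of any convex combination of $-\alpha + \sigma(\beta)$ and using $\beta_k \ge 0$, $\alpha_1 \le 2n-2k+1$.

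Thus it suffices to show that $H^{>0}(\IGr(k,V), \Sigma^{\gamma}\cU^\vee) = 0$ for every $\GL_k$-dominant $\gamma$ with $\gamma_k \ge -(2n-2k+1)$. I would apply Theorem~\ref{theo:relative_BBW_sym} to the projection $\IGr(k,V) \to \mathrm{pt}$ with weight $(\gamma;0) \in \mathbb Z^n$ (so $\mu = 0$ and $t = k$, $r = n$). The associated sequence is
\[
\eta = (\gamma,0) + \rho = \bigl(\gamma_1 + n,\; \gamma_2 + n-1,\; \ldots,\; \gamma_k + n-k+1,\; n-k,\; n-k-1,\; \ldots,\; 1\bigr).
\]
Since $\gamma$ is $\GL_k$-dominant, the first $k$ entries of $\eta$ are strictly decreasing.

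The proof then splits into two cases. If $\gamma_k \ge 0$, then $\gamma \in \YD_k$, the full sequence $\eta$ is strictly decreasing with positive entries, the Weyl element $\sigma$ in Theorem~\ref{theo:relative_BBW_sym} is the identity, and the cohomology is concentrated in degree $0$. If instead $\gamma_k = -j$ with $j \in \{1,\ldots, 2n-2k+1\}$, then $\eta_k = n-k+1-j \in \{-(n-k),\ldots, n-k\}$: a direct case check (for $1 \le j \le n-k$ the value $\eta_k$ repeats some $\eta_{k+i}$; for $j = n-k+1$ one has $\eta_k = 0$; for $n-k+2 \le j \le 2n-2k+1$ the absolute value $|\eta_k|$ lies in $\{1,\ldots, n-k\}$, again repeating some $\eta_{k+i}$) shows in every subcase that either $\eta$ contains $0$ or its absolute values are not distinct. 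By Theorem~\ref{theo:relative_BBW_sym} the corresponding cohomology vanishes entirely.

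There is no real obstacle in the argument; the only slightly delicate point is verifying the bound on $\gamma_k$ and executing the case analysis of $\eta_k$ in the negative range, which relies crucially on the hypothesis $\alpha_1, \beta_1 \le 2n-2k+1$ (one more would break the collision pattern, which is why the block $\tilde\B_k = \YD_k^{2n-2k+1}$ is the maximal choice for which the argument works).
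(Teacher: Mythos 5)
Your proof is correct and takes essentially the same route as the paper: decompose $\Sigma^{\alpha}\cU\otimes\Sigma^{\beta}\cU^\vee$ into summands $\Sigma^{\gamma}\cU^\vee$, bound $\gamma_k\ge -(2n-2k+1)$ via the Littlewood--Richardson rule (Lemma~\ref{lemma:LR rule}), and apply symplectic Borel--Bott--Weil, splitting into $\gamma_k\ge 0$ (cohomology in degree $0$) and $\gamma_k<0$ (where $|\gamma_k+\rho_k|\in\{0,\ldots,n-k\}$ forces a collision or a zero in $\gamma+\rho$, so all cohomology vanishes). The only difference is cosmetic: the paper states the bound as $\gamma_k\in[-\alpha_1,\beta_k]$ and phrases the collision condition via $\rho$-coordinates rather than your explicit case split on $j=-\gamma_k$.
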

\begin{proof}
Let~\(\Sigma^\gamma \cU^\vee \) be a direct summand of~\(\Sigma^\alpha \cU \otimes \Sigma^\beta \cU^\vee\).  It suffices to show that~\(\operatorname{H}^{>0} (\IGr(k, V),\, \Sigma^\gamma \cU^\vee) = 0\). We compute this by Theorem~\ref{theo:relative_BBW_sym} (symplectic Borel--Bott--Weil) for~\(\cV = V\). 

First, by Lemma~\ref{lemma:LR rule}, we have 
\begin{equation*}
\gamma_k \in [-\alpha_1, \beta_k] \subseteq [-2n+2k-1, 2n-2k+1].
\end{equation*}

Next, if~\(\gamma_k \ge 0\), then~\(\gamma \ge 0\) and, thus,
\begin{equation*}
\operatorname{H}^\bullet (\IGr(k, V),\, \Sigma^\gamma \cU^\vee) = \operatorname{H}^0 (\IGr(k, V),\, \Sigma^\gamma \cU^\vee),
\end{equation*} 
while higher cohomologies vanish.

In the remaining case~\( \gamma_k <0\), we observe that 
\begin{equation*}
|\gamma_k+\rho_k| \in \{0, \dots, n-k+1\} = \{\rho_{k+1},\dots,\rho_n,0\}.
\end{equation*} 
This implies that the weight $\gamma+\rho$ has at least two entries with equal absolute values. Therefore, by Theorem~\ref{theo:relative_BBW_sym}, in this case all cohomology vanishes.
\end{proof}
\begin{lemma} \label{lemma:BBW_tilde_mu}
Consider the projection
\begin{equation*}
p_t\colon \IFl(k-t,k;V)\to \IGr(k-t,V).
\end{equation*}
Let~\(\gamma \in \Y_t\) and~\(\eta \in \YD^{a}_{n-k}\) be such that~\(\gamma_t \ge -2(n-k) -a - 1\). 
Suppose that for some~\(l\in \mathbb{Z}_{\ge 0}\) we have 
\begin{equation*}
R^lp_{t*}\big(\Sigma^\gamma (\cU_k/\cU_{k-t})^{\vee} \otimes \Sigma^\eta_\Sp\cS_{2(n-k)}\big)=\Sigma^{\hat \mu}_\Sp \cS_{2(n-k+t)},
\end{equation*} 
where~\(\hat \mu \in \YD^{a}_{n-k+t}\) satisfies~\(\hat \mu_1 = \dots = \hat \mu_t = a \). Then necessarily:
\begin{itemize}
    \item \(\gamma = (a)^t\), i.e., all entries of $\gamma$ equal $a$,
    \item \(\hat \mu = (\gamma, \eta)\), i.e., $\hat \mu$ is the concatenation of $\gamma$ and $\eta$,
    \item $l=0$, so the pushforward is concentrated in degree zero.
\end{itemize} 
\end{lemma}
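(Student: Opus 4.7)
The approach is to apply Theorem~\ref{theo:relative_BBW_sym} to the projection $p_t$, whose fibers are isomorphic to $\IGr(t,\, 2(n-k+t))$. Setting $r = n-k+t$ and $\alpha = (\gamma,\eta)+\rho \in \ZZ^r$, the non-vanishing hypothesis provides a unique Weyl group element $\sigma \in \mathfrak{S}_r \ltimes (\ZZ/2)^r$ of $\Sp(2r)$ such that $\sigma(\alpha) = \hat\mu + \rho$ and $\ell(\sigma) = l$. The plan is to show $\sigma = \operatorname{id}$, which immediately yields $l = 0$, $\gamma = (a)^t$, and $\eta = \mu$.

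The first step compares multisets of absolute values. The vector $\hat\mu + \rho$ decomposes into a \emph{top block} $\{a+n-k+t,\, a+n-k+t-1,\, \ldots,\, a+n-k+1\}$ coming from positions $1,\ldots,t$, and a \emph{lower block} $\{\mu_j + n-k+1-j \mid 1 \le j \le n-k\} \subset [1,\, a+n-k]$ coming from positions $t+1,\ldots,r$. Since $\eta \in \YD^{a}_{n-k}$, the entries $\alpha_{t+j} = \eta_j + n-k+1-j$ all lie in $[1,\, a+n-k]$, so the $t$ values of the top block must be supplied by $|\alpha_1|,\ldots,|\alpha_t|$. Now the sharp hypothesis $\gamma_t \ge -2(n-k)-a-1$ enters: if $\alpha_i \le -(a+n-k+1)$ for some $i \le t$, then $\gamma_i \le -a -2(n-k) - t + i - 2$, and dominance of $\gamma$ forces $\gamma_t \le -a -2(n-k) - 2$, a contradiction. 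Hence every $\alpha_i$ with $i \le t$ is positive.

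Combined with $\alpha_i - \alpha_{i+1} = \gamma_i - \gamma_{i+1} + 1 \ge 1$, positivity forces $\alpha_1, \ldots, \alpha_t$ to be a strictly decreasing sequence of $t$ distinct positive integers filling $[a+n-k+1,\, a+n-k+t]$, whence $\alpha_i = a+n-k+t+1-i$ and $\gamma_i = a$ for all $i \le t$. With $\gamma = (a)^t$, the first $t$ entries of $\alpha$ already coincide with those of $\hat\mu + \rho$ and are strictly larger than all remaining entries, so $\sigma$ must fix positions $1,\ldots,t$. On positions $t+1,\ldots,r$ the entries $\eta_j + n-k+1-j$ already form a strictly decreasing positive sequence in the same form as the target, so $\sigma$ is trivial there as well. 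Hence $\sigma = \operatorname{id}$, so $l = 0$, $\eta = \mu$, and $\hat\mu = (\gamma,\eta)$. The main subtlety is the sign-change analysis: the bound $\gamma_t \ge -2(n-k)-a-1$ is tight, since $\gamma_t = -2(n-k)-a-2$ would produce $\alpha_t = -(a+n-k+1)$, a negative entry whose absolute value lies in the top block, introducing a nontrivial sign-change contribution to $\sigma$ and invalidating the conclusion.
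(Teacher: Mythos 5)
Your proof is correct and takes essentially the same approach as the paper: both apply the relative symplectic Borel--Bott--Weil theorem to $p_t$, decompose the target weight $\hat\mu+\rho$ into top and lower blocks, observe that the $t$ top values $a+n-k+1,\dots,a+n-k+t$ must be supplied by positions $1,\dots,t$, and use the bound $\gamma_t\ge -2(n-k)-a-1$ to rule out a sign change. The only cosmetic difference is that you phrase this as showing $\sigma=\mathrm{id}$ by contradiction, while the paper bounds $\beta_t$ directly and concludes $(\gamma,\eta)$ is already dominant.
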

\begin{proof}
Note that $\IFl(k-t,k;V)$ can be viewed as the relative isotropic Grassmannian
\begin{equation*}
\IFl(k-t,k;V)=\IGr_{\IGr(k-t,V)}(t,\cS_{2(n-k + t)}).
\end{equation*}
Thus, to compute the derived pushforward $p_{t*}$, we apply Theorem~\ref{theo:relative_BBW_sym} (symplectic Borel--Bott--Weil) to the symplectic vector bundle~$\cS_{2(n-k+t)}$.

Denote~$\beta=(\gamma;\eta)+\rho$. By Theorem~\ref{theo:relative_BBW_sym}, if the pushforward is the irreducible bundle~$\Sigma^{\hat \mu}_\Sp \cS_{2(n-k+t)}$ with~$\hat \mu_1 = \dots = \hat \mu_t = a$, then the set~$\{|\beta_i|\}$ must contain the values 
\begin{equation}\label{eq:values}
 a+n-k+1, \ldots, a+n-k + t.
\end{equation} 

On the other hand, the assumptions on $\eta$ imply
\begin{equation*}
  0\le \beta_{t+i}=\eta_i + \rho_{t + i} \le n-k + a\qquad \text{for} \quad 1\le i\le n-k.
\end{equation*}
Furthermore, since~\(\gamma_t \ge -2(n-k) -a - 1\), we get
\begin{equation*}
\beta_t=\gamma_t + \rho_t \ge -n+k-a.
\end{equation*}
Thus, the values~\eqref{eq:values} appear among $\{|\beta_i|\}$ only if
\begin{equation*}
\beta_i = \gamma_i+\rho_i = a+ (n-k+t+1-i) \qquad \text{for}\quad 1 \le i \le t.
\end{equation*} 
This condition uniquely determines~\(\gamma = (a)^t\). In this case, the weight~\((\gamma, \eta)\) is already~\(\Sp_{2(n-k+t)}\)-dominant, and by Theorem~\ref{theo:relative_BBW_sym} the pushforward is concentrated in degree~\(0\):
\begin{equation*}
R^0 p_{t*}\big(\Sigma^\gamma (\cU_k/\cU_{k-t})^{\vee} \otimes \Sigma^\eta_\Sp\cS_{2(n-k)}\big) = \Sigma^{(\gamma, \eta)}_\Sp \cS_{2(n-k+t)}.\qedhere
\end{equation*}
\end{proof}

\begin{lemma} \label{cor:BBW-12k}
Consider the projection $\phi_{k}\colon \IFl(1,2, k; V)\to \IFl(1,k,V)$. For a~\(\GL_1 \times \GL_{k - 2}\)-dominant weight~\(\alpha=(\alpha_1;\alpha_2,\ldots,\alpha_{k-1})\) we have
\begin{equation*}
\phi_{k*} \left(S^{\alpha_1 +k -2 - j} (\cU_2/\cU_1)\otimes \Sigma^{\bar \alpha-1}(\cU_k/\cU_2)\right) =
    \begin{cases}
	\Sigma^{\alpha^{(i)}}(\cU_{k}/\cU_1)[\dd(\alpha,i)-k+1], &\text{if~\(j= b^i(\alpha)\)}; \\
		0, &\text{otherwise}.
	\end{cases}
\end{equation*}
\end{lemma}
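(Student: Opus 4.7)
The plan is to recognize the projection $\phi_k$ as the projective bundle $\PP_{\IFl(1,k;V)}(\cU_k/\cU_1)$ of the rank $k-1$ vector bundle $\cU_k/\cU_1$, in which $\cU_2/\cU_1$ is the tautological line subbundle and $\cU_k/\cU_2$ is the tautological rank $k-2$ quotient. The lemma then reduces to a single application of Theorem~\ref{theo:relative_BBW} in exactly the same spirit as the BBW computation used in the proof of Theorem~\ref{thm:gsc}.

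First I would rewrite the input bundle via the duality $\Sigma^\gamma \cW \simeq \Sigma^{-\gamma}\cW^\vee$ in the form $\Sigma^\lambda(\cU_2/\cU_1)^\vee \otimes \Sigma^\mu(\cU_k/\cU_2)^\vee$ with $\lambda = -(\alpha_1 + k - 2 - j)$ and $\mu = (1 - \alpha_{k-1}, \ldots, 1 - \alpha_2)$, so that Theorem~\ref{theo:relative_BBW} applies directly with $s = 1$ and $r = k - 1$. Setting $z \coloneqq \alpha_1 - j$ and adding $\rho = (k-1, k-2, \ldots, 1)$, one obtains
\[
(\lambda, \mu) + \rho = \bigl(1 - z,\; (k-1) - \alpha_{k-1},\; (k-2) - \alpha_{k-2},\; \ldots,\; 2 - \alpha_2\bigr),
\]
whose last $k - 2$ entries are already strictly decreasing. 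A coincidence of the first entry with one of the others occurs precisely when $z \in \bV(\bar\alpha)$ in the sense of~(\ref{eq:def-bv}), that is, when $j \neq b^i(\alpha)$ for every $i$; in this case Theorem~\ref{theo:relative_BBW} gives the vanishing branch of the lemma.

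For $z = z_i \notin \bV(\bar\alpha)$, equivalently $j = b^i(\alpha)$, the sorting permutation $\sigma$ fixes the last $k - 2$ entries and inserts the first one at position $p = k - \dd(\alpha, i)$. This permutation has length $\ell(\sigma) = p - 1 = k - \dd(\alpha, i) - 1$, producing precisely the shift $[\dd(\alpha, i) - k + 1]$ claimed by the lemma. A direct entry-by-entry computation of $\sigma((\lambda, \mu) + \rho) - \rho$, combined with the explicit formula~(\ref{explicit_formula_i}) for $\alpha^{(i)}$ and the identity $b^i(\alpha) = i - \vv(\alpha) + \dd(\alpha, i) - 1$ from~(\ref{eq:b=i-v+d}), then identifies this output weight with $-\alpha^{(i)}$; applying $\Sigma^{-\alpha^{(i)}}(\cU_k/\cU_1)^\vee \simeq \Sigma^{\alpha^{(i)}}(\cU_k/\cU_1)$ yields the stated formula.

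The proof is essentially bookkeeping: the combinatorial framework of Subsection~\ref{subsection:alpha^i} was engineered exactly to package this BBW computation, so the main obstacle is simply to keep the conventions straight --- the reversal caused by $\Sigma^\gamma \cW \simeq \Sigma^{-\gamma}\cW^\vee$, the position $p = k - \dd(\alpha, i)$ of the inserted entry, and the signs in $\lambda$ and $\mu$. No additional vanishing arguments or Schur-functor manipulations are required beyond this alignment.
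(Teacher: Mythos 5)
Your proposal is correct and follows essentially the same route as the paper: identify $\phi_k$ with the projectivization $\PP_{\IFl(1,k;V)}(\cU_k/\cU_1)$, apply relative Borel--Bott--Weil to the weight $\beta = (j-\alpha_1-k+2;\,1-\alpha_{k-1},\ldots,1-\alpha_2)$, observe that the vanishing locus matches $\alpha_1 - j \in \bV(\bar\alpha)$, and for $j = b^i(\alpha)$ read off the dominant weight $-\alpha^{(i)}$ and the shift $\dd(\alpha,i)-k+1$ from the cyclic sorting permutation of length $k-1-\dd(\alpha,i)$. The only cosmetic imprecision is the phrase ``fixes the last $k-2$ entries'': the cycle fixes positions $p+1,\dots,k-1$ pointwise (i.e.\ the last $\dd(\alpha,i)-1$ entries), not $k-2$ of them, though it does preserve the relative order of all $k-2$ tail entries, and your computed length $\ell(\sigma)=p-1$ shows you have the correct cycle in mind.
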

\begin{proof}
Note that $\IFl(1,2, k; V)$ is the relative projective bundle:
\begin{equation*}
\IFl(1,2, k; V)=\PP_{\IFl(1,k;V)}(\cU_k/\cU_1),
\end{equation*}
and the projection $\phi_k$ corresponds to this projective fibration.

To compute the pushforward, we apply Theorem~\ref{theo:relative_BBW} (Borel--Bott--Weil) to the vector bundle
\begin{equation*}
S^{\alpha_1 +k -2 - j} (\cU_2/\cU_1)\otimes \Sigma^{\bar \alpha-1}(\cU_k/\cU_2),
\end{equation*}
which corresponds to the weight:
\begin{equation*}
\beta = (j-\alpha_1-k+2;1-\alpha_{k-1}, \ldots, 1-\alpha_{2}) \in \Y_1 \times \Y_{k-2}.
\end{equation*}

Recall the notation introduced in Section~\ref{section:generalized}. 
By~\eqref{eq:def-bv}, definition of the set~$\bV(\bar \alpha)$, the pushforward vanishes when~\(\beta_1 \in \bV(\bar \beta)\). 
A direct computation shows that this condition is equivalent to~$\alpha_1-j \in \bV(\bar \alpha)$. 
Furthermore, by~\eqref{def:zi} and~\eqref{def:foralli}, we have~$\alpha_1-j \notin \bV(\bar \alpha)$ if and only if~$j=\alpha_1-z_i=b^i(\alpha)$. For $j=b^i(\alpha)$, we obtain
\begin{equation*}
(b^i(\alpha)-\alpha_1-k+2,1-\alpha_{k-1}, \ldots, 1-\alpha_{2})+\rho=(1-z_i,k-1-\alpha_{k-1}, \ldots, 2-\alpha_{2}),
\end{equation*}
so $\BBW_{1-\bar \alpha}(b^i(\alpha)-\alpha_1-k+2)=-\alpha^{(i)}$. The length of the associated permutation is $k-1-\dd(\alpha,i)$. 
\end{proof}
\begin{lemma}\label{BBW-SC}
Let $q_t\colon \IGr(k-t, k;V)\to \IGr(k,V)$ be the natural projection, and let $m\in \mathbb{Z}_{\ge 0}$. Suppose that~$\gamma \in \Y_t$ satisfies~$\gamma_t\ge -k+t+1$. Denote $\alpha=(-m,-\gamma)\in \mathbb{Z}^{t+1}$. 
Assume that 
\begin{equation*}
R^{\bullet}q_{t*}\left(S^m\cU_{k-t} \otimes \Sigma^{\gamma} (\cU_k/\cU_{k - t})\right)\neq 0.
\end{equation*} 
Then we have
\begin{equation*}
R^{\bullet}q_{t*}\left(S^m\cU_{k-t} \otimes \Sigma^{\gamma} (\cU_k/\cU_{k - t})\right) = 
		\Sigma^{-\alpha^{(0)}} \cU_k[-\dd(\alpha,0)+1],
\end{equation*}
where $-\alpha^{(0)}\in \YD_{t+1}^{\max(\gamma_1, m-t)}$. 
\end{lemma}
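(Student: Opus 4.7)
The plan is to compute the pushforward via the relative Borel--Bott--Weil theorem (Theorem~\ref{theo:relative_BBW}) applied to $q_t$, whose fibers are $\Gr(k-t,k)$ with tautological subbundle $\cU_{k-t}$ and quotient $\cU_k/\cU_{k-t}$, and then match the answer with the combinatorial description of $\alpha^{(0)}$ given by~\eqref{eq:def-bbw} and~\eqref{explicit_formula_i}. Using the identification $\Sigma^\alpha\cV \simeq \Sigma^{-\alpha}\cV^\vee$ one rewrites the input bundle as $\Sigma^\lambda \cU_{k-t}^\vee \otimes \Sigma^\mu(\cU_k/\cU_{k-t})^\vee$ with $\lambda = (0,\ldots,0,-m)\in\Y_{k-t}$ and $\mu = (-\gamma_t,\ldots,-\gamma_1)\in\Y_t$, so that
\[
(\lambda,\mu)+\rho = (k,\, k-1,\, \ldots,\, t+2,\ t+1-m,\ t-\gamma_t,\ t-1-\gamma_{t-1},\ \ldots,\ 1-\gamma_1).
\]

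First I would observe that the last $t$ entries are strictly decreasing (this uses $\gamma\in\Y_t$) and identify $\bar\alpha$ in the notation of Section~\ref{section:generalized} with $\mu$. A direct computation shows that a coincidence between $t+1-m$ and some last-block entry occurs precisely when $-m\in\bV(\bar\alpha)$, while a coincidence between the initial block $\{k,\ldots,t+2\}$ and some last-block entry forces $\gamma_t \in \{-2,-3,\ldots,-(k-t-1)\}$. Thus the nonvanishing hypothesis gives both $\alpha_1 = -m \notin \bV(\bar\alpha)$, so $z_0 = \alpha_1$ in the notation of~\eqref{eq:def-ti}, and $\gamma_t \ge -1$, ensuring that the initial block lies strictly above all remaining entries.

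Next I would count the number of last-block entries that exceed $t+1-m$; a short manipulation identifies it with $|\bV(\bar\alpha)\cap(-m,\infty)| = \dd(\alpha,0) - 1$. Denoting $d := \dd(\alpha,0)$, the permutation $\sigma$ of Theorem~\ref{theo:relative_BBW} is the cyclic shift that moves $t+1-m$ past the first $d-1$ entries of the last block, so $\ell(\sigma) = d - 1$. Subtracting $\rho$ from the resulting sorted sequence, I would verify entry-by-entry, using formula~\eqref{explicit_formula_i} together with the identity $\vv(\alpha) = d - 1$ (which follows from $z_0 = \alpha_1$ and~\eqref{eq:def-vv1}), that $\sigma((\lambda,\mu)+\rho)-\rho$ consists of $k-t-1$ leading zeros followed by the tuple $\alpha^{(0)}$. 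Finally, the identity $\Sigma^{(0,\ldots,0,\nu)}\cU_k^\vee\simeq\Sigma^{-\nu}\cU_k$ (valid whenever the left-hand side is dominant, as is the case here thanks to $\gamma_t \ge -1$) yields $R^\bullet q_{t*}(\cdots) = \Sigma^{-\alpha^{(0)}}\cU_k[1-d]$.

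For the inclusion $-\alpha^{(0)} \in \YD_{t+1}^{\max(\gamma_1,\,m-t)}$, nonnegativity of the last entry $\gamma_t + 1$ follows from $\gamma_t \ge -1$ established above, while the remaining entries of $-\alpha^{(0)}$ are nonnegative and weakly decreasing thanks to the threshold inequalities $\gamma_{t-d+1} \ge m-d+1 \ge \gamma_{t-d+2}+1$ forced by the definition of $d$. The width bound splits into $d \le t$, in which case $-\alpha^{(0)}_1 = \gamma_1$, and $d = t+1$, in which case necessarily $\gamma_1 \le m-t-1$ and $-\alpha^{(0)}_1 = m-t$; in both situations $-\alpha^{(0)}_1 \le \max(\gamma_1,\,m-t)$. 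The main, though routine, obstacle is the entry-by-entry match between the sorted BBW output and formula~\eqref{explicit_formula_i}, but this amounts to careful bookkeeping with the indices $\{t-d+1,\,t-d+2\}$ separating the two halves of $-\alpha^{(0)}$.
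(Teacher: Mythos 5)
Your proof is correct and follows essentially the same route as the paper's: apply the relative Borel--Bott--Weil theorem over $q_t$ to the weight $((0)^{k-t-1},\alpha)+\rho$, deduce from nonvanishing that $-m\notin\bV(\bar\alpha)$ and $\gamma_t\ge-1$ (so the initial block $\{k,\ldots,t+2\}$ stays in place), and identify the sorted output with $\BBW_{\bar\alpha}(-m)=\alpha^{(0)}$ and cohomological shift $\dd(\alpha,0)-1$. Your write-up just spells out the details the paper compresses into ``the results of Section~\ref{section:generalized}'', including the case split on $\dd(\alpha,0)\le t$ versus $\dd(\alpha,0)=t+1$ for the width bound.
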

\begin{proof}
Note that the isotropic flag variety can be viewed as a relative Grassmannian:
\begin{equation*}
\IFl(k-t,k;V)=\Gr_{\IGr(k,V)}(k-t,\cU_k).
\end{equation*}
To compute the pushforward via~$q_t$, we apply Theorem~\ref{theo:relative_BBW} (Borel--Bott--Weil) to the tautological vector bundle~$\cU_k$.

We consider the weight:
\begin{equation*}
((0)^{k-t-1},\alpha) +\rho 
=(k,k-1,\ldots, t+2, t+1-m,t-\gamma_t, \ldots, 1-\gamma_1).
\end{equation*}
Under the assumptions $\gamma_t\ge -k+t+1$ and $m\in \mathbb{Z}_{\ge 0}$, we obtain 
\begin{equation*}
 t-\gamma_t\le k-1 \qquad \text{and} \qquad t+1-m<t+2.
\end{equation*}
Hence, if 
\begin{equation*}
R^{\bullet}q_{t*}\left(S^m\cU_{k-t} \otimes \Sigma^{\gamma} (\cU_k/\cU_{k - t})\right)\neq 0,
\end{equation*} 
then Theorem~\ref{theo:relative_BBW} (Borel–Bott–Weil) implies that
\begin{equation*}
\sigma((0,\ldots,0,\alpha)+\rho)-\rho= ((0)^{k-t-1},\beta),
\end{equation*}
for some $\beta\in \YD_{t+1}$,  where $\sigma$ is the unique permutation such that the sequence $\sigma((0,\ldots,0,\alpha)+\rho)$ is strictly decreasing. 

It remains to verify that~$\beta=\alpha^{(0)}$ and~$\ell(\sigma)=\dd(\alpha,0)-1$. 
The results of Section~\ref{section:generalized} show that if the pushforward is nonzero, then~$-m\notin \bV(-\bar \gamma)$, so~$z_0=-m$. Therefore, by definition~\eqref{eq:def-bbw}, we obtain
\begin{equation*}
\beta= \BBW_{-\gamma}(-m)=\alpha^{(0)} \qquad \text{and} \qquad \ell(\sigma)=\dd(\alpha,0)-1,
\end{equation*}
where~\(\beta_{t+1} = \min (\alpha_t, t-m)\). 
\end{proof}

\subsection{Filtrations}
Here, we establish results concerning filtrations of $\bG$-equivariant vector bundles on the isotropic flag variety~$\IFl(k - t, k; V)$.

Recall the notation~$((a)^t,\mu)=(\underbrace{a,\ldots,a}_t,\mu)$. 
\begin{lemma}\label{lemma:filtration_on_flags1}
    Let~\(1 \le t \le k-1\),~\(\lambda \in \Y_t\),~\(\mu \in \YD^a_{n-k}\),
    and let~\(\theta = ((a)^{t}, \mu)\), so that $\theta \in \YD^{a}_{n - k + t}$. 
    The vector bundle
    \begin{equation*}
\Sigma^{\lambda} (\cU_k/\cU_{k - t}) \otimes p_t^*\Sigma_\Sp^{\theta} \cS_{2(n - k + t)}
\end{equation*} 
    on $\IFl(k-t,k;V)$ admits a filtration whose associated subquotients are:
\begin{itemize}
    \item the term $\Sigma^{\lambda+a}(\cU_k/\cU_{k-t})\otimes \Sigma_\Sp^\mu \cS_{2(n-k)}$
of multiplicity exactly one;
\item terms of the form~\(\Sigma^{\gamma} (\cU_k/\cU_{k-t})\otimes \Sigma_\Sp^\eta \cS_{2(n-k)}\), where~\(\gamma \in \Y_t\) satisfies~\(\gamma < \lambda+a\), and~\(\eta\in \YD_{n-k}^{a}\).
\end{itemize}
\end{lemma}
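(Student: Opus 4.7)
The plan is to apply Lemma~\ref{lemma:filtration_on_flags} with $\xi = \lambda$ and $\theta = ((a)^t, \mu)$, obtaining a filtration with subquotients of the form $\Sigma^{\gamma}(\cU_k/\cU_{k-t}) \otimes \Sigma^{\eta}_\Sp \cS_{2(n-k)}$, where $\gamma_i \in [\lambda_i - a, \lambda_i + a]$ and $\eta \in \YD^{a}_{n-k}$. In particular $\gamma \le \lambda + a$ componentwise for every subquotient, which already takes care of the second bullet. Only the multiplicity of the extremal subquotient at $\gamma = \lambda + a$, $\eta = \mu$, requires additional work.

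Writing $\cG := \cU_k/\cU_{k-t}$ and retracing the construction in the proof of Lemma~\ref{lemma:filtration_on_flags}, the subquotients of $\Sigma^{\theta}_\Sp p_t^* \cS_{2(n-k+t)}$ arise from the three-step filtration $[\cG, \cS_{2(n-k)}, \cG^\vee]$ via two successive applications of Lemma~\ref{skew_decomposition}, and have the form $\Sigma^{\alpha}\cG \otimes \Sigma^{\rho/\alpha}\cS_{2(n-k)} \otimes \Sigma^{\theta/\rho}\cG^\vee$ for $0 \le \alpha \le \rho \le \theta$ (where, passing to the quotient $\Sigma^{\theta} \cS \twoheadrightarrow \Sigma^{\theta}_\Sp \cS$, some of these may drop out). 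After tensoring with $\Sigma^{\lambda}\cG$, the $\cG$-Schur components of $\Sigma^{\lambda}\cG \otimes \Sigma^{\alpha}\cG \otimes \Sigma^{\beta}\cG^\vee$ (with $\beta$ arising from the decomposition $\Sigma^{\theta/\rho}\cG^\vee = \bigoplus_\beta c^{\theta}_{\rho,\beta}\,\Sigma^\beta\cG^\vee$) have weights $\gamma \le \lambda + \alpha$ by Lemma~\ref{lemma:LR rule} combined with the identification $\Sigma^{\beta}\cG^\vee \simeq \Sigma^{-\beta}\cG$; since $\alpha \subset \theta$ and $\alpha$ has at most $t$ parts, we have $\alpha_i \le \theta_i = a$, so $\gamma = \lambda + a$ forces $\alpha = (a)^t$ and $\beta = 0$.

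The constraint $\beta = 0$ then forces $\rho = \theta$, hence $\rho/\alpha = \theta/(a)^t = \mu$. The Littlewood--Richardson coefficient $c^{\theta}_{(a)^t, \mu}$ equals one by the rectangular-shape rule, and the classical-to-symplectic decomposition $\Sigma^{\mu}\cS_{2(n-k)} = \Sigma^{\mu}_\Sp \cS_{2(n-k)} \oplus (\text{summands of strictly smaller total size})$ picks out $\Sigma^{\mu}_\Sp \cS_{2(n-k)}$ with multiplicity one. Combined with Lemma~\ref{lemma:PRV} applied to $\Sigma^{\lambda}\cG \otimes \Sigma^{(a)^t}\cG$ (which yields $\Sigma^{\lambda + a}\cG$ with multiplicity one at the top), the term $\Sigma^{\lambda + a}\cG \otimes \Sigma^{\mu}_\Sp \cS_{2(n-k)}$ appears in the filtration with multiplicity exactly one, as required.

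The main obstacle is the weight bookkeeping in the triple tensor $\Sigma^{\lambda}\cG \otimes \Sigma^{\alpha}\cG \otimes \Sigma^{\beta}\cG^\vee$ together with the compatibility of the passage from the classical to the symplectic Schur functor in the middle factor. The key observation is that the dual factor $\Sigma^{\beta}\cG^\vee$ cannot raise the $\cG$-weight above $\lambda + \alpha$, since $\beta \ge 0$ implies $-\beta \le 0$ and any $\beta \ne 0$ strictly lowers $\gamma$. Once this is cleanly settled via Lemma~\ref{lemma:LR rule}, the remaining multiplicity counts reduce to the rectangular-shape Littlewood--Richardson rule, Lemma~\ref{lemma:PRV}, and the identification of $\Sigma^{\mu}_\Sp \cS_{2(n-k)}$ as the unique top symplectic summand of $\Sigma^{\mu}\cS_{2(n-k)}$.
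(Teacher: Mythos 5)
Your proposal follows the same route as the paper's proof: apply Lemma~\ref{lemma:filtration_on_flags} for the weight bounds, retrace the construction through the three-step filtration $[\cG,\cS_{2(n-k)},\cG^\vee]$ and Lemma~\ref{skew_decomposition}, then pin down the extremal subquotient via Littlewood--Richardson arguments. The conclusion is correct, but one intermediate claim is stated incorrectly.

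You assert that the Schur components of $\Sigma^{\lambda}\cG \otimes \Sigma^{\alpha}\cG \otimes \Sigma^{-\beta}\cG$ satisfy $\gamma \le \lambda + \alpha$ componentwise. This is false already for $\beta=0$: take $\operatorname{rk}\cG=3$, $\lambda=(1,0,0)$, $\alpha=(1,1,0)$; then $\Sigma^{1,1,1}\cG$ appears in the product, and $(1,1,1)\not\le(2,1,0)$. What Lemma~\ref{lemma:LR rule} actually gives, via the convex hull $\gamma \in \operatorname{conv}\{\lambda+\sigma(\alpha)+\tau(-\beta)\}$, is the weaker but sufficient bound $\gamma_i - \lambda_i \le \max_j\alpha_j = \alpha_1 \le a$. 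The same convex-hull description is also what forces $\beta=0$ once $\gamma=\lambda+(a)^t$: for every entry of a hull point to equal $a$, some vertex $\sigma(\alpha)+\tau(-\beta)$ must equal $(a)^t$, and since $\sigma(\alpha)_i \le a$ and $-\beta_j \le 0$ this pins down $\alpha=(a)^t$ and $\beta=0$ simultaneously. Your sentence "any $\beta \ne 0$ strictly lowers $\gamma$" gestures at this but is not justified as a componentwise statement; as written, $\gamma \le \lambda+\alpha$ even if it were true would give you $\alpha=(a)^t$ but not $\beta=0$. A clean repair is either the convex-hull argument above (as in the paper) or the degree count $|\gamma|=|\lambda|+|\alpha|-|\beta|$ combined with $|\alpha|\le ta$. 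Everything else is fine: your observation that $\theta/(a)^t$ is the translate of the straight shape $\mu$, so $c^{\theta}_{(a)^t,\mu}=1$ by direct inspection, is a valid and arguably more transparent alternative to invoking Lemma~\ref{lemma:PRV} as the paper does, and your handling of the classical-to-symplectic passage (the top piece of $\Sigma^{\mu}\cS$ is $\Sigma_\Sp^{\mu}\cS$ with multiplicity one) is correct.
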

\begin{proof}
By Lemma~\ref{lemma:filtration_on_flags}, the vector bundle 
\begin{equation*}
\Sigma^{\lambda} (\cU_k/\cU_{k - t}) \otimes p_t^*\Sigma_\Sp^{\theta} \cS_{2(n - k + t)}
\end{equation*} admits a filtration with associated subquotients of the form
\begin{equation*}
    \Sigma^{\gamma} (\cU_k/\cU_{k-t})\otimes \Sigma_\Sp^\eta \cS_{2(n-k)},
\end{equation*}
where~\(\gamma \in \Y_t\) satisfies~\(\gamma-a \le \gamma \le \lambda+a\), and~\(\eta\in \YD_{n-k}^{a}\).

Thus, it remains to show that if~\(\gamma = \lambda+a\) then~\(\eta = \mu\), and this subquotient appears with multiplicity one.
Recall from the proof of Lemma~\ref{lemma:filtration_on_flags} that the associated subquotients of the bundle \begin{equation*}
\Sigma^{\lambda} (\cU_k/\cU_{k - t}) \otimes p_t^*\Sigma_\Sp^{\theta} \cS_{2(n - k + t)}
\end{equation*} 
are of the form
\begin{equation}\label{eq:filtr_flags1}
 \Sigma^{\lambda} (\cU_k/\cU_{k - t})\otimes \Sigma^{\alpha}(\cU_{k}/\cU_{k-t})\otimes \Sigma^{\beta}(\cU_{k}/\cU_{k-t})^{\vee}\otimes \Sigma^{\nu}_{\Sp}\cS_{2(n-k)},   
\end{equation}
where $\alpha,\beta,\nu\subset \theta$.

Now, for such a summand to contain
\begin{equation*}
\Sigma^{\lambda+a} (\cU_k/\cU_{k-t})\otimes \Sigma_\Sp^\eta \cS_{2(n-k)},
\end{equation*}
we must have 
\begin{equation*}
\Sigma^{\lambda+a} (\cU_k/\cU_{k-t}) \inplus \Sigma^{\lambda} (\cU_k/\cU_{k - t})\otimes \Sigma^{\alpha}(\cU_{k}/\cU_{k-t})\otimes \Sigma^{-\beta}(\cU_{k}/\cU_{k-t}).
\end{equation*} 
By Lemma~\ref{lemma:LR rule}, this inclusion is possible only if~\(\alpha = (a)^t\) and~\(\beta = 0\). Furthermore, by Lemma~\ref{skew_decomposition}, we have~\(\nu = \theta/\alpha = \mu\), and Lemma~\ref{lemma:PRV} gives~\(c^{\theta}_{\alpha,\mu} = 1\). 
\end{proof}

\begin{corollary} \label{symp_coh}
    Let~\(\alpha \in \YD_{k-t}\),~\(\xi \in \Y_t\),~\(\zeta \in \YD_{n-k}\) and $\nu\in \YD_{n-k+t}$. Consider the projection~$p_t\colon \mathrm{IFl}(k-t,k;V)\to \IGr(k-t,V)$. Then we have 
    \begin{align*}
        \mathrm{H}^{\bullet}&\left(\mathrm{IFl}(k-t,k;V),\,  \Sigma^{\xi}(\cU_{k}/\cU_{k-t}) \otimes \Sigma^{\zeta}_{\Sp}\cS_{2(n-k)}\otimes p^*_{t} \Sigma_\Sp^{\nu} \cS_{2(n-k + t)} \otimes \Sigma^{\alpha} \cU^{\vee}_{k}\right)^{\bG} = \\ 
        = \mathrm{H}^{\bullet}&\left(\mathrm{IFl}(k-t,k;V),\, \Sigma^{\xi}(\cU_{k}/\cU_{k-t}) \otimes \Sigma^{\zeta}_{\Sp}\cS_{2(n-k)}\otimes p^*_{t} \Sigma_\Sp^{\nu} \cS_{2(n-k + t)} \otimes \Sigma^{\alpha} (\cU_{k}/\cU_{k-t})^{\vee} \right)^{\bG}.
    \end{align*}
\end{corollary}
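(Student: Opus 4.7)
The plan is to replace $\Sigma^\alpha\cU_k^\vee$ by $\Sigma^\alpha(\cU_k/\cU_{k-t})^\vee$ via a $\bG$-equivariant filtration. Applying Lemma~\ref{skew_decomposition} to the short exact sequence
$$0 \to (\cU_k/\cU_{k-t})^\vee \to \cU_k^\vee \to \cU_{k-t}^\vee \to 0$$
on $\IFl(k-t,k;V)$ yields a filtration on $\Sigma^\alpha \cU_k^\vee$ whose associated graded pieces are $\Sigma^\beta(\cU_k/\cU_{k-t})^\vee \otimes \Sigma^{\alpha/\beta}\cU_{k-t}^\vee$ for $0 \le \beta \le \alpha$. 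The piece with $\beta = \alpha$ is exactly $\Sigma^\alpha(\cU_k/\cU_{k-t})^\vee$, because $\Sigma^{\alpha/\alpha}\cU_{k-t}^\vee = \cO$.

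Since $\bG$ is reductive, taking $\bG$-invariants is exact on $\bG$-representations, so the long exact sequences in cohomology arising from each step of the filtration break into long exact sequences on $\bG$-invariants. Therefore, it suffices to prove that for every $\beta$ with $0\le \beta < \alpha$, the $\bG$-invariants of the cohomology of
$$\Sigma^{\xi}(\cU_{k}/\cU_{k-t})\otimes \Sigma^{\zeta}_{\Sp}\cS_{2(n-k)}\otimes p^*_{t}\Sigma_\Sp^{\nu}\cS_{2(n-k+t)}\otimes \Sigma^{\beta}(\cU_k/\cU_{k-t})^\vee\otimes \Sigma^{\alpha/\beta}\cU_{k-t}^\vee$$
vanish. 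Since $\beta\ne\alpha$ and $\beta\le\alpha$ are Young diagrams, $|\alpha/\beta|=|\alpha|-|\beta|>0$, so every Schur summand $\Sigma^\gamma\cU_{k-t}^\vee$ appearing in $\Sigma^{\alpha/\beta}\cU_{k-t}^\vee$ has $|\gamma|>0$.

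To verify the vanishing, I would push forward along $p_t\colon \IFl(k-t,k;V)\to \IGr(k-t,V)$. The two bundles $\Sigma_\Sp^{\nu}\cS_{2(n-k+t)}$ and $\Sigma^{\alpha/\beta}\cU_{k-t}^\vee$ are pulled back from the base, so by the projection formula the pushforward equals
$$Rp_{t*}\bigl(\Sigma^{\xi}(\cU_{k}/\cU_{k-t})\otimes \Sigma^{\zeta}_{\Sp}\cS_{2(n-k)}\otimes \Sigma^{\beta}(\cU_k/\cU_{k-t})^\vee\bigr)\otimes \Sigma_\Sp^{\nu}\cS_{2(n-k+t)}\otimes \Sigma^{\alpha/\beta}\cU_{k-t}^\vee.$$
Viewing $p_t$ as the relative isotropic Grassmannian $\IGr_{\IGr(k-t,V)}(t,\cS_{2(n-k+t)})$ and applying Theorem~\ref{theo:relative_BBW_sym} fiberwise (together with a decomposition of $\Sigma^\xi\otimes\Sigma^{-\beta}$), the first factor is a direct sum (with shifts) of symplectic Schur functors $\Sigma^\mu_\Sp\cS_{2(n-k+t)}$. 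After further decomposing all tensor products, the problem reduces to showing that
$$\operatorname{H}^\bullet\bigl(\IGr(k-t,V),\,\Sigma^\tau_\Sp\cS_{2(n-k+t)}\otimes \Sigma^\gamma\cU_{k-t}^\vee\bigr)^{\bG} = 0$$
for every $\tau\in\YD_{n-k+t}$ and every Young diagram $\gamma\in\YD_{k-t}$ with $|\gamma|>0$. This is an application of Borel--Bott--Weil on $\bG/\bP_{k-t}$: the bundle corresponds to the Levi weight $(\gamma;\tau)$, which is nonzero, hence its cohomology is either zero or an irreducible $\bG$-representation of nontrivial highest weight, whose $\bG$-invariants vanish. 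The main point to verify carefully is the last step, where one must keep track of the symplectic Weyl-group conventions to confirm that the nontriviality of $\gamma$ forces nontriviality of the resulting $\bG$-weight; everything else is a direct application of the projection formula, the relative symplectic BBW theorem, and reductivity of~$\bG$.
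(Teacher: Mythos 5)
Your proof is correct and follows essentially the same approach as the paper: filter $\Sigma^\alpha\cU_k^\vee$ via Lemma~\ref{skew_decomposition}, isolate the piece $\Sigma^\alpha(\cU_k/\cU_{k-t})^\vee$, and show the pieces carrying a nontrivial $\cU_{k-t}^\vee$ factor contribute no $\bG$-invariant cohomology via a Borel--Bott--Weil argument together with exactness of $(-)^\bG$ for reductive $\bG$. The only technical difference is that the paper decomposes $\Sigma^\xi(\cU_k/\cU_{k-t})\otimes p_t^*\Sigma_\Sp^\nu\cS_{2(n-k+t)}$ via Lemma~\ref{lemma:filtration_on_flags} and applies BBW on $\IFl(k-t,k;V)$, whereas you push down along $p_t$ by the projection formula and apply BBW on $\IGr(k-t,V)$; these routes are equivalent in content. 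The one step you flag as needing care closes immediately: since $\gamma\in\YD_{k-t}$ with $|\gamma|>0$ has $\gamma_1\ge 1$, the first entry of $(\gamma;\tau)+\rho$ is $\gamma_1+n>n$, so $(\gamma;\tau)+\rho$ cannot lie in the $W(\Sp(2n))$-orbit of $\rho$, and the cohomology (if nonzero) is a nontrivial irreducible $\bG$-representation whose invariants vanish.
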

\begin{proof}
By Lemma~\ref{skew_decomposition} irreducible subquotients of~\(\Sigma^\alpha \cU_k^\vee\) have the form
\begin{equation} \label{decompU_k1}
  \Sigma^{\lambda} \cU_{k-t}^{\vee} \otimes \Sigma^{\eta} (\cU_k/\cU_{k-t})^{\vee} \qquad \text{for} \quad \lambda\in \YD_{k-t} \quad \text{and} \quad \eta \subseteq \alpha/\lambda.  
\end{equation}
By Lemma~\ref{lemma:filtration_on_flags} the vector bundle $\Sigma^{\xi}(\cU_{k}/\cU_{k-t}) \otimes p^*_{t} \Sigma_\Sp^{\nu} \cS_{2(n-k + t)}$ has a filtration with the subquotients of the form
\begin{equation*}
\Sigma^{\gamma} (\cU_k/\cU_{k - t}) \otimes \Sigma_\Sp^\phi \cS_{2(n - k)}.
\end{equation*}
Thus, irreducible subquotients of the vector bundle~$\Sigma^{\alpha} \cU^{\vee}_{k} \otimes \Sigma^{\xi}(\cU_{k}/\cU_{k-t}) \otimes \Sigma^{\zeta}_{\Sp}\cS_{2(n-k)}\otimes p^*_{t} \Sigma_\Sp^{\nu} \cS_{2(n-k + t)}$ have the following form:
\begin{equation} \label{irreducibleagain}
 \Sigma^{\lambda} \cU^{\vee}_{k-t} \otimes \Sigma^{\theta}(\cU_{k}/\cU_{k-t})^{\vee} \otimes \Sigma^{\psi}_{\Sp}\cS_{2(n-k)},\quad \text{where} \quad \lambda\in \YD_{k-t},\quad \theta\in \Y_t,\quad \text{and}\quad \psi\in \YD_{n-k}.   
\end{equation}
Next, if $\lambda>0$ then Theorem~\ref{theo:relative_BBW_sym} implies that for any~$\theta\in \Y_t$ and $\psi\in \YD_{n-k}$ we have
    \begin{equation*}
\mathrm{H}^{\bullet}(\mathrm{IFl}(k-t,k,V),\, \Sigma^{\lambda} \cU_{k-t}^{\vee} \otimes \Sigma^{\theta}(\cU_{k}/\cU_{k-t})^{\vee} \otimes \Sigma^{\psi}_{\Sp}\cS_{2(n-k)})^{\bG} = 0.
\end{equation*}
Thus, in~\eqref{irreducibleagain} only subquotients with $\lambda=0$ have nontrivial~$\bG$-invariant cohomology and only the subquotient $ \Sigma^{\alpha} (\cU_k/\cU_{k-t})^{\vee}$ in~\eqref{decompU_k1} gives a nontrivial contribution to~$\bG$-invariant cohomology of the vector bundle~$\Sigma^{\alpha} \cU^{\vee}_{k} \otimes \Sigma^{\xi}(\cU_{k}/\cU_{k-t}) \otimes \Sigma^{\zeta}_{\Sp}\cS_{2(n-k)}\otimes p^*_{t} \Sigma_\Sp^{\nu} \cS_{2(n-k + t)}$. 
\end{proof}

\subsection{Proof of Lemma~\ref{lemma:minimal_f_in_H}} \label{subsection:appendix:proof}
In this subsection, we use the notation introduced in Section~\ref{section:generalized}. 
Our goal is to establish Lemma~\ref{lemma:minimal_f_in_H}, which concerns function~\(f\) and the set of marked weights, both introduced in Definition~\ref{def:marked}. Before presenting its proof, we first recall the statement.
\begin{lemma*}
Let~\(\alpha \in \YD_1 \times \YD_{t-1}\). 
Consider the set of marked weights
\begin{equation} \label{alphaset1}
 \{(\alpha^{(i)}, b^i(\alpha))\}_{1 \le b^i(\alpha) \le t}.
\end{equation} 
There exists a unique index~\(i_{\alpha} \in [1,t]\) such that
\begin{equation*}
b^{i_{\alpha}}(\alpha) = \dd(\alpha, i_{\alpha}).
\end{equation*}
Moreover, the function~\begin{equation*}
f\colon (\alpha^{(i)}, b^i(\alpha))\mapsto (b^i(\alpha))^2 + \sum_{j=1}^{t} 2j \cdot \alpha^{(i)}_j,
\end{equation*}
attains its strictly minimal value on the set~\eqref{alphaset1} precisely at the marked weight~\((\alpha^{(i_{\alpha})}, b^{i_{\alpha}}(\alpha))\).
\end{lemma*}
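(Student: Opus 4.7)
\smallskip
\noindent\textbf{Proof proposal.} The plan is to identify the candidate minimum $i_\alpha$ by a direct algebraic manipulation of the formula~\eqref{eq:b=i-v+d}, and then to verify the strict minimality by computing how $f$ changes between two consecutive indices~$i$ and~$i+1$ in the set~\eqref{alphaset1}. Since $b^i(\alpha)$ is strictly increasing in $i$ (by the choice of the enumeration~\eqref{def:zi}), the set of indices $i$ with $1\le b^i(\alpha)\le t$ forms an interval of consecutive integers, so it will suffice to show that the sign of the one-step difference $f(\alpha^{(i+1)},b^{i+1}) - f(\alpha^{(i)},b^i)$ is negative precisely for $i<i_\alpha$ and positive precisely for $i\ge i_\alpha$.

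First, using~\eqref{eq:b=i-v+d}, the equation $b^i(\alpha)=\dd(\alpha,i)$ rewrites as $i-\vv(\alpha)-1=0$. Therefore the only candidate is $i_\alpha\coloneqq\vv(\alpha)+1$, which lies in $[1,t]$ because $\vv(\alpha)\in[0,t-1]$ by~\eqref{eq:def-vv}. Moreover $b^{i_\alpha}(\alpha)=\dd(\alpha,i_\alpha)\in[1,t]$ since $\dd$ takes values in $[1,t]$, so $i_\alpha$ indeed belongs to the index set of~\eqref{alphaset1}. This establishes the first half of the lemma.

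Next, fix $i$ with $1\le b^i(\alpha),b^{i+1}(\alpha)\le t$, set $d=\dd(\alpha,i)$, $v=\vv(\alpha)$, and $k=\dd(\alpha,i+1)-\dd(\alpha,i)\ge 0$. By~\eqref{difference_1,in_Z} we have $b^{i+1}-b^i=k+1$. When $k\ge 1$, the characterization~\eqref{eq:d-ineq} of $\dd$ at $i$ and $i+1$ forces the chain of equalities
\[
\alpha_{d+1}=\alpha_{d+2}=\dots=\alpha_{d+k}=\alpha_1-i+v,
\]
and then the explicit formula~\eqref{explicit_formula_i} yields $\alpha^{(i+1)}_j=\alpha^{(i)}_j$ for $j\notin[d,d+k]$, while $\alpha^{(i+1)}_j=\alpha^{(i)}_j-1$ for $j\in[d,d+k]$ (the case $k=0$ reduces to the single component $j=d$). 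Substituting this into the definition~\eqref{eq:def_f} of $f$ gives
\[
f(\alpha^{(i+1)},b^{i+1})-f(\alpha^{(i)},b^i)
=(b^i+k+1)^2-(b^i)^2-2\sum_{j=d}^{d+k}j
=(k+1)(2b^i-2d+1).
\]

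Finally, substituting $b^i-d=i-v-1=i-i_\alpha$ (which uses~\eqref{eq:b=i-v+d} and the definition of $i_\alpha$) transforms the above into $(k+1)\bigl(2(i-i_\alpha)+1\bigr)$. This quantity is at most $-(k+1)\le -1$ for $i<i_\alpha$ and at least $k+1\ge 1$ for $i\ge i_\alpha$. Therefore $f$ is strictly decreasing on the indices $i\le i_\alpha$ and strictly increasing on $i\ge i_\alpha$, so its unique strict minimum on the interval~\eqref{alphaset1} is attained at $i=i_\alpha$.

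The only genuinely delicate point is the combinatorial fact that the entries $\alpha_{d+1},\dots,\alpha_{d+k}$ of $\alpha$ must all coincide with the common value $\alpha_1-i+v$ when $\dd$ jumps; once this is in place, the difference of the two $f$-values collapses to the clean expression $(k+1)(2(i-i_\alpha)+1)$ and the monotonicity of $f$ is immediate.
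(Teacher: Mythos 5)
Your proof is correct and follows the same approach as the paper: identify $i_{\alpha} = \vv(\alpha)+1$ from the identity $b^i(\alpha) = i - \vv(\alpha) + \dd(\alpha,i) - 1$, compute the one-step difference $f(\alpha^{(i+1)}, b^{i+1}) - f(\alpha^{(i)}, b^i) = (b^{i+1}-b^i)\bigl(1 + 2(b^i - \dd(\alpha,i))\bigr)$, and conclude by strict monotonicity on either side of $i_{\alpha}$. The only small distinction is that you derive the box-removal pattern $\alpha^{(i+1)}_j = \alpha^{(i)}_j - 1$ for $j \in [\dd(\alpha,i), \dd(\alpha,i+1)]$ algebraically from~\eqref{eq:d-ineq} and~\eqref{explicit_formula_i}, whereas the paper appeals to the pictorial outer-strip description introduced just before its proof.
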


We begin by giving a combinatorial description of the family of Young diagrams~\(\{\alpha^{(i)}\}_{0 \le i \le \alpha_1}\) associated with a diagram~\(\alpha \in \YD_k\); see also~\cite[Figure 5.2]{Fonarev_2013}.

We define the \emph{outer strip} of width~1 as the set of boxes that lie in~\(\alpha\) but not in~\(\alpha^{(\alpha_1)}\). To visualize this, draw the diagram of~\(\alpha\), then shade the boxes that would be removed to obtain~\(\alpha^{(\alpha_1)}\). Enumerate these shaded boxes from top to bottom, row by row, starting at the top-right corner. This is illustrated in Figure~\ref{fig:alpha^i} for the partition~\(\alpha = (7,6,6,3,2)\).

To obtain~\(\alpha^{(i)}\), remove the first~\(b^i(\alpha)\) boxes of the outer strip from~\(\alpha\). Equivalently,~\(\alpha^{(i)}\) is obtained by deleting all boxes in the outer strip that lie to the right of the vertical line with abscissa~\(\alpha_1 - i\). For example, in the case illustrated in Figure~\ref{fig:alpha^i}, we have 
\begin{equation*}
(7, 6, 6, 3, 2)^{(4)}=(5,5,3,3,2),
\end{equation*}
where we have removed the first 6 boxes of the outer strip.  

In particular, for $0\le i \le \alpha_1-1$ we can explicitly describe how to obtain $\alpha^{(i+1)}$ from $\alpha^{(i)}$. Namely, we remove~\(b^{i+1}(\alpha) - b^i(\alpha)\) boxes from~\(\alpha^{(i)}\), one in each of the rows indexed by
\begin{equation*}
j = \dd(\alpha, i),\ \dd(\alpha, i)+1,\ \ldots,\ \dd(\alpha, i+1).
\end{equation*}

	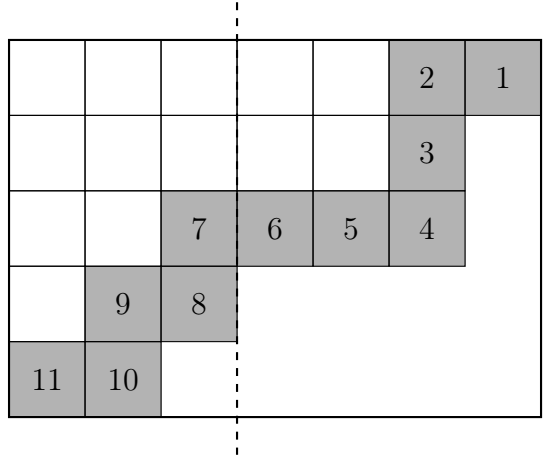
\begin{figure}[h] 
		\centering
		\begin{tikzpicture}[scale=1]
			\def\partitions{{7, 6, 6, 3, 2}}
			\newcounter{cellcount}
			\setcounter{cellcount}{0}
			\foreach \row in {0, 1, 2, 3} {
				\pgfmathsetmacro{\boxes}{\partitions[\row]}
				\pgfmathsetmacro{\nextboxes}{\partitions[\row+1]}
				\foreach \i in {\boxes, ..., \nextboxes} {
					\fill[gray!60] (\i-1, -\row) rectangle (\i, -\row-1);
					\stepcounter{cellcount}
					\node at (\i-0.5, -\row-0.5) {\thecellcount};
				}
			}
			\pgfmathsetmacro{\boxes}{\partitions[4]}
			\foreach \i in {\boxes, ..., 1} {
				\fill[gray!60] (\i-1, -4) rectangle (\i, -5);
				\stepcounter{cellcount}
				\node at (\i-0.5, -4.5) {\thecellcount};
			}
			
			\foreach \row/\boxes in {0/7, 1/6, 2/6, 3/3, 4/2} {
				\foreach \i in {1, ..., \boxes} {
					\draw (\i-1, -\row) rectangle (\i, -\row-1);
				}
			}
			\draw[thick] (0, 0) rectangle (7, -5);
			\draw[thick, dashed] (3, 0.5) -- (3, -5.5);
		\end{tikzpicture}
		\caption{Young diagram for the partition (7, 6, 6, 3, 2) with the shaded outer strip}
            \label{fig:alpha^i}
	\end{figure}

\begin{proof}[Proof of Lemma~\ref{lemma:minimal_f_in_H}]
We begin by analyzing the condition 
\begin{equation*}
b^i(\alpha) = \dd(\alpha, i).
\end{equation*}
Using the identity~\eqref{eq:b=i-v+d},
\begin{equation*}
b^i(\alpha) = i - \vv(\alpha) + \dd(\alpha, i) - 1,
\end{equation*}
we see that this equality holds if and only if
\begin{equation*}
i = \vv(\alpha) + 1.
\end{equation*}
Since~\(\vv(\alpha) \in [0, t-1]\), it follows that~\(i_{\alpha} := \vv(\alpha) + 1 \in [1, t]\), completing the proof of the first part of the lemma.

To prove the second part, we examine how the function $f$ changes between consecutive marked weights. Specifically, we claim that for $1\le b^i(\alpha) \le k$ the function $f$ satisfies the identity:
\begin{equation} \label{diff}
f\big(\alpha^{(i+1)}, b^{i+1}(\alpha)\big)-f\big(\alpha^{(i)}, b^{i}(\alpha)\big)=\big(b^{i+1}(\alpha) - b^i(\alpha)\big)\big(1+2(b^i(\alpha) - \dd(\alpha, i))\big).
\end{equation}
Indeed, to obtain~\(\alpha^{(i+1)}\) from~\(\alpha^{(i)}\), we remove exactly~\(b^{i+1} - b^i\) boxes, one from each of the rows indexed by~\(\dd(\alpha, i), \dd(\alpha, i)+1, \ldots, \dd(\alpha, i+1)\); see the explanation above the proof. Thus,
\begin{multline*} 
 f(\alpha^{(i+1)}, b^{i+1})-f(\alpha^{(i)}, b^{i})=(b^{i+1}-b^{i})(b^{i+1}+b^{i})-\sum_{i=\dd(\alpha, i)}^{\dd(\alpha, i+1)} 2i=\\
 =(b^{i+1}-b^{i})(b^{i+1}+b^{i})-\big(\dd(\alpha, i+1)+\dd(\alpha, i)\big)\big(\dd(\alpha, i+1)-\dd(\alpha, i)+1\big)=\big(b^{i+1} - b^i\big)\big(1+2(b^i - \dd(\alpha, i))\big),
\end{multline*}
where the last equality follows from~\eqref{eq:b=i-v+d}. The equality~\eqref{diff} follows.

Recall that $\{b^i(\alpha)\}$ is strictly increasing in $i$. Also, using the identity~\eqref{eq:b=i-v+d} again, we find that 
\begin{equation*}
b^i(\alpha) - \dd(\alpha, i)=i-\vv(\alpha)-1.
\end{equation*}
This shows that for~\(i < i_{\alpha} \) we have~\( b^i(\alpha) < \dd(\alpha, i) \), so the difference~\eqref{diff} is negative and the function~\(f\) strictly decreases. Conversely, for~\( i > i_{\alpha} \), we have~\( b^i(\alpha) > \dd(\alpha, i)\), so~\( f \) strictly increases. Therefore,~\(f\) attains its strictly minimal value precisely at~\(i = i_{\alpha}\).
\end{proof}

\printbibliography

\end{document}